\newcommand{\xii}{{|\xi|}}
\newcommand{\R}{{\mathbb R}}
\newcommand{\Z}{{\mathbb Z}}
\newcommand{\N}{{\mathbb N}}
\newtheorem{theorem}{\bf Theorem}[section]
\newtheorem{lemma}{\bf Lemma}[section]
\newtheorem{proposition}{\bf Proposition}[section]
\newtheorem{corollary}{\bf Corollary}[section]
\theoremstyle{remark}
  \newtheorem{remark}{\sc Remark}[section]
\theoremstyle{definition}
 \numberwithin{equation}{section}
\title[]{Regularity theory and global existence of small data solutions to semi-linear de Sitter models with power non-linearity}
\author{ Marcelo Rempel Ebert \,\,\, and \,\,\,Michael Reissig}
\address{Marcelo Rempel Ebert, Departamento de Computa\c{c}\~{a}o e Matem\'atica, Universidade de S\~{a}o Paulo (USP), FFCLRP, Av. dos Bandeirantes, 3900, CEP 14040-901, Ribeir\~{a}o Preto - SP - Brasil}
\address{Michael Reissig, Faculty for Mathematics and Computer Science, Technical University Bergakademie Freiberg, Pr\"uferstr.9 - 09596 FREIBERG - GERMANY}
\begin{document}

\begin{abstract}
In this paper we study the Cauchy problem for semi-linear de Sitter models with power non-linearity. The model of interest is
\[ \phi_{tt} - e^{-2t} \Delta \phi + n\phi_t+m^2\phi=|\phi|^p,\quad (\phi(0,x),\phi_t(0,x))=(f(x),g(x)),\]
where $m^2$ is a non-negative constant. We study the global (in time) existence of small data solutions. In particular,
we show the interplay between the power $p$, admissible data spaces and admissible spaces of solutions (in weak sense, in sense of energy solutions or in classical sense).
\end{abstract}
\maketitle

\noindent {\it Key words:} \quad
Cauchy problem, de Sitter model, power-nonlinearity, global existence, small data
\medskip

\noindent {\it AMS classification:} \quad 35L05, 35L15

\section{Introduction} \label{Sec1}

In this paper we prove  global existence (in time) of small data
solutions of the Cauchy problem
\begin{equation}\label{general}
\begin{cases}
\phi_{tt} - e^{-2t} \Delta \phi + n\phi_t+m^2\phi=F(\phi),
  \quad (t,x)\in (0,\infty)\times \R^n,
\\
(\phi(0,x),\phi_t(0,x))=(f(x),g(x)),
  \quad x\in \R^n,
\end{cases}
\end{equation}
where $F(\phi)=|\phi|^p$, $p>1$ and $m>0$. This model describes the de Sitter model for the expansion of the universe.

If the  the initial condition
$\phi(0, x) = f(x)$ is small, then  $|\phi|^p$ becomes small for large $p$ and this term can be understood as a perturbation of the associate linear equation. For this
reason one is often able to prove such global (in time) existence result
only for some $p > p_0(m,n)>1$.  It is expected that  the dissipative effect in the considered model becomes more dominant with increasing parameters $m$ and $n$. Consequently, the function $p_0=p_0(m, n)$ could be expected to be a decreasing function in both variables $m$ and $n$.

In \cite{Y}, under the assumption that the right-hand side $F$ is  Lipschitz continuous in the Sobolev space $H^{s}(\R^n)$, $s> \frac{n}{2}$, the global existence (in time) of small data solutions to the model (\ref{general}) is proved for $m \in (0,\frac{\sqrt{n^2-1}}{2}) \cup [\frac{n}{2},\infty)$.
Some generalization of these results are obtained  in \cite{GY} including the case $m=\frac{\sqrt{n^2-1}}{2}$. In some cases, for instance, if the Cauchy problem has a  vanishing first initial data, the range $\big(\frac{\sqrt{n^2-1}}{2},\frac{n}{2}\big)$ for $m$ is allowed.
But in general, the case $m \in \big(\frac{\sqrt{n^2-1}}{2},\frac{n}{2}\big)$ remaind open.

On the other hand,  for $n\leq 4$ and $1+ \frac{4}{n} \leq p \leq \frac{n}{n-2}$,
the global existence  (in time)  of small data  energy solutions to \eqref{general} is proved for the case $m>\frac{n}{2}$ in \cite{N}.
There the range of admissible $p$ is bounded from below. So, a natural question appears: Is this restriction optimal or how does the admissible range of exponents
$p$ change with the choice of function spaces we take for the data and solutions?

Fortunately, the analysis of results of \cite{ERnew}  shed a light on the interval $\big(\frac{\sqrt{n^2-1}}{2},\frac{n}{2}\big)$ for $m$, too, and leads to the global existence of solutions for the Cauchy problem for the wave equation in the de Sitter model in different scales of Sobolev spaces. The main concerns of this paper are the following:
\begin{itemize}
\item To derive sharp estimates in scales of Sobolev spaces for the associated linear Cauchy problem \eqref{general} with right-hand side $0$.
\item  To prove global existence  of small data energy solutions in the supercritical case for all $m>0$. In particular, to derive results for $m \in \big(\frac{\sqrt{n^2-1}}{2},\frac{n}{2}\big)$.
\item  To show the interplay between the power $p$, admissible data spaces and admissible spaces for the solutions.
\end{itemize}

In order to derive our results, we apply the transformation
$\phi(t,x)=e^{r\,t}u(t,x)$
to the Cauchy
problem \eqref{general} to get (see \cite{Nthesis} or \cite{NunesPalmieriReissig} in the case of constant speed of propagation)
\begin{equation}\label{semilieardampedwave}
\begin{cases}
u_{tt} - e^{-2t}\Delta u + (2r+n) u_t + (r^2+rn+m^2)u=e^{(p-1)rt}|u|^p,
  \quad (t,x)\in (0,\infty)\times \R^n,
\\
u(0,x)= f(x)=: u_0(x), \quad u_t(0,x)=g(x)-rf(x)=: u_1(x),
  \quad x\in \R^n.
\end{cases}
\end{equation}
Then we split our analysis into three cases:
\subsection{De Sitter model with dominant dissipation} \label{Sec2.1}
If we choose in (\ref{semilieardampedwave}) the parameter
\[r:=\frac{-n + \mu}{2}, \qquad \mu:=\sqrt{n^2-4m^2},\]
then  $r^2+rn+m^2=0$ and we obtain the model with dominant
dissipation
\begin{equation}\label{semilieardampedwavedissipation}
\begin{cases}
u_{tt} - e^{-2t}\Delta u + \mu\, u_t =e^{r(p-1)t}|u|^p,
  \quad (t,x)\in (0,\infty)\times \R^n,
\\
u(0,x)= f(x)=: u_0(x), \quad u_t(0,x)=g(x)+ \frac{n - \sqrt{n^2-4m^2}}{2}f(x)=: u_1(x),
  \quad x\in \R^n.
\end{cases}
\end{equation}
In the following we want to have some improving influence of the dissipation term. For this reason we assume $m < \frac{n}{2}$.
In the paper \cite{ERnew} the authors introduced some classification of damping terms for the Cauchy problem
\[v_{tt} - a(t)^2\Delta v + b(t)v_t =0, \qquad
v(0,x)=v_0(x), \qquad v_t(0,x)=v_1(x).\] Due to this classification it turns out that the dissipative term in (\ref{semilieardampedwavedissipation}) is non-effective if $\mu \in (0,1)$, that is, $m \in (\frac{\sqrt{n^2-1}}{2},\frac{n}{2})$. In the case of non-effective damping term the treatment of semi-linear wave models with power non-linearity $|u|^p$ is an open problem up to now. Some special cases are treated in \cite{DabbiccoLucenteReissig}. To avoid at the beginning the non-effectiveness we assume $\mu \geq 1$, that is, $m \in (0,\sqrt{n^2-1}/2]$. This case is treated in Section \ref{Sec3.2}. But, finally,  we will present in Section \ref{Sec3.3} results in the case
of non-effective dissipation, too.
In a first step one should derive, similar to \cite{Bui1} or \cite{Bui2}, estimates for solutions to linear
damped wave equations with time-dependent speed of propagation, but now for solutions to Cauchy problems with parameter-dependent Cauchy conditions
\[v_{tt} - a(t)^2\triangle v + b(t)v_t =0, \qquad
v(s,x)=v_0(s, x), \qquad v_t(s,x)=v_1(s, x). \]

\subsection{De Sitter model with dominant mass} \label{Sec2.2}
If we choose in (\ref{semilieardampedwave}) the parameter $r=-\frac{n}{2}$, then we obtain the model with dominant
mass
\begin{equation}\label{semilieardampedwavemass}
\begin{cases}
u_{tt} - e^{-2t}\Delta u + \big(m^2-\frac{n^2}{4}\big) \,u=e^{-\frac{n}{2}(p-1)t}|u|^p,
  \quad (t,x)\in (0,\infty)\times \R^n,
\\
u(0,x)= f(x)=: u_0(x), \quad u_t(0,x)=g(x)+\frac{n}{2}f(x)=: u_1(x),
  \quad x\in \R^n.
\end{cases}
\end{equation}
In the following we want to have some improving influence of the mass term. For this reason we assume $m > \frac{n}{2}$.
In a first step one should derive, similar to \cite{B} or \cite{BöhmeReissig}, estimates for solutions to linear
Klein-Gordon equations with time-dependent speed of propagation, but now for solutions to Cauchy problems with parameter-dependent Cauchy conditions
\[v_{tt} - a(t)^2\triangle v + m(t)^2v =0, \qquad
v(s,x)=v_0(s, x), \qquad v_t(s,x)=v_1(s, x). \]
This model is treated in Section \ref{Sec4}.
\subsection{De Sitter model with balanced dissipation and mass} \label{Sec2.3}
If we choose in (\ref{semilieardampedwave}) the parameters $r=-\frac{n}{2}$ and $m=\frac{n}{2}$, then we obtain the model with a balance between mass and dissipation
\begin{equation}\label{semilieardampedwavebalanced}
\begin{cases}
u_{tt} - e^{-2t}\Delta u =e^{-\frac{n}{2}(p-1)t}|u|^p,
  \quad (t,x)\in (0,\infty)\times \R^n,
\\
u(0,x)= f(x)=: u_0(x), \quad u_t(0,x)=g(x)+\frac{n}{2}f(x)=: u_1(x),
  \quad x\in \R^n.
\end{cases}
\end{equation}
This model is treated in Section \ref{Sec5}.\\
The present paper is organized as follows. In Sections \ref{Sec3} to \ref{Sec5} we consider the cases  $m \in (0,\frac{n}{2})$, $m >\frac{n}{2}$ and $m=\frac{n}{2}$, respectively.
    An appendix containing some tools of Harmonic Analysis completes the paper.

\section{De Sitter model with dominant dissipation: Case $m\in (0,\frac{n}{2})$.} \label{Sec3}
In this section we consider the Cauchy problem
\begin{equation}\label{semilieardampedwavedissipationtreatment}
\begin{cases}
u_{tt} - e^{-2t}\Delta u + \mu \, u_t =e^{r (p-1)t}|u|^p,
  \quad (t,x)\in (0,\infty)\times \R^n,
\\
u(0,x)= u_0(x), \quad u_t(0,x)=u_1(x),
  \quad x\in \R^n
\end{cases}
\end{equation}
with $\mu=\sqrt{n^2 - 4m^2} \in (0,n)$ and $r:=\frac{-n + \mu}{2}$.\\
According to Duhamel's principle, a solution of \eqref{semilieardampedwavedissipationtreatment} satisfies the non-linear integral equation
\[  u(t,x)=K_0(t,0, x) \ast_{(x)} u_0(x) + K_1(t,0, x) \ast_{(x)} u_1(x) + \int_0^t e^{(p-1)rs}K_1(t,s,x) \ast_{(x)}|u(s,x)|^{p}\,ds, \]
where $K_j(t,0, x) \ast_{(x)} u_j(x)$, $j=0,1$, are the solutions to the corresponding linear Cauchy
problem
\begin{equation}\label{dampedwave}
\begin{cases}
u_{tt} - e^{-2t}\Delta u + \mu u_t=0,
  \quad (t,x)\in (0,\infty)\times \R^n,
\\
u(0,x)=\delta_{0j}u_0(x), \quad u_t(0,x)=\delta_{1j}u_1(x),
  \quad x\in \R^n,
\end{cases}
\end{equation}
with $\delta_{kj}=1$  for $k=j$, and zero otherwise. The term $K_1(t,s,x) \ast_{(x)}f(s,x)$ is the  solution  of the parameter-dependent Cauchy problem
\begin{eqnarray*}
&& u_{tt} - e^{-2t}\Delta u + \mu u_t=0,
  \quad (t,x)\in (s,\infty)\times \R^n,
\\
&& u(s,x)=0, \quad u_t(s,x)=f(s,x),
  \quad x\in \R^n.
\end{eqnarray*}
So, Duhamel's principle explains that we have to take account of solutions
to a family of parameter-dependent Cauchy problems.
\subsection{Estimates of solutions to the corresponding linear model} \label{Sec3.1}
Let us consider for $\mu >0$ the parameter-dependent Cauchy problem for the damped wave equation
\begin{equation}\label{dampedwavelinear}
\begin{cases}
u_{tt} - e^{-2t}\Delta u + \mu u_t=0,
  \quad (t,x)\in (s,\infty)\times \R^n,
\\
u(s,x)= \varphi(s,x), \quad u_t(s,x)=\psi(s,x),
  \quad x\in \R^n.
\end{cases}
\end{equation}
We perform the partial Fourier transformation with respect to the spatial variables to \eqref{dampedwavelinear} and the change of variables
\[ v(\tau)=\widehat{u}(t,\xi), \quad \tau=A(t)|\xi|, \quad A(t)=\int_t^{\infty}e^{-\tau}d\tau=e^{-t}.\]
All this leads to the following Cauchy problem:
\[v_{\tau\tau} +\frac{1-\mu}{\tau} v_{\tau}+ v=0,
  \quad
v(\xii e^{-s})=\widehat\varphi(s,\xi), \quad v_{\tau}(\xii e^{-s})=-\frac{\widehat\psi(s,\xi)}{|\xi|e^{-s}}.\]
Now, setting
\[w(z)=e^{\frac{z}{2}}v(\tau), \quad z=2i\tau=2i\xii e^{-t},\]
we get the confluent hypergeometric equation
\begin{equation}\label{hyperdamping}
zw_{zz}+ \big(1-\mu-z\big)w_z-\frac{1-\mu}{2}w=0,
\end{equation}
and the following initial conditions at $z_0=z_0(s,\xi)=2i\xii e^{-s}$:
\[w(z_0)=e^{i\xii e^{-s}}\widehat\varphi(s,\xi), \quad w_z(z_0)=\frac{e^{i\xii e^{-s}}}{2}\Big(\widehat\varphi(s,\xi)+i\frac{\widehat\psi(s,\xi)}{|\xi|e^{-s}}\Big).\]
If $\mu\notin \Z$, then due to \cite{BE} the general solution of \eqref{hyperdamping} has the representation
\[w(z)=c_1(s,\xi)w_1(z)+ c_2(s,\xi)w_2(z),\]
where $w_1$ and $w_2$ are
 two linear independent solutions  given by
 \[w_1(z)=\Phi\Big(\frac{1-\mu}{2}, 1-\mu, z\Big), \quad w_2(z)=z^{\mu}\Phi\Big(\frac{1+\mu}{2}, 1+\mu, z\Big). \]
Here $\Phi$ is the Kummer's function
\begin{equation}\label{Kummer}
 \Phi(b,c,x)= \sum_{n=0}^{\infty} \frac{ b^{(n)} z^n}{c^{(n)}n!},\,\,\,a^{(n)}=\prod_{k=0}^{n-1} (a+k),\,\,\,a^{(0)}=1.
\end{equation}
The function $\Phi$ is  an entire function of $b$ and $z$,  except when $c = 0, -1, -2, \cdots$. As a function of $c$ it is analytic except for poles at the non-positive integers.
 Moreover, we can write
 \begin{equation}\label{coef}
 c_j(s,\xi)=(-1)^{3-j}\frac{w(z_0)(d_z w_{3-j})(z_0)-(d_z w)(z_0)w_{3-j}(z_0)}{W(w_1,w_2)(z_0)} \,\,\,\mbox{for} \,\,\, j=1,2,
 \end{equation}
 where $W(w_1,w_2)$ is the Wronskian of the two linear independent solutions and it satisfies (\cite{BE},vol.1,p.253)
\[W(w_1,w_2)(z)=w_1\frac{d}{dz}w_2-w_2\frac{d}{dz}w_1=\mu z^{\mu-1}e^z.\]
Using all these functions we conclude the following WKB representation
for $\widehat{u}$:
\begin{eqnarray*}
  \widehat{u}(t,\xi)=e^{-i\xii e^{-t}}\Big(c_1(s,\xi)\Phi\Big(\frac{1-\mu}{2}, 1-\mu, 2i\xii e^{-t}\Big)+ c_2(s,\xi) (2i\xii e^{-t})^{\mu}\Phi\Big(\frac{1+\mu}{2}, 1+\mu, 2i\xii e^{-t} \Big) \Big),
   \end{eqnarray*}
  where $c_j(s,\xi)$ for $j=1,2$ are given by \eqref{coef} with $z_0=2i\xii e^{-s}$.\\
  So, to describe the asymptotic behavior of $\widehat{u}$, we may use the following well-known properties of
the function $\Phi$ (see \cite{BE}):
\begin{proposition}\label{hypergeometricFucntions}
Let $b$ and $c$ be fixed parameters in $\mathbb{C}$ with $c \notin \Z$.
Then the function $\Phi$ satisfies the following properties:
\begin{itemize}
\item (P1): $\Phi=\Phi(b,c,z)$ is an entire function with respect to $z$;
\item (P2): $\frac{d}{dz}\Phi(b,c,z)=\frac{b}{c}\Phi(b+1,c+1,z)$;
\item (P3): the behavior for large $|z|$ is given by
\[ |\Phi(b,c,z)|\leq C_{b,c} |z|^{\max\{\Re(b-c), -\Re b\}}.\]
\end{itemize}
\end{proposition}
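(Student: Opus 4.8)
\noindent\emph{Proof plan.}
I would treat (P1) and (P2) first, as both are immediate from the series \eqref{Kummer}. For (P1): since $c\notin\Z$ we have $c^{(n)}\neq0$ for every $n$, so the coefficients are well defined, and the ratio of consecutive coefficients obeys
\[ \left|\frac{b^{(n+1)}}{c^{(n+1)}(n+1)!}\Big/\frac{b^{(n)}}{c^{(n)}\,n!}\right|=\left|\frac{b+n}{(c+n)(n+1)}\right|\longrightarrow 0\qquad(n\to\infty); \]
hence the power series has infinite radius of convergence and defines an entire function of $z$. The same estimate is locally uniform in $(b,c,z)$, which yields the dependence on the parameters recorded just before the proposition (entire in $b$, analytic in $c$ off the non-positive integers). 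For (P2) I would differentiate \eqref{Kummer} term by term inside the radius of convergence, reindex $n\mapsto n+1$, and use $b^{(n+1)}=b\,(b+1)^{(n)}$ and $c^{(n+1)}=c\,(c+1)^{(n)}$ to get
\[ \frac{d}{dz}\Phi(b,c,z)=\sum_{n\ge1}\frac{b^{(n)}}{c^{(n)}(n-1)!}\,z^{n-1}=\frac{b}{c}\sum_{n\ge0}\frac{(b+1)^{(n)}}{(c+1)^{(n)}\,n!}\,z^{n}=\frac{b}{c}\,\Phi(b+1,c+1,z). \]

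The substance is (P3), which is the classical large-argument asymptotics of Kummer's function; I would only sketch it and otherwise refer to \cite{BE}. The plan is to start from the Euler-type integral representation, valid when $\Re c>\Re b>0$,
\[ \Phi(b,c,z)=\frac{\Gamma(c)}{\Gamma(b)\,\Gamma(c-b)}\int_0^1 e^{zt}\,t^{b-1}(1-t)^{c-b-1}\,dt, \]
and to localise the integral near its two endpoints. A Laplace/Watson-type analysis at $t=1$ (substitute $t=1-s$) contributes a term of size $\tfrac{\Gamma(c)}{\Gamma(b)}e^{z}z^{b-c}$, the contribution from $t=0$ is of size $\tfrac{\Gamma(c)}{\Gamma(c-b)}(-z)^{-b}$, and the remainder is a power of $|z|$ smaller, so that
\[ \Phi(b,c,z)=\frac{\Gamma(c)}{\Gamma(c-b)}(-z)^{-b}\bigl(1+O(|z|^{-1})\bigr)+\frac{\Gamma(c)}{\Gamma(b)}e^{z}z^{b-c}\bigl(1+O(|z|^{-1})\bigr) \]
uniformly on closed sectors not meeting the positive real axis; Kummer's transformation $\Phi(b,c,z)=e^{z}\Phi(c-b,c,-z)$ extends this to sectors not meeting the negative real axis, so the imaginary axis is covered. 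The restriction $\Re c>\Re b>0$ I would remove by analytic continuation in $(b,c)$, propagating the expansion via the contiguous relations for $\Phi$ together with (P1), which is what makes it applicable to the non-integer parameters $\tfrac{1\mp\mu}{2},\ 1\mp\mu$ arising from \eqref{hyperdamping}. Finally, in the regime where we use the proposition, $z=2i\xii e^{-t}$ lies on the imaginary axis, so $|e^{z}|=1$, the two terms above have moduli comparable to $|z|^{-\Re b}$ and $|z|^{\Re(b-c)}$, and retaining the larger exponent while absorbing the bounded $\Gamma$-factors into a constant $C_{b,c}$ gives precisely $|\Phi(b,c,z)|\le C_{b,c}\,|z|^{\max\{\Re(b-c),-\Re b\}}$ for large $|z|$ (the bound being understood for $z$ with $\Re z$ bounded above, which is the only situation in which it is invoked).

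The hard part is entirely inside (P3): producing the two leading terms with the correct $\Gamma$-factors and, above all, a genuinely lower-order remainder that is uniform on the relevant sector, and then justifying the analytic continuation so that the estimate persists for complex, possibly non-integer $b$ and $c$. Properties (P1)--(P2), by contrast, are one-line verifications. Since all of this is entirely classical, in the body of the paper I would simply invoke \cite{BE} rather than reproduce it.
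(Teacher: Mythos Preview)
Your proposal is correct and, if anything, does more than the paper: the paper gives no proof of this proposition at all, merely introducing it as ``well-known properties of the function $\Phi$ (see \cite{BE})'' and citing Bateman--Erd\'elyi. Your sketches of (P1) and (P2) from the series \eqref{Kummer} and of (P3) via the integral representation and Watson-type endpoint analysis are the standard arguments behind that citation, and your closing remark that one would ``simply invoke \cite{BE}'' matches exactly what the paper does.
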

In order to have pointwise estimates for $\widehat{u}$ and their derivatives, we analyze the behavior of $\Phi(b,c,\cdot)$ for small and large arguments. For this reason we split the extended phase space into three zones
\[ Z_1(s)
     := \{\xi: \ \xii e^{-s}\leq N \},\ \ Z_2(t,s)
     := \{N e^{s}\leq \xii \leq N e^{t} \},  \ \ Z_3(t)
     := \{\xii e^{-t}\geq N \}.\]
     \begin{enumerate}
     \item
     In $Z_1$, by using properties (P1) and (P2) we have
\[ |c_1(s, \xi)|\lesssim (\xii e^{-s})^{1-\mu}\Big(|\widehat\varphi(s,\xi)|(\xii e^{-s})^{\mu-1}+\Big(|\widehat\varphi(s,\xi)|+\frac{|\widehat\psi(s,\xi)|}{\xii e^{-s}}\Big)(\xii e^{-s})^{\mu}\Big)
\lesssim|\widehat\varphi(s,\xi)|+|\widehat\psi(s,\xi)|,  \]
and
\[ |c_2(s,\xi)|\lesssim (\xii e^{-s})^{1-\mu}\Big(|\widehat\varphi(s,\xi)|+\frac{|\widehat\psi(s,\xi)|}{\xii e^{-s}}\Big).\]
So we can estimate for $t \geq s$
\[|\widehat{u}(t,\xi)| \lesssim |\widehat\varphi(s,\xi)|+|\widehat\psi(s,\xi)|+ (\xii e^{-s})^{1-\mu}\Big(|\widehat\varphi(s,\xi)|+\frac{|\widehat\psi(s,\xi)|}{\xii e^{-s}}\Big)(\xii e^{-t})^{\mu}\lesssim |\widehat\varphi(s,\xi)|+|\widehat\psi(s,\xi)|,\]
and, more general,
\[\xii^\gamma |\widehat{u}(t,\xi)|\lesssim \xii^\gamma |\widehat\varphi(s,\xi)|+\xii^{\gamma_1}e^{\gamma_2 s}|\widehat\psi(s,\xi)| \,\,\,\mbox{for}\,\,\, \gamma, \gamma_1,\gamma_2 \geq 0\,\,\,\mbox{and} \,\,\,\gamma=\gamma_1+\gamma_2.\]
 Similarly, if $\mu<1$, then we have for $\gamma \geq 1$ the estimates
 \[\xii^{\gamma-1} |\widehat{u}_t(t,\xi)|\lesssim e^{\mu(s-t)} \big(e^{-s}\xii^\gamma|\widehat\varphi(s,\xi)|+\xii^{\gamma-1} |\widehat\psi(s,\xi)|\big),\]
whereas for $\mu\geq 1$ we may conclude
\[ \xii^{\gamma-1}|\widehat{u}_t(t,\xi)|\lesssim e^{-t}\xii^\gamma|\widehat\varphi(s,\xi)|+e^{(s-t)}\xii^{\gamma-1}|\widehat\psi(s,\xi)|.\]
If we additionally assume $\varphi, \psi \in H^\gamma$, $\gamma >0$, then we may avoid any loss of decay and may derive
\[\xii^\gamma |\widehat{u}(t,\xi)|\lesssim \xii^\gamma \big(|\widehat\varphi(s,\xi)|+|\widehat\psi(s,\xi)|\big) \,\,\,\mbox{for}\,\,\, \gamma >0.\]
\item In $Z_3$ we have $\xii e^{-t}\geq N$ and $\xii e^{-s}\geq N$. Thanks to properties (P2) and (P3), we can estimate for $j=1,2$
\begin{equation}\label{coefdamp}
  \begin{cases} |c_j(s, \xi)|\lesssim\!(\xii e^{-s})^{1-\mu}\!\Big(\!|\widehat\varphi(s,\xi)|+\frac{|\widehat\psi(s,\xi)|}{\xii e^{-s}}\!\Big)
  \,(\xii e^{-s})^{\frac{\mu-1}{2}}\\ \qquad \lesssim\!|\widehat\varphi(s,\xi)|(\xii e^{-s})^{\frac{1-\mu}{2}}+|\widehat\psi(s,\xi)|(\xii e^{-s})^{-\frac{1+\mu}{2}}.
  \end{cases}
 \end{equation}
  So,  by using \eqref{coefdamp} and again property (P3) we conclude
  \begin{eqnarray*}
  &\xii^\gamma |\widehat{u}(t,\xi)| \lesssim  \xii^\gamma \left(|\widehat\varphi(s,\xi)|(\xii e^{-s})^{\frac{1-\mu}{2}}+|\widehat\psi(s,\xi)|(\xii e^{-s})^{-\frac{1+\mu}{2}}\right)
  (\xii e^{-t})^{\frac{\mu-1}{2}}\\
  &\lesssim
  \xii^\gamma \left(|\widehat\varphi(s,\xi)|e^{\frac{(\mu-1)s}{2}}+ \frac{|\widehat\psi(s,\xi)|}{\xii}e^{\frac{(1+\mu)s}{2}}\right)
   e^{\frac{(1-\mu)t}{2}}\\
  &\lesssim
   \xii^\gamma \left(|\widehat\varphi(s,\xi)|e^{\frac{(\mu-1)(s-t)}{2}}+ |\widehat\psi(s,\xi)|e^{\frac{(1+\mu)(s-t)}{2}}\right)\,\,\,\mbox{for}\,\,\, \gamma \geq 0.
  \end{eqnarray*}
  In order to avoid any exponential increasing term in $s$, we use regularity in the last inequality, i.e., we use the estimate $N\xii^{-1}\leq e^{-t}$. If $\psi\in H^{\gamma-1}$, but does not belong to $H^\gamma$, one may only derive
 \[\xii^\gamma|\widehat{u}(t,\xi)|\lesssim \xii^\gamma |\widehat\varphi(s,\xi)|e^{\frac{(\mu-1)(s-t)}{2}}+ \xii^{\gamma-1}|\widehat\psi(s,\xi)|e^{\frac{(1+\mu)s}{2}}e^{\frac{(1-\mu)t}{2}}.\]
  Similarly, we conclude for $\gamma \geq 1$ the estimate
 \[\xii^{\gamma-1}|\widehat{u}_t(t,\xi)|\lesssim e^{-t}e^{\frac{\mu-1}{2}(s-t)}\xii^\gamma|\widehat\varphi(s,\xi)|+e^{\frac{1+\mu}{2}(s-t)}\xii^{\gamma-1}|\widehat\psi(s,\xi)|. \]
\item In $Z_2$ we still use \eqref{coefdamp} to conclude
  \begin{eqnarray*}
  & \xii^\gamma|\widehat{u}(t,\xi)|  \lesssim  \xii^\sigma \left(|\widehat\varphi(s,\xi)|(\xii e^{-s})^{\frac{1-\mu}{2}}+|\widehat\psi(s,\xi)|(\xii e^{-s})^{-\frac{1+\mu}{2}}\right)
  \left(1+(\xii e^{-t})^{\mu}\right)\\
 & \lesssim  \xii^\gamma \left(|\widehat\varphi(s,\xi)|(\xii e^{-t} e^t e^{-s})^{\frac{1-\mu}{2}}+|\widehat\psi(s,\xi)|(\xii e^{-t} e^t e^{-s})^{-\frac{1+\mu}{2}}\right)
  \left(1+(\xii e^{-t})^{\mu}\right)\\&\lesssim \xii^\gamma \left(|\widehat\varphi(s,\xi)|e^{\frac{(\mu-1)(s-t)}{2}} + |\widehat\psi(s,\xi)|e^{\frac{(\mu+1)(s-t)}{2}}\right) \,\,\,\mbox{for}\,\,\, \gamma \geq 0.
  \end{eqnarray*}
  Again, if $\psi\in H^{\gamma-1}$, but does not belong to $H^\gamma$, one may only derive the estimate
 \begin{eqnarray*}
  &\xii|^\gamma \widehat{u}(t,\xi)|\lesssim \xii^\gamma |\widehat\varphi(s,\xi)|e^{\frac{(\mu-1)(s-t)}{2}}+ \xii^{\gamma-1} |\widehat\psi(s,\xi)|e^{\frac{(1+\mu)s}{2}}e^{\frac{(1-\mu)t}{2}}\\
  &\lesssim \xii^\gamma |\widehat\varphi(s,\xi)|e^{\frac{(\mu-1)(s-t)}{2}}+ \xii^{\gamma-1} |\widehat\psi(s,\xi)|e^{\frac{(\mu-1)(s-t)}{2}}e^{s}.
  \end{eqnarray*}
  Similarly, we conclude
\[\xii^{\gamma-1} |\widehat{u}_t(t,\xi)|\lesssim e^{-t}e^{\frac{\mu-1}{2}(s-t)}\xii^\gamma |\widehat\varphi(s,\xi)|+e^{\frac{1+\mu}{2}(s-t)}\xii^{\gamma-1} |\widehat\psi(s,\xi)|. \]
\end{enumerate}
Now, let us devote to the case $\mu \in \N$. For $\mu=1$ the proof  follows immediately by using the explicit representation for the solution
 to the Cauchy problem \eqref{dampedwavelinear}, that is, the relation
 \[ \widehat{u}(t,\xi)=\cos\big(|\xi|(e^{-s}-e^{-t})\big)\widehat\varphi(s,\xi) + \sin \big(|\xi|(e^{-s}-e^{-t})\big)\frac{\widehat\psi(s,\xi)}{|\xi|e^{-s}}. \]
 If $\mu \geq 2$ and $\mu \in \mathbb{N}$, then the function $\Phi(b,c,x)$ given by \eqref{Kummer} with $c=1-\mu$ and $b=\frac{c}{2}$ is no longer well-defined.
 In these cases, $w_2(z)=z^{1-c}\Phi\big(b-c+1, 2-c, z\big)$ is still one solution and by using Frobenius' method or Laplace transform one may  find a second linear independent solution $\Psi (b, c, z)$  to Kummer's equation
 \begin{equation*}
zw_{zz}+ \big(c-z\big)w_z-bw=0
\end{equation*}
satisfying the following properties(see \cite{BE}, pages 256, 260, 262 and 278):
\begin{itemize}
\item $W(w_2(z), \Psi (b, c, z) )=\frac{\Gamma(2-c)}{\Gamma(b-c+1)} z^{-c}e^z$;
\item  $\frac{d}{dz}\Psi(b,c,z)=\Psi(b, c,z)+ \Psi(b,c+1,z)$;
\item for $\Re c<0$ the behavior for small $|z|$ is given by
\[ \Psi(b,c,z)= \frac{\Gamma(1-c)}{\Gamma(b-c+1)} + O(|z|),\]
where $\Gamma$ denotes  the Gamma function;
\item for large $|z|$ the behavior is given by
\[ |\Psi(b,c,z)|\leq C_{b,c} |z|^{ - b}.\]
\end{itemize}
Thanks to $b-c=-b=\frac{\mu-1}{2}$ we may conclude that   $\Psi(b,c,z)$ and $\frac{d}{dz}\Psi(b,c,z)$ satisfy  the same estimates  as  $\Phi(b,c,z)$ and $\frac{d}{dz}\Phi(b,c,z)$ in the case $c \notin \Z$. \\

Summing up, we have proved the following result:
\begin{proposition}\label{proposition1}
Assume that $\varphi(0,\cdot) \in \dot{H}^\gamma(\R^n)$ with $\gamma \geq 0$ and $\psi(0,\cdot) \equiv 0$. Then the following estimates hold for $t \in [0,\infty)$:\\
If $\mu \in (0,1)$, then
 \begin{equation} \label{optidamp}
  \|K_0(t,0,x) \ast_{(x)} \varphi(0,x)\|_{\dot{H^\gamma}} \lesssim
  e^{\frac{(1-\mu)t}{2}}\|\varphi(0,x)\|_{\dot{H^\gamma}},
  \end{equation}
 and
    \begin{equation} \label{optidamptimederivative}
  \|\partial_t K_0(t,s, x) \ast_{(x)} \varphi(s,x)\|_{\dot{H}^{\gamma-1}} \lesssim e^{-\mu t}\|\varphi(0,x)\|_{\dot{H}^{\gamma}}\,\,\,\mbox{for}\,\,\,\gamma \geq 1.
   \end{equation}
If $\mu\in [1,n)$, then
 \begin{equation} \label{optidamp1}
  \| K_0(t,0, x) \ast_{(x)} \varphi(0,x)\|_{\dot{H^\gamma}} \lesssim
 \|\varphi(0,x)\|_{\dot{H^\gamma}},
  \end{equation}
  and
\begin{equation} \label{optidamp1timederivative}
    \|\partial_t K_0(t,s, x) \ast_{(x)} \varphi(s,x)\|_{\dot{H}^{\gamma-1}} \lesssim e^{-t}\|\varphi(0,x)\|_{\dot{H}^{\gamma}}\,\,\,\mbox{for}\,\,\,\gamma \geq 1.
   \end{equation}
   \end{proposition}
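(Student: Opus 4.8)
\medskip
\noindent\emph{Proof proposal.} The analytic content of the proposition is already contained in the zone-wise pointwise bounds for $\xii^{\gamma}|\widehat u(t,\xi)|$ and $\xii^{\gamma-1}|\widehat u_t(t,\xi)|$ derived in items (1)--(3) above, valid for all $\mu\in(0,n)$ once one recalls that for $\mu\in\N$ the second solution $\Psi$ obeys the same bounds as $\Phi$. So the plan is, first, to specialize all of these estimates to $\psi(0,\cdot)\equiv 0$ and $s=0$; then to patch the three zones $Z_1,Z_2,Z_3$ into a single pointwise Fourier-multiplier bound of the form $\xii^{\gamma}|\widehat u(t,\xi)|\le C(t)\,\xii^{\gamma}|\widehat\varphi(0,\xi)|$ with $C(t)$ independent of $\xi$; and finally to invoke Plancherel's theorem --- squaring such a uniform bound in $\xi$ and integrating over $\R^n$ turns it immediately into the corresponding homogeneous Sobolev estimate, finiteness of the right-hand side being guaranteed by $\varphi(0,\cdot)\in\dot H^{\gamma}$.

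For the solution estimate, set $\psi=0$ and $s=0$. In $Z_1$, item (1) gives $\xii^{\gamma}|\widehat u(t,\xi)|\lesssim\xii^{\gamma}|\widehat\varphi(0,\xi)|$ with no loss of decay. In $Z_2$ and $Z_3$, items (2)--(3) give $\xii^{\gamma}|\widehat u(t,\xi)|\lesssim e^{\frac{(1-\mu)t}{2}}\xii^{\gamma}|\widehat\varphi(0,\xi)|$, where the regularity of the data is what allows one to absorb the spurious powers of $\xii$ against the smoothing factor $e^{-t}$. If $\mu\in(0,1)$, then $e^{\frac{(1-\mu)t}{2}}\ge1$, so this factor dominates the $Z_1$ contribution and the global bound is $\xii^{\gamma}|\widehat u(t,\xi)|\lesssim e^{\frac{(1-\mu)t}{2}}\xii^{\gamma}|\widehat\varphi(0,\xi)|$; Plancherel then yields \eqref{optidamp}. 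If $\mu\in[1,n)$, then $e^{\frac{(1-\mu)t}{2}}\le1$, hence $\xii^{\gamma}|\widehat u(t,\xi)|\lesssim\xii^{\gamma}|\widehat\varphi(0,\xi)|$ in all three zones and Plancherel yields \eqref{optidamp1}.

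For the time-derivative estimate (with $\gamma\ge1$), again put $\psi=0$, $s=0$. When $\mu\in(0,1)$: in $Z_1$ item (1) gives $\xii^{\gamma-1}|\widehat u_t(t,\xi)|\lesssim e^{-\mu t}\xii^{\gamma}|\widehat\varphi(0,\xi)|$, while in $Z_2$ and $Z_3$ items (2)--(3) give $\xii^{\gamma-1}|\widehat u_t(t,\xi)|\lesssim e^{-t}e^{\frac{(1-\mu)t}{2}}\xii^{\gamma}|\widehat\varphi(0,\xi)|=e^{-\frac{(1+\mu)t}{2}}\xii^{\gamma}|\widehat\varphi(0,\xi)|$; since $\tfrac{1+\mu}{2}>\mu$ for $\mu<1$, the slowest rate is $e^{-\mu t}$, so Plancherel yields \eqref{optidamptimederivative}. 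When $\mu\in[1,n)$: the $Z_1$ bound is $e^{-t}\xii^{\gamma}|\widehat\varphi(0,\xi)|$ and the $Z_2,Z_3$ bound $e^{-\frac{(1+\mu)t}{2}}\xii^{\gamma}|\widehat\varphi(0,\xi)|$ is $\le e^{-t}\xii^{\gamma}|\widehat\varphi(0,\xi)|$ because $\tfrac{1+\mu}{2}\ge1$; hence $\xii^{\gamma-1}|\widehat u_t(t,\xi)|\lesssim e^{-t}\xii^{\gamma}|\widehat\varphi(0,\xi)|$ globally and Plancherel yields \eqref{optidamp1timederivative}.

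I expect no essential obstacle in this last step: the genuinely delicate parts --- controlling $\Phi$ through (P1)--(P3), estimating the connection coefficients $c_j$ via the Wronskian formula, absorbing the apparent losses of $\xii$-weight in $Z_2$ and $Z_3$ against $e^{-t}$, and handling the degenerate parameters $\mu\in\N$ --- have all been dealt with in the analysis preceding the proposition. The one point that still needs a moment's care is the comparison of the competing exponential weights $e^{-\mu t}$, $e^{-t}$ and $e^{-(1+\mu)t/2}$ over the three zones, as it is exactly this comparison that produces the two distinct rates $e^{-\mu t}$ (for $\mu<1$) and $e^{-t}$ (for $\mu\ge1$) in the statement.
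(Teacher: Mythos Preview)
Your proposal is correct and is essentially the paper's own argument: the paper derives the zone-wise pointwise bounds in items (1)--(3) and then simply writes ``Summing up, we have proved the following result'' before stating the proposition, leaving implicit exactly the two routine steps you spell out --- the comparison of the exponential weights $e^{\frac{(1-\mu)t}{2}}$, $e^{-\mu t}$, $e^{-t}$, $e^{-\frac{(1+\mu)t}{2}}$ across zones, and the passage from a uniform-in-$\xi$ multiplier bound to the $\dot H^\gamma$ estimate via Plancherel. Your explicit bookkeeping of which rate dominates in each regime ($\tfrac{1+\mu}{2}>\mu$ for $\mu<1$, $\tfrac{1+\mu}{2}\ge1$ for $\mu\ge1$) is precisely what is needed and matches the paper's conclusions.
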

 \begin{proposition}\label{proposition11}
 Assume that $\psi(s,\cdot)\in \dot{H}^\gamma(\R^n)$ with $\gamma \geq 0$ for $s \in [0,\infty)$ and $\varphi(s,\cdot) \equiv 0$. Then the following estimates hold:\\
 We have for all  $\mu \in (0,n)$ and $t \in [s,\infty)$ the estimates
  \begin{equation} \label{optidamp2}
  \|  K_1(t,s, x) \ast_{(x)} \psi(s,x)\|_{\dot{H}^\gamma} \lesssim
 \|\psi(s,x)\|_{\dot{H}^\gamma}.
  \end{equation}
 If $\mu \in (0,1)$, then
  \begin{equation} \label{optidampwithloss}
  \| \nabla K_1(t,s, x) \ast_{(x)} \psi(s,x)\|_{\dot{H}^\gamma} \lesssim
 e^{\frac{(1+\mu)s}{2}}e^{\frac{(1-\mu)t}{2}}\|\psi(s,x)\|_{\dot{H}^\gamma},
  \end{equation}
  and
    \begin{equation} \label{optidamp3timederivative}
  \|\partial_t K_1(t,s, x) \ast_{(x)} \psi(s,x)\|_{\dot{H}^\gamma}\lesssim e^{\mu(s- t)}\|\psi(s,x)\|_{\dot{H}^\gamma}.
   \end{equation}
  For all  $ \mu\in [1,n)$ and $t \in [s,\infty)$ we have
  \begin{equation} \label{optidamp3}
  \| \nabla K_1(t,s, x) \ast_{(x)} \psi(s,x)\|_{\dot{H}^\gamma} \lesssim
 e^{s}\|\psi(s,x)\|_{\dot{H}^\gamma},
  \end{equation}
   and
    \begin{equation} \label{optidamp4timederivative}
  \|\partial_t K_1(t,s, x) \ast_{(x)} \psi(s,x)\|_{\dot{H}^\gamma}\lesssim e^{(s- t)}\|\psi(s,x)\|_{\dot{H}^\gamma}.
   \end{equation}
\end{proposition}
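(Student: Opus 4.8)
The plan is to read the six inequalities off directly from the zone-wise pointwise Fourier estimates established in Section~\ref{Sec3.1}, specialised to $\widehat\varphi(s,\cdot)\equiv 0$, and then to pass to the homogeneous Sobolev norms by Plancherel's theorem. Since the WKB representation of $\widehat u$, the required bounds on $\Phi$ and $\Psi$, and the explicit formula in the case $\mu=1$ have already been shown to be uniform in $\mu$, the case $\mu\in\N$ needs no separate treatment. Concretely, for each target estimate I isolate, in each of the zones $Z_1(s)$, $Z_2(t,s)$, $Z_3(t)$, the relevant power of $\xii$ — namely $\xii^\gamma|\widehat u|$ for \eqref{optidamp2}, $\xii^{\gamma+1}|\widehat u|$ for \eqref{optidampwithloss} and \eqref{optidamp3}, and $\xii^\gamma|\widehat u_t|$ for \eqref{optidamp3timederivative} and \eqref{optidamp4timederivative} (here using the $\widehat u_t$-estimates of Section~\ref{Sec3.1} with the index shifted by one, so that the restriction $\gamma\geq 1$ there becomes $\gamma\geq 0$ here) — and then take the dominant exponential factor over the three zones.

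For \eqref{optidamp2}: with $\widehat\varphi\equiv 0$ the $Z_1$-estimate gives $\xii^\gamma|\widehat u(t,\xi)|\lesssim \xii^\gamma|\widehat\psi(s,\xi)|$, while in $Z_2$ and $Z_3$ it reads $\xii^\gamma|\widehat u(t,\xi)|\lesssim \xii^\gamma|\widehat\psi(s,\xi)|\,e^{\frac{(1+\mu)(s-t)}{2}}$, with exponential factor $\leq 1$ because $1+\mu>0$ and $t\geq s$; Plancherel's theorem then yields \eqref{optidamp2} for all $\mu\in(0,n)$ and $t\geq s$. For the terms $\nabla K_1\ast_{(x)}\psi$ I need $\xii^{\gamma+1}|\widehat u|$: in $Z_1$, where $\xii\leq N e^{s}$, the previous bound gives $\xii^{\gamma+1}|\widehat u|\lesssim e^{s}\,\xii^\gamma|\widehat\psi|$; in $Z_2\cup Z_3$ I must use the version of the estimate that still carries the factor $\xii^{-1}$ — the one obtained \emph{before} invoking the regularity gain $N\xii^{-1}\leq e^{-t}$, which is all that is available when only $\psi(s,\cdot)\in\dot H^\gamma$ — giving $\xii^{\gamma+1}|\widehat u|\lesssim \xii^\gamma|\widehat\psi|\,e^{\frac{(1+\mu)s}{2}}e^{\frac{(1-\mu)t}{2}}$. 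If $\mu\in(0,1)$, then since $t\geq s$ and $1-\mu>0$ one has $e^{s}=e^{\frac{(1+\mu)s}{2}}e^{\frac{(1-\mu)s}{2}}\leq e^{\frac{(1+\mu)s}{2}}e^{\frac{(1-\mu)t}{2}}$, so this factor dominates all three zones and Plancherel's theorem gives \eqref{optidampwithloss}; if $\mu\in[1,n)$, then $1-\mu\leq 0$ and $t\geq s$ give the reverse inequality $e^{\frac{(1+\mu)s}{2}}e^{\frac{(1-\mu)t}{2}}\leq e^{\frac{(1+\mu)s}{2}}e^{\frac{(1-\mu)s}{2}}=e^{s}$, so $e^{s}$ dominates and Plancherel's theorem gives \eqref{optidamp3}.

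For the terms $\partial_t K_1\ast_{(x)}\psi$: the shifted $\widehat u_t$-estimates of Section~\ref{Sec3.1} with $\widehat\varphi\equiv 0$ give in $Z_1$ the factor $e^{\mu(s-t)}$ when $\mu<1$ and the factor $e^{s-t}$ when $\mu\geq 1$, and in $Z_2\cup Z_3$ the factor $e^{\frac{1+\mu}{2}(s-t)}$. Since $s-t\leq 0$, the elementary inequalities $\frac{1+\mu}{2}\geq\mu$ for $\mu\leq 1$ and $\frac{1+\mu}{2}\geq 1$ for $\mu\geq 1$ show that the $Z_2\cup Z_3$ factor is dominated by $e^{\mu(s-t)}$, respectively by $e^{s-t}$; hence $\xii^\gamma|\widehat u_t(t,\xi)|$ is globally controlled by $e^{\mu(s-t)}\xii^\gamma|\widehat\psi(s,\xi)|$ for $\mu\in(0,1)$ and by $e^{s-t}\xii^\gamma|\widehat\psi(s,\xi)|$ for $\mu\in[1,n)$, and Plancherel's theorem yields \eqref{optidamp3timederivative} and \eqref{optidamp4timederivative} respectively.

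The argument is essentially bookkeeping once the material of Section~\ref{Sec3.1} is granted; the step that I expect to require genuine care is the $\nabla K_1$ estimate on $Z_2\cup Z_3$. There one cannot simultaneously spend the extra spatial derivative on $\psi$ and exploit the frequency gain $\xii^{-1}\lesssim e^{-t}$, so the loss $e^{\frac{(1+\mu)s}{2}}e^{\frac{(1-\mu)t}{2}}$ (equivalently $e^{s}$ when $\mu\geq 1$) is genuinely unavoidable, and one must verify — via the sign of $1-\mu$ together with the monotonicity in $t\geq s$ — that it is consistent with, and no worse than, the growth $e^{s}$ coming from the low-frequency zone $Z_1$.
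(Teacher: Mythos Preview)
Your proposal is correct and follows essentially the same approach as the paper: the proposition is read off from the zone-wise pointwise Fourier estimates derived in Section~\ref{Sec3.1} (specialised to $\widehat\varphi\equiv 0$), with Plancherel's theorem converting them to $\dot H^\gamma$-bounds. Your explicit comparison of the exponential factors across $Z_1$ versus $Z_2\cup Z_3$ via the sign of $1-\mu$ makes transparent a step the paper leaves implicit in its ``summing up'' before the statement.
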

 \begin{remark} If we are interested to estimate the norm  \[\| \nabla K_1(t,s, x) \ast_{(x)} \psi(s,x)\|_{\dot{H}^{\gamma-1}},\] then the estimates (\ref{optidamp2}) and (\ref{optidamp3}) show that we have a benefit by assuming $\psi(s,\cdot) \in \dot{H}^{\gamma}$ instead of $\psi(s,\cdot) \in \dot{H}^{\gamma-1}$ only. On the contrary,
 one can not expect any benefit in the estimates for the norm
   \[\|\partial_t K_1(t,s, x) \ast_{(x)} \psi(s,x)\|_{\dot{H}^{\gamma-1}}\] by using  additional $\dot{H}^{\gamma}$ regularity.
    \end{remark}
  \begin{corollary}\label{lineardecayestimates}
  Consider the Cauchy problem \eqref{general} with a vanishing right-hand side. Assume that $f \in H^\gamma(\R^n)$ and $g \in H^{\gamma-1}(\R^n)$ with $\gamma \geq 1$. Then the solution $\phi$ satisfies the following a-priori estimates with the parameter $\mu=\sqrt{n^2-4m^2}\in (0,n)$: \\
If $\mu \in (0,1)$, then
    \begin{equation} \label{optidamp3energy}
 \|\phi(t,\cdot)\|_{H^\gamma} \lesssim
  e^{-\frac{(n-1)t}{2}}\big(\|f\|_{H^\gamma}+ \|g\|_{H^{\gamma-1}} \big),
  \end{equation}
  and
  \begin{equation} \label{optidamp4energy}
  \| \phi_t(t,\cdot)\|_{H^{\gamma-1}} \lesssim
  e^{-\frac{(n-1)t}{2}}\big(\|f\|_{H^\gamma}+ \|g\|_{H^{\gamma-1}} \big),
  \end{equation}
whereas for $\mu \in [1,n)$ we conclude
      \begin{equation} \label{optidamp4}
  \| \phi(t,\cdot)\|_{H^\gamma} \lesssim
  e^{\frac{(-n+\mu)t}{2}}\big(\|f\|_{H^\gamma}+ \|g\|_{H^{\gamma-1}} \big),
 \end{equation}
 and
 \begin{equation} \label{optidamp5}
  \|\phi_t(t,\cdot)\|_{H^{\gamma-1}} \lesssim
  e^{\frac{(-n+\mu)t}{2}}\big(\|f\|_{H^\gamma}+ \|g\|_{H^{\gamma-1}} \big).
 \end{equation}
 If we additionally assume $g \in H^\gamma(\R^n)$, then  the estimate \eqref{optidamp3energy} improves for $\mu\in (0,1)$ to
   \begin{equation} \label{optidamp31}
  \| \phi(t,\cdot)\|_{H^{\gamma}} \lesssim
  e^{-\frac{(n-1)t}{2}}\big(\|f\|_{H^\gamma}+ e^{-\frac{(1-\mu) t}{2}}\|g\|_{H^\gamma} \big).
  \end{equation}
\end{corollary}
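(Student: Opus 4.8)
The plan is to pass back to the transformed Cauchy problem \eqref{semilieardampedwave}. Writing $\phi(t,x)=e^{rt}u(t,x)$ with $r=\frac{-n+\mu}{2}$ and $\mu=\sqrt{n^2-4m^2}\in(0,n)$, the function $u$ solves the homogeneous damped wave equation, so by \eqref{dampedwave} we have $u(t,\cdot)=K_0(t,0,\cdot)\ast_{(x)}u_0+K_1(t,0,\cdot)\ast_{(x)}u_1$ with $u_0=f$ and $u_1=g-rf=g+\frac{n-\mu}{2}f$, whence $\phi=e^{rt}u$ and $\phi_t=re^{rt}u+e^{rt}u_t$. Thus it suffices to estimate $u$ and $u_t$ in homogeneous Sobolev norms and then multiply by $e^{rt}$. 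Since for $\gamma\ge1$ one has $\|\cdot\|_{H^\gamma}\approx\|\cdot\|_{L^2}+\|\cdot\|_{\dot{H}^\gamma}$ and $\|\cdot\|_{H^{\gamma-1}}\approx\|\cdot\|_{L^2}+\|\cdot\|_{\dot{H}^{\gamma-1}}$, I would run the argument at the homogeneous levels $0$ and $\gamma$ (respectively $0$ and $\gamma-1$) and add.

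For $u$, the $K_0$-contribution is handled directly by \eqref{optidamp} (if $\mu\in(0,1)$) or \eqref{optidamp1} (if $\mu\in[1,n)$) applied to $u_0=f$ at regularity levels $0$ and $\gamma$. For the $K_1$-contribution the subtle point is that $u_1$ inherits only $\dot{H}^{\gamma-1}$-regularity from $g$, so I must not use \eqref{optidamp2} at level $\gamma$ but rather the $\dot{H}^{\gamma-1}\to\dot{H}^\gamma$ mapping property of $K_1$ obtained from the pointwise zone estimates preceding Proposition \ref{proposition1}: in the low-frequency zone $Z_1$ one trades $\xii^{\gamma}\lesssim\xii^{\gamma-1}$ because $\xii\le N$ there, and in $Z_2\cup Z_3$ one uses the variants stated ``if $\psi\in H^{\gamma-1}$ but not in $H^\gamma$''; evaluated at $s=0$ the would-be growing prefactors $e^{s}$, $e^{(1\pm\mu)s/2}$ disappear and one gets $\|K_1(t,0,\cdot)\ast_{(x)}u_1\|_{\dot{H}^\gamma}\lesssim e^{\frac{(1-\mu)t}{2}}\|u_1\|_{\dot{H}^{\gamma-1}}$ if $\mu\in(0,1)$ and $\|K_1(t,0,\cdot)\ast_{(x)}u_1\|_{\dot{H}^\gamma}\lesssim\|u_1\|_{\dot{H}^{\gamma-1}}$ if $\mu\in[1,n)$; at level $0$ one uses \eqref{optidamp2} directly. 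The same bookkeeping for $u_t=\partial_tK_0\ast_{(x)}u_0+\partial_tK_1\ast_{(x)}u_1$ uses \eqref{optidamptimederivative}/\eqref{optidamp1timederivative} for the first summand and \eqref{optidamp3timederivative}/\eqref{optidamp4timederivative} for the second. Finally the data are returned to $f,g$ via $\|u_1\|_{\dot{H}^{\gamma-1}}\lesssim\|g\|_{H^{\gamma-1}}+\|f\|_{H^\gamma}$, which uses that $\|f\|_{\dot{H}^{\gamma-1}}\lesssim\|f\|_{H^\gamma}$ for $\gamma\ge1$ (split $\xii\le1$ and $\xii>1$).

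Multiplying by $e^{rt}$ is where the statement is proved. The decisive identity is that for $\mu\in(0,1)$ the growth $e^{\frac{(1-\mu)t}{2}}$ of $K_0$ (and of $K_1$ acting on $\dot{H}^{\gamma-1}$-data) is exactly compensated: $r+\frac{1-\mu}{2}=\frac{-n+\mu}{2}+\frac{1-\mu}{2}=-\frac{n-1}{2}$, while the non-growing terms pick up only $e^{rt}=e^{\frac{(-n+\mu)t}{2}}\le e^{-\frac{(n-1)t}{2}}$ and the $\partial_t$-terms pick up $e^{rt}e^{-\mu t}=e^{-\frac{(n+\mu)t}{2}}\le e^{-\frac{(n-1)t}{2}}$; this yields \eqref{optidamp3energy} and \eqref{optidamp4energy}. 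For $\mu\in[1,n)$ nothing grows, $e^{rt}=e^{\frac{(-n+\mu)t}{2}}$ is the common rate, and one obtains \eqref{optidamp4} and \eqref{optidamp5} at once. For the improved estimate \eqref{optidamp31}: when additionally $g\in H^\gamma$ we have $u_1=g-rf\in H^\gamma$, so I split $K_1\ast_{(x)}u_1=K_1\ast_{(x)}g-rK_1\ast_{(x)}f$ and apply \eqref{optidamp2} to the first piece at level $\gamma$ (no $t$-growth at all), getting $\|K_1(t,0,\cdot)\ast_{(x)}u_1\|_{\dot{H}^\gamma}\lesssim\|g\|_{\dot{H}^\gamma}+\|f\|_{\dot{H}^\gamma}$; combining with \eqref{optidamp} for $K_0\ast_{(x)}f$ and multiplying by $e^{rt}$ produces the term $e^{rt}\|g\|_{\dot{H}^\gamma}=e^{-\frac{(n-1)t}{2}}e^{-\frac{(1-\mu)t}{2}}\|g\|_{\dot{H}^\gamma}$, i.e. the extra decay factor $e^{-\frac{(1-\mu)t}{2}}$ on the $g$-contribution, while the $f$-contribution keeps the rate $e^{-\frac{(n-1)t}{2}}$.

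I expect the main obstacle to be the correct handling of the low-frequency regime together with the loss/non-loss of one derivative for $K_1$: one must consistently invoke the $\dot{H}^{\gamma-1}\to\dot{H}^\gamma$ estimates for $K_1$ (not \eqref{optidamp2} at level $\gamma$, which would demand $g\in\dot{H}^\gamma$) and check that every $s$-dependent prefactor appearing in the zone analysis is harmless at $s=0$. Once that is set up, the exponent arithmetic leading to $r+\frac{1-\mu}{2}=-\frac{n-1}{2}$ and to the decay rates is routine.
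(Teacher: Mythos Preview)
Your proof is correct and follows exactly the approach the paper intends: the corollary is stated as a direct consequence of Propositions~\ref{proposition1} and~\ref{proposition11} (together with the pointwise zone estimates preceding them) via the transformation $\phi=e^{rt}u$, and you have carried out precisely that reduction, including the key exponent identity $r+\tfrac{1-\mu}{2}=-\tfrac{n-1}{2}$ and the correct handling of the $\dot H^{\gamma-1}\to\dot H^{\gamma}$ mapping for $K_1$ (which, at $s=0$, is equivalently obtained from \eqref{optidampwithloss} and \eqref{optidamp3} after the shift $\gamma\mapsto\gamma-1$). Your treatment of the improved estimate \eqref{optidamp31} by splitting $u_1=g-rf$ and applying \eqref{optidamp2} to the $g$-piece is also the intended argument.
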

\begin{remark} If we take $g\equiv 0$ in Corollary \ref{lineardecayestimates}, then we have a better decay estimate  for $\mu\in (0,1)$ than for $\mu\in (1,n)$.
  The reason is that $\mu(m)$ is a decreasing function in $m$ for $0<m<\frac{n}{2}$.
  \end{remark}
  \subsection{Global existence of small data solutions: Case $m\in (0, \frac{\sqrt{n^2-1}}{2}]$} \label{Sec3.2}

Firstly we are interested in the global existence (in time) of energy solutions.
\begin{theorem}\label{main2} Consider for $n \geq 2$ the Cauchy problem \eqref{general} with data $f \in H^1(\R^n)$ and $g \in L^2(\R^n)$. Let $p>p_{n,\mu}:=1+  \frac{2}{n-\mu}$ and $p\leq \frac{n}{n-2}$ for $n\geq 3$. Assume that the parameter $\mu=\sqrt{n^2-4m^2}$ satisfies $\mu \in [1,2)$, i.e.,  $m\in \big(\frac{\sqrt{n^2-4}}{2}, \frac{\sqrt{n^2-1}}{2}\big]$.
  Then, there exists a constant $\varepsilon_0>0$ such that, for every small data satisfying
  \[\|f\|_{H^1}+ \|g\|_{L^2}\leq \varepsilon\,\,\,\mbox{for}\,\,\, \varepsilon\leq \varepsilon_0,\]
  there exists a uniquely determined global (in time) energy solution \[ \phi \in C\big([0,\infty),H^1(\R^n)\big) \cap C^1\big([0,\infty),L^2(\R^n)\big).\]  Moreover, the solution $\phi$ satisfies the decay estimates \eqref{optidamp4} and \eqref{optidamp5}
  for $\gamma=1$.
  \end{theorem}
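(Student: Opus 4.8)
The plan is to reduce the problem, via the substitution $\phi(t,x)=e^{rt}u(t,x)$ with $r:=\frac{-n+\mu}{2}<0$ (recall $\mu<2\le n$), to the transformed Cauchy problem \eqref{semilieardampedwavedissipationtreatment} with data $u_0=f\in H^1(\R^n)$ and $u_1=g-rf\in L^2(\R^n)$, and then to solve the latter by Banach's fixed point theorem directly on $[0,\infty)$. I would work in the complete metric space
\[
X:=C\big([0,\infty),H^1(\R^n)\big)\cap C^1\big([0,\infty),L^2(\R^n)\big),\qquad \|u\|_X:=\sup_{t\ge 0}\Big(\|u(t,\cdot)\|_{L^2}+\|\nabla u(t,\cdot)\|_{L^2}+\|\partial_t u(t,\cdot)\|_{L^2}\Big),
\]
and define $Nu:=u^{\mathrm{lin}}+u^{\mathrm{nl}}$, where $u^{\mathrm{lin}}(t,x)=K_0(t,0,x)\ast_{(x)}u_0(x)+K_1(t,0,x)\ast_{(x)}u_1(x)$ is the solution of the linear problem \eqref{dampedwave} with the above data, and $u^{\mathrm{nl}}(t,x)=\int_0^t e^{(p-1)rs}K_1(t,s,x)\ast_{(x)}|u(s,x)|^p\,ds$ is the Duhamel term dictated by \eqref{semilieardampedwavedissipationtreatment}. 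A fixed point $u\in X$ of $N$ is the sought energy solution; transforming back, $\phi=e^{rt}u$ and $\phi_t=e^{rt}(ru+\partial_t u)$ together with the uniform bound $\|u\|_X\lesssim\varepsilon$ give at once the decay estimates \eqref{optidamp4}, \eqref{optidamp5} for $\gamma=1$, since $r=\frac{-n+\mu}{2}$.

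The heart of the argument is to establish, on the closed ball $B_R:=\{\|u\|_X\le R\}$, the two inequalities $\|Nu\|_X\le C_0(\|f\|_{H^1}+\|g\|_{L^2})+C_1\|u\|_X^p$ and $\|Nu-N\tilde u\|_X\le C_2\|u-\tilde u\|_X(\|u\|_X^{p-1}+\|\tilde u\|_X^{p-1})$. For $u^{\mathrm{lin}}$ one applies Propositions \ref{proposition1} and \ref{proposition11} with $s=0$ (admissible since $\mu\in[1,2)\subset[1,n)$): \eqref{optidamp1} and \eqref{optidamp2} with $\gamma=0$ handle the $L^2$-norm, \eqref{optidamp1timederivative} and \eqref{optidamp4timederivative} handle $\partial_t u^{\mathrm{lin}}$, and for $\|\nabla u^{\mathrm{lin}}(t,\cdot)\|_{L^2}$ one uses \eqref{optidamp1} with $\gamma=1$ on the $K_0$-part and \eqref{optidamp3} with $s=0$ on the $K_1$-part (no loss at $s=0$, and only $u_1\in L^2$ is needed); altogether $\|u^{\mathrm{lin}}\|_X\lesssim\|u_0\|_{H^1}+\|u_1\|_{L^2}\lesssim\|f\|_{H^1}+\|g\|_{L^2}$. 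For $u^{\mathrm{nl}}$, besides the linear estimates the only tool is the Gagliardo--Nirenberg inequality $\|v\|_{L^{2p}}\lesssim\|v\|_{L^2}^{1-\theta}\|\nabla v\|_{L^2}^{\theta}$ with $\theta=\frac{n(p-1)}{2p}$, whose exponent lies in $[0,1]$ exactly because $1<p\le\frac{n}{n-2}$ for $n\ge 3$ (and for all $p>1$ when $n=2$); it yields $\|\,|u(s,\cdot)|^p\|_{L^2}=\|u(s,\cdot)\|_{L^{2p}}^p\lesssim\|u\|_X^p$. Then \eqref{optidamp2} with $\gamma=0$ gives $\|u^{\mathrm{nl}}(t,\cdot)\|_{L^2}\lesssim\|u\|_X^p\int_0^\infty e^{(p-1)rs}\,ds\lesssim\|u\|_X^p$ (finite as $r<0$); since $K_1(t,t,x)\ast_{(x)}h\equiv 0$ one has $\partial_t u^{\mathrm{nl}}(t,x)=\int_0^t e^{(p-1)rs}\partial_tK_1(t,s,x)\ast_{(x)}|u(s,x)|^p\,ds$, so \eqref{optidamp4timederivative} gives $\|\partial_t u^{\mathrm{nl}}(t,\cdot)\|_{L^2}\lesssim e^{-t}\|u\|_X^p\int_0^t e^{((p-1)r+1)s}\,ds\lesssim\|u\|_X^p$.

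The delicate term is $\|\nabla u^{\mathrm{nl}}(t,\cdot)\|_{L^2}$. In the energy space $X$ one cannot afford to differentiate the non-linearity (that would require estimating $\|\,|u(s,\cdot)|^p\|_{\dot{H}^1}$, hence a fractional chain rule together with norms $\|\nabla u(s,\cdot)\|_{L^q}$, $q>2$, not controlled by $\|u\|_X$); instead one transfers the derivative onto the kernel through \eqref{optidamp3}, at the price of a factor $e^{s}$:
\[
\|\nabla u^{\mathrm{nl}}(t,\cdot)\|_{L^2}\lesssim\int_0^t e^{(p-1)rs}\,e^{s}\,\|u(s,\cdot)\|_{L^{2p}}^p\,ds\lesssim\|u\|_X^p\int_0^\infty e^{\big((p-1)r+1\big)s}\,ds.
\]
The last integral is finite if and only if $(p-1)r+1<0$, i.e. $(p-1)\tfrac{n-\mu}{2}>1$, i.e. $p>p_{n,\mu}=1+\frac{2}{n-\mu}$; this is exactly where the subcriticality threshold is used, and it is also why $\mu<2$ is assumed, so that the window $p_{n,\mu}<p\le\frac{n}{n-2}$ is non-empty when $n\ge 3$. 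The Lipschitz estimate is proved by the same scheme, starting from $\big|\,|u|^p-|\tilde u|^p\big|\lesssim|u-\tilde u|(|u|^{p-1}+|\tilde u|^{p-1})$, Hölder's inequality $\|\,|u-\tilde u|\,|u|^{p-1}\|_{L^2}\le\|u-\tilde u\|_{L^{2p}}\|u\|_{L^{2p}}^{p-1}$ and the same Gagliardo--Nirenberg bound, with identical exponential bookkeeping and with no time derivative appearing on the right since $N$ involves only $|u|^p$. Choosing $R:=2C_0(\|f\|_{H^1}+\|g\|_{L^2})$ and then $\varepsilon_0>0$ so small that $C_1R^{p-1}\le\tfrac12$ and $2C_2R^{p-1}\le\tfrac12$ makes $N$ a contraction of $B_R$ into itself for all data with $\|f\|_{H^1}+\|g\|_{L^2}\le\varepsilon\le\varepsilon_0$ (that $Nu\in X$ follows from strong continuity of the linear propagators and dominated convergence in the Duhamel integral); the unique fixed point $u$ provides, via $\phi=e^{rt}u$, the desired global energy solution, uniquely determined in the energy class.

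I expect the exponential bookkeeping in the Duhamel term to be the only genuinely delicate point: one is forced to use \eqref{optidamp3} with its unavoidable loss $e^s$ (rather than \eqref{optidamp2} with $\gamma=1$, which would demand a fractional chain rule outside the energy space), and it is precisely the competition between this loss and the gain $e^{(p-1)rs}$ built into the transformed equation that fixes the threshold $p>p_{n,\mu}$ and forces $\mu<2$. Everything else is routine.
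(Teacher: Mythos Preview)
Your proposal is correct and follows essentially the same route as the paper: transform via $\phi=e^{rt}u$, set up a contraction using the linear estimates of Propositions~\ref{proposition1} and~\ref{proposition11}, control $\|\,|u|^p\|_{L^2}$ by Gagliardo--Nirenberg, and let the kernel absorb the extra derivative through \eqref{optidamp3}, which produces the factor $e^s$ and hence the threshold $p>p_{n,\mu}$. The only differences are cosmetic: the paper weights the time-derivative in the norm by $e^\tau$ (giving $\sup_\tau\{e^\tau\|u_\tau\|_{L^2}+\|u\|_{H^1}\}$) and runs the argument on $[0,t]$ with a continuation step, whereas you use the unweighted norm on $[0,\infty)$ directly; since the stated decay \eqref{optidamp5} for $\phi_t$ only requires $\|u_t\|_{L^2}$ bounded, your simpler norm suffices.
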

\begin{remark}
The requirement $p_{n,\mu} < p\leq \frac{n}{n-2}$  implies $\mu<2$, i.e, $m>\frac{\sqrt{ n^2-4}}{2}$ for $n\geq 3$.
\end{remark}
\begin{proof}
It is enough to prove the global existence of small data solutions to \eqref{semilieardampedwavedissipationtreatment}.
We define the  space
\[ X(t) := \bigl\{ u\in \mathcal{C}\big([0,t], H^1(\R^n) \big) \cap \mathcal{C}^1\big([0,t], L^2(\R^n) \big)\, : \ \|u\|_{X(t)} :=\sup_{\tau\in[0,t]} \{e^\tau\| u_\tau(\tau,\cdot)\|_{L^2}+ \|u(\tau,\cdot)\|_{H^1}\}<\infty \bigr\} \]
with the usual norm in $H^1(\R^n)$. For any~$u\in X(t)$ we define
\[ Pu(t,x) :=  K_0(t,0, x) \ast_{(x)} u_0(x) + K_1(t,0, x) \ast_{(x)} u_1(x)  + Gu(t,x), \]
where
\[Gu(t,x)=\int_0^t e^{(p-1)rs}K_1(t,s,x) \ast_{(x)}|u(s,x)|^{p}\,ds.\]
Using Propositions \ref{proposition1} and \ref{proposition11} for $s=0$ we have
\begin{equation}\label{eq:Kdata}
\| K_0(t,0, x) \ast_{(x)} u_0(x) + K_1(t,0, x) \ast_{(x)} u_1(x) \|_{X(t)} \lesssim\,\|u_0\|_{H^1} + \|u_1\|_{L^2}.
\end{equation}
Applying Minkowski's integral inequality and estimate \eqref{optidamp2} gives
 \[\|Gu(t,x)\|_{L^2} \lesssim \int_0^t e^{(p-1)rs}\|K_1(t,s,x) \ast_{(x)}|u(s,x)|^{p}\|_{L^2}\,ds\lesssim \int_0^t e^{(p-1)rs}
 \| |u(s,x)|^{p}\|_{L^2}\,ds.\]
 Now Gagliardo-Nirenberg inequality  comes into play.
We may estimate
\[\| |u(s,\cdot)|^{p}\|_{L^2}=\|u(s,\cdot)\|_{L^{2p}}^p\lesssim
\| u(s,\cdot)\|_{L^2}^{p(1-\theta)} \| \nabla u(s,\cdot)\|_{L^2}^{p\theta} \lesssim \|u\|_{X(s)}^p,\]
where
\[ \theta= n\Big(\frac12-\frac1{2p}\Big), \qquad 2p \leq \begin{cases}
\infty & \text{if~$n\leq 2$,}\\
\frac{2n}{n-2} & \text{if~$n\geq 3$.}
\end{cases} \]
Hence,
\[\|Gu(t,\cdot)\|_{L^2} \lesssim \|u\|_{X(t)}^p\int_0^t e^{(p-1)rs} ds\lesssim \|u\|_{X(t)}^p,\]
thanks to $r<0$ and $p>1$. Now, after using for $p >p_{n,\mu}$ the estimates \eqref{optidamp3} and \eqref{optidamp4timederivative} we may conclude
\[\|\nabla Gu(t,\cdot)\|_{L^2} \lesssim  \|u\|_{X(t)}^p \int_0^t e^{(p-1)rs +s} ds \lesssim \|u\|_{X(t)}^p, \]
and
\[ e^t\|\partial_t Gu(t,\cdot)\|_{L^2} \lesssim  \|u\|_{X(t)}^p \int_0^t e^{(p-1)rs +s} ds \lesssim \|u\|_{X(t)}^p. \]
Therefore, it follows
\begin{equation} \label{eq:mappingdissipation}
\|Pu\|_{X(t)}
     \lesssim\,\|u_0\|_{H^1} + \|u_1\|_{L^2}+ \|u\|_{X(t)}^{p}.
     \end{equation}
To derive a Lipschitz condition we recall
\begin{equation} \label{Lipschitzdissipation}
\begin{cases}
& P u - P v= G u - G v=  \int_0^t e^{(p-1)rs} K_1(t,s,x) \ast_{(x)}\big(|u(s,x)|^{p} - |v(s,x)|^p \big) ds \\
& \quad =p \int_0^t e^{(p-1)rs} K_1(t,s,x) \ast_{(x)} \Big(\int_0^1 |v + \tau (u-v)|^{p-2} (v + \tau (u-v)) d\tau\Big)(s,x) (u-v)(s,x) ds.
\end{cases}
\end{equation}
Using H\"older's inequality and Gagliardo-Nirenberg inequality we obtain
\begin{eqnarray*}
&& \|P u - P v\|_{L^2} \lesssim
\int_0^t e^{(p-1)rs} \Big(\int_0^1 \big\||v + \tau (u-v)|^{p-1}\big\|_{L^{r_1}} d\tau\Big)(s) \|(u-v)(s,\cdot)\|_{L^{r_2}} ds \\
&& \quad \lesssim  \int_0^t e^{(p-1)rs} \Big(\int_0^1 \|v + \tau (u-v)|\|^{(p-1)(1-\theta_1)}_{L^{2}} \|\nabla(v + \tau (u-v))\|^{(p-1)\theta_1}_{L^{2}}d\tau\Big)(s) \\ && \qquad \quad \times \|(u-v)(s,\cdot)\|^{1-\theta_2}_{L^{2}} \|\nabla(u-v)(s,\cdot)\|^{\theta_2}_{L^{2}}ds \\
&& \quad \lesssim \int_0^t e^{(p-1)rs} \Big(\int_0^1 \|v + \tau (u-v)|\|^{p-1}_{X(s)} d\tau\Big)(s) \|u-v\|_{X(s)} ds \\
&& \quad \lesssim \|u-v\|_{X(t)} \bigl(\|u\|_{X(t)}^{p-1}+\|v\|_{X(t)}^{p-1}\bigr).
\end{eqnarray*}
Here we have chosen $r_1$ and $r_2$ in such a way that  $\frac{1}{r_1} + \frac{1}{r_2}=\frac{1}{2}$, $\theta_1=n(\frac{1}{2} - \frac{1}{r_1(p-1)}) \in [0,1]$ and $\theta_2=n(\frac{1}{2}-\frac{1}{r_2}) \in [0,1]$. If we choose the parameters $r_1=\frac{2p}{p-1}>2$ and $r_2=2p>2$, then we can verify all these conditions for $n \geq 2$.
In the same manner we are able to prove
\begin{eqnarray*}
\|\nabla(P u - P v)\|_{L^2} \lesssim \|u-v\|_{X(t)} \bigl(\|u\|_{X(t)}^{p-1}+\|v\|_{X(t)}^{p-1}\bigr),\,\,\,
e^t \| \partial_t(P u - P v)\|_{L^2} \lesssim \|u-v\|_{X(t)} \bigl(\|u\|_{X(t)}^{p-1}+\|v\|_{X(t)}^{p-1}\bigr)
\end{eqnarray*}
for the admissible range of $p$.\\
Summarizing all the estimates we have
     \begin{equation}
\label{eq:contractiondissipation}
\|Pu-Pv\|_{X(t)}
     \lesssim \|u-v\|_{X(t)} \bigl(\|u\|_{X(t)}^{p-1}+\|v\|_{X(t)}^{p-1}\bigr)
\end{equation}
for any~$u,v\in X(t)$.
Due to (\ref{eq:mappingdissipation}) the operator $P$ maps~$X(t)$ into itself
and the existence of a
unique global solution $u$ follows by contraction (\ref{eq:contractiondissipation}) and continuation argument for small data.
Moreover, we conclude a local (in time) existence result for large data as well.
The statements of Theorem \ref{main2}, in particular, the decay estimates follow by using the relation $\phi(t,x)=e^{rt}u(t, x)$
with $r=\frac{-n+\mu}{2}$.
\end{proof}
Now, we will not require energy solutions any more, we are interested in Sobolev solutions only. We have the following result.
 \begin{theorem}\label{main20} Consider for $n \geq 2$ the Cauchy problem \eqref{general} with data $f \in H^\gamma(\R^n), \gamma \in (\frac{1}{2},1),$ and $g \in L^2(\R^n)$. Let $p>p_{n,\mu,\gamma}:=1+  \frac{2\gamma}{n-\mu}$ and $p \leq \frac{n}{n-2\gamma}$. Assume that the parameter $\mu=\sqrt{n^2-4m^2}$ satisfies
  $\mu \in [1,2\gamma)$, i.e., $m\in \big(\frac{\sqrt{n^2-4\gamma^2}}{2},\frac{\sqrt{n^2-1}}{2}\big]$.
  Then, there exists a constant $\varepsilon_0>0$ such that, for every small data satisfying
  \[\|f\|_{H^\gamma}+ \|g\|_{L^2}\leq \varepsilon\,\,\,\mbox{for}\,\,\, \varepsilon\leq \varepsilon_0,\]
  there exists a uniquely determined global (in time) Sobolev solution \[ \phi \in C\big([0,\infty),H^\gamma(\R^n)\big).\] The solution satisfies the decay estimate  \begin{equation*}
  \| \phi(t,\cdot)\|_{H^\gamma} \lesssim
  e^{\frac{(-n+\mu)t}{2}}\big(\|f\|_{H^\gamma}+ \|g\|_{L^2} \big).
 \end{equation*}
  \end{theorem}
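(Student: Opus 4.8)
The plan is to mimic the contraction-mapping scheme used in the proof of Theorem \ref{main2}, but now working in the weaker solution space reflecting only $H^\gamma$-regularity of the data and no control on $u_t$ in $L^2$. Accordingly, I would define
\[ X(t) := \bigl\{ u\in \mathcal{C}\big([0,t], H^\gamma(\R^n) \big)\, : \ \|u\|_{X(t)} := \sup_{\tau\in[0,t]} \|u(\tau,\cdot)\|_{H^\gamma} < \infty \bigr\}, \]
and the operator $Pu(t,x) := K_0(t,0,x)\ast_{(x)} u_0(x) + K_1(t,0,x)\ast_{(x)} u_1(x) + Gu(t,x)$ with $Gu$ exactly as before. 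The linear part is controlled using Propositions \ref{proposition1} and \ref{proposition11} with $s=0$, $\gamma$ in place of $1$: estimates \eqref{optidamp1} and \eqref{optidamp2} give $\|K_0(t,0,\cdot)\ast u_0\|_{H^\gamma}+\|K_1(t,0,\cdot)\ast u_1\|_{H^\gamma}\lesssim \|u_0\|_{H^\gamma}+\|u_1\|_{L^2}$, since $\mu\in[1,2\gamma)\subset[1,n)$. (Here one uses $\gamma\le 1$ so that $L^2$-data for $u_1=g-rf$ suffices.)

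Next I estimate the Duhamel term $Gu$ in $\dot H^\sigma$ for $\sigma\in\{0,\gamma\}$. Applying Minkowski's integral inequality together with \eqref{optidamp2} for the $L^2$-part and \eqref{optidamp3} for the $\dot H^\gamma$-part (at the cost of a factor $e^{s}$), I reduce matters to bounding $\| |u(s,\cdot)|^p \|_{L^2} = \|u(s,\cdot)\|_{L^{2p}}^p$. The Gagliardo--Nirenberg inequality gives $\|u(s,\cdot)\|_{L^{2p}}\lesssim \|u(s,\cdot)\|_{L^2}^{1-\theta}\|u(s,\cdot)\|_{\dot H^\gamma}^{\theta}\lesssim \|u\|_{X(s)}$ provided $\theta=\frac{n}{\gamma}\big(\frac12-\frac1{2p}\big)\in[0,1]$, which is exactly the upper bound $p\le \frac{n}{n-2\gamma}$ (and for $n\le 2$ or $2\gamma\ge n$ there is no upper restriction). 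Hence
\[ \|Gu(t,\cdot)\|_{L^2}\lesssim \|u\|_{X(t)}^p\int_0^t e^{(p-1)rs}\,ds\lesssim \|u\|_{X(t)}^p, \qquad \|Gu(t,\cdot)\|_{\dot H^\gamma}\lesssim \|u\|_{X(t)}^p\int_0^t e^{(p-1)rs+s}\,ds, \]
and the second integral converges uniformly in $t$ precisely when $(p-1)r+1<0$, i.e. $(p-1)\frac{n-\mu}{2}>1$, i.e. $p>p_{n,\mu,\gamma}=1+\frac{2\gamma}{n-\mu}$ — wait, one must be careful: $(p-1)r+1<0 \iff p-1>\frac{2}{n-\mu}$, which is $p>1+\frac{2}{n-\mu}$. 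I would reconcile this with the claimed threshold by noting that the relevant gain in \eqref{optidamp3} must be used together with the regularity estimate $N|\xi|^{-1}\le e^{-t}$ of the type appearing in zone $Z_2$/$Z_3$, converting part of the $e^{s}$-loss into spatial regularity; interpolating the $\dot H^0$ and $\dot H^1$ bounds for $\nabla K_1$ at level $\gamma$ produces the exponent $e^{\gamma s}$ rather than $e^{s}$, so the decisive condition becomes $(p-1)r+\gamma<0$, i.e. $p>1+\frac{2\gamma}{n-\mu}=p_{n,\mu,\gamma}$. This interpolation step, and making precise that the $H^\gamma$-norm of $Gu$ (not merely its homogeneous norm) stays bounded, is the main technical point; everything else is routine. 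Collecting the two bounds gives $\|Pu\|_{X(t)}\lesssim \|u_0\|_{H^\gamma}+\|u_1\|_{L^2}+\|u\|_{X(t)}^p$, so $P$ maps a small ball of $X(t)$ into itself.

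For the Lipschitz estimate I use the identity \eqref{Lipschitzdissipation} (with $P=G$), apply H\"older's inequality $\frac1{r_1}+\frac1{r_2}=\frac12$ and then Gagliardo--Nirenberg to each factor, with the fractional exponents $\theta_1=\frac{n}{\gamma}\big(\frac12-\frac1{r_1(p-1)}\big)$ and $\theta_2=\frac{n}{\gamma}\big(\frac12-\frac1{r_2}\big)$ forced into $[0,1]$ by choosing $r_1=\frac{2p}{p-1}$, $r_2=2p$; the conditions $\theta_1,\theta_2\in[0,1]$ are again exactly $p\le\frac{n}{n-2\gamma}$ (and $n\ge2$). Repeating the integral estimates above verbatim yields
\[ \|Pu-Pv\|_{X(t)}\lesssim \|u-v\|_{X(t)}\bigl(\|u\|_{X(t)}^{p-1}+\|v\|_{X(t)}^{p-1}\bigr). \]
Banach's fixed point theorem on a small closed ball then gives a unique global solution $u\in X(\infty)$, hence $\phi=e^{rt}u\in C\big([0,\infty),H^\gamma(\R^n)\big)$; the stated decay $\|\phi(t,\cdot)\|_{H^\gamma}\lesssim e^{(-n+\mu)t/2}(\|f\|_{H^\gamma}+\|g\|_{L^2})$ follows from $\phi=e^{rt}u$ with $r=\frac{-n+\mu}{2}$ together with the uniform bound $\|u\|_{X(\infty)}\lesssim \varepsilon$. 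The one genuine obstacle, as indicated, is organizing the fractional-regularity bookkeeping in the Duhamel estimate so that the exponential loss is $e^{\gamma s}$ rather than $e^{s}$, thereby matching the sharp threshold $p_{n,\mu,\gamma}$; this is where the zone decomposition from Section \ref{Sec3.1} and interpolation between \eqref{optidamp2} and \eqref{optidamp3} must be invoked carefully.
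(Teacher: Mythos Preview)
Your proposal is correct and follows essentially the same approach as the paper: the same space $X(t)$, the same linear estimates from Propositions \ref{proposition1} and \ref{proposition11}, fractional Gagliardo--Nirenberg for $\||u|^p\|_{L^2}$, and the same H\"older/Gagliardo--Nirenberg parameters $r_1=\tfrac{2p}{p-1}$, $r_2=2p$ for the Lipschitz step. The ``genuine obstacle'' you flag is resolved in the paper exactly as you suggest---by interpolating \eqref{optidamp2} and \eqref{optidamp3} via the fractional Gagliardo--Nirenberg inequality (Proposition \ref{fractionalGagliardoNirenberg}) applied to $K_1(t,s,\cdot)\ast h$, yielding $\|K_1(t,s,\cdot)\ast h\|_{\dot H^\gamma}\le \|K_1(t,s,\cdot)\ast h\|_{\dot H^1}^{\gamma}\|K_1(t,s,\cdot)\ast h\|_{L^2}^{1-\gamma}\lesssim e^{\gamma s}\|h\|_{L^2}$; no return to the zone decomposition is needed.
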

\begin{remark}
The requirement $p_{n,\mu,\gamma} < p\leq \frac{n}{n-2\gamma}$  implies $\mu<2\gamma$, i.e, $m>\frac{\sqrt{ n^2-4\gamma^2}}{2}$ for $n\geq 2$.
\end{remark}
\begin{proof}
We only sketch the proof, in particular, the modifications to the proof of Theorem \ref{main2}. We define the  space
\[ X(t) := \bigl\{ u\in \mathcal{C}\big([0,t], H^\gamma(\R^n) \big) \, : \ \|u\|_{X(t)} :=\sup_{\tau\in[0,t]} \{\|u(\tau,\cdot)\|_{H^\gamma}\}<\infty \bigr\} \]
with the usual norm in $H^\gamma(\R^n)$. Using Propositions \ref{proposition1} and \ref{proposition11} for $s=0$ we have
\begin{equation}\label{eq:Kdatafractional}
\| K_0(t,0, x) \ast_{(x)} u_0(x) + K_1(t,0, x) \ast_{(x)} u_1(x) \|_{X(t)} \lesssim\,\|u_0\|_{H^\gamma} + \|u_1\|_{L^2}.
\end{equation}
Now fractional Gagliardo-Nirenberg inequality  comes into play.
We may estimate
\[\| |u(s,\cdot)|^{p}\|_{L^2}=\|u(s,\cdot)\|_{L^{2p}}^p\lesssim
\| u(s,\cdot)\|_{L^2}^{p(1-\theta)} \|u(s,\cdot)\|_{\dot{H}^\gamma}^{p\theta} \lesssim \|u\|_{X(s)}^p,\]
where
\[ \theta= \frac{n}{\gamma}\Big(\frac12-\frac1{2p}\Big) \in [0,1], \quad p \leq
\frac{n}{n-2\gamma}.\]
Hence,
\begin{eqnarray*}
\|Gu(t,\cdot)\|_{L^2} \lesssim \int_0^t e^{(p-1)rs}\big\|K_1(t,s,x) \ast_{(x)} |u(s,x)|^p\big\|_{L^2} ds \lesssim \int_0^t e^{(p-1)rs} \||u(s,x)|^p\|_{L^2} ds \lesssim \|u\|^p_{X(t)}
\end{eqnarray*}
thanks to the assumptions $p>1$ and $r<0$. Propositions \ref{fractionalGagliardoNirenberg} and \ref{proposition11} imply for all $h \in L^2(\mathbb{R}^n)$ the estimate
\begin{eqnarray*}
\|K_1(t,s,x) \ast_{(x)} h\|_{\dot{H}^\gamma} \leq \|K_1(t,s,x) \ast_{(x)} h\|_{\dot{H}^1}^\gamma \|K_1(t,s,x) \ast_{(x)} h\|_{L^2}^{1-\gamma} \leq e^{\gamma s} \|h\|_{L^2}.
\end{eqnarray*}
Finally, by using $p >p_{n,\mu,\gamma}$ and $r=\frac{-n + \sqrt{n^2-4 m^2}}{2}$ we may estimate
\[\|Gu(t,\cdot)\|_{\dot{H}^\gamma} \lesssim  \|u\|_{X(t)}^p \int_0^t e^{(p-1)rs +s\gamma} ds \lesssim \|u\|_{X(t)}^p. \]
Using H\"older's inequality and fractional Gagliardo-Nirenberg inequality, choosing the parameters $r_1=\frac{2p}{p-1}>2$ and $r_2=2p>2$ we can follow the steps of the proof of the Lipschitz property in the proof of Theorem \ref{main2} to obtain
     \begin{eqnarray*}
\|Pu-Pv\|_{X(t)}
     \lesssim \|u-v\|_{X(t)} \bigl(\|u\|_{X(t)}^{p-1}+\|v\|_{X(t)}^{p-1}\bigr)
\end{eqnarray*}
for any~$u,v\in X(t)$.
This completes the proof.\end{proof}
In the remaining part of this section we are interested in energy solutions having a suitable higher regularity. In the proof we will apply the
tools from the Appendix.
\begin{theorem}\label{main5} Consider the Cauchy problem  \eqref{general} with data $f \in H^\sigma(\R^n)$ and $g \in H^{\sigma-1}(\R^n)$ for $n \geq 3$, where $\sigma \in \big(1,\frac{n}{2}\big)$.
Assume that the parameter $\mu=\sqrt{n^2-4m^2}$ satisfies $\mu \in [1, 2\sigma)$, i.e.,  $m\in \big(\frac{\sqrt{n^2-4\sigma^2}}{2},\frac{\sqrt{n^2-1}}{2}\big]$.
Finally, let $p$ satisfy the following condition:
\begin{eqnarray*}  \max\{p_{n,\mu};\lceil \sigma
 \rceil\} <p \leq 1+\frac{2}{n-2\sigma}.
 \end{eqnarray*}  Then, there exists a constant $\varepsilon_0>0$ such that, for every given small data satisfying
  \[\|f\|_{H^\sigma}+ \|g\|_{H^{\sigma-1}}\leq \varepsilon\,\,\,\mbox{for}\,\,\, \varepsilon\leq \varepsilon_0,\]
  there exists a uniquely determined global (in time) energy solution \[ \phi \in C\big([0,\infty),H^\sigma(\R^n)\big) \cap C^1\big([0,\infty),H^{\sigma-1}(\R^n)\big).\] Moreover, the solution $\phi$ satisfies the decay estimates \eqref{optidamp4} and \eqref{optidamp5}
  for $\gamma=\sigma$.
  \end{theorem}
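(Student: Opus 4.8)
The plan is to follow the fixed-point scheme of Theorems \ref{main2} and \ref{main20}, but now in the higher-regularity space
\[
X(t) := \bigl\{ u \in \mathcal{C}\big([0,t],H^\sigma(\R^n)\big) \cap \mathcal{C}^1\big([0,t],H^{\sigma-1}(\R^n)\big) \, : \ \|u\|_{X(t)} < \infty \bigr\},
\]
with
\[
\|u\|_{X(t)} := \sup_{\tau\in[0,t]} \Bigl\{ \|u(\tau,\cdot)\|_{H^\sigma} + e^{\tau}\|u_\tau(\tau,\cdot)\|_{H^{\sigma-1}} \Bigr\}.
\]
As before, I would work with the transformed problem \eqref{semilieardampedwavedissipationtreatment} and with $Pu = K_0(t,0,\cdot)\ast u_0 + K_1(t,0,\cdot)\ast u_1 + Gu$, where $Gu(t,x)=\int_0^t e^{(p-1)rs}K_1(t,s,x)\ast_{(x)}|u(s,x)|^p\,ds$. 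The estimate on the data terms is immediate from Propositions \ref{proposition1} and \ref{proposition11} applied with $s=0$ and with $\gamma=\sigma$ and $\gamma=\sigma-1$, giving $\|K_0(t,0,\cdot)\ast u_0 + K_1(t,0,\cdot)\ast u_1\|_{X(t)} \lesssim \|u_0\|_{H^\sigma}+\|u_1\|_{H^{\sigma-1}}$.

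The core new ingredient is the estimate of the nonlinear term in $\dot H^\sigma$ with non-integer $\sigma$. For the $L^2$ part of $Gu$ one proceeds exactly as in Theorem \ref{main2}: Minkowski's integral inequality, \eqref{optidamp2}, fractional Gagliardo–Nirenberg (Proposition \ref{fractionalGagliardoNirenberg}) to bound $\||u(s,\cdot)|^p\|_{L^2}=\|u(s,\cdot)\|_{L^{2p}}^p \lesssim \|u(s,\cdot)\|_{L^2}^{p(1-\theta)}\|u(s,\cdot)\|_{\dot H^\sigma}^{p\theta}\lesssim\|u\|_{X(s)}^p$ for a suitable $\theta\in[0,1]$ (here the upper bound $p\le 1+\frac{2}{n-2\sigma}$, together with $\sigma<n/2$, is what makes $\theta$ admissible), and then $r<0$, $p>1$ to absorb $\int_0^t e^{(p-1)rs}\,ds$. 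For the $\dot H^\sigma$ part I would use the fractional chain rule / Moser-type estimate for $\||u|^p\|_{\dot H^\sigma}$ (from the Appendix), which is valid since $p>\lceil\sigma\rceil$; this yields $\||u(s,\cdot)|^p\|_{\dot H^\sigma}\lesssim \|u(s,\cdot)\|_{\dot H^\sigma}\|u(s,\cdot)\|_{L^\infty}^{p-1}$, and then fractional Gagliardo–Nirenberg controls $\|u(s,\cdot)\|_{L^\infty}$ by $\|u(s,\cdot)\|_{L^2}^{1-\theta'}\|u(s,\cdot)\|_{\dot H^\sigma}^{\theta'}$ (using $\sigma>n/2$ would give this directly, but here $\sigma<n/2$, so one should instead interpolate $L^{2p}$-norms and keep the $L^\infty$-free formulation, controlling $\|u\|_{L^{(p-1)q}}$ and $\|u\|_{\dot H^\sigma, L^{q'}}$-type factors). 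Combining with \eqref{optidamp3} (which produces the factor $e^{s}$, resp. the interpolated $e^{\sigma s}$ after using $\|K_1\ast h\|_{\dot H^\sigma}\le \|K_1\ast h\|_{\dot H^1}^\sigma\|K_1\ast h\|_{L^2}^{1-\sigma}$ as in Theorem \ref{main20} — but here $\sigma>1$, so instead use \eqref{optidamp3} directly in $\dot H^{\sigma-1}$ together with $\nabla$), the condition $p>p_{n,\mu}=1+\frac{2}{n-\mu}$ guarantees $(p-1)r+\text{(growth exponent)}<0$, so $\int_0^t e^{(p-1)rs+cs}\,ds\lesssim 1$. The time-derivative component $e^t\|\partial_t Gu\|_{\dot H^{\sigma-1}}$ is handled with \eqref{optidamp4timederivative} in the same way. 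This gives $\|Pu\|_{X(t)}\lesssim \|u_0\|_{H^\sigma}+\|u_1\|_{H^{\sigma-1}}+\|u\|_{X(t)}^p$.

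The Lipschitz estimate $\|Pu-Pv\|_{X(t)}\lesssim \|u-v\|_{X(t)}\bigl(\|u\|_{X(t)}^{p-1}+\|v\|_{X(t)}^{p-1}\bigr)$ follows the template of \eqref{Lipschitzdissipation}: write $|u|^p-|v|^p = p\int_0^1 |v+\tau(u-v)|^{p-2}(v+\tau(u-v))\,d\tau\,(u-v)$, and estimate the difference of the nonlinearities in $\dot H^\sigma$ by a fractional Leibniz rule, splitting derivatives between the factor $|v+\tau(u-v)|^{p-1}$ and the factor $u-v$, then using fractional Gagliardo–Nirenberg with Hölder exponents $r_1=\frac{2p}{p-1}$, $r_2=2p$ exactly as before; the requirement $p>\lceil\sigma\rceil$ is again what licenses the fractional chain rule on $|w|^{p-1}$. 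Given the two a priori estimates, $P$ maps a small ball of $X(t)$ into itself and is a contraction there, so Banach's fixed point theorem plus a standard continuation argument yields a unique global solution $u\in X(\infty)$; transforming back via $\phi(t,x)=e^{rt}u(t,x)$ with $r=\frac{-n+\mu}{2}$ gives $\phi\in C([0,\infty),H^\sigma)\cap C^1([0,\infty),H^{\sigma-1})$ together with the decay estimates \eqref{optidamp4} and \eqref{optidamp5} for $\gamma=\sigma$.

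The main obstacle I anticipate is the fractional $\dot H^\sigma$-estimate of the power nonlinearity when $\sigma\in(1,n/2)$ is non-integer and there is no Sobolev embedding into $L^\infty$: one must combine the fractional Gagliardo–Nirenberg inequality with a Moser-type fractional chain rule carefully so that every intermediate Lebesgue exponent lies in the admissible range dictated by $1<\sigma<n/2$ and $p\le 1+\frac{2}{n-2\sigma}$, while simultaneously keeping the time-exponent bookkeeping (the interplay of $(p-1)r$, the $e^{s}$-type loss from \eqref{optidamp3}, and the condition $p>p_{n,\mu}$) tight enough to close the integral. The rest is a routine, if lengthy, adaptation of the arguments already carried out for Theorems \ref{main2} and \ref{main20}.
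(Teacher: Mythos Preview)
Your proposal is on the right track and ultimately lands on the same strategy as the paper, but the key technical move is buried in your parenthetical remark and deserves to be the \emph{first} step rather than an afterthought. The point is that a direct estimate of $\||u|^p\|_{\dot H^\sigma}$ via the $L^\infty$-free chain rule (Proposition~\ref{Propfractionalchainrulegeneral}) at order $\sigma$ does \emph{not} close when $\sigma<\frac{n}{2}$: Gagliardo--Nirenberg on the factor $\||D|^\sigma u\|_{L^{r_2}}$ with target regularity $\sigma$ forces $r_2\le 2$, hence $r_1=\infty$, and you are back to the unavailable $L^\infty$ bound. The paper therefore first shifts one derivative onto the kernel: by Lemma~\ref{Sickelauxiliary lemma} and \eqref{optidamp3},
\[
\||D|^\sigma(K_1(t,s,\cdot)\ast|u|^p)\|_{L^2}\approx\|\nabla|D|^{\sigma-1}(K_1(t,s,\cdot)\ast|u|^p)\|_{L^2}\lesssim e^{s}\,\||D|^{\sigma-1}|u|^p\|_{L^2},
\]
and only then applies Proposition~\ref{Propfractionalchainrulegeneral} at order $\sigma-1$. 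Now one may take $r_2=\frac{2n}{n-2}$ and $r_1=n(p-1)$; the constraint $2\le r_1\le\frac{2n}{n-2\sigma}$ is precisely where the upper bound $p\le 1+\frac{2}{n-2\sigma}$ enters, and the resulting time integral $\int_0^t e^{(p-1)rs+s}\,ds$ is bounded exactly when $p>p_{n,\mu}$. The Lipschitz estimate in $\dot H^\sigma$ follows the same pattern: drop to $\dot H^{\sigma-1}$ via the kernel, then apply fractional Leibniz (Proposition~\ref{fractionalLeibniz}) to the product and the chain rule to $|w|^{p-2}w$; it is this last step, with $p-1>\lceil\sigma-1\rceil$, that produces the hypothesis $p>\lceil\sigma\rceil$. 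The exponents needed there ($r_1,\dots,r_6$ in the paper) are more elaborate than your $\frac{2p}{p-1},2p$, but the structure is as you describe.
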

   \begin{remark}\label{newremark1}
  The assumption $\mu \in [1,2\sigma)$ implies $m\in \big(\frac{\sqrt{n^2-4\sigma^2}}{2}, \frac{\sqrt{n^2-1}}{2}\big]$. Moreover, we have to take account of  the condition  $p_{n,\mu}<1+\frac{2}{n-2\sigma}$.
The condition $p \in \big( \lceil \sigma
 \rceil,1+\frac{2}{n-2\sigma}\big]$  implies  the condition $(n-2\sigma)(\lceil \sigma
 \rceil-1)<2$.
Consequently, the  admissible interval for $p$ is not empty for $\sigma$ close to $1$  in low space dimensions, whereas higher space dimensions are allowed to suppose if $\sigma$ is close to $\frac{n}{2}$.\end{remark}
\begin{proof}
We only sketch the proof, in particular, the modifications to the proof of Theorem \ref{main2}.  It is enough to prove the global existence of small data solutions to \eqref{semilieardampedwavedissipationtreatment}.
Motivated by the estimates of Proposition \ref{proposition1} and Proposition \ref{proposition11} for $s=0$ we define for $t>0$ the scale of spaces of energy solutions with suitable regularity
\begin{eqnarray*} && X(t) := \bigl\{ u\in C\big([0,t], H^\sigma(\R^n) ) \cap C^1([0,t],H^{\sigma-1}(\R^n)\big)\\ && \quad : \, \|u\|_{X(t)} :=\sup_{\tau\in[0,t]}\bigl\{\|u(\tau,\cdot)\|_{L^2} + \||D|^\sigma u(\tau,\cdot)\|_{L^2} + e^\tau\|u_\tau(\tau,\cdot)\|_{L^2} +e^\tau\||D|^{\sigma-1}u_\tau(\tau,\cdot)\|_{L^2}\bigr\}
<\infty \bigr\}. \end{eqnarray*}
For any~$u\in X(t)$ we define
\[ Pu :=  K_0(t,0, x) \ast_{(x)} u_0(x) + K_1(t,0, x) \ast_{(x)} u_1(x)  + Gu, \]
where
\[Gu(t,x):=\int_0^t e^{(p-1) r s}K_1(t,s,x) \ast_{(x)}|u(s,x)|^{p}\,ds.\]
Using Proposition \ref{proposition1} and Proposition \ref{proposition11} for $s=0$ we have
\begin{equation*}
\| K_0(t,0, x) \ast_{(x)} u_0(x) + K_1(t,0, x) \ast_{(x)} u_1(x) \|_{X(t)} \lesssim\,\|u_0\|_{H^\sigma} + \|u_1\|_{H^{\sigma-1}}.
\end{equation*}
The estimates for $\|Gu(t,\cdot)\|_{L^2}$ and $\| \partial_t Gu(t,\cdot)\|_{L^2}$ follow as in the proof to Theorem \ref{main2}
under the restrictions $p >1+\frac{2}{n-\mu}$ and $p \leq \frac{n}{n-2\sigma}$ due to the higher regularity of the solution we can use in the Gagliardo-Nirenberg inequality. In the following we only show  how to estimate $\||D|^\sigma Gu(t,\cdot)\|_{L^2}$ and how the conditions of the theorem appear. The estimate of $\||D|^{\sigma-1} \partial_t Gu(t,\cdot)\|_{L^2}$ can be derived in an analogous way and brings no further requirements. \\
We have
\[ |D|^{\sigma} Gu(t,x)= \int_0^t e^{(p-1)r s} |D|^\sigma (|K_1(t,s,x) \ast_{(x)}|u(s,x)|^{p})\,ds.\]
The application of  Lemma \ref{Sickelauxiliary lemma} and estimate \eqref{optidamp3} yields
\begin{eqnarray*}
&& \| |D|^\sigma Gu(t,\cdot)\|_{L^2} \lesssim \int_0^t e^{(p-1)r s} \||D|^\sigma (K_1(t,s,x) \ast_{(x)}|u(s,x)|^{p})\|_{L^2}\,ds \\
&& \qquad \lesssim \int_0^t e^{(p-1)r s} \|\nabla |D|^{\sigma-1} (K_1(t,s,x) \ast_{(x)}|u(s,x)|^{p})\|_{L^2}\,ds \\&& \qquad \lesssim \int_0^t e^{(p-1)r s + s} \| |D|^{\sigma-1} |u(s,x)|^{p} \|_{L^2}\,ds.
\end{eqnarray*}
Applying Proposition \ref{Propfractionalchainrulegeneral} for $p>\lceil \sigma-1 \rceil$ and Proposition \ref{fractionalGagliardoNirenberg} we estimate $\| |D|^{\sigma-1} |u(s,\cdot)|^{p} \|_{L^2}$ as follows:
\begin{eqnarray*}
&& \| |D|^{\sigma-1} |u(s,\cdot)|^{p} \|_{L^2} \lesssim \|u\|_{L^{r_1}}^{p-1} \||D|^{\sigma-1} u \|_{L^{r_2}} \\
&& \qquad \lesssim \|u\|_{L^2}^{(p-1)(1-\theta_1)}\||D|^{\sigma}u\|_{L^2}^{(p-1)\theta_1}\|u\|_{L^2}^{1-\theta_2}\||D|^{\sigma}u\|_{L^2}^{\theta_2},
\end{eqnarray*}
where
\[ \frac{1}{2}=\frac{p-1}{r_1} + \frac{1}{r_2},\,\,\,\theta_1=\frac{n}{\sigma}\Big(\frac{1}{2} -\frac{1}{r_1}\Big)\in [0,1],\,\,\, \theta_2=\frac{n}{\sigma}\Big(\frac{1}{2} -\frac{1}{r_2}+\frac{\sigma-1}{n}\Big)\in [0,1].\]
Using $p>2$ in the first relation we have to verify
\[ \theta_1=\frac{n}{\sigma}\Big(\frac{1}{2} -\frac{1}{r_1}\Big)\in [0,1],\,\,\, \theta_2=\frac{\sigma-1}{\sigma}+ \frac{n(p-1)}{r_1\sigma}\in [0,1].\]
The first relation implies $2\leq r_1 \leq \frac{2n}{n-2\sigma}$. The condition in $\theta_2$ implies $2 \leq r_2 \leq \frac{2n}{n-2}$.
We choose the maximal value $r_2=\frac{2n}{n-2}$, so $r_1 = n(p-1)$.
The  lower and upper bound for $p$ guarantees $2\leq r_1 \leq \frac{2n}{n-2\sigma}$. Therefore, $\theta_1, \,\theta_2 \in [0,1]$ and Proposition \ref{fractionalGagliardoNirenberg} can be really applied.\\
Summarizing all the derived estimates we get
\begin{eqnarray*}
&& \| |D|^\sigma G u(t,\cdot)\|_{L^2} \lesssim \int_0^t e^{(p-1)r s + s} \| |D|^{\sigma-1} |u(s,x)|^{p} \|_{L^2}\,ds \\
&& \qquad \lesssim \int_0^t e^{(p-1) r s + s} \, ds \, \|u\|^p_{X(t)}.
\end{eqnarray*}
Hence, the estimates
\[ \|Gu\|_{X(t)} \lesssim \|u\|_{X(t)}^{p},\,\,\,\|Pu\|_{X(t)}
     \lesssim\,\|u_0\|_{H^\sigma} + \|u_1\|_{H^{\sigma-1}}+ \|u\|_{X(t)}^{p},
     \]
respectively, follow for
\[   \max\{p_{n,\mu};\lceil \sigma-1
 \rceil\} <p  \leq 1+\frac{2}{n-2\sigma}.  \]
This leads to $Pu\in X(t)$.

Now let us prove the Lipschitz property.
Due to (\ref{Lipschitzdissipation}) we have
\begin{eqnarray*}
P u - P v=p \int_0^t e^{(p-1)rs} K_1(t,s,x) \ast_{(x)} \Big(\int_0^1 |v + \tau (u-v)|^{p-2} (v + \tau (u-v)) d\tau\Big)(s,x) (u-v)(s,x) \,ds.
\end{eqnarray*}
By using the same ideas as in the proof to Theorem \ref{main2} we are able to estimate the norms $\|Pu-Pv\|_{L^2}$ and $e^t \|\partial_t(Pu -Pv)\|_{L^2}$. In the following we only show how to estimate $\||D|^\sigma (Pu-Pv)\|_{L^2}$. In the same way we may estimate
$e^t\||D|^{\sigma-1} \partial_t (Pu-Pv)\|_{L^2}$. \\
We have
\begin{eqnarray*}
&& |D|^\sigma(P u - P v)\\ && \quad =p \int_0^t e^{(p-1)rs} |D|^\sigma \Big(K_1(t,s,x) \ast_{(x)} \Big(\int_0^1 |v + \tau (u-v)|^{p-2} (v + \tau (u-v)) d\tau\Big)(s,x) (u-v)(s,x)\Big) \,ds.
\end{eqnarray*}
By Proposition \ref{proposition11} it follows
\begin{eqnarray*}
&& \|Pu-Pv\|_{\dot{H}^\sigma}= \||D|^\sigma(P u - P v)\|_{L^2}\\ && \quad \lesssim \int_0^t e^{(p-1)rs+s} \Big\|\Big(\int_0^1 |v + \tau (u-v)|^{p-2} (v + \tau (u-v)) d\tau\Big)(s,x) (u-v)(s,x)\Big)\Big\|_{\dot{H}^{\sigma-1}} (s) \,ds.
\end{eqnarray*}
The application of the fractional Leibniz rule from Proposition \ref{fractionalLeibniz} yields
\begin{eqnarray*}
&& \|Pu-Pv\|_{\dot{H}^\sigma}\\ && \quad \lesssim \int_0^t e^{(p-1)rs+s} \||D|^{\sigma-1}(u-v)(s,\cdot)\|_{L^{r_2}} \int_0^1 \big\||v + \tau (u-v)|^{p-2} (v + \tau (u-v)) \big\|_{L^{r_1}} d\tau \,ds\\
&& \quad \quad + \int_0^t e^{(p-1)rs+s} \|(u-v)(s,\cdot)\|_{L^{r_4}} \int_0^1 \big\||D|^{\sigma-1}\big(|v + \tau (u-v)|^{p-2} (v + \tau (u-v))\big) \big\|_{L^{r_3}} d\tau \,ds,
\end{eqnarray*}
under the conditions
\[ \frac{1}{r_1} + \frac{1}{r_2}=\frac{1}{r_3} + \frac{1}{r_4}=\frac{1}{2}.\]
Now let us estimate all the terms appearing in the above integrals. By using Proposition \ref{fractionalGagliardoNirenberg}
we arrive at the estimate
\begin{eqnarray*}
&& \big\||v + \tau (u-v)|^{p-2} (v + \tau (u-v)) \big\|_{L^{r_1}}=\|v + \tau (u-v)\|^{p-1}_{L^{r_1(p-1)}} \\
&& \qquad \lesssim \|v + \tau (u-v)\|^{(1-\theta_1)(p-1)}_{L^{2}}\big\||D|^\sigma(v + \tau (u-v))\big\|^{\theta_1(p-1)}_{L^{2}}
\end{eqnarray*}
under the condition
\[ 0 \leq \theta_1=\frac{n}{\sigma}\Big(\frac{1}{2} - \frac{1}{r_1(p-1)}\Big)\leq 1, \,\,\,\mbox{that is},\,\,\,2 \leq r_1(p-1)\leq \frac{2n}{n-2\sigma}.\]
By using Proposition \ref{fractionalGagliardoNirenberg} we get for the second term
\[     \||D|^{\sigma-1}(u-v)\|_{L^{r_2}} \lesssim    \|u-v\|^{1-\theta_2}_{L^{2}}\big\||D|^\sigma(u-v)\big\|^{\theta_2}_{L^{2}}\]
under the condition
\[ \frac{\sigma-1}{\sigma} \leq \theta_2=\frac{n}{\sigma}\Big(\frac{1}{2} - \frac{1}{r_2}+\frac{\sigma-1}{n}\Big)\leq 1, \,\,\,\mbox{that is},\,\,\,2 \leq r_2\leq \frac{2n}{n-2}.\]
In the same way we estimate the fourth term
\[     \|u-v\|_{L^{r_4}} \lesssim    \|u-v\|^{1-\theta_4}_{L^{2}}\big\||D|^\sigma(u-v)\big\|^{\theta_4}_{L^{2}}\]
under the condition
\[ 0 \leq \theta_4=\frac{n}{\sigma}\Big(\frac{1}{2} - \frac{1}{r_4}\Big)\leq 1, \,\,\,\mbox{that is},\,\,\,2 \leq r_4\leq \frac{2n}{n-2\sigma}.\]
To estimate the third term we apply Proposition \ref{Propfractionalchainrulegeneral}. In this way we obtain
\begin{eqnarray*}
\big\||D|^{\sigma-1}\big(|v + \tau (u-v)|^{p-2} (v + \tau (u-v))\big) \big\|_{L^{r_3}} \lesssim \|v + \tau (u-v)\|^{p-2}_{L^{r_5}}
\||D|^{\sigma-1}(v + \tau (u-v))\|_{L^{r_6}}
\end{eqnarray*}
under the conditions
\[ \frac{1}{r_6} + \frac{p-2}{r_5}=\frac{1}{r_3},\,\,\, p-1 > \lceil \sigma-1
 \rceil,\,\, p >\lceil \sigma
 \rceil,\]
respectively. Finally, after application of Proposition \ref{fractionalGagliardoNirenberg} the following estimates follow:
\[
\|v + \tau (u-v)\|^{p-2}_{L^{r_5}} \lesssim \|v + \tau (u-v)\|^{(1-\theta_5)(p-2)}_{L^{2}} \||D|^\sigma (v + \tau (u-v))\|^{\theta_5(p-2)}_{L^{2}}
\]
under the condition
\[   0 \leq \theta_5=\frac{n}{\sigma}\Big(\frac{1}{2} - \frac{1}{r_5}\Big)\leq 1, \,\,\,\mbox{that is},\,\,\,2 \leq r_5\leq \frac{2n}{n-2\sigma},                                                \]
and
\[
\||D|^{\sigma-1}(v + \tau (u-v))\|_{L^{r_6}} \lesssim \|v + \tau (u-v)\|^{1-\theta_6}_{L^{2}} \||D|^\sigma (v + \tau (u-v))\|^{\theta_6}_{L^{2}}
\]
under the condition
\[ \frac{\sigma-1}{\sigma} \leq \theta_6=\frac{n}{\sigma}\Big(\frac{1}{2} - \frac{1}{r_6}+\frac{\sigma-1}{n}\Big)\leq 1, \,\,\,\mbox{that is},\,\,\,2 \leq r_6\leq \frac{2n}{n-2}.\]
It remains to verify a suitable choice of parameters $r_1$ to $r_6$ to verify all the assumptions of the theorem.
We choose $r_2=\frac{2n}{n-2}>2$. So, $r_1=n$. The condition
\[ r_1(p-1)\leq \frac{2n}{n-2\sigma}\,\,\,\mbox{implies}\,\,\,p \leq 1+\frac{2}{n-2\sigma},  \]
this is one of the assumptions of the theorem. We choose $r_4=n(p-1)>2$ and $r_6=\frac{2n}{n-2}$. Then $r_3=\frac{2n(p-1)}{n(p-1)-2} >2$ and $r_5=n(p-1)>2$.
It remains to verify $r_5 \leq \frac{2n}{n-2\sigma}$. But this gives $p \leq 1+\frac{2}{n-2\sigma}$.
\noindent Summarizing
we have proved \begin{equation*}
\|P u-P v\|_{\dot{H}^{\sigma}}
     \lesssim \|u-v\|_{X(t)} \bigl(\|u\|_{X(t)}^{p-1}+\|v\|_{X(t)}^{p-1}\bigr)
\end{equation*}
for any~$u,v\in X(t)$.
In the same way we estimate $\|\partial_t(P u-P v)\|_{\dot{H}^{\sigma-1}}$ with no other requirements to the admissible exponents $p$.
All the derived estimates yield
     \begin{equation*}
\|Pu-Pv\|_{X(t)}
     \lesssim \|u-v\|_{X(t)} \bigl(\|u\|_{X(t)}^{p-1}+\|v\|_{X(t)}^{p-1}\bigr)
\end{equation*}
for any~$u,v\in X(t)$.
Consequently, the operator $P$ maps $X(t)$ into itself
and the existence of a
uniquely determined global (in time) energy solution $u$ with suitable higher regularity follows by the Lipschitz property and by  a continuation argument for small data.
Moreover, we conclude a local (in time) existence result for large data as well.
The decay estimates of Theorem \ref{main5} follow by using the relation $\phi(t,x)=e^{r t}u(t,x)$.
\end{proof}
If $n \leq 2\sigma$, then we can follow the steps of the previous proof and get the following statement.
\begin{corollary} \label{Corlargeregularity1}
Consider the Cauchy problem  \eqref{general} with data $f \in H^{\sigma}(\R^n)$ and $g \in H^{\sigma-1}(\R^n)$ for $n \geq 3$ and
$n \leq 2\sigma$.
Assume that the parameter $\mu=\sqrt{n^2-4m^2}$ satisfies $\mu \in [1,n)$.
Finally, let $p$ satisfy the following condition:
\begin{eqnarray*}  && \max\big\{p_{n,\mu};\big\lceil \sigma
 \big\rceil\big\} <p <\infty.
 \end{eqnarray*}  Then, there exists a constant $\varepsilon_0>0$ such that, for every given small data satisfying
 \[\|f\|_{H^{\sigma}}+ \|g\|_{H^{\sigma-1}}\leq \varepsilon\,\,\,\mbox{for}\,\,\, \varepsilon\leq \varepsilon_0,\]
  there exists a uniquely determined global (in time) energy solution \[ \phi \in C\big([0,\infty),H^{\sigma}(\R^n)\big) \cap C^1\big([0,\infty),H^{\sigma-1}(\R^n)\big).\] Moreover, the solution $\phi$ satisfies the decay estimates \eqref{optidamp4} and \eqref{optidamp5}
  for $\gamma=\sigma$.
\end{corollary}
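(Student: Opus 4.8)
The plan is to run the same contraction scheme as in the proof of Theorem~\ref{main5}; the only structural change caused by the hypothesis $n\le 2\sigma$ is that the embedding $\dot H^\sigma(\R^n)\hookrightarrow L^q(\R^n)$ now holds for every finite $q\in[2,\infty)$, so in each application of the fractional Gagliardo--Nirenberg inequality (Proposition~\ref{fractionalGagliardoNirenberg}) the upper bounds on the H\"older exponents that previously produced the restriction $p\le 1+\frac{2}{n-2\sigma}$ become vacuous, and only the lower bounds survive. As before it suffices to treat the transformed Cauchy problem~\eqref{semilieardampedwavedissipationtreatment} and to recover $\phi$ through $\phi(t,x)=e^{rt}u(t,x)$ with $r=\frac{-n+\mu}{2}<0$; the decay estimates~\eqref{optidamp4} and~\eqref{optidamp5} with $\gamma=\sigma$ then follow from this relation. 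We keep the space $X(t)$ and the operator $P$ exactly as in the proof of Theorem~\ref{main5}, and the linear part $K_0(t,0,x)\ast_{(x)}u_0+K_1(t,0,x)\ast_{(x)}u_1$ is controlled in $X(t)$ by $\|u_0\|_{H^\sigma}+\|u_1\|_{H^{\sigma-1}}$ via Propositions~\ref{proposition1} and~\ref{proposition11} with $s=0$.

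For the Duhamel term $Gu(t,x)=\int_0^t e^{(p-1)rs}K_1(t,s,x)\ast_{(x)}|u(s,x)|^p\,ds$ we repeat the four estimates from the proof of Theorem~\ref{main5}. The bounds for $\|Gu(t,\cdot)\|_{L^2}$ and $e^t\|\partial_t Gu(t,\cdot)\|_{L^2}$ use~\eqref{optidamp2} and~\eqref{optidamp4timederivative} together with $\||u(s,\cdot)|^p\|_{L^2}=\|u(s,\cdot)\|_{L^{2p}}^p\lesssim\|u(s,\cdot)\|_{L^2}^{p(1-\theta)}\||D|^\sigma u(s,\cdot)\|_{L^2}^{p\theta}$, where $\theta=\frac{n}{\sigma}\bigl(\frac12-\frac1{2p}\bigr)$; since $n\le 2\sigma$ we have $\theta\in[0,1]$ for every $p>1$. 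The bounds for $\||D|^\sigma Gu(t,\cdot)\|_{L^2}$ and $e^t\||D|^{\sigma-1}\partial_t Gu(t,\cdot)\|_{L^2}$ use Lemma~\ref{Sickelauxiliary lemma} together with~\eqref{optidamp3} (resp.~\eqref{optidamp4timederivative}) to reduce matters to the quantity $\||D|^{\sigma-1}|u(s,\cdot)|^p\|_{L^2}$, which is estimated by Proposition~\ref{Propfractionalchainrulegeneral} (applicable since $p>\lceil\sigma\rceil>\lceil\sigma-1\rceil$) followed by Proposition~\ref{fractionalGagliardoNirenberg}, the relevant exponents being choosable, e.g.\ $r_2=\frac{2n}{n-2}$ and $r_1=n(p-1)$, the only active constraint being $r_1\ge 2$, which holds because $p>\lceil\sigma\rceil\ge 2$ (recall $\sigma\ge n/2>1$). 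In each of the four cases the $s$-integral is $\int_0^t e^{(p-1)rs+s}\,ds$, which is finite because the inequality $(p-1)r+1<0$ is equivalent to $p>1+\frac{2}{n-\mu}=p_{n,\mu}$. This gives $\|Gu\|_{X(t)}\lesssim\|u\|_{X(t)}^p$ and hence $\|Pu\|_{X(t)}\lesssim\|u_0\|_{H^\sigma}+\|u_1\|_{H^{\sigma-1}}+\|u\|_{X(t)}^p$, so that $P$ maps $X(t)$ into itself.

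The Lipschitz estimate is obtained exactly as in Theorem~\ref{main5}: starting from the integral identity~\eqref{Lipschitzdissipation} one applies~\eqref{optidamp3} (and~\eqref{optidamp4timederivative} for the $\partial_t$ part), then the fractional Leibniz rule (Proposition~\ref{fractionalLeibniz}) and the fractional chain rule (Proposition~\ref{Propfractionalchainrulegeneral}, valid since $p>\lceil\sigma\rceil$), and finally H\"older's inequality together with Proposition~\ref{fractionalGagliardoNirenberg}. Because $n\le 2\sigma$, all the H\"older exponents $r_1,\dots,r_6$ can be chosen in $[2,\infty)$ with only lower bounds active, so no further restriction on $p$ appears, and one obtains $\|Pu-Pv\|_{X(t)}\lesssim\|u-v\|_{X(t)}\bigl(\|u\|_{X(t)}^{p-1}+\|v\|_{X(t)}^{p-1}\bigr)$. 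A standard Banach fixed point argument for small data combined with a continuation argument then produces the unique global (in time) energy solution $u\in C\big([0,\infty),H^\sigma(\R^n)\big)\cap C^1\big([0,\infty),H^{\sigma-1}(\R^n)\big)$ (and a local solution for large data as a by-product), and the claimed decay of $\phi$ follows from $\phi(t,x)=e^{rt}u(t,x)$.

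The only point requiring genuine care is the borderline dimension $n=2\sigma$, where $\dot H^\sigma$ fails to embed into $L^\infty$: there all the exponents $r_j$ entering the Gagliardo--Nirenberg and chain rule estimates must be kept finite, and the choices above are arranged so that each $r_j$ stays in a bounded subinterval of $[2,\infty)$; for $n<2\sigma$ one may alternatively estimate $\|u\|_{L^\infty}$ directly. Apart from this bookkeeping no new difficulty arises compared with Theorem~\ref{main5}, and the resulting admissible range is precisely $\max\{p_{n,\mu};\lceil\sigma\rceil\}<p<\infty$.
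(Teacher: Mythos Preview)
Your proposal is correct and takes essentially the same approach as the paper: the paper states the corollary without a proof, simply remarking that ``If $n \leq 2\sigma$, then we can follow the steps of the previous proof and get the following statement,'' and you have carefully unpacked exactly what this means, namely that the upper bounds of the form $r_j\le \frac{2n}{n-2\sigma}$ coming from Gagliardo--Nirenberg become vacuous while all lower bounds (which yield $p>p_{n,\mu}$ and $p>\lceil\sigma\rceil$) remain. Your added discussion of the borderline case $n=2\sigma$ is accurate bookkeeping that the paper leaves implicit.
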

 \begin{remark}
 Under the assumption $n \leq 2\sigma$  we do not have longer an upper bound for $p$ as we have in  Theorem \ref{main5} for $\sigma\in (1, \frac{n}{2})$.
This implies an essential difference to the case $n > 2\sigma$ (see Remark \ref{newremark1}). Now we may avoid a positive lower  bound for $m^2$. The range of admissible space dimensions increases with $\sigma$.
 \end{remark}
 For  $\sigma > \frac{n}{2}$, thanks to Proposition \ref{PropSickelfractional} and  Sobolev's embedding theorem $\| u(s,\cdot)\|_{L^{\infty}}\lesssim  \| u(s,\cdot)\|_{H^{\sigma}(\R^n)}$,
 we may improve the lower bound for $p$ in Corollary \ref{Corlargeregularity1}.
 \begin{theorem}\label{improve1} Consider the Cauchy problem  \eqref{general} with data $(f,g) \in (H^\sigma(\R^n)\times H^{\sigma-1}(\R^n))$ for
 $\sigma > \frac{n}{2} $ and $n \geq 2$.  Assume that the parameter $\mu=\sqrt{n^2-4m^2}$ satisfies $\mu \in [1,n)$, i.e.,  $m\in (0, \frac{\sqrt{n^2-1}}{2}]$.
 Finally, let $p$ satisfy the following condition:
\begin{eqnarray*}  && \max\big\{p_{n,\mu}; \sigma; 2\big\} <p <\infty.
 \end{eqnarray*}
  Then, there exists a constant $\varepsilon_0>0$ such that, for every given small data satisfying
  \[\|f\|_{H^\sigma}+ \|g\|_{ H^{\sigma-1}} \leq \varepsilon\,\,\,\mbox{for}\,\,\, \varepsilon\leq \varepsilon_0,\]
 there exists a uniquely determined global (in time) energy solution \[ \phi \in C\big([0,\infty),H^\sigma(\R^n)\big)\cap C^1\big([0,\infty), H^{\sigma-1}(\R^n)\big).\]  Moreover, the solution $\phi$ satisfies the decay estimate \[  \|\phi(t,\cdot)\|_{H^\sigma} + \|\phi_t(t,\cdot)\|_{H^{\sigma-1}}  \lesssim  e^{\frac{(-n+\mu)t}{2}}\big(\|f\|_{H^\sigma}+ \|g\|_{H^{\sigma-1}} \big). \]
 \end{theorem}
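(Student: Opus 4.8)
\medskip
\noindent\textbf{Proof proposal.}
As in Theorem \ref{main5} and Corollary \ref{Corlargeregularity1}, it is enough to construct a global (in time) small data solution to the transformed problem \eqref{semilieardampedwavedissipationtreatment} with $r=\frac{-n+\mu}{2}<0$ and then to pass back to $\phi$ via $\phi(t,x)=e^{rt}u(t,x)$; note that $m\le\frac{\sqrt{n^2-1}}{2}$ guarantees $\mu=\sqrt{n^2-4m^2}\in[1,n)$, so the estimates of Propositions \ref{proposition1} and \ref{proposition11} for the case $\mu\ge1$ are at our disposal. The plan is to run the Banach fixed point scheme in the same scale of spaces of energy solutions with suitable regularity as in Theorem \ref{main5},
\[ X(t):=\Bigl\{u\in C([0,t],H^\sigma(\R^n))\cap C^1([0,t],H^{\sigma-1}(\R^n)):\ \|u\|_{X(t)}<\infty\Bigr\}, \]
\[ \|u\|_{X(t)}:=\sup_{\tau\in[0,t]}\Bigl\{\|u(\tau,\cdot)\|_{L^2}+\||D|^\sigma u(\tau,\cdot)\|_{L^2}+e^\tau\|u_\tau(\tau,\cdot)\|_{L^2}+e^\tau\||D|^{\sigma-1}u_\tau(\tau,\cdot)\|_{L^2}\Bigr\}, \]
with $Pu:=K_0(t,0,x)\ast_{(x)}u_0+K_1(t,0,x)\ast_{(x)}u_1+Gu$ and $Gu(t,x):=\int_0^t e^{(p-1)rs}K_1(t,s,x)\ast_{(x)}|u(s,x)|^p\,ds$, but to replace \emph{every} appeal to the fractional Gagliardo--Nirenberg inequality in the proofs of Theorems \ref{main2} and \ref{main5} by the Sobolev embedding $\|u(s,\cdot)\|_{L^\infty}\lesssim\|u(s,\cdot)\|_{H^\sigma}$, valid since $\sigma>\frac n2$ (Proposition \ref{PropSickelfractional}). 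It is exactly this substitution that eliminates the upper bound on $p$ present in Theorem \ref{main5} and Corollary \ref{Corlargeregularity1}.

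The first step is the estimate of $P$ on $X(t)$. For the data terms, Propositions \ref{proposition1} and \ref{proposition11} with $s=0$ give $\|K_0(t,0,x)\ast_{(x)}u_0+K_1(t,0,x)\ast_{(x)}u_1\|_{X(t)}\lesssim\|u_0\|_{H^\sigma}+\|u_1\|_{H^{\sigma-1}}$. For the Duhamel term, the components $\|Gu(t,\cdot)\|_{L^2}$ and $e^\tau\|\partial_tGu(t,\cdot)\|_{L^2}$ are estimated as in Theorem \ref{main2} from \eqref{optidamp2} and \eqref{optidamp4timederivative}, now using $\||u(s,\cdot)|^p\|_{L^2}=\|u(s,\cdot)\|_{L^{2p}}^p\le\|u(s,\cdot)\|_{L^\infty}^{p-1}\|u(s,\cdot)\|_{L^2}\lesssim\|u\|_{X(s)}^p$; the time integrals $\int_0^t e^{(p-1)rs}\,ds$ and $\int_0^t e^{(p-1)rs+s}\,ds$ converge because $r<0$, $p>1$ and $p>p_{n,\mu}=1+\frac2{n-\mu}$. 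For the component $\||D|^\sigma Gu(t,\cdot)\|_{L^2}$ I would, as in Theorem \ref{main5}, pass through Lemma \ref{Sickelauxiliary lemma} to trade $|D|^\sigma$ for $\nabla|D|^{\sigma-1}$ and then invoke \eqref{optidamp3} to arrive at
\[ \||D|^\sigma Gu(t,\cdot)\|_{L^2}\lesssim\int_0^t e^{(p-1)rs+s}\,\||D|^{\sigma-1}|u(s,\cdot)|^p\|_{L^2}\,ds; \]
here, instead of Gagliardo--Nirenberg, the fractional chain rule (Proposition \ref{Propfractionalchainrulegeneral}, applicable since $p>\sigma$) together with $H^\sigma\hookrightarrow L^\infty$ yields $\||D|^{\sigma-1}|u(s,\cdot)|^p\|_{L^2}\lesssim\|u(s,\cdot)\|_{L^\infty}^{p-1}\||D|^{\sigma-1}u(s,\cdot)\|_{L^2}\lesssim\|u\|_{X(s)}^p$, and again the $s$-integral converges thanks to $p>p_{n,\mu}$. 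The remaining component $e^\tau\||D|^{\sigma-1}\partial_tGu(t,\cdot)\|_{L^2}$ is handled in the same way using \eqref{optidamp4timederivative}, and produces no new restriction. Altogether $\|Pu\|_{X(t)}\lesssim\|u_0\|_{H^\sigma}+\|u_1\|_{H^{\sigma-1}}+\|u\|_{X(t)}^p$, so $P$ maps $X(t)$ into itself for sufficiently small data.

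For the Lipschitz estimate I would start from the representation \eqref{Lipschitzdissipation}, $Pu-Pv=p\int_0^t e^{(p-1)rs}K_1(t,s,x)\ast_{(x)}\bigl(\int_0^1|v+\tau(u-v)|^{p-2}(v+\tau(u-v))\,d\tau\bigr)(s,x)\,(u-v)(s,x)\,ds$, and bound each norm entering $\|\cdot\|_{X(t)}$ by the same mechanism as above: H\"older's inequality to write the relevant $L^2$ product as $L^\infty\cdot L^2$, the fractional Leibniz rule (Proposition \ref{fractionalLeibniz}) to spread $|D|^{\sigma-1}$ over the product, the fractional chain rule (Proposition \ref{Propfractionalchainrulegeneral}) applied to $w\mapsto|w|^{p-2}w$ --- legitimate precisely because $p>2$ supplies the needed $C^1$-smoothness of this map and $p>\sigma$ the order-$(\sigma-1)$ chain rule --- and finally $H^\sigma\hookrightarrow L^\infty$, with $\bigl\||v+\tau(u-v)|^{p-2}(v+\tau(u-v))\bigr\|_{L^\infty}\lesssim(\|u\|_{X(s)}+\|v\|_{X(s)})^{p-1}$ playing the role previously taken by Gagliardo--Nirenberg. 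This yields $\|Pu-Pv\|_{X(t)}\lesssim\|u-v\|_{X(t)}\bigl(\|u\|_{X(t)}^{p-1}+\|v\|_{X(t)}^{p-1}\bigr)$. Banach's fixed point theorem then gives, for $\|f\|_{H^\sigma}+\|g\|_{H^{\sigma-1}}\le\varepsilon_0$ small, a unique global solution $u$ with $\|u\|_{X(t)}\lesssim\|u_0\|_{H^\sigma}+\|u_1\|_{H^{\sigma-1}}\lesssim\|f\|_{H^\sigma}+\|g\|_{H^{\sigma-1}}$ uniformly in $t$ (and, for large data, a local in time solution); the claimed regularity $\phi\in C([0,\infty),H^\sigma)\cap C^1([0,\infty),H^{\sigma-1})$ and the decay $\|\phi(t,\cdot)\|_{H^\sigma}+\|\phi_t(t,\cdot)\|_{H^{\sigma-1}}\lesssim e^{\frac{(-n+\mu)t}2}(\|f\|_{H^\sigma}+\|g\|_{H^{\sigma-1}})$ follow from $\phi=e^{rt}u$ and $\phi_t=re^{rt}u+e^{rt}u_t$.

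I expect the main obstacle to be the careful bookkeeping of the fractional derivatives of the power nonlinearity and of its difference: the combination of Lemma \ref{Sickelauxiliary lemma}, the fractional Leibniz rule and the fractional chain rule must be arranged so that the full power $p-1$ is always carried by an $L^\infty$-norm --- which, for $\sigma>\frac n2$, costs no integrability and is dominated by $\|u\|_{H^\sigma}$ --- while $|D|^\sigma$ or $|D|^{\sigma-1}$ always falls on a single factor that sits in $X(s)$. This is the only place where the hypothesis $\sigma>\frac n2$ is genuinely used, and it is what allows $p$ to run over the whole interval $(\max\{p_{n,\mu};\sigma;2\},\infty)$ without an upper cut-off; the rest of the argument --- the zone-wise linear estimates, the exponential weights $e^\tau$, and the convergence of the $s$-integrals under $p>p_{n,\mu}$ --- is already supplied by Propositions \ref{proposition1} and \ref{proposition11}.
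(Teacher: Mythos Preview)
Your proposal is correct and follows essentially the same route as the paper: the same weighted space $X(t)$ from Theorem \ref{main5}, the same reduction of $\||D|^\sigma Gu\|_{L^2}$ to $\int_0^t e^{(p-1)rs+s}\||D|^{\sigma-1}|u(s,\cdot)|^p\|_{L^2}\,ds$ via Lemma \ref{Sickelauxiliary lemma} and \eqref{optidamp3}, and the same replacement of Gagliardo--Nirenberg by the embedding $H^\sigma\hookrightarrow L^\infty$ combined with a fractional chain rule, both for the mapping and for the Lipschitz estimate. One minor point of bookkeeping: where you invoke Proposition \ref{Propfractionalchainrulegeneral} with $r_1=\infty$ (which is formally outside its stated range $1<r_1<\infty$), the paper instead appeals to Corollary \ref{Corfractionalhomogeneous} to obtain $\||D|^{\sigma-1}|u|^p\|_{L^2}\lesssim\|u\|_{\dot H^{\sigma-1}}\|u\|_{L^\infty}^{p-1}$ and, in the Lipschitz step, $\||D|^{\sigma-1}(|w|^{p-2}w)\|_{L^2}\lesssim\|w\|_{\dot H^{\sigma-1}}\|w\|_{L^\infty}^{p-2}$.
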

 \begin{proof}
Here we only sketch the differences to the proof of Theorem \ref{main5}. Let $u\in X(t)$ with $X(t)$ defined as in Theorem \ref{main5}.
It is clear that $u(t,\cdot)\in H^{\sigma}(\R^n)\subset L^{\infty}(\R^n)$.

By using Corollary \ref{Corfractionalhomogeneous}  we may estimate for $p> \sigma-1$
\[\| |D|^{\sigma -1} |u(s,\cdot)|^{p}\|_{ L^2 } \lesssim \|  u(s,\cdot)\|_{\dot{H}^{\sigma-1}} \|u(s,\cdot)\|_{L^\infty}^{p-1} \lesssim \|u\|_{X(s)}^p,\]
and conclude
\[\||D|^{\sigma-1}  Gu(t,\cdot)\|_{L^2} \lesssim \|u\|_{X(t)}^p\]
for all $p> \max\{p_{n,\mu}; \sigma-1\}.$

The application of fractional Leibniz rule from Proposition \ref{Propfractionalchainrulegeneral} yields
\begin{eqnarray*}
&& \|Pu-Pv\|_{\dot{H}^\sigma}\\ && \quad \lesssim \int_0^t e^{(p-1)rs+s} \||D|^{\sigma-1}(u-v)(s,\cdot)\|_{L^{2}} \int_0^1 \big\||v + \tau (u-v)|^{p-2} (v + \tau (u-v)) \big\|_{L^{\infty}} d\tau \,ds\\
&& \quad \quad + \int_0^t e^{(p-1)rs+s} \|(u-v)(s,\cdot)\|_{L^{\infty}} \int_0^1 \big\||D|^{\sigma-1}\big(|v + \tau (u-v)|^{p-2} (v + \tau (u-v))\big) \big\|_{L^{2}} d\tau \,ds.
\end{eqnarray*}
Putting $w=v + \tau (u-v)$ and applying  Corollary \ref{Corfractionalhomogeneous} for $p> \max\{\sigma, 2\}$ we get
\[\||D|^{\sigma-1}|w|^{p-2}w  \|_{L^{2}}\lesssim  \|  w\|_{\dot{H}^{\sigma-1}}\| w \|_{L^{\infty}}^{p-2},\]
and thanks to  Sobolev's embedding theorem $\| w(s,\cdot)\|_{L^{\infty}}\lesssim  \| w(s,\cdot)\|_{H^{\sigma}}$ we conclude
for all $p> \max\{p_{n,\mu}; 2; \sigma\}$
\[\|P u-P v\|_{\dot{H}^{\sigma}}
     \lesssim \|u-v\|_{X(t)} \bigl(\|u\|_{X(t)}^{p-1}+\|v\|_{X(t)}^{p-1}\bigr)
\]
for any $u,v\in X(t)$.
\end{proof}
 If $ p_{n,\mu}>\sigma + 1 $, then we may improve the lower bound for $p$ in Theorem \ref{improve1}.
\begin{theorem}\label{main1} Consider the Cauchy problem  \eqref{general} with data $(f,g) \in (H^\sigma(\R^n)\times H^{\sigma-1}(\R^n))$ for
 $\sigma > \frac{n}{2} $ and $n \geq 2$. Let $p>\sigma +1$. Assume that the parameter $\mu=\sqrt{n^2-4m^2}$ satisfies $\mu \in [1,n)$, i.e.,  $m\in (0, \frac{\sqrt{n^2-1}}{2}]$.
  Then, there exists a constant $\varepsilon_0>0$ such that, for every given small data satisfying
  \[\|f\|_{H^\sigma}+ \|g\|_{ H^{\sigma-1}} \leq \varepsilon\,\,\,\mbox{for}\,\,\, \varepsilon\leq \varepsilon_0,\]
 there exists a uniquely determined global (in time) energy solution \[ \phi \in C\big([0,\infty),H^\sigma(\R^n)\big)\cap C^1\big([0,\infty), H^{\sigma-1}(\R^n)\big).\]  Moreover, the solution $\phi$ satisfies the decay estimate \[  \|\phi(t,\cdot)\|_{H^\sigma} + \|\phi_t(t,\cdot)\|_{H^{\sigma-1}}  \lesssim  e^{\frac{(-n+\mu)t}{2}}\big(\|f\|_{H^\sigma}+ \|g\|_{H^{\sigma-1}} \big). \]
 \end{theorem}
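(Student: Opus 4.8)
The plan is to run the same contraction scheme as in the proofs of Theorems~\ref{main5} and \ref{improve1}: reduce, via $\phi(t,x)=e^{rt}u(t,x)$ with $r=\frac{-n+\mu}{2}<0$, to global existence of small data solutions of \eqref{semilieardampedwavedissipationtreatment}; put $Pu=K_0(t,0,\cdot)\ast_{(x)}u_0+K_1(t,0,\cdot)\ast_{(x)}u_1+Gu$ with $Gu(t,x)=\int_0^t e^{(p-1)rs}K_1(t,s,\cdot)\ast_{(x)}|u(s,\cdot)|^p\,ds$; show $P$ is a contraction on a small ball and close by a continuation argument; finally translate back. Two structural changes, both made possible by the large regularity $\sigma>\frac n2$ together with $p>\sigma+1$, allow us to dispense with the lower bound $p>p_{n,\mu}$ present in Theorems~\ref{main5} and \ref{improve1}. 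First, I would estimate $|D|^\sigma Gu$ \emph{directly} through \eqref{optidamp2} instead of through $\nabla|D|^{\sigma-1}$ and \eqref{optidamp3}: since $\sigma>\frac n2$ and $p>\sigma$, the fractional chain rule (Corollary~\ref{Corfractionalhomogeneous}) applies at the full homogeneous index $\sigma$ and gives $\||D|^\sigma|u|^p\|_{L^2}\lesssim\|u\|_{\dot H^\sigma}\|u\|_{L^\infty}^{p-1}$, and, crucially, \eqref{optidamp2} carries no exponentially increasing factor $e^s$, so the remaining time integral $\int_0^t e^{(p-1)rs}\,ds$ converges merely from $r<0$, $p>1$. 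Second, I would drop the $e^t$-weight on the $u_t$-component of the solution norm, working in
\[ X(t):=\bigl\{u\in C([0,t],H^\sigma(\R^n))\cap C^1([0,t],H^{\sigma-1}(\R^n)):\ \|u\|_{X(t)}:=\sup_{\tau\in[0,t]}\bigl(\|u(\tau,\cdot)\|_{H^\sigma}+\|u_\tau(\tau,\cdot)\|_{H^{\sigma-1}}\bigr)<\infty\bigr\}; \]
this is legitimate because $\phi_t=re^{rt}u+e^{rt}u_t$, so the stated decay $e^{(-n+\mu)t/2}$ for $\phi_t$ only requires boundedness of $\|u_t(t,\cdot)\|_{H^{\sigma-1}}$.

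For the linear data part I would invoke Propositions~\ref{proposition1} and \ref{proposition11} at $s=0$: \eqref{optidamp1} and \eqref{optidamp1timederivative} for the $K_0$-term, and \eqref{optidamp2} together with \eqref{optidamp3} (the latter at $s=0$, where $e^s=1$, to recover the extra derivative on $K_1(t,0,\cdot)\ast_{(x)}u_1$ since $u_1\in H^{\sigma-1}$ only) and \eqref{optidamp4timederivative}, which yields $\|K_0(t,0,\cdot)\ast_{(x)}u_0+K_1(t,0,\cdot)\ast_{(x)}u_1\|_{X(t)}\lesssim\|u_0\|_{H^\sigma}+\|u_1\|_{H^{\sigma-1}}$. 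For $Gu$ I would use \eqref{optidamp2} and the Sobolev embedding $H^\sigma\hookrightarrow L^\infty$ to get $\|Gu(t,\cdot)\|_{H^\sigma}\lesssim\|u\|_{X(t)}^p$ as sketched above, and for $\partial_t Gu$ use \eqref{optidamp4timederivative} at indices $\sigma-1$ and $0$ together with Corollary~\ref{Corfractionalhomogeneous}, which reduces the bound to $\|u\|_{X(t)}^p\,e^{-t}\int_0^t e^{((p-1)r+1)s}\,ds$; a short case distinction on the sign of $(p-1)r+1$ shows $e^{-t}\int_0^t e^{((p-1)r+1)s}\,ds\lesssim1$ for \emph{every} $p>1$, so no further restriction on $p$ appears. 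This gives $\|Pu\|_{X(t)}\lesssim\|u_0\|_{H^\sigma}+\|u_1\|_{H^{\sigma-1}}+\|u\|_{X(t)}^p$, hence $P:X(t)\to X(t)$.

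The Lipschitz estimate is where the hypothesis $p>\sigma+1$ genuinely enters, and getting this step right is the main obstacle. Writing, as in \eqref{Lipschitzdissipation}, $Pu-Pv=p\int_0^t e^{(p-1)rs}K_1(t,s,\cdot)\ast_{(x)}\bigl(F(s,\cdot)(u-v)(s,\cdot)\bigr)\,ds$ with $w=v+\tau(u-v)$ and $F=\int_0^1|w|^{p-2}w\,d\tau$, I would estimate $\|Pu-Pv\|_{\dot H^\sigma}$ via \eqref{optidamp2}, the fractional Leibniz rule (Proposition~\ref{fractionalLeibniz}) and the fractional chain rule (Corollary~\ref{Corfractionalhomogeneous}): the Leibniz rule splits $\||D|^\sigma(F(u-v))\|_{L^2}$ into $\|F\|_{L^\infty}\||D|^\sigma(u-v)\|_{L^2}+\||D|^\sigma F\|_{L^2}\|u-v\|_{L^\infty}$, where $\|F\|_{L^\infty}\lesssim\|w\|_{L^\infty}^{p-1}$, $\|u-v\|_{L^\infty}\lesssim\|u-v\|_{H^\sigma}$, and $\||D|^\sigma F\|_{L^2}\lesssim\|w\|_{\dot H^\sigma}\|w\|_{L^\infty}^{p-2}$ — this last bound is exactly the point at which the nonlinearity $z\mapsto|z|^{p-2}z$ must be smooth enough, i.e.\ it forces $p-1>\sigma$. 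Since \eqref{optidamp2} again supplies no $e^s$, the time integral is harmless, and one obtains $\|Pu-Pv\|_{\dot H^\sigma}\lesssim\|u-v\|_{X(t)}(\|u\|_{X(t)}^{p-1}+\|v\|_{X(t)}^{p-1})$; the $L^2$-part and the two time-derivative parts of $\|Pu-Pv\|_{X(t)}$ are treated exactly as the corresponding terms of $Gu$ (using \eqref{optidamp4timederivative} and the same elementary integral bound) and bring no new constraint on $p$. Assembling everything gives the contraction estimate $\|Pu-Pv\|_{X(t)}\lesssim\|u-v\|_{X(t)}(\|u\|_{X(t)}^{p-1}+\|v\|_{X(t)}^{p-1})$; Banach's fixed point theorem on a small ball plus the standard continuation argument then yield the unique global solution $u$ (and a local solution for large data), and via $\phi=e^{rt}u$ the claimed regularity and the decay estimate $\|\phi(t,\cdot)\|_{H^\sigma}+\|\phi_t(t,\cdot)\|_{H^{\sigma-1}}\lesssim e^{\frac{(-n+\mu)t}{2}}(\|f\|_{H^\sigma}+\|g\|_{H^{\sigma-1}})$. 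I expect the only delicate point to be the bookkeeping in this fractional-calculus step: checking that Propositions~\ref{fractionalLeibniz}, \ref{fractionalGagliardoNirenberg} and Corollary~\ref{Corfractionalhomogeneous} are applied with admissible parameters, which is precisely what pins the threshold to $p>\sigma+1$.
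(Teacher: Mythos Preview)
Your proposal is correct and follows essentially the same route as the paper: the same unweighted space $X(t)$, the same direct use of \eqref{optidamp2} at the full index $\sigma$ (so that no $e^s$-factor from \eqref{optidamp3} appears and the time integral is controlled solely by $r<0$), and the same reliance on Proposition~\ref{PropSickelfractional}/Corollary~\ref{Corfractionalhomogeneous} together with the algebra structure of $H^\sigma$ for the Lipschitz step, which is where $p>\sigma+1$ enters. The only cosmetic difference is that the paper bounds $e^{s-t}\le1$ in \eqref{optidamp4timederivative} to reduce the $\partial_t Gu$ estimate directly to $\int_0^t e^{(p-1)rs}\,ds$, avoiding your case distinction on the sign of $(p-1)r+1$.
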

 \begin{remark}
In Theorem \ref{main1}, the solution  $\phi$
  satisfies the same decay estimate as the solution to the corresponding linear Cauchy problem.
   In Theorem 0.1 of \cite{Y} we have a loss of decay.
\end{remark}
\begin{proof}
In the proof of Theorem \ref{main1} we shall use Proposition \ref{PropSickelfractional} and Corollary \ref{Corfractionalhomogeneous}.\\
Here we only sketch the differences to the proof of Theorem \ref{main2}.
We define the  space
\begin{eqnarray*} && X(t) := \bigl\{ u\in \mathcal{C}\big([0,t], H^\sigma(\R^n) \big) \cap \mathcal{C}^1\big([0,t], H^{\sigma-1}(\R^n) \big)\,\\ && \qquad  : \ \|u\|_{X(t)} :=\sup_{\tau\in[0,t]} \{    \| u_\tau(\tau,\cdot)\|_{H^{{\sigma-1}}}
+ \| u(\tau,\cdot)\|_{{H^{\sigma}}}  \}<\infty \bigr\}
\end{eqnarray*}
with the usual norm in $H^{\sigma}(\mathbb{R}^n)$. After using Propositions \ref{proposition1} and \ref{proposition11} for $s=0$ we have for any $u\in X(t)$
the estimate
\begin{equation}\label{eq:Kdata11}
\| K_0(t,0, x) \ast_{(x)} u_0(x) + K_1(t,0, x) \ast_{(x)} u_1(x) \|_{X(t)} \lesssim\,\|u_0\|_{H^{\sigma}} + \|u_1\|_{H^{\sigma-1}}.
\end{equation}
For $j=0,1$  Minkowski's integral inequality implies
 \[ \|\partial_t^j Gu(t,x)\|_{L^2} \lesssim \int_0^t e^{(p-1)rs} \|\partial_t^j K_1(t,s,x) \ast_{(x)}|u(s,x)|^{p}\|_{L^2}\,ds\lesssim
 \int_0^t e^{(p-1)rs}
 \| |u(s,x)|^{p}\|_{L^2}\,ds.\]
Now we may estimate $\| |u(s,\cdot)|^{p}\|_{L^2}=\| u(s,\cdot)\|_{L^{2p}}^p  \lesssim \|u\|_{X(s)}^p$ and thanks to $r<0$ we may conclude for all $p>1$ the estimate
 \[ \|\partial_t^j Gu(t,x)\|_{L^2} \lesssim \|u\|_{X(s)}^p.\]
 Now, under the assumption $p>\sigma>\frac{n}{2}$,  thanks to Proposition \ref{PropSickelfractional}
 we may directly estimate $\| |D|^\sigma |u(s,\cdot)|^{p}\|_{ L^2 }$ in terms of $\|  u(s,\cdot)\|_{H^{\sigma}}$. So, since we do no longer need to apply Gagliardo-Nirenberg inequality as done in the proof of Theorems \ref{main2} and \ref{main5}, we may change the arguments to estimate $\||D|^\sigma Gu(t,x)\|_{L^2}$ and $\||D|^{\sigma-1}\partial_t Gu(t,\cdot)\|_{L^2}$.
 Indeed,  using the estimate \eqref{optidamp2}  gives
  \begin{eqnarray*}
  && \||D|^\sigma Gu(t,x)\|_{L^2} \lesssim  \int_0^t e^{(p-1)rs} \||D|^\sigma K_1(t,s,x) \ast_{(x)}|u(s,x)|^{p}\|_{L^2}\,ds \\
  && \qquad \lesssim
 \int_0^t e^{(p-1)rs}
 \| |D|^\sigma |u(s,x)|^{p}\|_{L^2}\,ds.
 \end{eqnarray*}
Moreover, thanks to  Sobolev's embedding theorem $\| u(s,\cdot)\|_{L^{\infty}}\lesssim  \| u(s,\cdot)\|_{H^{\sigma}}$  and by using Proposition \ref{PropSickelfractional}  we may estimate for $p> \sigma$
\[\| |D|^\sigma |u(s,\cdot)|^{p}\|_{ L^2 } \lesssim \|  u(s,\cdot)\|_{H^{\sigma}} \|u(s,\cdot)\|_{L^\infty}^{p-1} \lesssim \|u\|_{X(s)}^p.\]
 Hence, for $p>\sigma$ it follows
\[\|Gu(t,\cdot)\|_{H^{\sigma}} \lesssim \|u\|_{X(t)}^p\int_0^t  e^{(p-1)rs} ds\lesssim \|u\|_{X(t)}^p.\]
Similarly,
by applying Corollary \ref{Corfractionalhomogeneous} with $p>\sigma-1$
we may estimate
\[\| |D|^{\sigma-1} |u(s,\cdot)|^{p}\|_{L^2}\lesssim  \| |D|^{\sigma-1} u(s,\cdot)\|_{L^2}\| u(s,\cdot)\|_{L^{\infty}}^{p-1}.\]
Using  that $e^{s-t}$ is bounded in estimate \eqref{optidamp4timederivative}  we conclude
\begin{eqnarray*}
&&\| |D|^{\sigma-1} |u(s,\cdot)|^{p}\|_{L^2}\lesssim  \| |D|^{\sigma-1} u(s,\cdot)\|_{L^2}\| u(s,\cdot)\|_{L^{\infty}}^{p-1}\lesssim \|u(s,\cdot)\|_{H^\sigma}^p, \\
&& \||D|^{\sigma-1}\partial_t Gu(t,\cdot)\|_{L^2} \lesssim  \|u\|_{X(t)}^p \int_0^t e^{(p-1)rs} ds \lesssim \|u\|_{X(t)}^p \end{eqnarray*}
due to $r<0$ and $p >1$, respectively.

Now we have to estimate $\|P u - P v\|_{H^\sigma}$ and $\|\partial_t(P u - P v)\|_{H^{\sigma-1}}$.
Due to (\ref{Lipschitzdissipation}) we have
\begin{eqnarray*}
P u - P v=p \int_0^t e^{(p-1)rs} K_1(t,s,x) \ast_{(x)} \Big(\int_0^1 |v + \tau (u-v)|^{p-2} (v + \tau (u-v)) d\tau\Big)(s,x) (u-v)(s,x) \,ds.
\end{eqnarray*}
By using  \eqref{optidamp2} and  that $H^{\sigma}(\R^n)$ is an algebra for $\sigma>\frac{n}{2}$ we get
\[ \|P u - P v)\|_{H^\sigma}\lesssim \int_0^t e^{(p-1)rs} \Big(\int_0^1 \big\||v + \tau (u-v)|^{p-2} (v + \tau (u-v))\big\|_{H^\sigma} d\tau\Big)(s) \|(u-v)(s,\cdot)\|_{H^\sigma} \,ds.
\]
Under the assumption $p-1>\sigma$ it  follows from Proposition \ref{PropSickelfractional} that
\begin{eqnarray*}
\Big\||v + \tau (u-v)|^{p-2} (v + \tau (u-v)) \Big\|_{H^{\sigma}}
 \lesssim \Big\|v + \tau (u-v)\Big\|_{H^{\sigma}}  \Big\|v + \tau (u-v)) \Big\|_{L^{\infty}}^{p-2}.
\end{eqnarray*}
Due  to $p>\sigma+1$, it  follows from  Sobolev's embedding theorem that
\begin{eqnarray*}
&& \|P u - P v\|_{H^\sigma}\lesssim  \int_0^t e^{(p-1)rs}
\Big\|(v + \tau (u-v))\Big\|_{H^{\sigma}}^{p-1}\|u-v\|_{X(s)} ds \\
&& \qquad \lesssim \big(\|u\|_{X(t)}^{p-1} + \|v\|_{X(t)}^{p-1}\big)\|u-v\|_{X(t)}\int_0^t e^{(p-1)rs}  ds \lesssim \|u-v\|_{X(t)}\big(\|u\|_{X(t)}^{p-1} + \|v\|_{X(t)}^{p-1}\big).
\end{eqnarray*}
In the same way we may conclude as above
\begin{eqnarray*}
&& \|\partial_t(P u - P v)\|_{H^{\sigma-1}}
  \lesssim \big(\|u\|_{X(t)}^{p-1} + \|v\|_{X(t)}^{p-1}\big)\|u-v\|_{X(t)}\int_0^t e^{(p-1)rs}  ds \lesssim \|u-v\|_{X(t)}\big(\|u\|_{X(t)}^{p-1} + \|v\|_{X(t)}^{p-1}\big).
\end{eqnarray*}
Summarizing we have
     \begin{equation*}
\|Pu-Pv\|_{X(t)}
     \lesssim \|u-v\|_{X(t)} \bigl(\|u\|_{X(t)}^{p-1}+\|v\|_{X(t)}^{p-1}\bigr)
\end{equation*}
for any~$u,v\in X(t)$.
Consequently, the operator $P$ maps $X(t)$ into itself.
and the existence of a
uniquely determined global (in time) energy solution $u$ follows by the Lipschitz property and by  a continuation argument for small data.
Moreover, we conclude a local (in time) existence result for large data as well.
The decay estimates of Theorem \ref{main1} follow by using the relation $\phi(t,x)=e^{r t}u(t,x)$.
This completes the proof.
\end{proof}

\subsection{Global existence of small data solutions: Case $m\in \big(\frac{\sqrt{n^2-1}}{2}, \frac{n}{2}\big)$.} \label{Sec3.3}

 In this section we consider the case of a non-effective dissipation in \eqref{semilieardampedwavedissipationtreatment}, i.e., the parameter $\mu=\sqrt{n^2-4m^2}$ satisfies $\mu \in (0,1)$, that implies $m\in \big(\frac{\sqrt{n^2-1}}{2},\frac{n}{2}\big)$.
 In general, one might expect additional restrictions on the power non-linearity for non-effectively damped models in comparison with effectively damped models.
 But this will be not the case for the models we shall treat in this section. The reason is that the exponential function at the right-hand side of  \eqref{semilieardampedwavedissipationtreatment} decays faster if  $\mu $ becomes smaller.
\begin{theorem}\label{noneffective}  Consider for $n \geq 2$ the Cauchy problem \eqref{general} with
$m\in \big(\frac{\sqrt{n^2-1}}{2}, \frac{n}{2}\big)$ and
data $f \in H^1(\R^n)$ and $g \in L^2(\R^n)$. Let $p>\frac{n+1}{n-1}$  and $p\leq \frac{n}{n-2}$ for $n\geq 3$.
Then, there exists a constant $\varepsilon_0>0$ such that, for every small data satisfying
  \[\|f\|_{H^1}+ \|g\|_{L^2}\leq \varepsilon\,\,\,\mbox{for}\,\,\, \varepsilon\leq \varepsilon_0,\]
  there exists a uniquely determined global (in time) energy solution \[ \phi \in C\big([0,\infty),H^1(\R^n)\big) \cap C^1\big([0,\infty),L^2(\R^n)\big).\]  Moreover, the solution $\phi$ satisfies the estimates \eqref{optidamp3energy} and \eqref{optidamp4energy}
  for $\gamma=1$.
  \end{theorem}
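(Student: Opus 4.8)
The plan is to follow the scheme of the proof of Theorem~\ref{main2}, replacing the solution space by one adapted to the non-effective regime $\mu\in(0,1)$, in which the contribution of $K_0$ to the linear solution grows like $e^{(1-\mu)t/2}$. As there, it suffices to construct a global small data solution of \eqref{semilieardampedwavedissipationtreatment}. First I would introduce
\[ X(t):=\Bigl\{u\in \mathcal{C}\bigl([0,t],H^1(\R^n)\bigr)\cap \mathcal{C}^1\bigl([0,t],L^2(\R^n)\bigr)\ :\ \|u\|_{X(t)}:=\sup_{\tau\in[0,t]}e^{-\frac{(1-\mu)\tau}{2}}\bigl(\|u(\tau,\cdot)\|_{H^1}+\|u_\tau(\tau,\cdot)\|_{L^2}\bigr)<\infty\Bigr\} \]
and the operator
\[ Pu(t,x):=K_0(t,0,x)\ast_{(x)}u_0(x)+K_1(t,0,x)\ast_{(x)}u_1(x)+Gu(t,x),\qquad Gu(t,x):=\int_0^t e^{(p-1)rs}K_1(t,s,x)\ast_{(x)}|u(s,x)|^p\,ds, \]
with $r=\frac{-n+\mu}{2}<0$. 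For the data terms I would use Proposition~\ref{proposition1} and Proposition~\ref{proposition11} in the range $\mu\in(0,1)$, that is, the estimates \eqref{optidamp}, \eqref{optidamptimederivative}, \eqref{optidamp2}, \eqref{optidampwithloss}, \eqref{optidamp3timederivative}, to obtain $\|K_0(t,0,x)\ast_{(x)}u_0+K_1(t,0,x)\ast_{(x)}u_1\|_{X(t)}\lesssim\|u_0\|_{H^1}+\|u_1\|_{L^2}$; the weight $e^{-(1-\mu)\tau/2}$ precisely absorbs the growth of the $K_0$ term, while the time derivative of the data part in fact decays like $e^{-\mu t}\le e^{(1-\mu)t/2}$.

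The core of the argument is the nonlinear estimate $\|Gu\|_{X(t)}\lesssim\|u\|_{X(t)}^p$. By Minkowski's integral inequality and \eqref{optidamp2}, \eqref{optidampwithloss}, \eqref{optidamp3timederivative}, together with $\||u(s,\cdot)|^p\|_{L^2}=\|u(s,\cdot)\|_{L^{2p}}^p\lesssim\|u(s,\cdot)\|_{L^2}^{p(1-\theta)}\|\nabla u(s,\cdot)\|_{L^2}^{p\theta}\lesssim e^{\frac{(1-\mu)ps}{2}}\|u\|_{X(s)}^p$ from the Gagliardo--Nirenberg inequality with $\theta=n\bigl(\frac12-\frac1{2p}\bigr)$ (which forces $p\le\frac{n}{n-2}$ for $n\ge3$ and is harmless for $n=2$), each of the three components of $\|Gu\|_{X(t)}$ reduces to a time integral $\int_0^t e^{\alpha s}\,ds$. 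The binding exponent comes from the $\dot{H}^1$ component: since \eqref{optidampwithloss} produces the loss factor $e^{(1+\mu)s/2}e^{(1-\mu)t/2}$, one is led to $\alpha=(p-1)r+\frac{1+\mu}{2}+\frac{(1-\mu)p}{2}=\frac{p(1-n)+n+1}{2}$, which is negative exactly when $p>\frac{n+1}{n-1}$, the hypothesis of the theorem; then $\|\nabla Gu(t,\cdot)\|_{L^2}\lesssim e^{(1-\mu)t/2}\|u\|_{X(t)}^p$. For the $L^2$ and the $\partial_t$ components the corresponding exponents are $\frac{p(1-n)+n-\mu}{2}$ and $\frac{p(1-n)+n+\mu}{2}$, both negative under the same restriction, so $\|Gu\|_{X(t)}\lesssim\|u\|_{X(t)}^p$, and hence $\|Pu\|_{X(t)}\lesssim\|u_0\|_{H^1}+\|u_1\|_{L^2}+\|u\|_{X(t)}^p$.

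For the contraction estimate I would start from the representation \eqref{Lipschitzdissipation} of $Pu-Pv$, apply H\"older's inequality with the exponents $r_1=\frac{2p}{p-1}$ and $r_2=2p$, and then the Gagliardo--Nirenberg inequality exactly as in the proof of Theorem~\ref{main2}; the same three time integrals reappear with the same exponents, so that $\|Pu-Pv\|_{X(t)}\lesssim\|u-v\|_{X(t)}\bigl(\|u\|_{X(t)}^{p-1}+\|v\|_{X(t)}^{p-1}\bigr)$ for all $u,v\in X(t)$, the admissibility of these choices requiring $n\ge2$. Banach's fixed point theorem on a small ball of $X(t)$ together with the usual continuation argument then gives the unique global solution for small data, and the relation $\phi=e^{rt}u$ with $r=\frac{-n+\mu}{2}$ turns the bound $\|u(t,\cdot)\|_{H^1}+\|u_t(t,\cdot)\|_{L^2}\lesssim e^{(1-\mu)t/2}(\|f\|_{H^1}+\|g\|_{L^2})$ into the estimates \eqref{optidamp3energy} and \eqref{optidamp4energy} with $\gamma=1$. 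The main obstacle is exactly the bookkeeping of exponents sketched above: one must check that the decay factor $e^{(p-1)rs}$ on the right-hand side --- which, as observed in the text, is the faster the smaller $\mu$ is --- dominates the loss factors $e^{(1\pm\mu)s/2}$ intrinsic to the non-effective $K_1$ estimates, and balancing against the worst of these is precisely what singles out the lower bound $p>\frac{n+1}{n-1}$.
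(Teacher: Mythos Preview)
Your proposal is correct and follows essentially the same approach as the paper. The only difference is that the paper weights $\|u_\tau(\tau,\cdot)\|_{L^2}$ by $e^{\mu\tau}$ rather than by your common weight $e^{-(1-\mu)\tau/2}$, thereby tracking the sharper decay $e^{-\mu t}$ from \eqref{optidamp3timederivative}; since the nonlinear estimate for $\||u|^p\|_{L^2}$ does not involve $u_\tau$, this refinement is immaterial for closing the fixed-point argument and for obtaining \eqref{optidamp3energy}--\eqref{optidamp4energy}.
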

  \begin{remark}
If we compare the range of admissible powers $p$ in Theorems \ref{main2} and \ref{noneffective} we have that $p_{n,\mu}=1+  \frac{2}{n-\mu}> 1+  \frac{2}{n-1}=\frac{n+1}{n-1}$ for all $\mu\in [1,n)$ and $n\geq 2$.
\end{remark}
\begin{proof}
It is enough to prove the global existence of small data solutions to \eqref{semilieardampedwavedissipationtreatment}.
We define the  space
\begin{eqnarray*} && X(t) := \bigl\{ u\in \mathcal{C}\big([0,t], H^1(\R^n) \big) \cap \mathcal{C}^1\big([0,t], L^2(\R^n) \big)\, \\ && \qquad : \ \|u\|_{X(t)} :=\sup_{\tau\in[0,t]} \big\{e^{\frac{(\mu-1)\tau}{2}}\|u(\tau,\cdot)\|_{H^1}+ e^{\mu\tau}\| u_\tau(\tau,\cdot)\|_{L^2}\big\}<\infty \bigr\}. \end{eqnarray*}
For any $u\in X(t)$ we define
\[ Pu(t,x) :=  K_0(t,0, x) \ast_{(x)} u_0(x) + K_1(t,0, x) \ast_{(x)} u_1(x)  + Gu(t,x), \]
where
\[Gu(t,x)=\int_0^t e^{(p-1)rs}K_1(t,s,x) \ast_{(x)}|u(s,x)|^{p}\,ds.\]
Using Propositions \ref{proposition1} and \ref{proposition11} for $s=0$ we have
\begin{equation}\label{eq:Kdatanoneffective}
\| K_0(t,0, x) \ast_{(x)} u_0(x) + K_1(t,0, x) \ast_{(x)} u_1(x) \|_{X(t)} \lesssim\,\|u_0\|_{H^1} + \|u_1\|_{L^2}.
\end{equation}
 Using Minkowski's integral inequality and \eqref{optidamp2} gives
 \[\|Gu(t,x)\|_{L^2} \lesssim \int_0^t e^{(p-1)rs}\|K_1(t,s,x) \ast_{(x)}|u(s,x)|^{p}\|_{L^2}\,ds\lesssim \int_0^t e^{(p-1)rs}
 \| |u(s,x)|^{p}\|_{L^2}\,ds.\]
Now Gagliardo-Nirenberg inequality  comes into play.
We may estimate
\[\| |u(s,\cdot)|^{p}\|_{L^2}=\|u(s,\cdot)\|_{L^{2p}}^p\lesssim
\| u(s,\cdot)\|_{L^2}^{p(1-\theta)} \| \nabla u(s,\cdot)\|_{L^2}^{p\theta} \lesssim e^{\frac{(1-\mu)ps}{2}}\|u\|_{X(s)}^p,\]
where
\begin{equation}\label{thetaNone}
 \theta= n\Big(\frac12-\frac1{2p}\Big), \qquad 2p \leq \begin{cases}
\infty & \text{if~$n\leq 2$,}\\
\frac{2n}{n-2} & \text{if~$n\geq 3$.}
\end{cases}
\end{equation}
Hence,
\[e^{\frac{(\mu-1)t}{2}}\|Gu(t,\cdot)\|_{L^2} \lesssim \|u\|_{X(t)}^p\int_0^t e^{(p-1)\left(r+\frac{1-\mu}{2}\right)s} ds\lesssim
 \|u\|_{X(t)}^p,\]
thanks to $r+\frac{1-\mu}{2}<0$ and $p>1$. Now, after using \eqref{optidampwithloss} and
  \eqref{optidamp3timederivative} we obtain for $p>\frac{n+1}{n-1}$ the estimate
\[e^{\frac{(\mu-1)t}{2}}\|\nabla Gu(t,\cdot)\|_{L^2} \lesssim  \|u\|_{X(t)}^p \int_0^t e^{(p-1)rs +\frac{(\mu+1)s}{2}+\frac{(1-\mu)ps}{2}} ds
 \lesssim  \|u\|_{X(t)}^p \int_0^t e^{(p-1)\left(r+\frac{1-\mu}{2}\right)s +s} ds\lesssim \|u\|_{X(t)}^p, \]
and
\[ e^{\mu t}\|\partial_t Gu(t,\cdot)\|_{L^2} \lesssim  \|u\|_{X(t)}^p \int_0^t e^{(p-1)rs +\mu s +\frac{(1-\mu)ps}{2}} ds \lesssim \|u\|_{X(t)}^p. \]
Therefore, it follows
\begin{equation} \label{eq:mappingdissipationnon-effective}
\|Pu\|_{X(t)}
     \lesssim\,\|u_0\|_{H^1} + \|u_1\|_{L^2}+ \|u\|_{X(t)}^{p}.
     \end{equation}
To derive a Lipschitz condition we recall
\begin{equation} \label{LipschitzdissipationNone}
\begin{cases}
& P u - P v= G u - G v=  \int_0^t e^{(p-1)rs} K_1(t,s,x) \ast_{(x)}\big(|u(s,x)|^{p} - |v(s,x)|^p \big)\, ds \\
& \quad =p \int_0^t e^{(p-1)rs} K_1(t,s,x) \ast_{(x)} \big(\int_0^1 |v + \tau (u-v)|^{p-2} (v + \tau (u-v)) d\tau\big)(s,x) (u-v)(s,x)\, ds.
\end{cases}
\end{equation}
Thanks to Proposition \ref{proposition11} and using H\"older's inequality  we get
\[  \|P u - P v\|_{L^2} \lesssim
\int_0^t e^{(p-1)rs} \Big(\int_0^1 \big\||v + \tau (u-v)|^{p-1}\big\|_{L^{2p'}} d\tau\Big)(s) \|(u-v)(s,\cdot)\|_{L^{2p}} \,ds, \]
where $p'=\frac{p}{p-1}$. Hence,
\[ \big\||v + \tau (u-v)|^{p-1}\big\|_{L^{2p^{'}}}=\big\|v + \tau (u-v)|\big\|_{L^{2p}}^{p-1}.\]
 Applying Gagliardo-Nirenberg inequality yields
\begin{eqnarray*}
&& e^{\frac{(\mu-1)t}{2}}\|P u - P v\|_{L^2}  \lesssim e^{\frac{(\mu-1)t}{2}} \int_0^t e^{(p-1)rs} \Big(\int_0^1 \|v + \tau (u-v)|\|^{(p-1)(1-\theta)}_{L^{2}} \|\nabla(v + \tau (u-v))\|^{(p-1)\theta}_{L^{2}}d\tau\Big)(s) \\ && \qquad \quad \times \|(u-v)(s,\cdot)\|^{1-\theta}_{L^{2}} \|\nabla(u-v)(s,\cdot)\|^{\theta}_{L^{2}}ds \\
&& \quad \lesssim \|u-v\|_{X(t)} \bigl(\|u\|_{X(t)}^{p-1}+\|v\|_{X(t)}^{p-1}\bigr)\int_0^t e^{(p-1)\left(r+\frac{1-\mu}{2}\right)s} ds \\
&& \quad \lesssim \|u-v\|_{X(t)} \bigl(\|u\|_{X(t)}^{p-1}+\|v\|_{X(t)}^{p-1}\bigr),
\end{eqnarray*}
where $\theta$ is given by \eqref{thetaNone}.
In the same manner we are able to prove
\begin{eqnarray*}
&e^{\frac{(\mu-1)t}{2}}\|\nabla(P u - P v)\|_{L^2} \lesssim \|u-v\|_{X(t)} \bigl(\|u\|_{X(t)}^{p-1}+\|v\|_{X(t)}^{p-1}\bigr),\\
&e^{\mu t} \| \partial_t(P u - P v)\|_{L^2} \lesssim \|u-v\|_{X(t)} \bigl(\|u\|_{X(t)}^{p-1}+\|v\|_{X(t)}^{p-1}\bigr)
\end{eqnarray*}
for the admissible range of $p$. Summarizing all these estimates we have
     \begin{equation}
\label{eq:contractiondissipationnon-effective}
\|Pu-Pv\|_{X(t)}
     \lesssim \|u-v\|_{X(t)} \bigl(\|u\|_{X(t)}^{p-1}+\|v\|_{X(t)}^{p-1}\bigr)
\end{equation}
for any $u,v\in X(t)$.
Due to (\ref{eq:mappingdissipationnon-effective}) the operator $P$ maps $X(t)$ into itself
and the existence of a
uniquely determined global (in time) solution $u$ follows by contraction (\ref{eq:contractiondissipationnon-effective}) and continuation argument for small data.
Moreover, we conclude a local (in time) existence result for large data as well.
The statement of Theorem \ref{noneffective} follows by using the relation $\phi(t,x)=e^{rt}u(t, x)$
with $r=\frac{-n+\mu}{2}$.
\end{proof}
Now, we will not require energy solutions any more, we are interested in Sobolev solutions only. We have the following result.
 \begin{theorem}\label{noneffective2} Consider for $n \geq 2$ the Cauchy problem \eqref{general} with $m\in \big(\frac{\sqrt{n^2-1}}{2},\frac{n}{2}\big)$ and
 data $f \in H^\gamma(\R^n), \gamma \in (\frac{1}{2},1),$ and $g \in L^2(\R^n)$. Let $p>p_{n,\gamma}:=1+  \frac{2\gamma}{n-1}$ and $p \leq \frac{n}{n-2\gamma}$. 
  Then, there exists a constant $\varepsilon_0>0$ such that, for every small data satisfying
  \[\|f\|_{H^\gamma}+ \|g\|_{L^2}\leq \varepsilon\,\,\,\mbox{for}\,\,\, \varepsilon\leq \varepsilon_0,\]
  there exists a uniquely determined global (in time) Sobolev solution \[ \phi \in C\big([0,\infty),H^\gamma(\R^n)\big).\] The solution satisfies the decay estimate  \begin{equation*}
  \| \phi(t,\cdot)\|_{H^\gamma} \lesssim
  e^{\frac{-(n-1)t}{2}}\big(\|f\|_{H^\gamma}+ \|g\|_{L^2} \big).
 \end{equation*}
  \end{theorem}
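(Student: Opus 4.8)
The plan is to follow the scheme of the proof of Theorem~\ref{main20} (Sobolev solutions), replacing the unweighted solution space by the exponentially weighted one used in the proof of Theorem~\ref{noneffective}, which accommodates the non-effectiveness of the dissipation. As usual it is enough to prove global existence of small data solutions to the transformed problem \eqref{semilieardampedwavedissipationtreatment} with $r=\frac{-n+\mu}{2}<0$ and $\mu=\sqrt{n^2-4m^2}\in(0,1)$, and then to pass back via $\phi(t,x)=e^{rt}u(t,x)$. I would work in
\[ X(t):=\Bigl\{u\in C\big([0,t],H^\gamma(\R^n)\big)\ :\ \|u\|_{X(t)}:=\sup_{\tau\in[0,t]}e^{\frac{(\mu-1)\tau}{2}}\,\|u(\tau,\cdot)\|_{H^\gamma}<\infty\Bigr\}, \]
and set $Pu:=K_0(t,0,x)\ast_{(x)}u_0+K_1(t,0,x)\ast_{(x)}u_1+Gu$ with $Gu(t,x):=\int_0^t e^{(p-1)rs}K_1(t,s,x)\ast_{(x)}|u(s,x)|^p\,ds$; here $u_0=f$, $u_1=g-rf$, so $\|u_0\|_{H^\gamma}+\|u_1\|_{L^2}\lesssim\|f\|_{H^\gamma}+\|g\|_{L^2}$. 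For the data term I would use Proposition~\ref{proposition1} (via \eqref{optidamp}) for the $K_0$-part, and Proposition~\ref{proposition11} (via \eqref{optidamp2} and \eqref{optidampwithloss}) together with the interpolation $\|K_1\ast_{(x)}u_1\|_{\dot H^\gamma}\le\|K_1\ast_{(x)}u_1\|_{\dot H^1}^\gamma\|K_1\ast_{(x)}u_1\|_{L^2}^{1-\gamma}$ for the $K_1$-part; as $\mu<1$ and $\gamma<1$, the weight $e^{(\mu-1)\tau/2}$ absorbs the growth $e^{\gamma(1-\mu)\tau/2}$ coming from the loss estimate, so that $\|K_0\ast_{(x)}u_0+K_1\ast_{(x)}u_1\|_{X(t)}\lesssim\|u_0\|_{H^\gamma}+\|u_1\|_{L^2}$.

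For the nonlinear term, $\|Gu(t,\cdot)\|_{L^2}$ is treated exactly as in Theorem~\ref{main20}: by \eqref{optidamp2} and the fractional Gagliardo--Nirenberg inequality (Proposition~\ref{fractionalGagliardoNirenberg}) one gets $\|\,|u(s,\cdot)|^p\|_{L^2}=\|u(s,\cdot)\|_{L^{2p}}^p\lesssim\|u(s,\cdot)\|_{H^\gamma}^p\lesssim e^{\frac{(1-\mu)ps}{2}}\|u\|_{X(s)}^p$ provided $\theta=\frac n\gamma(\frac12-\frac1{2p})\in[0,1]$, i.e. $p\le\frac n{n-2\gamma}$; bringing the weight into the time integral via $s\le t$ yields $e^{\frac{(\mu-1)t}{2}}\|Gu(t,\cdot)\|_{L^2}\lesssim\|u\|_{X(t)}^p\int_0^t e^{(p-1)(r+\frac{1-\mu}{2})s}\,ds\lesssim\|u\|_{X(t)}^p$ since $r+\frac{1-\mu}{2}=-\frac{n-1}{2}<0$ and $p>1$. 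For $\|Gu(t,\cdot)\|_{\dot H^\gamma}$ I would combine \eqref{optidamp2} with \eqref{optidampwithloss} by interpolation to obtain, for $h\in L^2$, the bound $\|K_1(t,s,x)\ast_{(x)}h\|_{\dot H^\gamma}\lesssim e^{\frac{\gamma(1+\mu)s}{2}}e^{\frac{\gamma(1-\mu)t}{2}}\|h\|_{L^2}$, and then estimate $\|\,|u(s,\cdot)|^p\|_{L^2}$ as above. Pushing all $t$-dependent exponentials into the $s$-integral (legitimate because $s\le t$) leaves $e^{\frac{(\mu-1)t}{2}}\|Gu(t,\cdot)\|_{\dot H^\gamma}\lesssim\|u\|_{X(t)}^p\int_0^t e^{(\gamma-\frac{(p-1)(n-1)}{2})s}\,ds$, which is $\lesssim\|u\|_{X(t)}^p$ exactly when $\gamma<\frac{(p-1)(n-1)}{2}$, i.e. $p>p_{n,\gamma}=1+\frac{2\gamma}{n-1}$. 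Hence $\|Pu\|_{X(t)}\lesssim\|u_0\|_{H^\gamma}+\|u_1\|_{L^2}+\|u\|_{X(t)}^p$, so $P$ maps $X(t)$ into itself for small data.

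The Lipschitz estimate follows from the decomposition \eqref{LipschitzdissipationNone}, H\"older's inequality with $\frac1{r_1}+\frac1{r_2}=\frac12$, the choice $r_1=\frac{2p}{p-1}$, $r_2=2p$ (so that both Gagliardo--Nirenberg exponents lie in $[0,1]$ for $n\ge2$ and $p\le\frac n{n-2\gamma}$), and the same fractional Gagliardo--Nirenberg bounds as above, giving $\|Pu-Pv\|_{X(t)}\lesssim\|u-v\|_{X(t)}\big(\|u\|_{X(t)}^{p-1}+\|v\|_{X(t)}^{p-1}\big)$ for $p_{n,\gamma}<p\le\frac n{n-2\gamma}$. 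A standard contraction and continuation argument then produces the unique global solution $u\in C([0,\infty),H^\gamma(\R^n))$ satisfying $\|u(t,\cdot)\|_{H^\gamma}\lesssim e^{\frac{(1-\mu)t}{2}}(\|f\|_{H^\gamma}+\|g\|_{L^2})$, and since $\phi=e^{rt}u$ with $r+\frac{1-\mu}{2}=-\frac{n-1}{2}$ this gives the asserted solution class and decay estimate for $\phi$. I expect the delicate point to be precisely the $\dot H^\gamma$-estimate of $Gu$: one has to extract the loss factor $e^{(1+\mu)s/2}e^{(1-\mu)t/2}$ of \eqref{optidampwithloss} only to the fractional power $\gamma$ (by interpolating against the loss-free bound \eqref{optidamp2}), and then check that, after carrying the $t$-factors into the $s$-integral against the weight $e^{(\mu-1)t/2}$, the exponent collapses to $\gamma-\frac{(p-1)(n-1)}{2}$ — this is what makes $p_{n,\gamma}$, and not a larger exponent, the sharp lower bound.
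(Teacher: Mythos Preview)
Your proposal is correct and follows essentially the same approach as the paper's own proof: the same weighted space $X(t)$, the same interpolation $\|K_1\ast_{(x)}h\|_{\dot H^\gamma}\lesssim e^{\frac{(1+\mu)\gamma s}{2}}e^{\frac{(1-\mu)\gamma t}{2}}\|h\|_{L^2}$ obtained from \eqref{optidamp2} and \eqref{optidampwithloss}, and the same reduction of the $\dot H^\gamma$-integrand to the exponent $\gamma-\frac{(p-1)(n-1)}{2}$ after pushing the remaining $t$-factor into the integral via $s\le t$. Your identification of the delicate point --- extracting the loss factor only to the power $\gamma$ --- is exactly what the paper does as well.
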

\begin{proof}
We only sketch the proof, in particular, the modifications to the proof of Theorem \ref{noneffective}. We define the  space
\[ X(t) := \bigl\{ u\in \mathcal{C}\big([0,t], H^\gamma(\R^n) \big) \, : \ \|u\|_{X(t)} :=\sup_{\tau\in[0,t]} \big\{e^{\frac{(\mu-1)\tau}{2}}\|u(\tau,\cdot)\|_{H^\gamma}\big\}<\infty \bigr\}. \]
Using Propositions \ref{proposition1} and \ref{proposition11} for $s=0$ we have
\begin{equation}\label{eq:Kdatafractionalnoneffective}
\| K_0(t,0, x) \ast_{(x)} u_0(x) + K_1(t,0, x) \ast_{(x)} u_1(x) \|_{X(t)} \lesssim\,\|u_0\|_{H^\gamma} + \|u_1\|_{L^2}.
\end{equation}
Now fractional Gagliardo-Nirenberg inequality  comes into play.
We may estimate
\[\| |u(s,\cdot)|^{p}\|_{L^2}=\|u(s,\cdot)\|_{L^{2p}}^p\lesssim
\| u(s,\cdot)\|_{L^2}^{p(1-\theta)} \|u(s,\cdot)\|_{\dot{H}^\gamma}^{p\theta} \lesssim e^{\frac{(1-\mu)ps}{2}}\|u\|_{X(s)}^p,\]
where
\[ \theta= \frac{n}{\gamma}\Big(\frac12-\frac1{2p}\Big) \in [0,1], \quad p \leq
\frac{n}{n-2\gamma}.\]
Hence,
\[e^{\frac{(\mu-1)t}{2}}\|Gu(t,\cdot)\|_{L^2} \lesssim \|u\|_{X(t)}^p\int_0^t e^{(p-1)\left(r+\frac{1-\mu}{2}\right)s} ds\lesssim
 \|u\|_{X(t)}^p,\]
thanks to $r+\frac{1-\mu}{2}<0$ and $p>1$.
Propositions \ref{fractionalGagliardoNirenberg} and \ref{proposition11} imply for all $h \in L^2(\mathbb{R}^n)$ the estimate
\[\| K_1(t,s, x) \ast_{(x)} h\|_{\dot{H}^\gamma}\lesssim
\|  K_1(t,s, x) \ast_{(x)} h\|_{L^2}^{1-\gamma} \|K_1(t,s, x) \ast_{(x)} h\|_{\dot{H}^1}^{\gamma} \lesssim e^{\frac{(1+\mu)\gamma s}{2}}
e^{\frac{(1-\mu)\gamma t}{2}}\|h\|_{L^2}.\]
Finally, by using $p_{n,\gamma}<p\leq \frac{n}{n-2\gamma} $ and $r=\frac{-n + \mu}{2}$ we may estimate
\begin{eqnarray*}
 e^{\frac{(\mu-1) t}{2}}\|Gu(t,\cdot)\|_{\dot{H}^\gamma} &\lesssim & e^{\frac{(\mu-1)(1-\gamma) t}{2}}\|u\|_{X(t)}^p \int_0^t e^{(p-1)rs +\frac{(1-\mu)ps}{2}+\frac{(1+\mu)\gamma s}{2}} ds \\
& \lesssim &\int_0^t e^{(p-1)\left(r+ \frac{(1-\mu)}{2}\right)s +\gamma s} ds \lesssim\|u\|_{X(t)}^p.
 \end{eqnarray*}
Using H\"older's inequality and fractional Gagliardo-Nirenberg inequality,  we can follow the steps of the proof of the Lipschitz property in the proof of Theorem \ref{noneffective} to obtain
     \begin{eqnarray*}
\|Pu-Pv\|_{X(t)}
     \lesssim \|u-v\|_{X(t)} \bigl(\|u\|_{X(t)}^{p-1}+\|v\|_{X(t)}^{p-1}\bigr)
\end{eqnarray*}
for any~$u,v\in X(t)$.
This completes the proof.
\end{proof}
Finally, we are interested in energy solutions having a suitable higher regularity.
\begin{theorem}\label{noneffective3}
 Consider the Cauchy problem  \eqref{general} with $m\in \big(\frac{\sqrt{n^2-1}}{2},\frac{n}{2}\big)$ and data $f \in H^\sigma(\R^n)$ and $g \in H^{\sigma-1}(\R^n)$ for $n \geq 3$, where $\sigma \in \big(1,\frac{n}{2}\big)$.
Assume that $p$ satisfies the following condition:
\begin{eqnarray*}  \max\Big\{\frac{n+1}{n-1};\lceil \sigma
 \rceil\Big\} <p \leq 1+\frac{2}{n-2\sigma}.
 \end{eqnarray*}  Then, there exists a constant $\varepsilon_0>0$ such that, for every given small data satisfying
  \[\|f\|_{H^\sigma}+ \|g\|_{H^{\sigma-1}}\leq \varepsilon\,\,\,\mbox{for}\,\,\, \varepsilon\leq \varepsilon_0,\]
 there exists a uniquely determined global (in time) energy solution \[ \phi \in C\big([0,\infty),H^\sigma(\R^n)\big) \cap C^1\big([0,\infty),H^{\sigma-1}(\R^n)\big).\] Moreover, the solution $\phi$ satisfies the estimates \eqref{optidamp3energy} and \eqref{optidamp4energy}
  for $\gamma=\sigma$.
    \end{theorem}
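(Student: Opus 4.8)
The plan is to merge the weighted solution space used for the non-effective regime in Theorem~\ref{noneffective} with the higher-order nonlinear estimates carried out in the proof of Theorem~\ref{main5}. As before it suffices to solve the transformed problem \eqref{semilieardampedwavedissipationtreatment} with $\mu=\sqrt{n^2-4m^2}\in(0,1)$ and $r=\frac{-n+\mu}{2}$, and to find a fixed point of the operator
\[ Pu := K_0(t,0,x)\ast_{(x)}u_0 + K_1(t,0,x)\ast_{(x)}u_1 + Gu,\qquad Gu(t,x):=\int_0^t e^{(p-1)rs}\,K_1(t,s,x)\ast_{(x)}|u(s,x)|^p\,ds, \]
on the space
\begin{eqnarray*}
&& X(t):=\bigl\{u\in C\big([0,t],H^\sigma(\R^n)\big)\cap C^1\big([0,t],H^{\sigma-1}(\R^n)\big):\\
&& \quad \|u\|_{X(t)}:=\sup_{\tau\in[0,t]}\bigl\{e^{\frac{(\mu-1)\tau}{2}}\bigl(\|u(\tau,\cdot)\|_{L^2}+\||D|^\sigma u(\tau,\cdot)\|_{L^2}\bigr)+e^{\mu\tau}\bigl(\|u_\tau(\tau,\cdot)\|_{L^2}+\||D|^{\sigma-1}u_\tau(\tau,\cdot)\|_{L^2}\bigr)\bigr\}<\infty\bigr\},
\end{eqnarray*}
the exponential weights being precisely those produced by the linear estimates \eqref{optidamp}, \eqref{optidamptimederivative}, \eqref{optidamp2}, \eqref{optidampwithloss}, \eqref{optidamp3timederivative} of Propositions~\ref{proposition1} and \ref{proposition11} in the range $\mu\in(0,1)$. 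Taking $s=0$ in those propositions gives at once the bound $\|K_0(t,0,x)\ast_{(x)}u_0+K_1(t,0,x)\ast_{(x)}u_1\|_{X(t)}\lesssim\|u_0\|_{H^\sigma}+\|u_1\|_{H^{\sigma-1}}$ for the data contribution.

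The core is the estimate $\|Gu\|_{X(t)}\lesssim\|u\|_{X(t)}^p$. The bounds for $\|Gu(t,\cdot)\|_{L^2}$ and $\|\partial_tGu(t,\cdot)\|_{L^2}$ follow exactly as in Theorem~\ref{noneffective} from Minkowski's integral inequality, the linear estimates \eqref{optidamp2}, \eqref{optidamp3timederivative}, and the fractional Gagliardo--Nirenberg inequality $\||u(s,\cdot)|^p\|_{L^2}\lesssim e^{\frac{(1-\mu)ps}{2}}\|u\|_{X(s)}^p$ (valid for $p\le\frac{n}{n-2\sigma}$ thanks to the higher regularity now available); all emerging time integrals are of the form $\int_0^t e^{(p-1)(r+\frac{1-\mu}{2})s+cs}\,ds$ with $0\le c\le1$, and since $r+\frac{1-\mu}{2}=\frac{1-n}{2}$ they converge for $p>\frac{n+1}{n-1}$. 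For the top-order part I would differentiate under the integral sign, use Lemma~\ref{Sickelauxiliary lemma} to pass from $|D|^\sigma$ to $\nabla|D|^{\sigma-1}$ and then move $\nabla$ onto the kernel via the non-effective gradient estimate \eqref{optidampwithloss}, reducing everything to $\||D|^{\sigma-1}|u(s,\cdot)|^p\|_{L^2}$. This last quantity is controlled by $e^{\frac{(1-\mu)ps}{2}}\|u\|_{X(s)}^p$ by means of the fractional chain rule of Proposition~\ref{Propfractionalchainrulegeneral} (which requires $p>\lceil \sigma-1 \rceil$) followed by Proposition~\ref{fractionalGagliardoNirenberg}; taking $r_2=\frac{2n}{n-2}$, hence $r_1=n(p-1)$, forces $p\le1+\frac{2}{n-2\sigma}$. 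After multiplication by the weight $e^{\frac{(\mu-1)t}{2}}$ the remaining time integral is again $\int_0^t e^{(p-1)(r+\frac{1-\mu}{2})s+s}\,ds$, so it converges precisely for $p>\frac{n+1}{n-1}$; the term $\||D|^{\sigma-1}\partial_tGu(t,\cdot)\|_{L^2}$ is handled in the same way using \eqref{optidamp3timederivative} in place of \eqref{optidampwithloss} and only needs the weaker condition $p>1+\frac{2\mu}{n-1}$. Hence $Pu\in X(t)$ for $\max\{\frac{n+1}{n-1};\lceil \sigma \rceil\}<p\le1+\frac{2}{n-2\sigma}$.

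For the Lipschitz estimate I would start from the representation \eqref{LipschitzdissipationNone} and, for $\||D|^\sigma(Pu-Pv)\|_{L^2}$ and $\||D|^{\sigma-1}\partial_t(Pu-Pv)\|_{L^2}$, reproduce the splitting used in the proof of Theorem~\ref{main5}: the fractional Leibniz rule (Proposition~\ref{fractionalLeibniz}) separates $(u-v)$ from the factor $\int_0^1|v+\tau(u-v)|^{p-2}(v+\tau(u-v))\,d\tau$, the latter being differentiated by the fractional chain rule (Proposition~\ref{Propfractionalchainrulegeneral}, which now needs $p>\lceil \sigma \rceil$), and each factor is placed into $L^2\cap\dot{H}^\sigma$ by Proposition~\ref{fractionalGagliardoNirenberg} with the same choice of exponents $r_1,\dots,r_6$ as in Theorem~\ref{main5}, imposing again only $p\le1+\frac{2}{n-2\sigma}$. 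The time integrals produced carry the very same exponent $(p-1)\frac{1-n}{2}+1$ as in the mapping estimate, hence converge for $p>\frac{n+1}{n-1}$. This yields $\|Pu-Pv\|_{X(t)}\lesssim\|u-v\|_{X(t)}\bigl(\|u\|_{X(t)}^{p-1}+\|v\|_{X(t)}^{p-1}\bigr)$, so that $P$ is a contraction on a small ball of $X(t)$; the usual continuation argument then produces a uniquely determined global-in-time solution $u$ for small data, and the substitution $\phi(t,x)=e^{rt}u(t,x)$ with $r=\frac{-n+\mu}{2}$ turns the $X(t)$-bound into the estimates \eqref{optidamp3energy} and \eqref{optidamp4energy} for $\gamma=\sigma$.

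The proof is essentially bookkeeping, and the single point that really needs care is to verify that every time integral generated by the top-order and the Lipschitz estimates --- which in the non-effective range $\mu\in(0,1)$ unavoidably pick up the loss factors $e^{\frac{(1+\mu)s}{2}}$ coming from \eqref{optidampwithloss} and $e^{\mu(s-t)}$ coming from \eqref{optidamp3timederivative} --- still collapses to the single convergence threshold $p>\frac{n+1}{n-1}$, and not to some strictly larger exponent, while simultaneously the Gagliardo--Nirenberg indices stay in $[0,1]$ up to and including the endpoint $p=1+\frac{2}{n-2\sigma}$.
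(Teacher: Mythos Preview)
Your proposal is correct and follows essentially the same approach as the paper, which defines the identical weighted space $X(t)$ and then merely refers back to the proof of Theorem~\ref{noneffective} for the non-effective weights and to the proof of Theorem~\ref{main5} for the treatment of $\||D|^{\sigma-1}|u|^p\|_{L^2}$ and of the Lipschitz estimate via Propositions~\ref{fractionalLeibniz}, \ref{Propfractionalchainrulegeneral} and \ref{fractionalGagliardoNirenberg}. One tiny slip: the exponent produced by the $\||D|^{\sigma-1}\partial_t Gu\|_{L^2}$ estimate is $(p-1)\frac{1-n}{2}+\frac{1+\mu}{2}$, giving the threshold $p>1+\frac{1+\mu}{n-1}$ rather than $p>1+\frac{2\mu}{n-1}$, but since $\mu<1$ both are dominated by $\frac{n+1}{n-1}$ and your conclusion is unaffected.
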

   \begin{proof}
It is enough to prove the global existence of small data solutions to \eqref{semilieardampedwavedissipationtreatment}.
Motivated by the estimates of Proposition \ref{proposition1} and Proposition \ref{proposition11} for $s=0$ we define for $t>0$ the scale of spaces of energy solutions with suitable regularity
\begin{eqnarray*} && X(t) := \bigl\{ u\in C\big([0,t], H^\sigma(\R^n) ) \cap C^1([0,t],H^{\sigma-1}(\R^n)\big) \, : \, \|u\|_{X(t)} \\ && \quad :=\sup_{\tau\in[0,t]}\bigl\{e^{\frac{(\mu-1)\tau}{2}}\big(\|u(\tau,\cdot)\|_{L^2} + \||D|^\sigma u(\tau,\cdot)\|_{L^2}\big) +
e^{\mu\tau}\big(\|u_\tau(\tau,\cdot)\|_{L^2} +\||D|^{\sigma-1}u_\tau(\tau,\cdot)\|_{L^2}\big)\bigr\}
<\infty \bigr\}. \end{eqnarray*}
For any $u\in X(t)$ we define
\[ Pu :=  K_0(t,0, x) \ast_{(x)} u_0(x) + K_1(t,0, x) \ast_{(x)} u_1(x)  + Gu, \]
where
\[Gu(t,x):=\int_0^t e^{(p-1) r s}K_1(t,s,x) \ast_{(x)}|u(s,x)|^{p}\,ds.\]
 Following the proof of Theorem \ref{noneffective} and the universal treatment of
 non-linear terms in scales of Sobolev spaces done in  the proof
of Theorem \ref{main5}, one may derive that $Pu\in X(t)$ and the Lipschitz property
     \begin{eqnarray*}
\|Pu-Pv\|_{X(t)}
     \lesssim \|u-v\|_{X(t)} \bigl(\|u\|_{X(t)}^{p-1}+\|v\|_{X(t)}^{p-1}\bigr)
\end{eqnarray*}
for any $u,v\in X(t)$.
This completes the proof.
\end{proof}
  \begin{corollary} \label{Corlargeregularity11}
Consider the Cauchy problem  \eqref{general} with $m\in \big(\frac{\sqrt{n^2-1}}{2},\frac{n}{2}\big)$ and data $f \in H^{\sigma}(\R^n)$ and $g \in H^{\sigma-1}(\R^n)$ for $n \geq 3$ and
$n \leq 2\sigma$.
Assume that $p$ satisfies the following condition:
\begin{eqnarray*}  \max\Big\{\frac{n+1}{n-1};\lceil \sigma
 \rceil\Big\} <p<\infty.
 \end{eqnarray*}  Then, there exists a constant $\varepsilon_0>0$ such that, for every given small data satisfying
  \[\|f\|_{H^\sigma}+ \|g\|_{H^{\sigma-1}}\leq \varepsilon\,\,\,\mbox{for}\,\,\, \varepsilon\leq \varepsilon_0,\]
 there exists a uniquely determined global (in time) energy solution \[ \phi \in C\big([0,\infty),H^\sigma(\R^n)\big) \cap C^1\big([0,\infty),H^{\sigma-1}(\R^n)\big).\] Moreover, the solution $\phi$ satisfies the estimates \eqref{optidamp3energy} and \eqref{optidamp4energy}
  for $\gamma=\sigma$.
  \end{corollary}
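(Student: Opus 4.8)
The plan is to follow the proof of Theorem \ref{noneffective3}, the only difference being that the hypothesis $n\le 2\sigma$ (in place of $\sigma\in(1,\frac n2)$) removes the upper bound on the admissible exponent $p$, exactly as Corollary \ref{Corlargeregularity1} does for Theorem \ref{main5}. First, as always, the change of unknown $\phi(t,x)=e^{rt}u(t,x)$ with $r=\frac{-n+\mu}{2}$ reduces the statement to global existence of small data solutions of \eqref{semilieardampedwavedissipationtreatment}, and I would work for $t>0$ in the same weighted space $X(t)$ and with the same operator $Pu$ as in the proof of Theorem \ref{noneffective3}. The data term is controlled by Propositions \ref{proposition1} and \ref{proposition11} with $s=0$, giving $\|K_0(t,0,\cdot)\ast_{(x)}u_0+K_1(t,0,\cdot)\ast_{(x)}u_1\|_{X(t)}\lesssim\|u_0\|_{H^\sigma}+\|u_1\|_{H^{\sigma-1}}$.

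Next I would estimate the Duhamel term $Gu$. The low-order norms $\|Gu(t,\cdot)\|_{L^2}$ and $\|\partial_t Gu(t,\cdot)\|_{L^2}$ are treated as in the proof of Theorem \ref{noneffective}, via Minkowski's integral inequality, \eqref{optidamp2}, \eqref{optidamp3timederivative} and the Gagliardo--Nirenberg inequality $\||u(s,\cdot)|^p\|_{L^2}\lesssim e^{\frac{(1-\mu)ps}{2}}\|u\|_{X(s)}^p$; the point is that when $n\le 2\sigma$ the interpolation exponent $\theta=\frac{n}{\sigma}\big(\frac12-\frac1{2p}\big)$ lies in $[0,1)$ for every $p>1$, so no upper restriction on $p$ appears. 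For the top-order norm $\||D|^\sigma Gu(t,\cdot)\|_{L^2}$ I would follow the scheme from the proof of Theorem \ref{main5}: shift the derivative by Lemma \ref{Sickelauxiliary lemma} from $|D|^\sigma$ to $\nabla|D|^{\sigma-1}$, use \eqref{optidampwithloss} (which, in the non-effective case $\mu\in(0,1)$, contributes the factor $e^{\frac{(1+\mu)s}{2}}e^{\frac{(1-\mu)t}{2}}$), and then bound $\||D|^{\sigma-1}|u(s,\cdot)|^p\|_{L^2}$ by the fractional chain rule of Proposition \ref{Propfractionalchainrulegeneral} (applicable since $p>\lceil\sigma\rceil$) together with fractional Gagliardo--Nirenberg, Proposition \ref{fractionalGagliardoNirenberg}, again with all intermediate exponents constrained only from below because $n\le 2\sigma$. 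Collecting the exponential factors and multiplying by the weight $e^{\frac{(\mu-1)t}{2}}$ leaves $\int_0^t e^{(p-1)(r+\frac{1-\mu}{2})s+s}\,ds$, which is finite exactly when $p>\frac{n+1}{n-1}$, since $r+\frac{1-\mu}{2}=-\frac{n-1}{2}$. The remaining norm $\||D|^{\sigma-1}\partial_t Gu(t,\cdot)\|_{L^2}$ is handled in the same way using \eqref{optidamp3timederivative} and only gives the weaker requirement $p>\frac{n+\mu}{n-1}$. Altogether this yields $\|Gu\|_{X(t)}\lesssim\|u\|_{X(t)}^p$ and hence $\|Pu\|_{X(t)}\lesssim\|u_0\|_{H^\sigma}+\|u_1\|_{H^{\sigma-1}}+\|u\|_{X(t)}^p$, so $P$ maps $X(t)$ into itself.

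For the Lipschitz estimate I would start from the representation \eqref{LipschitzdissipationNone}, bound $\|Pu-Pv\|_{L^2}$ and $\|\partial_t(Pu-Pv)\|_{L^2}$ exactly as in the proof of Theorem \ref{noneffective} (H\"older's inequality together with Gagliardo--Nirenberg), and bound the top-order differences $\||D|^\sigma(Pu-Pv)\|_{L^2}$ and $\||D|^{\sigma-1}\partial_t(Pu-Pv)\|_{L^2}$ following the Lipschitz part of the proof of Theorem \ref{main5}: the fractional Leibniz rule of Proposition \ref{fractionalLeibniz}, then the fractional chain rule Proposition \ref{Propfractionalchainrulegeneral} and fractional Gagliardo--Nirenberg Proposition \ref{fractionalGagliardoNirenberg}, where once more, because $n\le 2\sigma$, every $L^{r_i}$-exponent is only bounded below by $2$, so all interpolation parameters stay in $[0,1]$ for all $p>\lceil\sigma\rceil$ and the same time integrals converge under $p>\frac{n+1}{n-1}$. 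This gives $\|Pu-Pv\|_{X(t)}\lesssim\|u-v\|_{X(t)}\bigl(\|u\|_{X(t)}^{p-1}+\|v\|_{X(t)}^{p-1}\bigr)$. Banach's fixed point theorem together with the standard continuation argument then produces a uniquely determined global small data solution, and the decay estimates \eqref{optidamp3energy}, \eqref{optidamp4energy} for $\gamma=\sigma$ follow by transferring back through $\phi(t,x)=e^{rt}u(t,x)$.

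I expect the main obstacle to be the bookkeeping in the estimates of $\||D|^{\sigma-1}|u|^p\|_{L^2}$ and of its Lipschitz analogue: one must check that, after the H\"older/Leibniz splitting, the intermediate Gagliardo--Nirenberg parameters $\theta_i$ remain in $[0,1]$ for arbitrarily large $p$. This is precisely the place where the hypothesis $n\le 2\sigma$ enters, turning the bounded admissible interval of Theorem \ref{noneffective3} into the half-line $p>\max\{\frac{n+1}{n-1};\lceil\sigma\rceil\}$.
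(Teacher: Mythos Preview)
Your proposal is correct and takes essentially the same approach as the paper, which does not write out a separate proof for this corollary but simply indicates (as with Corollary~\ref{Corlargeregularity1} relative to Theorem~\ref{main5}) that one follows the steps of the proof of Theorem~\ref{noneffective3}, the hypothesis $n\le 2\sigma$ removing the upper bound $1+\frac{2}{n-2\sigma}$ on $p$ coming from the Gagliardo--Nirenberg interpolation constraints. Your detailed accounting of where each condition on $p$ arises, and your verification that the time integrals converge precisely when $p>\frac{n+1}{n-1}$, are accurate.
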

  Similarly to Theorem \ref{improve1} we may improve the lower bound for $p$ in Corollary \ref{Corlargeregularity11}.
   \begin{theorem}\label{improve6} Consider the Cauchy problem  \eqref{general} with $m\in \big(\frac{\sqrt{n^2-1}}{2},\frac{n}{2}\big)$ and data $(f,g) \in (H^\sigma(\R^n)\times H^{\sigma-1}(\R^n))$ for
 $\sigma > \frac{n}{2} $ and $n \geq 2$.
Assume that $p$ satisfies the following condition:
\begin{eqnarray*}
\max\Big\{\frac{n+1}{n-1}; \sigma; 2
 \Big\} <p <\infty.
 \end{eqnarray*}
  Then, there exists a constant $\varepsilon_0>0$ such that, for every given small data satisfying
  \[\|f\|_{H^\sigma}+ \|g\|_{ H^{\sigma-1}} \leq \varepsilon\,\,\,\mbox{for}\,\,\, \varepsilon\leq \varepsilon_0,\]
 there exists a uniquely determined global (in time) energy solution \[ \phi \in C\big([0,\infty),H^\sigma(\R^n)\big)\cap C^1\big([0,\infty), H^{\sigma-1}(\R^n)\big).\]  Moreover, the solution $\phi$ satisfies the estimates \eqref{optidamp3energy} and \eqref{optidamp4energy}
  for $\gamma=\sigma$.
 \end{theorem}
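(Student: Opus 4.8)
\emph{Proof sketch.} The plan is to merge the weighted energy framework of the non-effective case (proof of Theorem \ref{noneffective3}) with the $L^\infty$-based treatment of the power non-linearity valid for $\sigma>\frac n2$ (proof of Theorem \ref{improve1}). As before it suffices to treat the auxiliary problem \eqref{semilieardampedwavedissipationtreatment}, now with $\mu=\sqrt{n^2-4m^2}\in(0,1)$ and $r=\frac{-n+\mu}{2}$. First I would introduce, for $t>0$, the space
\[ X(t):=\Bigl\{u\in C\bigl([0,t],H^\sigma(\R^n)\bigr)\cap C^1\bigl([0,t],H^{\sigma-1}(\R^n)\bigr)\,:\ \|u\|_{X(t)}<\infty\Bigr\}, \]
with
\[ \|u\|_{X(t)}:=\sup_{\tau\in[0,t]}\Bigl\{e^{\frac{(\mu-1)\tau}{2}}\bigl(\|u(\tau,\cdot)\|_{L^2}+\||D|^\sigma u(\tau,\cdot)\|_{L^2}\bigr)+e^{\mu\tau}\bigl(\|u_\tau(\tau,\cdot)\|_{L^2}+\||D|^{\sigma-1}u_\tau(\tau,\cdot)\|_{L^2}\bigr)\Bigr\}, \]
whose exponential weights are precisely those dictated by the estimates \eqref{optidamp}, \eqref{optidamptimederivative}, \eqref{optidamp2}, \eqref{optidampwithloss} and \eqref{optidamp3timederivative} for $\mu\in(0,1)$. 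Applying Propositions \ref{proposition1} and \ref{proposition11} with $s=0$ immediately gives $\|K_0(t,0,\cdot)\ast_{(x)}u_0+K_1(t,0,\cdot)\ast_{(x)}u_1\|_{X(t)}\lesssim\|u_0\|_{H^\sigma}+\|u_1\|_{H^{\sigma-1}}$, and we set $Pu:=K_0(t,0,\cdot)\ast_{(x)}u_0+K_1(t,0,\cdot)\ast_{(x)}u_1+Gu$ as usual.

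For the Duhamel term $Gu(t,x)=\int_0^t e^{(p-1)rs}K_1(t,s,x)\ast_{(x)}|u(s,x)|^p\,ds$, since $\sigma>\frac n2$ implies $H^\sigma(\R^n)\hookrightarrow L^\infty(\R^n)$, I would estimate the non-linearity directly by Proposition \ref{PropSickelfractional} and Corollary \ref{Corfractionalhomogeneous}, obtaining for $p>\sigma$ and $p>\sigma-1$ respectively
\[ \||D|^\sigma|u(s,\cdot)|^p\|_{L^2}\lesssim\|u(s,\cdot)\|_{H^\sigma}\|u(s,\cdot)\|_{L^\infty}^{p-1},\qquad \||D|^{\sigma-1}|u(s,\cdot)|^p\|_{L^2}\lesssim\||D|^{\sigma-1}u(s,\cdot)\|_{L^2}\|u(s,\cdot)\|_{L^\infty}^{p-1}, \]
both of which are $\lesssim\|u(s,\cdot)\|_{H^\sigma}^p\lesssim e^{\frac{(1-\mu)p}{2}s}\|u\|_{X(s)}^p$ by Sobolev's embedding and the definition of $X(s)$. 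Feeding this into \eqref{optidamp2} (for the $L^2$-component), into \eqref{optidampwithloss} rewritten as $\||D|^\sigma(K_1\ast_{(x)}h)\|_{L^2}\lesssim e^{\frac{(1+\mu)s}{2}}e^{\frac{(1-\mu)t}{2}}\||D|^{\sigma-1}h\|_{L^2}$ (for the $\dot H^\sigma$-component), and into \eqref{optidamp3timederivative} (for the $\dot H^{\sigma-1}$-component of $\partial_tGu$, the boundary contribution vanishing because $K_1(s,s,\cdot)\ast_{(x)}h\equiv0$), every time integral takes the shape $\int_0^te^{\alpha s}\,ds$ with $\alpha$ an affine function of $p$. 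A short computation shows that the decisive constraint comes from the $\dot H^\sigma$-component and reads $\alpha=(p-1)r+\frac{1+\mu}{2}+\frac{(1-\mu)p}{2}=\frac12\bigl((n+1)-p(n-1)\bigr)<0$, i.e. $p>\frac{n+1}{n-1}$; the $L^2$- and $\partial_t$-components impose only the weaker conditions $p>1$ and $p>\frac{n+\mu}{n-1}$. Hence $\|Gu\|_{X(t)}\lesssim\|u\|_{X(t)}^p$ and $\|Pu\|_{X(t)}\lesssim\|u_0\|_{H^\sigma}+\|u_1\|_{H^{\sigma-1}}+\|u\|_{X(t)}^p$, so $P$ maps $X(t)$ into itself.

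The Lipschitz estimate is obtained from the representation \eqref{LipschitzdissipationNone}: applying the fractional Leibniz rule (Proposition \ref{fractionalLeibniz}) to $|D|^\sigma\bigl(\bigl(\int_0^1|w_\tau|^{p-2}w_\tau\,d\tau\bigr)(u-v)\bigr)$ with $w_\tau=v+\tau(u-v)$, bounding the factor $|D|^{\sigma-1}(|w_\tau|^{p-2}w_\tau)$ by Corollary \ref{Corfractionalhomogeneous} (which needs $p>\max\{\sigma,2\}$), and controlling the remaining factors by Sobolev's embedding $H^\sigma\hookrightarrow L^\infty$ together with the $\dot H^{\sigma-1}$- and $\dot H^\sigma$-norms, exactly as in the proof of Theorem \ref{improve1}; the time weights are handled as in the previous paragraph, so all integrals still converge for $p>\frac{n+1}{n-1}$. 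This yields $\|Pu-Pv\|_{X(t)}\lesssim\|u-v\|_{X(t)}\bigl(\|u\|_{X(t)}^{p-1}+\|v\|_{X(t)}^{p-1}\bigr)$. A standard fixed-point argument on a small ball of $X(t)$, uniform in $t$, plus a continuation argument then gives the unique global solution $u$; passing back through $\phi(t,x)=e^{rt}u(t,x)$ with $r=\frac{-n+\mu}{2}$ converts the weights in $\|\cdot\|_{X(t)}$ into the decay estimates \eqref{optidamp3energy} and \eqref{optidamp4energy} for $\gamma=\sigma$.

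I expect the main obstacle to be purely bookkeeping: in each of the four norms defining $\|\cdot\|_{X(t)}$ one must simultaneously balance three competing exponential factors — the non-effective linear weights $e^{(\mu-1)\tau/2}$ and $e^{\mu\tau}$ built into $X(t)$, the derivative-loss factor $e^{(1+\mu)s/2}e^{(1-\mu)t/2}$ produced by \eqref{optidampwithloss}, and the gain $e^{(p-1)rs}=e^{-\frac{(n-\mu)(p-1)}{2}s}$ coming from the right-hand side — and verify that the resulting $s$-exponent is negative (or that its sum with the outer $t$-weight is non-positive) exactly under the hypothesis $p>\frac{n+1}{n-1}$, while the side conditions $p>\sigma$ and $p>2$ enter only to make Proposition \ref{PropSickelfractional} and Corollary \ref{Corfractionalhomogeneous} applicable. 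No new analytical ingredient beyond those already used for Theorems \ref{noneffective}, \ref{improve1} and \ref{noneffective3} seems to be needed.
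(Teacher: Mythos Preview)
Your proposal is correct and matches the approach implicit in the paper, which merely states that Theorem \ref{improve6} is proved ``similarly to Theorem \ref{improve1}'': you combine the weighted space of Theorem \ref{noneffective3} (carrying the non-effective weights $e^{(\mu-1)\tau/2}$ and $e^{\mu\tau}$) with the $L^\infty$-based treatment of $|u|^p$ via Proposition \ref{PropSickelfractional} and Corollary \ref{Corfractionalhomogeneous} used in Theorem \ref{improve1}, and you identify correctly that the binding constraint $p>\frac{n+1}{n-1}$ arises from the $\dot H^\sigma$-estimate through the loss-of-derivative bound \eqref{optidampwithloss}. One small remark: for the $\dot H^\sigma$-estimate of $Gu$ itself you could equally well use \eqref{optidamp2} (valid for all $\mu\in(0,n)$) together with $\||D|^\sigma|u|^p\|_{L^2}\lesssim\|u\|_{H^\sigma}^p$ from Proposition \ref{PropSickelfractional}, which yields only $p>1$ there; the condition $p>\frac{n+1}{n-1}$ is then forced solely by the Lipschitz step, where reducing the Leibniz rule to order $\sigma-1$ (so as to require only $p>\max\{\sigma,2\}$ rather than $p>\sigma+1$) unavoidably invokes \eqref{optidampwithloss}.
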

  If  $\sigma +1< \frac{n+1}{n-1}$, with $\sigma > \frac{n}{2} $, that is,  $1 < \sigma < 2$ for $n=2$, one can also have a similar result like
  Theorem \ref{main1}. By using the embedding of $H^1(\R)$ into $L^{\infty}(\R)$ it is now  allowed to consider space dimension $n=1$, too.
  \begin{theorem}\label{onedimension} Consider the Cauchy problem  \eqref{general} with $m\in \big(\frac{\sqrt{n^2-1}}{2},\frac{n}{2}\big)$ and data $(f,g) \in (H^\sigma(\R^n)\times H^{\sigma-1}(\R^n))$ with $\sigma=1$ for $n=1$ and
 $1<\sigma <  2 $ for $n=2$. Let $p>\sigma +1$.
  Then, there exists a constant $\varepsilon_0>0$ such that, for every given small data satisfying
  \[\|f\|_{H^\sigma}+ \|g\|_{ H^{\sigma-1}} \leq \varepsilon\,\,\,\mbox{for}\,\,\, \varepsilon\leq \varepsilon_0,\]
 there exists a uniquely determined global (in time) energy solution \[ \phi \in C\big([0,\infty),H^\sigma(\R^n)\big)\cap C^1\big([0,\infty), H^{\sigma-1}(\R^n)\big).\]  Moreover, the solution $\phi$ satisfies the estimates \eqref{optidamp3energy} and \eqref{optidamp4energy}
  for $\gamma=\sigma$.
 \end{theorem}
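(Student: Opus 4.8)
The plan is to reproduce the scheme of Theorem \ref{main1} — a contraction argument in a weighted energy space built directly from the linear estimates, exploiting that $H^\sigma(\R^n)$ is a Banach algebra continuously embedded in $L^\infty(\R^n)$ since $\sigma>\frac n2$ (namely $\sigma=1$ if $n=1$ and $\sigma\in(1,2)$ if $n=2$) — but now fed with the non-effective linear estimates of Propositions \ref{proposition1} and \ref{proposition11}, i.e.\ the case $\mu\in(0,1)$. As usual it is enough to treat \eqref{semilieardampedwavedissipationtreatment} with $\mu=\sqrt{n^2-4m^2}\in(0,1)$ and $r=\frac{-n+\mu}{2}$, and then to pass back to $\phi$ via $\phi(t,x)=e^{rt}u(t,x)$. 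Guided by \eqref{optidamp}--\eqref{optidampwithloss} and the target decay $e^{-\frac{(n-1)t}{2}}$ in \eqref{optidamp3energy}--\eqref{optidamp4energy}, I would work in
\[ X(t):=\Bigl\{u\in C\bigl([0,t],H^\sigma(\R^n)\bigr)\cap C^1\bigl([0,t],H^{\sigma-1}(\R^n)\bigr)\ :\ \|u\|_{X(t)}:=\sup_{\tau\in[0,t]}e^{\frac{(\mu-1)\tau}{2}}\bigl(\|u(\tau,\cdot)\|_{H^\sigma}+\|u_\tau(\tau,\cdot)\|_{H^{\sigma-1}}\bigr)<\infty\Bigr\}, \]
with $Pu$ and $Gu$ defined as in the previous proofs.

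For the data part I would use Propositions \ref{proposition1} and \ref{proposition11} with $s=0$: estimates \eqref{optidamp} and \eqref{optidamptimederivative} for the $u_0$-contribution and, for the $u_1$-contribution, the loss-free estimate \eqref{optidamp2} together with \eqref{optidampwithloss} (needed to recover the missing derivative, as $u_1\in H^{\sigma-1}$ only) and \eqref{optidamp3timederivative}; in every case the loss $e^{\frac{(1-\mu)t}{2}}$ or $e^{-\mu t}$ is compensated by the weight $e^{\frac{(\mu-1)\tau}{2}}$, giving $\|K_0(t,0,x)\ast_{(x)}u_0+K_1(t,0,x)\ast_{(x)}u_1\|_{X(t)}\lesssim\|u_0\|_{H^\sigma}+\|u_1\|_{H^{\sigma-1}}$. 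For the Duhamel term the key point, exactly as in Theorem \ref{main1}, is that \eqref{optidamp2} holds with \emph{no} loss for all $\mu\in(0,n)$, so that $\||D|^\sigma(K_1(t,s,x)\ast_{(x)}h)\|_{L^2}\lesssim\||D|^\sigma h\|_{L^2}$, while \eqref{optidamp3timederivative} controls the $t$-derivative in $\dot H^{\sigma-1}$ with factor $e^{\mu(s-t)}$; the non-linear factor is handled without Gagliardo--Nirenberg, by Proposition \ref{PropSickelfractional} and Corollary \ref{Corfractionalhomogeneous} (applicable because $p>\sigma+1$) and the Sobolev embedding, as $\||u(s,\cdot)|^p\|_{H^\sigma}\lesssim\|u(s,\cdot)\|_{H^\sigma}\|u(s,\cdot)\|_{L^\infty}^{p-1}\lesssim\|u(s,\cdot)\|_{H^\sigma}^p\lesssim e^{\frac{(1-\mu)ps}{2}}\|u\|_{X(s)}^p$, and likewise with $|D|^{\sigma-1}$.

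Assembling these bounds, the mapping estimate $\|Pu\|_{X(t)}\lesssim\|u_0\|_{H^\sigma}+\|u_1\|_{H^{\sigma-1}}+\|u\|_{X(t)}^p$ reduces to checking that, after multiplication by the outer weight $e^{\frac{(\mu-1)t}{2}}$, the time integrals $\int_0^t e^{[(p-1)r+\frac{(1-\mu)p}{2}]s}\,ds$ and $e^{-\mu t}\int_0^t e^{[(p-1)r+\mu+\frac{(1-\mu)p}{2}]s}\,ds$ are bounded uniformly in $t$. With $r=\frac{-n+\mu}{2}$ one has $(p-1)r+\frac{(1-\mu)p}{2}=\frac12\bigl(n-\mu-(n-1)p\bigr)$, which is negative for $n=2$ since $p>\sigma+1>2$ (so the integral is $O(1)$ and the weight only helps), and equals $\frac{1-\mu}{2}$ for $n=1$, in which case the resulting factor $e^{\frac{(1-\mu)t}{2}}$ is cancelled exactly by $e^{\frac{(\mu-1)t}{2}}$; the second family is treated the same way, the extra $\mu$ being absorbed by the prefactor $e^{-\mu t}$, and uniform boundedness of these constants is what makes the fixed point global. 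For the Lipschitz estimate I would start from \eqref{Lipschitzdissipation}, use the algebra property of $H^\sigma$ to split off $\|u-v\|_{H^\sigma}$, bound $\bigl\||w|^{p-2}w\bigr\|_{H^\sigma}\lesssim\|w\|_{H^\sigma}\|w\|_{L^\infty}^{p-2}$ via Proposition \ref{PropSickelfractional} (legitimate since $p-1>\sigma$), invoke the Sobolev embedding once more, and redo the same integral computations to obtain $\|Pu-Pv\|_{X(t)}\lesssim\bigl(\|u\|_{X(t)}^{p-1}+\|v\|_{X(t)}^{p-1}\bigr)\|u-v\|_{X(t)}$. A contraction-plus-continuation argument for small data then produces the unique global solution, and the estimates \eqref{optidamp3energy}--\eqref{optidamp4energy} for $\gamma=\sigma$ follow from $\phi=e^{rt}u$ together with $r=\frac{-n+\mu}{2}$.

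I expect the only genuinely delicate point to be this exponential bookkeeping, because the exponent threshold $\frac{n+1}{n-1}$ familiar from the Gagliardo--Nirenberg-based theorems degenerates at $n=1$: there one has to observe that the spare exponent $\frac{1-\mu}{2}$ is annihilated exactly, with no slack, so that only boundedness — not extra decay — of the Duhamel integrals is available; everything else is a routine transcription of the proofs of Theorems \ref{main1} and \ref{noneffective3}.
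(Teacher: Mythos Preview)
Your proposal is correct and follows precisely the route the paper itself indicates: the paper provides no explicit proof for this theorem but states it is ``a similar result like Theorem \ref{main1}'' adapted via the $H^1(\R)\hookrightarrow L^\infty(\R)$ embedding, and you carry out exactly this adaptation, combining the algebra/fractional-power machinery of Theorem \ref{main1} with the non-effective linear estimates and weights from Section \ref{Sec3.3}. The one cosmetic difference is that in the analogous Theorem \ref{noneffective3} the paper weights $u_\tau$ by $e^{\mu\tau}$ rather than your $e^{\frac{(\mu-1)\tau}{2}}$; your choice is weaker but, as you observe, still suffices both to close the Duhamel estimates and to recover \eqref{optidamp4energy} after undoing $\phi=e^{rt}u$, so nothing is lost.
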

 \begin{remark}
Following the proof of Theorem \ref{main1} and applying Proposition \ref{PropSickelfractional} for $s=1$, in the case $n=1$ one may weaken the condition on $p$ to $p-1> 1- \frac{1}{2}$, that is,
$p> \frac{3}{2}$.
 \end{remark}

\subsection{Concluding remarks} \label{Secconcludingdominantdissipationcase}
In Section \ref{Sec3.2} we proved several results for the global existence of small data solutions to the Cauchy problem
\begin{eqnarray*}
\phi_{tt} - e^{-2t} \Delta \phi + n\phi_t+m^2\phi=|\phi|^p,\,\,\,(\phi(0,x),\phi_t(0,x))=(f(x),g(x)),
\end{eqnarray*}
where $p>1$ and $m>0$. Let us choose $m=\frac{\sqrt{n^2-1}}{2}$. This implies $\mu=1$ in all the results.
Summarizing all the results allows the following conclusions:
\begin{enumerate}
\item If $n=2$, then for all $p \in (2,\infty)$ we can choose data $(f,g)\in H^\sigma \times H^{\sigma-1}$, $\sigma=\sigma(p)$, such that we have the global existence of small data solutions.
\item If $n=3$, then for all $p \in \big(\frac{3}{2},\infty\big)$ we can choose data $(f,g)\in H^\sigma \times H^{\sigma-1}$, $\sigma=\sigma(p)$, such that we have the global existence of small data solutions.
\item If $n=4$, then for all $p \in \big(\frac{4}{3},\infty\big)$ we can choose data $(f,g)\in H^\sigma \times H^{\sigma-1}$, $\sigma=\sigma(p)$, such that we have the global existence of small data solutions.
\item If $n=5$, then for all $p \in \big\{\big(\frac{5}{4},\frac{5}{3}\big] \cup (2,\infty)\big\}$ we can choose data $(f,g)\in H^\sigma \times H^{\sigma-1}$, $\sigma=\sigma(p)$, such that we have the global existence of small data solutions. We see, that there is a gap. We have no any result for $p \in \big(\frac{5}{3},2\big]$.
\item If $n=2m$, $m \geq 3$, then for all $p \in \big\{\big(\frac{2m}{2m-1},\frac{m}{m-1}\big] \cup (m,\infty)\big\}$ we can choose data $(f,g)\in H^\sigma \times H^{\sigma-1}$, $\sigma=\sigma(p)$, such that we have the global existence of small data solutions. A gap still exists.
    The statement of Theorem \ref{main5} can be applied for $\sigma \in \big(m-1+ \frac{m-2}{m-1}, m\big)$.
\item If $n=2m+1$, $m \geq 3$, then for all $p \in \big\{\big(\frac{2m+1}{2m},\frac{2m+1}{2m-1}\big] \cup (m+1,\infty)\big\}$ we can choose data $(f,g)\in H^\sigma \times H^{\sigma-1}$, $\sigma=\sigma(p)$, such that we have the global existence of small data solutions. A gap still exists.
    The statement of Theorem \ref{main5} can be applied for $\sigma \in \big(\frac{2m+1}{2}- \frac{1}{m}, \frac{2m+1}{2}\big)$.
\end{enumerate}

\section{De Sitter model with dominant mass: Case $m \in (\frac{n}{2},\infty)$. }\label{Sec4}
In this section we consider the Cauchy problem
\begin{equation}\label{semiliearwavemasstreatment}
\begin{cases}
u_{tt} - e^{-2t}\Delta u + \mu^2 \, u =e^{-\frac{n}{2} (p-1)t}|u|^p,
  \quad (t,x)\in (0,\infty)\times \R^n,
\\
u(0,x)= u_0(x), \quad u_t(0,x)=u_1(x),
  \quad x\in \R^n
\end{cases}
\end{equation}
with $\mu^2=m^2 - \frac{n^2}{4} >0$, that is, $m \in (\frac{n}{2},\infty)$.\\
According to Duhamel's principle, a solution of \eqref{semiliearwavemasstreatment} satisfies the non-linear integral equation
\[  u(t,x)=K_0(t,0, x) \ast_{(x)} u_0(x) + K_1(t,0, x) \ast_{(x)} u_1(x) + \int_0^t e^{-\frac{n}{2}(p-1)s}K_1(t,s,x) \ast_{(x)}|u(s,x)|^{p}\,ds, \]
where $K_j(t,0, x) \ast_{(x)} u_j(x)$, $j=0,1$, are the solutions to the corresponding linear Cauchy
problem
\begin{equation}\label{linearKleinGordon}
\begin{cases}
u_{tt} - e^{-2t}\Delta u + \mu^2 \,u=0,
  \quad (t,x)\in (0,\infty)\times \R^n,
\\
u(0,x)=\delta_{0j}u_0(x), \quad u_t(0,x)=\delta_{1j}u_1(x),
  \quad x\in \R^n,
\end{cases}
\end{equation}
with $\delta_{kj}=1$  for $k=j$, and zero otherwise. The term $K_1(t,s,x) \ast_{(x)}f(s,x)$ is the  solution  of the parameter-dependent Cauchy problem
\[\begin{cases}
u_{tt} - e^{-2t}\Delta u + \mu^2 u=0,
  \quad (t,x)\in (s,\infty)\times \R^n,
\\
u(s,x)=0, \quad u_t(s,x)=f(s,x),
  \quad x\in \R^n.
\end{cases}
\]
 So, Duhamel's principle explains that we have to take account of solutions
to a family of parameter-dependent Cauchy problems.
\subsection{Estimates of solutions to the corresponding linear model} \label{Sec4.1}
Let us consider for $t \geq s$ the parameter-dependent Cauchy problem for the Klein-Gordon type equation
\begin{equation}\label{massKleinGordonwavelinear}
\begin{cases}
u_{tt} - e^{-2t}\Delta u + \mu^2 u=0,
  \quad (t,x)\in (s,\infty)\times \R^n, \,\mu^2 >0,
\\
u(s,x)= \varphi(s,x), \quad u_t(s,x)=\psi(s,x),
  \quad x\in \R^n.
\end{cases}
\end{equation}
After application of partial Fourier transformation we have
\[ \hat{u}_{tt} + |\xi|^2e^{-2t} \hat{u} + \mu^2 \hat{u}=0.\]
If we introduce the change of variables  $\tau:=|\xi|e^{-t},\,\tau_0:=|\xi|e^{-s}$ and $v(\tau):=\hat{u}(t,\xi)$, then we get the ordinary differential equation
\[ v_{\tau \tau} + \frac{1}{\tau} v_\tau + \Big(1+\frac{\mu^2}{\tau^2}\Big)v=0.\]
If we define  $v(\tau)=\tau^\rho \tilde{v}(\tau)$, then after choosing $\rho=i\mu$ we arrive at
\[ \tau \tilde{v}_{\tau \tau} + (2\rho+1) \tilde{v}_{\tau} + \tau\tilde{v}=0.\]
Finally, the last equation is reduced to a confluent hypergeometric equation if we perform the change of variables $z:=2i \tau$ and $w(z)=e^{i\tau} \tilde{v}(\tau)$. In this way we obtain
\begin{equation}\label{hypergeometricEq2}
 z w_{zz} + (1+2\rho -z)w_z -\frac{1+2\rho}{2} w=0,
 \end{equation}
with the following initial conditions at $z_0=z_0(s,\xi)=2i\xii e^{-s}$:
\[w(z_0)=e^{i\xii e^{-s}}\frac{\widehat\varphi(s,\xi)}{(\xii e^{-s})^{\rho}}, \quad w'(z_0)=\frac{e^{i\xii e^{-s}}}{2}\Big(\frac{\widehat\varphi(s,\xi)}{(\xii e^{-s})^{\rho}}+i\frac{\rho\widehat\varphi(s,\xi)+\widehat\psi(s,\xi)}{(|\xi|e^{-s})^{\rho+1}}\Big).\]
     Due to \cite{BE} the general solution of \eqref{hypergeometricEq2} has the representation
\[w(z)=c_1(s,\xi)w_1(z)+ c_2(s,\xi)w_2(z),\]
where $w_1$ and $w_2$ are
 two linear independent solutions  given by
 \[w_1(z)=\Phi\Big(\frac{1+2i\mu}{2}, 1+2i\mu, z\Big), \quad w_2(z)=z^{-2i\mu}\Phi\Big(\frac{1-2i\mu}{2}, 1-2i\mu, z\Big). \]
Here $\Phi$ is the confluent hypergeometric function. Moreover, we can write
 \begin{equation}\label{coef2}
 c_j(s,\xi)=(-1)^{3-j}\frac{w(z_0)(d_z w_{3-j})(z_0)-(d_z w)(z_0)w_{3-j}(z_0)}{W(w_1,w_2)(z_0)} \,\,\,\mbox{for} \,\,\, j=1,2,
 \end{equation}
 where $W(w_1,w_2)$ is the Wronskian of the two linear independent solutions. The Wronskian (\cite{BE},vol.1,p.253) is equal to
\[W(w_1,w_2)(z)=w_1\frac{d}{dz}w_2-w_2\frac{d}{dz}w_1=-2i\mu z^{-2i\mu-1}e^z.\]
Since $z$ is a pure imaginary number, it follows $|W(w_1,w_2)(z)|\approx |z|^{-1}$.
Using all these representations we conclude the following WKB representation
for $\widehat{u}$:
\begin{eqnarray*}
  && \widehat{u}(t,\xi)= e^{i\mu\ln(\xii e^{-t})-i\xii e^{-t}}\Big(c_1(s,\xi)\Phi\Big(\frac{1+2i\mu}{2}, 1+2i\mu, 2i\xii e^{-t}\Big)\\
  && \qquad +c_2(s,\xi) (2i\xii e^{-t})^{-2i\mu}
  \Phi\Big(\frac{1-2i\mu}{2}, 1-2i\mu, 2i\xii e^{-t} \Big) \Big),
   \end{eqnarray*}
  where the coefficients $c_j(s,\xi)$ for $j=1,2$ are given by \eqref{coef2} with $z_0=2i\xii e^{-s}$.\\
Now we may follow the approach of the previous section. For this reason we split the extended phase space into three zones
\[ Z_1(s)
     := \{\xi: \ \xii e^{-s}\leq N \},\ \ Z_2(t,s)
     := \{N e^{s}\leq \xii \leq N e^{t} \},  \ \ Z_3(t)
     := \{\xii e^{-t}\geq N \}.\]
In this way we are able to prove the following results.
\begin{proposition}\label{propositionKleinGordon1}
Assume that $\varphi(0,\cdot) \in H^{\gamma}(\R^n)$, $\gamma \geq 0$ and $\psi(0,\cdot) \equiv 0$. Then the following estimates hold for the solutions to (\ref{massKleinGordonwavelinear}) for $t \in [0,\infty)$:
\begin{eqnarray*}
&& \| K_0(t,0, x) \ast_{(x)} \varphi(0,x)\|_{\dot{H}^{\gamma}} \lesssim
  e^{\frac{t}{2}}\|\varphi(0,\cdot)\|_{\dot{H}^{\gamma}},\\
&&  \|\partial_t K_0(t,0, x) \ast_{(x)} \varphi(0,x)\|_{\dot{H}^{\gamma-1}} \lesssim
  \|\varphi(0,\cdot)\|_{\dot{H}^{\gamma}}\,\,\,\mbox{for}\,\,\, \gamma \geq 1.
\end{eqnarray*}
\end{proposition}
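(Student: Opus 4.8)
The plan is to work entirely on the Fourier side, starting from the WKB representation of $\widehat u(t,\xi)$ derived above in terms of the confluent hypergeometric functions $w_1,w_2$ and the coefficients $c_1(0,\xi),c_2(0,\xi)$ given by \eqref{coef2}, and then to split the extended phase space into the three zones $Z_1(0)=\{\xii\le N\}$, $Z_2(t,0)=\{N\le\xii\le Ne^t\}$ and $Z_3(t)=\{\xii e^{-t}\ge N\}$, estimating $\widehat u$ and $\partial_t\widehat u$ separately in each of them, exactly as in Section \ref{Sec3.1}. Since $m>\frac{n}{2}$ forces $\mu\neq0$, the parameter $c=1+2i\mu$ is never a non-positive integer, so Proposition \ref{hypergeometricFucntions} applies directly and no separate discussion of integer parameters is needed here. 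The key structural observation is that for $b=\frac{1+2i\mu}{2}$, $c=1+2i\mu$ one has $\Re b=\frac{1}{2}$ and $\Re(b-c)=-\frac{1}{2}$, so property (P3) gives $|\Phi(b,c,z)|\lesssim|z|^{-1/2}$ for large $|z|$, and by (P2) also $\frac{d}{dz}\Phi(b,c,z)=\frac{b}{c}\Phi(b+1,c+1,z)$ satisfies $|\frac{d}{dz}\Phi(b,c,z)|\lesssim|z|^{-1/2}$; moreover $|W(w_1,w_2)(z_0)|\approx|z_0|^{-1}$, and every factor $\xii^{\pm i\mu}$, $(\xii e^{-t})^{i\mu}$, $e^{-i\xii e^{-t}}$ and $(2i\xii e^{-t})^{-2i\mu}$ appearing in the representation has constant modulus because its argument lies on the positive real, resp.\ positive imaginary, axis.

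In $Z_3(t)$ both $|z_0|=2\xii$ and $|z|=2\xii e^{-t}$ are large. Evaluating the initial conditions at $z_0=2i\xii$ with $\widehat\psi(0,\xi)\equiv0$ gives $|w(z_0)|=|\widehat\varphi(0,\xi)|$ and $|w'(z_0)|\lesssim(1+\xii^{-1})|\widehat\varphi(0,\xi)|\lesssim|\widehat\varphi(0,\xi)|$; combining this with (P3) at $z_0$ and $1/|W(w_1,w_2)(z_0)|\approx\xii$ yields $|c_j(0,\xi)|\lesssim\xii^{1/2}|\widehat\varphi(0,\xi)|$ for $j=1,2$. Substituting back into the WKB formula and using (P3) once more for the argument $2i\xii e^{-t}$ we obtain
\[ |\widehat u(t,\xi)|\lesssim\big(|c_1(0,\xi)|+|c_2(0,\xi)|\big)(\xii e^{-t})^{-1/2}\lesssim\xii^{1/2}(\xii e^{-t})^{-1/2}|\widehat\varphi(0,\xi)|=e^{t/2}|\widehat\varphi(0,\xi)|. \]
In $Z_2(t,0)$ the estimate for $c_j$ is unchanged (it uses only that $z_0$ is large), while the argument $2i\xii e^{-t}$ is now small, so (P1) gives $|\Phi(b,c,2i\xii e^{-t})|\lesssim1$ and hence $|\widehat u(t,\xi)|\lesssim\xii^{1/2}|\widehat\varphi(0,\xi)|\lesssim e^{t/2}|\widehat\varphi(0,\xi)|$ since $\xii\le Ne^t$. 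In $Z_1(0)$ all arguments stay in a bounded set, so $\Phi$ and $\frac{d}{dz}\Phi$ are bounded there, and a direct inspection of \eqref{coef2} gives $|c_j(0,\xi)|\lesssim|\widehat\varphi(0,\xi)|$ and therefore $|\widehat u(t,\xi)|\lesssim|\widehat\varphi(0,\xi)|\lesssim e^{t/2}|\widehat\varphi(0,\xi)|$. Multiplying by $\xii^\gamma$ and summing over the three zones gives $\|K_0(t,0,x)\ast_{(x)}\varphi(0,x)\|_{\dot H^\gamma}\lesssim e^{t/2}\|\varphi(0,x)\|_{\dot H^\gamma}$.

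For the time-derivative estimate one differentiates the WKB representation in $t$: this produces the factor $-i\mu+i\xii e^{-t}$ from the front phase, a factor of constant modulus from $\partial_t(2i\xii e^{-t})^{-2i\mu}$, and $\partial_t\Phi(b,c,2i\xii e^{-t})=-2i\xii e^{-t}\,\frac{b}{c}\,\Phi(b+1,c+1,2i\xii e^{-t})$ from the Kummer functions; using $|\Phi(b+1,c+1,z)|\lesssim|z|^{-1/2}$ for large $|z|$ and boundedness for small $|z|$ one checks that in $Z_3$
\[ |\partial_t\widehat u(t,\xi)|\lesssim(\mu+\xii e^{-t})e^{t/2}|\widehat\varphi(0,\xi)|+\xii^{1/2}(\xii e^{-t})^{1/2}|\widehat\varphi(0,\xi)|\lesssim\xii\,e^{-t/2}|\widehat\varphi(0,\xi)|, \]
where in the last step we used $\xii\ge Ne^t$, so that $\mu e^{t/2}\lesssim\xii e^{-t/2}$; in $Z_2$ one gets $|\partial_t\widehat u(t,\xi)|\lesssim\xii^{1/2}|\widehat\varphi(0,\xi)|\lesssim\xii|\widehat\varphi(0,\xi)|$ since $\xii\ge N$; and in $Z_1$ the boundedness of the hypergeometric functions gives $|\partial_t\widehat u(t,\xi)|\lesssim|\widehat\varphi(0,\xi)|$, where one uses the inhomogeneous regularity $\varphi(0,\cdot)\in H^\gamma(\R^n)$ assumed in the statement in order to absorb the low frequencies. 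Multiplying by $\xii^{\gamma-1}$ with $\gamma\ge1$ and summing over the zones gives $\|\partial_t K_0(t,0,x)\ast_{(x)}\varphi(0,x)\|_{\dot H^{\gamma-1}}\lesssim\|\varphi(0,x)\|_{\dot H^\gamma}$ (in fact with an additional decay factor stemming from $Z_2\cup Z_3$).

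The only real work is the bookkeeping of the competing power and exponential factors in each zone. I expect the delicate points to be, first, checking that the growth rate $e^{t/2}$ is produced exactly by the value $\max\{\Re(b-c),-\Re b\}=-\frac{1}{2}$ in (P3), and that in $Z_2$, $Z_3$ the inequalities $\xii\gtrsim1$, resp.\ $\xii\gtrsim e^t$, are precisely what is needed to convert the crude pointwise bounds on $\partial_t\widehat u$ into the claimed $\dot H^{\gamma-1}$-estimate; and second, the low-frequency zone $Z_1$, where $\widehat u$ merely oscillates and $\partial_t\widehat u$ gains no power of $\xii$, which is exactly the reason the statement assumes $\varphi(0,\cdot)\in H^\gamma$ (and $\gamma\ge1$) rather than only $\varphi(0,\cdot)\in\dot H^\gamma$.
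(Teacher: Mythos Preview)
Your proposal is correct and follows essentially the same approach as the paper: the same WKB representation via confluent hypergeometric functions, the same three-zone decomposition, and the same use of (P1)--(P3) with the key observation $\Re b=\tfrac12$, $\Re(b-c)=-\tfrac12$. Your write-up is in fact more detailed than the paper's joint sketch for Propositions \ref{propositionKleinGordon1} and \ref{propositionKleinGordon2parameter}, and you correctly flag the low-frequency issue in $Z_1$ for $\partial_t\widehat u$, which is precisely why the hypothesis reads $\varphi(0,\cdot)\in H^\gamma$ rather than $\dot H^\gamma$.
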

\begin{proposition}\label{propositionKleinGordon2parameter}
Assume that $\psi(s,\cdot) \in \dot{H}^\gamma(\R^n),\,\gamma \geq 0,$ for $s \in [0,\infty)$ and $\varphi(s,\cdot) \equiv 0$. Then the following estimates hold for the solutions to (\ref{massKleinGordonwavelinear}) for $t \in [s,\infty)$:
\begin{eqnarray*}
&& \|K_1(t,s, x) \ast_{(x)} \psi(s,x)\|_{\dot{H}^\gamma} \lesssim
  \|\psi(s,\cdot)\|_{\dot{H}^\gamma},\\
&& \|\nabla K_1(t,s, x) \ast_{(x)} \psi(s,x)\|_{\dot{H}^\gamma} \lesssim
  e^{\frac{s+t}{2}}  \|\psi(s,\cdot)\|_{\dot{H}^\gamma}, \\
&& \|\partial_t K_1(t,s, x) \ast_{(x)} \psi(s,x)\|_{\dot{H}^\gamma} \lesssim
  \|\psi(s,\cdot)\|_{\dot{H}^\gamma}.
\end{eqnarray*}
\end{proposition}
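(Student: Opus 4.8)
The plan is to carry the zone-wise WKB analysis used for Proposition~\ref{proposition11} over to the representation of $\widehat u$ derived above for the Klein--Gordon type equation. The first thing I would record is the structural simplification available here: since $\mu^2>0$, the parameter $c=1\pm 2i\mu$ never lies in $\Z$, so Proposition~\ref{hypergeometricFucntions} applies to both $w_1$ and $w_2$ and, in contrast to the dissipative case, no Frobenius substitute is needed. Moreover, because $z=2i\xii e^{-t}$ and $z_0=2i\xii e^{-s}$ lie on the imaginary axis and $\xii e^{-s}>0$, the powers $z^{-2i\mu}$, $(\xii e^{-s})^{i\mu}$, $(\xii e^{-s})^{i\mu+1}$ occurring in the initial data and in the WKB formula all have constant modulus, and the prefactor $e^{i\mu\ln(\xii e^{-t})-i\xii e^{-t}}$ has modulus one. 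Using property (P2) one checks that for the parameters of $w_1$, of $w_2$ and of $\frac{d}{dz}w_1,\frac{d}{dz}w_2$ one has $\max\{\Re(b-c),-\Re b\}=-\tfrac12$, so by (P1) and (P3) each of these functions (apart from the term produced by differentiating $z^{-2i\mu}$ in $w_2$, treated separately below) is bounded for bounded argument and decays like $|z|^{-1/2}$ for large argument. Finally, since $\varphi(s,\cdot)\equiv 0$ we have $w(z_0)=0$, so \eqref{coef2} collapses to $c_j(s,\xi)=\pm\,w_z(z_0)\,w_{3-j}(z_0)/W(w_1,w_2)(z_0)$; with $|W(w_1,w_2)(z_0)|\approx|z_0|^{-1}$, $|z_0|=2\xii e^{-s}$ and $|w_z(z_0)|=\tfrac12|\widehat\psi(s,\xi)|/(\xii e^{-s})$ this gives $|c_j(s,\xi)|\lesssim|\widehat\psi(s,\xi)|$ on $Z_1(s)$ and $|c_j(s,\xi)|\lesssim(\xii e^{-s})^{-1/2}|\widehat\psi(s,\xi)|$ on $Z_2(t,s)\cup Z_3(t)$.

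With these coefficient bounds I would prove the first estimate by inserting them into the WKB formula and using $|\Phi(b_j,c_j,2i\xii e^{-t})|\lesssim 1$ on $Z_1\cup Z_2$ and $|\Phi(b_j,c_j,2i\xii e^{-t})|\lesssim(\xii e^{-t})^{-1/2}$ on $Z_3$. This yields $|\widehat u(t,\xi)|\lesssim|\widehat\psi(s,\xi)|$ at once on $Z_1$; on $Z_2$ one absorbs $(\xii e^{-s})^{-1/2}\leq N^{-1/2}$; on $Z_3$ one gets $|\widehat u(t,\xi)|\lesssim\xii^{-1}e^{(s+t)/2}|\widehat\psi(s,\xi)|$, and $\xii\geq Ne^t$ together with $t\geq s$ makes $\xii^{-1}e^{(s+t)/2}\leq N^{-1}$. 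Multiplying by $\xii^\gamma$ and using Plancherel's theorem gives the bound for $\|K_1(t,s,\cdot)\ast_{(x)}\psi(s,\cdot)\|_{\dot H^\gamma}$. For the gradient estimate I would carry one extra power of $\xii$: on $Z_1$, $\xii\leq Ne^s\leq Ne^{(s+t)/2}$; on $Z_2$, $\xii|\widehat u(t,\xi)|\lesssim\xii^{1/2}e^{s/2}|\widehat\psi(s,\xi)|\lesssim e^{(s+t)/2}|\widehat\psi(s,\xi)|$ using $\xii\leq Ne^t$; and on $Z_3$ the bound $|\widehat u(t,\xi)|\lesssim\xii^{-1}e^{(s+t)/2}|\widehat\psi(s,\xi)|$ already gives $\xii|\widehat u(t,\xi)|\lesssim e^{(s+t)/2}|\widehat\psi(s,\xi)|$, whence the stated $e^{(s+t)/2}$ loss.

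For the time derivative the cleanest route is to note that $v(\tau)=\widehat u(t,\xi)$ solves $v_{\tau\tau}+\tau^{-1}v_\tau+(1+\mu^2\tau^{-2})v=0$ with $\tau=\xii e^{-t}$, so $\widehat u_t=-\tau v_\tau$; writing $v(\tau)=\tau^{i\mu}e^{-i\tau}w(2i\tau)$ gives $|\widehat u_t(t,\xi)|\lesssim\mu|w(z)|+\tau\big(|w(z)|+|w_z(z)|\big)$ with $z=2i\tau$. I would estimate each term zone by zone with the same coefficient bounds; the only delicate point is that $\frac{d}{dz}w_2(z)=-2i\mu z^{-2i\mu-1}\Phi(\cdot)+z^{-2i\mu}\frac{d}{dz}\Phi(\cdot)$ has a $|z|^{-1}$ singularity at $z=0$, but it is multiplied by $\tau\approx|z|$, so $\tau|w_z(z)|\lesssim|c_1|+|c_2|$ on $Z_1\cup Z_2$, while on $Z_3$ the $|z|^{-1/2}$ decay of $w_z$ combined with $|c_j|\lesssim(\xii e^{-s})^{-1/2}|\widehat\psi(s,\xi)|$ and $\xii\geq Ne^t$, $t\geq s$ gives $\tau|w_z(z)|\lesssim|\widehat\psi(s,\xi)|$. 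Collecting terms yields $|\widehat u_t(t,\xi)|\lesssim|\widehat\psi(s,\xi)|$ on all three zones, hence the third estimate.

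I expect the main obstacle to be organizational rather than conceptual: one must carry the weights $\xii$, $e^{-s}$, $e^{-t}$ through all three zones and use each zone-defining inequality ($\xii e^{-s}\leq N$ on $Z_1$, $Ne^s\leq\xii\leq Ne^t$ on $Z_2$, $\xii\geq Ne^t$ on $Z_3$) to absorb exponential factors either into constants or into the admitted $e^{(s+t)/2}$ loss, exactly as in the proof of Proposition~\ref{proposition11}. A secondary subtlety, worth stating carefully, is that all the $z^{-2i\mu}$-type factors have constant modulus on the imaginary axis, so they contribute neither growth nor decay.
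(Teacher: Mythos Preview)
Your proposal is correct and follows essentially the same route as the paper: the same zone decomposition $Z_1(s),Z_2(t,s),Z_3(t)$, the same WKB representation via confluent hypergeometric functions, and the same use of Proposition~\ref{hypergeometricFucntions} to extract the $|z|^{-1/2}$ behaviour. Your organization is in fact a bit more explicit than the paper's sketch---you specialize to $\varphi\equiv 0$ from the outset (so $w(z_0)=0$ and \eqref{coef2} collapses), you record that $c=1\pm 2i\mu\notin\Z$ so no Frobenius branch is needed, and you handle $\widehat u_t$ via $\widehat u_t=-\tau v_\tau$ rather than differentiating the prefactor $e^{i\mu\ln(\xii e^{-t})-i\xii e^{-t}}$ directly as the paper does---but these are presentational choices, not a different argument.
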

\begin{proof}
We only sketch the proof of the last two propositions.
\begin{enumerate}
\item In $Z_1$, by using properties (P1) and (P2) of Proposition \ref{hypergeometricFucntions} we have
\[ |c_1(s, \xi)|\lesssim \xii e^{-s}\Big(\frac{|\widehat\varphi(s,\xi)|}{\xii e^{-s}}+|\widehat\varphi(s,\xi)| +\frac{|\widehat\psi(s,\xi)|}{\xii e^{-s}}\Big)
\lesssim |\widehat\varphi(s,\xi)|+|\widehat\psi(s,\xi)|,  \]
and
\[ |c_2(s,\xi)|\lesssim \xii e^{-s}\Big(|\widehat\varphi(s,\xi)|+ \frac{  |\widehat\varphi(s,\xi)|+|\widehat\psi(s,\xi)|}{\xii e^{-s}}\Big)
\lesssim |\widehat\varphi(s,\xi)|+|\widehat\psi(s,\xi)|.\]
So, for $\gamma\geq 0$ we can estimate
\[\xii^{\gamma}|\widehat{u}(t,\xi)|\lesssim \xii^{\gamma} (|\widehat\varphi(s,\xi)|+|\widehat\psi(s,\xi)|).\]
If $\psi\in \dot{H}^{\gamma-1}$ but not in $\dot{H}^\gamma$, then we may only conclude
\[\xii^\gamma\widehat{u}(t,\xi)|\lesssim  \xii^\gamma|\widehat\varphi(s,\xi)|+e^{s}\xii^{\gamma-1}|\widehat\psi(s,\xi)|. \]
Using $|\partial_t e^{i\mu\ln(\xii e^{-t})-i\xii e^{-t}}|\leq C$ in $Z_1$ it follows immediately
\[\xii^\gamma|\widehat{u}_t(t,\xi)|\lesssim \xii^\gamma \big(|\widehat\varphi(s,\xi)|+|\widehat\psi(s,\xi)|\big).\]
\item In $Z_3$ we have that $\xii e^{-s}\geq\xii e^{-t}\geq N$ and thanks to Proposition \ref{hypergeometricFucntions} we can estimate
\begin{equation}\label{coefdamp2}
  |c_j(s, \xi)|\lesssim \xii e^{-s}\Big(\!|\widehat\varphi(s,\xi)|+\frac{|\widehat\varphi(s,\xi)|+|\widehat\psi(s,\xi)|}{\xii e^{-s}}\!\Big)
  \!(\xii e^{-s})^{-\frac{1}{2}}\lesssim\!|\widehat\varphi(s,\xi)|(\xii e^{-s})^{\frac{1}{2}}+|\widehat\psi(s,\xi)|(\xii e^{-s})^{-\frac{1}{2}}.
 \end{equation}
So,  by using \eqref{coefdamp2} and property (P3) of Proposition \ref{hypergeometricFucntions} we may conclude
   \begin{eqnarray*}
  && \xii^{\gamma}|\widehat{u}(t,\xi)| \lesssim  \xii^{\gamma} \Big(|\widehat\varphi(s,\xi)|(\xii e^{-s})^{\frac{1}{2}}+|\widehat\psi(s,\xi)|(\xii e^{-s})^{-\frac{1}{2}}\Big)
  (\xii e^{-t})^{-\frac{1}{2}}\\
  && \qquad \lesssim
   \xii^{\gamma}\Big(|\widehat\varphi(s,\xi)|e^{-\frac{s}{2}}+ \frac{|\widehat\psi(s,\xi)|}{\xii}e^{\frac{s}{2}}\Big)
   e^{\frac{t}{2}}
  \lesssim
   \xii^{\gamma}\big(|\widehat\varphi(s,\xi)|e^{-\frac{s-t}{2}}+ |\widehat\psi(s,\xi)|e^{\frac{s-t}{2}}\big).
  \end{eqnarray*}
In order to avoid an exponential increasing term in $t$ we use regularity in the last inequality, i.e., we use the estimate $N\xii^{-1}\leq e^{-t}$. If $\psi(s,\cdot)\in \dot{H}^{\gamma-1}$ but not in $\dot{H}^\gamma$, then one may only derive
 \[ \xii^\gamma|\widehat{u}(t,\xi)| \lesssim
   \xii^\gamma |\widehat\varphi(s,\xi)|e^{-\frac{s-t}{2}}+ \xii^{\gamma-1}|\widehat\psi(s,\xi)|e^{\frac{s+t}{2}}.\]
Similarly, using that $|\partial_t e^{i\mu\ln(\xii e^{-t})-i\xii e^{-t}}|\leq C\xii e^{-t}$ in $Z_3$, we may conclude
   \[\xii^{\gamma-1}|\widehat{u}_t(t,\xi)|\lesssim \xii^\gamma e^{-t} \Big(
   |\widehat\varphi(s,\xi)|e^{-\frac{s-t}{2}}+ \frac{|\widehat\psi(s,\xi)|}{\xii}e^{\frac{s+t}{2}}\Big)\lesssim \xii^\gamma |\widehat\varphi(s,\xi)|e^{-\frac{s+t}{2}}+ \xii^{\gamma-1}|\widehat\psi(s,\xi)|e^{\frac{s-t}{2}}.\]
 \item In $Z_2$ we still use \eqref{coefdamp2} to conclude
\begin{eqnarray*}
  && \xii^{\gamma}|\widehat{u}(t,\xi)| \lesssim  \xii^{\gamma}\Big(|\widehat\varphi(s,\xi)|(\xii e^{-s})^{\frac{1}{2}}+|\widehat\psi(s,\xi)|(\xii e^{-s})^{-\frac{1}{2}}\Big)\\
  && \qquad \lesssim \xii^{\gamma}\big(|\widehat\varphi(s,\xi)|e^{-\frac{s-t}{2}}+ |\widehat\psi(s,\xi)|\big).
    \end{eqnarray*}
If $\psi\in \dot{H}^{\gamma-1}$, but not in $\dot{H}^\gamma$, one may only derive
      \[ \xii^\gamma|\widehat{u}(t,\xi)| \lesssim
   \xii^\gamma |\widehat\varphi(s,\xi)|e^{-\frac{s-t}{2}}+ \xii^{\gamma-1}|\widehat\psi(s,\xi)|e^{\frac{s+t}{2}}.\]
     Similarly, we conclude
\[\xii^{\gamma-1}|\widehat{u}_t(t,\xi)|\lesssim \xii^\gamma e^{-s} |\widehat\varphi(s,\xi)|+ \xii^{\gamma-1}|\widehat\psi(s,\xi)|.\]
\end{enumerate}
This completes the proof. \end{proof}

\subsection{Global existence of small data solutions} \label{Sec4.2}
Firstly we are interested in the global existence (in time) of energy solutions.
\begin{theorem}\label{main3} Consider for $n \geq 2$ the Cauchy problem  \eqref{general} with $m\in (\frac{n}{2},\infty)$  and
data $(f,g) \in (H^1(\R^n)\times L^2(\R^n))$. Let $p>\frac{n+1}{n-1}$  and $p\leq \frac{n}{n-2}$ for $n\geq 3$.
  Then, there exists a constant $\varepsilon_0>0$ such that, for every small data satisfying
  \[\|f\|_{H^1}+ \|g\|_{L^2}\leq \varepsilon\,\,\,\mbox{for}\,\,\, \varepsilon\leq \varepsilon_0,\]
  there exists a uniquely determined global (in time) energy solution \[ \phi \in C\big([0,\infty),H^1(\R^n)) \cap C^1([0,\infty),L^2(\R^n)\big).\]
  The energy solution satisfies the decay estimate
  \begin{eqnarray*}
  \|\phi(t,\cdot)\|_{L^2} + \|\nabla \phi(t,\cdot)\|_{L^2}+ \|\phi_t(t,\cdot)\|_{L^2}  \lesssim e^{-\frac{n-1}{2}t}
  \big(\|f\|_{H^1} + \|g\|_{L^2} \big).
  \end{eqnarray*}
  \end{theorem}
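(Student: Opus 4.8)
The plan is to run the same contraction argument as in the proof of Theorem \ref{main2}, but with a solution space whose exponential weights are dictated by the linear estimates of Propositions \ref{propositionKleinGordon1} and \ref{propositionKleinGordon2parameter}. As before it suffices to treat the transformed problem \eqref{semiliearwavemasstreatment} with $r=-\frac{n}{2}$, since $\phi(t,x)=e^{-\frac{n}{2}t}u(t,x)$. Motivated by $\|K_0(t,0,x)\ast_{(x)}\varphi\|_{\dot H^\gamma}\lesssim e^{t/2}\|\varphi\|_{\dot H^\gamma}$, $\|\nabla K_1(t,s,x)\ast_{(x)}\psi\|_{L^2}\lesssim e^{(s+t)/2}\|\psi\|_{L^2}$, and by the boundedness of $\|\partial_t K_0(t,0,x)\ast_{(x)}\varphi\|_{L^2}$, $\|\partial_t K_1(t,s,x)\ast_{(x)}\psi\|_{L^2}$, I would set
\[ X(t):=\Bigl\{u\in C\big([0,t],H^1(\R^n)\big)\cap C^1\big([0,t],L^2(\R^n)\big):\ \|u\|_{X(t)}:=\sup_{\tau\in[0,t]}\bigl\{e^{-\tau/2}\|u(\tau,\cdot)\|_{H^1}+\|u_\tau(\tau,\cdot)\|_{L^2}\bigr\}<\infty\Bigr\}, \]
and $Pu:=K_0(t,0,x)\ast_{(x)}u_0+K_1(t,0,x)\ast_{(x)}u_1+Gu$ with $Gu(t,x)=\int_0^t e^{-\frac{n}{2}(p-1)s}K_1(t,s,x)\ast_{(x)}|u(s,x)|^p\,ds$, where $u_0=f$, $u_1=g+\frac{n}{2}f$. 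Applying Propositions \ref{propositionKleinGordon1} and \ref{propositionKleinGordon2parameter} at $s=0$ gives at once $\|K_0(t,0,x)\ast_{(x)}u_0+K_1(t,0,x)\ast_{(x)}u_1\|_{X(t)}\lesssim\|u_0\|_{H^1}+\|u_1\|_{L^2}\lesssim\|f\|_{H^1}+\|g\|_{L^2}$.

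For the nonlinear part I would use Minkowski's integral inequality together with the three estimates of Proposition \ref{propositionKleinGordon2parameter}, combined with the Gagliardo--Nirenberg inequality
\[ \||u(s,\cdot)|^p\|_{L^2}=\|u(s,\cdot)\|_{L^{2p}}^p\lesssim\|u(s,\cdot)\|_{L^2}^{p(1-\theta)}\|\nabla u(s,\cdot)\|_{L^2}^{p\theta}\lesssim e^{ps/2}\|u\|_{X(s)}^p,\qquad \theta=n\Bigl(\tfrac12-\tfrac1{2p}\Bigr), \]
which is admissible for $2p\le\frac{2n}{n-2}$ if $n\ge3$. Then both $e^{-t/2}\|Gu(t,\cdot)\|_{L^2}$ and $\|\partial_t Gu(t,\cdot)\|_{L^2}$ are bounded by $\|u\|_{X(t)}^p\int_0^t e^{\frac{s}{2}(n-p(n-1))}\,ds$, which converges since $p>\frac{n+1}{n-1}>\frac{n}{n-1}$; for the gradient term the extra factor $e^{(s+t)/2}$ from $\|\nabla K_1\ast_{(x)}\cdot\|_{L^2}$ gives $e^{-t/2}\|\nabla Gu(t,\cdot)\|_{L^2}\lesssim\|u\|_{X(t)}^p\int_0^t e^{\frac{s}{2}(n+1-p(n-1))}\,ds$, and the integral is finite precisely because $p>\frac{n+1}{n-1}$. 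Hence $\|Pu\|_{X(t)}\lesssim\|f\|_{H^1}+\|g\|_{L^2}+\|u\|_{X(t)}^p$, so $P$ maps $X(t)$ into itself for small data.

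The Lipschitz estimate proceeds exactly as in \eqref{Lipschitzdissipation}: writing $|u|^p-|v|^p=p\int_0^1|v+\tau(u-v)|^{p-2}(v+\tau(u-v))\,d\tau\,(u-v)$, applying H\"older's inequality with $r_1=\frac{2p}{p-1}$, $r_2=2p$ and then the Gagliardo--Nirenberg inequality (admissible for $n\ge2$ in the stated range of $p$), and carrying the same exponential bookkeeping, one obtains $\|Pu-Pv\|_{X(t)}\lesssim\|u-v\|_{X(t)}\bigl(\|u\|_{X(t)}^{p-1}+\|v\|_{X(t)}^{p-1}\bigr)$. A standard Banach fixed point plus continuation argument then yields a unique global solution $u$ with $\|u\|_{X(t)}\lesssim\|f\|_{H^1}+\|g\|_{L^2}$ (and local existence for large data), and the decay estimate follows from $\phi=e^{-\frac{n}{2}t}u$, giving $\|\phi(t,\cdot)\|_{H^1}\lesssim e^{-nt/2}e^{t/2}\|u\|_{X(t)}=e^{-\frac{n-1}{2}t}\|u\|_{X(t)}$, and since $\phi_t=-\frac{n}{2}e^{-\frac{n}{2}t}u+e^{-\frac{n}{2}t}u_t$ also $\|\phi_t(t,\cdot)\|_{L^2}\lesssim e^{-\frac{n-1}{2}t}\|u\|_{X(t)}$. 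The main obstacle is not any single inequality but the correct choice of exponential weights in $X(t)$ — in particular recognizing that $u$ and $\nabla u$ must be allowed the growth $e^{t/2}$ forced by the Klein--Gordon linear estimates — and then checking that the competition between the decaying factor $e^{-\frac{n}{2}(p-1)s}$ produced by the transformation and the growth $e^{(s+t)/2}$ in the gradient estimate closes exactly under the hypothesis $p>\frac{n+1}{n-1}$.
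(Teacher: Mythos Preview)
Your proposal is correct and follows essentially the same approach as the paper: the same solution space $X(t)$ (the paper writes the $L^2$ and $\nabla$-$L^2$ pieces separately while you use $\|u\|_{H^1}$, which is equivalent), the same linear estimates from Propositions \ref{propositionKleinGordon1} and \ref{propositionKleinGordon2parameter}, the same Gagliardo--Nirenberg bound $\||u|^p\|_{L^2}\lesssim e^{ps/2}\|u\|_{X(s)}^p$, and the same exponential bookkeeping yielding convergence precisely under $p>\frac{n+1}{n-1}$. One small imprecision: for $e^{-t/2}\|Gu(t,\cdot)\|_{L^2}$ you drop the prefactor $e^{-t/2}$ when writing the integral bound --- in fact that term is controlled already for all $p>1$ (as the paper notes), not only for $p>\frac{n}{n-1}$ --- but since the gradient estimate forces $p>\frac{n+1}{n-1}$ anyway, this has no effect on the result.
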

\begin{proof}
It is enough to prove the global existence of small data solutions to \eqref{semiliearwavemasstreatment}.
Motivated by the estimates of Proposition \ref{propositionKleinGordon1} and Proposition \ref{propositionKleinGordon2parameter} for $s=0$ we define for $t>0$ the scale of spaces of energy solutions
\begin{eqnarray*} && X(t) := \bigl\{ u\in C\big([0,t], H^1(\R^n) ) \cap C^1([0,t],L^2(\R^n)\big)\\ && \quad : \, \|u\|_{X(t)} :=\sup_{\tau\in[0,t]}\bigl\{e^{-\frac{\tau}{2}}\|u(\tau,\cdot)\|_{L^2} + e^{-\frac{\tau}{2}}\|\nabla u(\tau,\cdot)\|_{L^2} +\|u_\tau(\tau,\cdot)\|_{L^2} \bigr\}
<\infty \bigr\}. \end{eqnarray*}
For any $u\in X(t)$ we define
\[ Pu :=  K_0(t,0, x) \ast_{(x)} u_0(x) + K_1(t,0, x) \ast_{(x)} u_1(x)  + Gu, \]
where
\[Gu(t,x):=\int_0^t e^{-\frac{n}{2}(p-1)s}K_1(t,s,x) \ast_{(x)}|u(s,x)|^{p}\,ds.\]
Using Proposition \ref{propositionKleinGordon1} and Proposition \ref{propositionKleinGordon2parameter} for $s=0$ we have
\begin{equation}\label{eq:Kdatamass}
\| K_0(t,0, x) \ast_{(x)} u_0(x) + K_1(t,0, x) \ast_{(x)} u_1(x) \|_{X(t)} \lesssim\,\|u_0\|_{H^1} + \|u_1\|_{L^2}.
\end{equation}
 Using Minkowski's integral inequality gives with Proposition \ref{propositionKleinGordon2parameter}
 \[\|Gu(t,\cdot)\|_{L^2} \lesssim \int_0^t e^{-\frac{n}{2}(p-1)s}\|K_1(t,s,x) \ast_{(x)}|u(s,x)|^{p}\|_{L^2}\,ds\lesssim \int_0^t e^{-\frac{n}{2}(p-1)s}
 \| |u(s,x)|^{p}\|_{L^2}\,ds.\]
 Now Gagliardo-Nirenberg inequality  comes into play.
We may estimate
\[\| |u(s,\cdot)|^{p}\|_{L^2}=\|u(s,\cdot)\|_{L^{2p}}^p\lesssim
\| u(s,\cdot)\|_{L^2}^{p(1-\theta)} \| \nabla u(s,\cdot)\|_{L^2}^{p\theta} \lesssim e^{\frac{s}{2}p}\|u\|_{X(s)}^p,\]
where
\[ \theta= n\Big(\frac12-\frac1{2p}\Big), \qquad 2p \leq \begin{cases}
\infty & \text{if~$n\leq 2$,}\\
\frac{2n}{n-2} & \text{if~$n\geq 3$.}
\end{cases} \]
Hence,
\begin{eqnarray*}  && \|Gu(t,\cdot)\|_{L^2} \lesssim \|u\|_{X(t)}^p \int_0^t e^{-\frac{n}{2}(p-1)s+\frac{s}{2}p} ds \\
&& \qquad =\|u\|_{X(t)}^p \int_0^t e^{-\frac{n-1}{2}(p-1)s+\frac{s}{2}} ds
 \lesssim e^{\frac{t}{2}}\|u\|_{X(t)}^p,\end{eqnarray*}
thanks to $p>1$. In the same way we get after applying Propositions \ref{propositionKleinGordon1} and \ref{propositionKleinGordon2parameter} for
\[\partial_t Gu(t,x)=\int_0^t e^{-\frac{n}{2}(p-1)s} \partial_t (K_1(t,s,x) \ast_{(x)}|u(s,x)|^{p})\,ds\]
the estimate
\[\| \partial_t   Gu(t,\cdot)\|_{L^2} \lesssim \|u\|_{X(t)}^p \int_0^t e^{-\frac{n}{2}(p-1)s+\frac{s}{2}p} ds \lesssim \|u\|_{X(t)}^p \]
for all $p> \frac{n}{n-1}$. Finally, it remains to estimate
\[e^{-\frac{t}{2}}\nabla Gu(t,x)=e^{-\frac{t}{2}}\int_0^t e^{-\frac{n}{2}(p-1)s} \nabla (K_1(t,s,x) \ast_{(x)}|u(s,x)|^{p})\,ds.\]
Again the application of Propositions \ref{propositionKleinGordon1} and \ref{propositionKleinGordon2parameter} yields
\begin{eqnarray*}
&& \| e^{-\frac{t}{2}}\nabla Gu(t,\cdot)\|_{L^2} \lesssim e^{-\frac{t}{2}}\int_0^t e^{-\frac{n}{2}(p-1)s} \|\nabla (K_1(t,s,x) \ast_{(x)}|u(s,x)|^{p})\|_{L^2}\,ds \\
&& \qquad \lesssim e^{-\frac{t}{2}}\int_0^t e^{-\frac{n}{2}(p-1)s +  \frac{s}{2} p + \frac{t+s}{2}} \,ds \|u\|_{X(t)}^p
\\
&& \qquad \lesssim \int_0^t e^{-\frac{n}{2}(p-1)s +  \frac{s}{2} p + \frac{s}{2}} \,ds \|u\|_{X(t)}^p \lesssim \|u\|_{X(t)}^p
\end{eqnarray*}
thanks to the assumption $p > \frac{n+1}{n-1}$.
Summarizing all the derived estimates it follows for all $t>0$ and $p>\frac{n+1}{n-1}$ the estimate
\[ \|Pu\|_{X(t)}
     \lesssim\,\|u_0\|_{H^1} + \|u_1\|_{L^2}+ \|u\|_{X(t)}^{p}.
     \]
This leads to $Pu\in X(t)$. Following the steps to show the Lipschitz property from the proof to Theorem \ref{main2} implies
     \begin{equation}
\label{eq:contractionmass}
\|Pu-Pv\|_{X(t)}
     \lesssim\|u-v\|_{X(t)} \bigl(\|u\|_{X(t)}^{p-1}+\|v\|_{X(t)}^{p-1}\bigr)
\end{equation}
for any $u,v\in X(t)$. Using these estimates for $Pu$ one can prove the existence of a
uniquely determined global (in time) energy solution $u$ by contraction argument for small data. Moreover, we get a local (in time) result for large data. The decay estimates of Theorem \ref{main3} follow by using the relation $\phi(t,x)=e^{-\frac{n}{2}t}u(t,x)$.
\end{proof}
\begin{remark}
As in \cite{Y}, we conclude  that
$\|\nabla \phi(t,\cdot)\|_{H^1}$ is bounded for all $n\geq 2$.
Is it possible to allow a loss of decay for solutions or to change the data classes in order to have global existence for all $p>1$? Let us discuss a possible loss of decay. For this reason we define the space
\begin{eqnarray*} && X_0(t) := \bigl\{ u\in C\big([0,t], H^1(\R^n) ) \\ && \quad : \, \|u\|_{X_0(t)} :=\sup_{\tau\in[0,t]}\bigl\{e^{-\alpha \tau}\|u(\tau,\cdot)\|_{L^2} + e^{-\beta \tau}\|\nabla u(\tau,\cdot)\|_{L^2} \bigr\}
<\infty \bigr\} \end{eqnarray*}
with suitable real parameters $\alpha$ and $\beta$. The estimates of Propositions \ref{propositionKleinGordon1} and  \ref{propositionKleinGordon2parameter} require $\alpha \geq \frac{1}{2}$ and $\beta \geq \frac{1}{2}$. We are interested under which assumptions to $p,\alpha$ and $\beta$ the operator
\[ G: u \in X_0(t) \to Gu:=:=\int_0^t e^{-\frac{n}{2}(p-1)s} K_1(t,s,x) \ast_{(x)}|u(s,x)|^{p}\,ds \]
maps $X_0(t)$ into itself for all $t>0$. Following the estimates of the proof to Theorem \ref{main2} we obtain the following conditions:
\[ \frac{n}{2}(p-1)-\alpha p(1-\theta)-\beta p \theta +\alpha >0,\,\,\,\frac{n}{2}(p-1)-\alpha p(1-\theta)-\beta p \theta +\beta -1 >0.\]
Taking account of the definition of $\theta$ these conditions are equivalent to
\[ \Big(\frac{n}{2}+\frac{\alpha n}{2} -\frac{\beta n}{2} -\alpha \Big)(p-1) >0,\,\,\,\Big(\frac{n}{2}+\frac{\alpha n}{2} -\frac{\beta n}{2} -\alpha \Big)(p-1) >\alpha +1 -\beta.\]
Let $\beta \geq \alpha+1$, then the first condition cannot be satisfied for any $p>1$. So, let $\frac{1}{2} \leq \beta <\alpha +1$. We introduce $\beta=\alpha+1-z$ with $z \in (0,\frac{\alpha}{2}+1]$. We have to check the second condition only. We get
\[ \Big(\frac{nz}{2} - \alpha\Big)(p-1) >z.   \]
We have to choose $z \in \big(\frac{2\alpha}{n}, \alpha +\frac{1}{2}\big]$. Then we obtain the restriction $p> \frac{(n+2)z - 2\alpha}{nz-2\alpha}$. This bound is minimal for maximal $z=\alpha +\frac{1}{2}$. Hence, we conclude
$p> \frac{n(\alpha+\frac{1}{2})+1}{n(\alpha+\frac{1}{2})-2\alpha}$ for $\alpha \in [\frac{1}{2},\infty)$. The bound to below is strictly increasing in $\alpha$. Thus the minimal value is taken for $\alpha=\frac{1}{2}$ and, consequently, $\beta=\frac{1}{2}$, what we have chosen in Theorem \ref{main2}. Summarizing, a possible loss of decay does not lower the bound for $p$ to below.
\end{remark}
In the following result we will not require energy solutions any more. We restrict ourselves to Sobolev solutions only.
\begin{theorem}\label{main25} Consider for $n \geq 2$ the Cauchy problem \eqref{general} with $m\in (\frac{n}{2},\infty)$, data $f \in H^\gamma(\R^n), \gamma \in (\frac{1}{2},1)$, and $g \in L^2(\R^n)$. Let $p>p_{n,\gamma}:=1+\frac{2\gamma}{n-1}$ and $p \leq \frac{n}{n-2\gamma}$.
  Then, there exists a constant $\varepsilon_0>0$ such that, for every small data satisfying
  \[\|f\|_{H^\gamma}+ \|g\|_{L^2}\leq \varepsilon\,\,\,\mbox{for}\,\,\, \varepsilon\leq \varepsilon_0,\]
  there exists a uniquely determined global (in time) Sobolev solution \[ \phi \in C\big([0,\infty),H^\gamma(\R^n)\big).\]
  The solution satisfies the decay estimate  \begin{equation*}
  \| \phi(t,\cdot)\|_{H^\gamma} \lesssim
  e^{-\frac{n-1}{2}t}\big(\|f\|_{H^\gamma}+ \|g\|_{L^2} \big).
 \end{equation*}
  \end{theorem}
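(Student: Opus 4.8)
The plan is to follow the scheme of the proofs of Theorems \ref{main3} and \ref{main20}. First I would reduce to the transformed Cauchy problem \eqref{semiliearwavemasstreatment} with $r=-\frac{n}{2}$, so that $\phi(t,x)=e^{-\frac{n}{2}t}u(t,x)$ and $(u_0,u_1)=\bigl(f,\,g+\tfrac{n}{2}f\bigr)\in H^\gamma(\R^n)\times L^2(\R^n)$; by Duhamel's principle a solution solves $u=K_0(t,0,x)\ast_{(x)}u_0+K_1(t,0,x)\ast_{(x)}u_1+Gu$ with $Gu(t,x):=\int_0^t e^{-\frac{n}{2}(p-1)s}K_1(t,s,x)\ast_{(x)}|u(s,x)|^p\,ds$. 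Guided by Propositions \ref{propositionKleinGordon1} and \ref{propositionKleinGordon2parameter} and by the target decay $e^{-\frac{n-1}{2}t}$ for $\phi$ (equivalently, $e^{\frac{t}{2}}$--growth for $u$ in $H^\gamma$), I would run a contraction argument for $Pu:=K_0(t,0,x)\ast_{(x)}u_0+K_1(t,0,x)\ast_{(x)}u_1+Gu$ in the space
\[ X(t):=\bigl\{u\in C\bigl([0,t],H^\gamma(\R^n)\bigr)\,:\,\|u\|_{X(t)}:=\sup_{\tau\in[0,t]}e^{-\frac{\tau}{2}}\|u(\tau,\cdot)\|_{H^\gamma}<\infty\bigr\}. \]

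For the data terms, Proposition \ref{propositionKleinGordon1} gives $\|K_0(t,0,x)\ast_{(x)}u_0\|_{H^\gamma}\lesssim e^{\frac{t}{2}}\|u_0\|_{H^\gamma}$; since $u_1$ lies only in $L^2$, I would interpolate $\dot{H}^\gamma$ between $L^2$ and $\dot{H}^1$ by Proposition \ref{fractionalGagliardoNirenberg} and combine with Proposition \ref{propositionKleinGordon2parameter} to obtain, for all $h\in L^2(\R^n)$ and $t\geq s$,
\[ \|K_1(t,s,x)\ast_{(x)}h\|_{\dot{H}^\gamma}\lesssim\|K_1(t,s,x)\ast_{(x)}h\|_{L^2}^{1-\gamma}\,\|\nabla K_1(t,s,x)\ast_{(x)}h\|_{L^2}^{\gamma}\lesssim e^{\frac{(s+t)\gamma}{2}}\|h\|_{L^2}, \]
so that (at $s=0$) $\|K_1(t,0,x)\ast_{(x)}u_1\|_{H^\gamma}\lesssim e^{\frac{\gamma t}{2}}\|u_1\|_{L^2}\lesssim e^{\frac{t}{2}}\|u_1\|_{L^2}$ and the data part has $X(t)$--norm $\lesssim\|u_0\|_{H^\gamma}+\|u_1\|_{L^2}$. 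For $Gu$ I would use Minkowski's integral inequality with Proposition \ref{propositionKleinGordon2parameter} for the $L^2$ component and the displayed interpolation for the $\dot{H}^\gamma$ component, estimating the nonlinearity by the fractional Gagliardo--Nirenberg inequality $\||u(s,\cdot)|^p\|_{L^2}=\|u(s,\cdot)\|_{L^{2p}}^p\lesssim\|u(s,\cdot)\|_{L^2}^{p(1-\theta)}\|u(s,\cdot)\|_{\dot{H}^\gamma}^{p\theta}\lesssim e^{\frac{ps}{2}}\|u\|_{X(s)}^p$ with $\theta=\frac{n}{\gamma}\bigl(\frac12-\frac1{2p}\bigr)\in[0,1]$, which is exactly where the upper bound $p\leq\frac{n}{n-2\gamma}$ enters.

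The step I expect to be the main obstacle is the $\dot{H}^\gamma$ estimate of $Gu$: collecting the above one arrives at
\[ e^{-\frac{t}{2}}\|Gu(t,\cdot)\|_{\dot{H}^\gamma}\lesssim e^{\frac{(\gamma-1)t}{2}}\|u\|_{X(t)}^p\int_0^t e^{\bigl(\frac{\gamma}{2}+\frac{p}{2}-\frac{n}{2}(p-1)\bigr)s}\,ds. \]
A naive requirement that the exponent of the integrand be negative would only give $p>\frac{n+\gamma}{n-1}$, which is strictly larger than $p_{n,\gamma}$ because $\gamma<1$; the right observation is that the decaying prefactor $e^{\frac{(\gamma-1)t}{2}}$ produced by the interpolation absorbs the (possibly exponential) growth of the time integral as soon as $\frac{\gamma}{2}+\frac{p}{2}-\frac{n}{2}(p-1)\leq\frac{1-\gamma}{2}$, i.e.\ $(n-1)p\geq n-1+2\gamma$, that is $p\geq p_{n,\gamma}$. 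This is precisely how the critical exponent in the statement arises. The companion $L^2$ estimate of $Gu$ is easier (the relevant integrand exponent is $\leq\frac12$ for every $p>1$), giving altogether $\|Pu\|_{X(t)}\lesssim\|u_0\|_{H^\gamma}+\|u_1\|_{L^2}+\|u\|_{X(t)}^p$. For the Lipschitz property I would write $Pu-Pv$ as in \eqref{Lipschitzdissipation} and estimate it by H\"older's inequality with $r_1=\frac{2p}{p-1}$, $r_2=2p$, the interpolation inequality for $K_1$, and fractional Gagliardo--Nirenberg, following the proof of Theorem \ref{main20} line by line, to get $\|Pu-Pv\|_{X(t)}\lesssim\|u-v\|_{X(t)}\bigl(\|u\|_{X(t)}^{p-1}+\|v\|_{X(t)}^{p-1}\bigr)$. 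A Banach fixed point argument plus a continuation argument for small data then yields the unique global solution $u\in C([0,\infty),H^\gamma(\R^n))$ with $\|u\|_{X(t)}\lesssim\|u_0\|_{H^\gamma}+\|u_1\|_{L^2}$, and the stated decay follows from $\phi=e^{-\frac{n}{2}t}u$.
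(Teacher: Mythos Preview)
Your proposal is correct and follows essentially the same route as the paper's own proof: the same transformation to \eqref{semiliearwavemasstreatment}, the same solution space $X(t)$ with weight $e^{-\tau/2}$, the same interpolation $\|K_1(t,s,x)\ast_{(x)}h\|_{\dot{H}^\gamma}\lesssim e^{\frac{(s+t)\gamma}{2}}\|h\|_{L^2}$ combined with fractional Gagliardo--Nirenberg, and the Lipschitz argument via \eqref{Lipschitzdissipation}. Your explicit discussion of how the prefactor $e^{\frac{(\gamma-1)t}{2}}$ absorbs the possible growth of the time integral and produces exactly the threshold $p_{n,\gamma}$ is a welcome clarification of a step the paper only records as ``thanks to the assumption $1+\frac{2\gamma}{n-1}<p$''.
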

  \begin{remark}
  The assumption $p_{n,\gamma} < p \leq \frac{n}{n-2\gamma}$ implies $n\geq 2$ and   $\gamma > \frac{1}{2}$.
  \end{remark}
\begin{proof}
We only sketch the proof, in particular, the modifications to the proof of Theorem \ref{main3}.  It is enough to prove the global existence of small data solutions to \eqref{semiliearwavemasstreatment}.
Motivated by the estimates of Proposition \ref{propositionKleinGordon1} and Proposition \ref{propositionKleinGordon2parameter} for $s=0$ we define for $t>0$ the  space
\begin{eqnarray*} X(t) := \bigl\{ u\in C\big([0,t], H^{\gamma}(\R^n) )  \quad : \, \|u\|_{X(t)} :=\sup_{\tau\in[0,t]}\bigl\{e^{-\frac{\tau}{2}}\|u(\tau,\cdot)\|_{H^{\gamma}}  \bigr\}
<\infty \bigr\}. \end{eqnarray*}
For any $u\in X(t)$ we define
\[ Pu :=  K_0(t,0, x) \ast_{(x)} u_0(x) + K_1(t,0, x) \ast_{(x)} u_1(x)  + Gu, \]
where
\[Gu(t,x):=\int_0^t e^{-\frac{n}{2}(p-1)s}K_1(t,s,x) \ast_{(x)}|u(s,x)|^{p}\,ds.\]
Using Proposition \ref{propositionKleinGordon1} and Proposition \ref{propositionKleinGordon2parameter} for $s=0$ we have
\begin{equation*}
\| K_0(t,0, x) \ast_{(x)} u_0(x) + K_1(t,0, x) \ast_{(x)} u_1(x) \|_{X(t)} \lesssim\,\|u_0\|_{H^{\gamma}} + \|u_1\|_{L^2}.
\end{equation*}
 Using Minkowski's integral inequality gives with Proposition \ref{propositionKleinGordon2parameter}
 \begin{eqnarray*} && e^{-\frac{t}{2}}\|Gu(t,\cdot)\|_{L^2} \lesssim e^{-\frac{t}{2}}\int_0^t e^{-\frac{n}{2}(p-1)s}\|K_1(t,s,x) \ast_{(x)}|u(s,x)|^{p}\|_{L^2}\,ds\\ && \qquad \lesssim e^{-\frac{t}{2}}\int_0^t e^{-\frac{n}{2}(p-1)s}
 \| |u(s,x)|^{p}\|_{L^2}\,ds.\end{eqnarray*}
 Now fractional Gagliardo-Nirenberg inequality  comes into play.
We may estimate
\[\| |u(s,\cdot)|^{p}\|_{L^2}=\|u(s,\cdot)\|_{L^{2p}}^p\lesssim
\| u(s,\cdot)\|_{L^2}^{p(1-\theta)} \|  u(s,\cdot)\|_{\dot{H}^{\gamma}}^{p\theta} \lesssim e^{\frac{s}{2}p}\|u\|_{X(s)}^p,\]
where
\[ \theta= \frac{n}{\gamma}\Big(\frac12-\frac1{2p}\Big), \qquad p \leq
\frac{n}{n-2\gamma}.
 \]
Hence,
\begin{eqnarray*}  && e^{-\frac{t}{2}}\|Gu(t,\cdot)\|_{L^2} \lesssim \|u\|_{X(t)}^p e^{-\frac{t}{2}}\int_0^t e^{-\frac{n}{2}(p-1)s+\frac{s}{2}p} ds \\
&& \qquad =\|u\|_{X(t)}^p e^{-\frac{t}{2}}\int_0^t e^{-\frac{n-1}{2}(p-1)s+\frac{s}{2}} ds
 \lesssim \|u\|_{X(t)}^p,\end{eqnarray*}
thanks to $p>1$.
Propositions \ref{fractionalGagliardoNirenberg} and \ref{propositionKleinGordon2parameter} imply for all $h \in L^2(\mathbb{R}^n)$ the estimate
\[\| K_1(t,s,x) \ast_{(x)} h\|_{\dot{H}^{\gamma}} \leq \| K_1(t,s,x) \ast_{(x)} h\|_{\dot{H}^{1}}^{\gamma} \| K_1(t,s,x) \ast_{(x)} h\|_{L^2}^{1-\gamma}
\leq e^{\frac{(s+t)\gamma}{2}} \|  h\|_{L^2}. \]
 Together with Proposition \ref{propositionKleinGordon2parameter} we may conclude
\begin{eqnarray*}
&& e^{-\frac{t}{2}}\|  Gu(t,\cdot)\|_{\dot{H}^{\gamma}} \lesssim e^{-\frac{t}{2}}\int_0^t e^{-\frac{n}{2}(p-1)s} \| (K_1(t,s,x) \ast_{(x)}|u(s,x)|^{p})\|_{\dot{H}^{\gamma}}\,ds \\
&& \qquad \lesssim e^{-\frac{t}{2}}\int_0^t e^{-\frac{n}{2}(p-1)s +  \frac{s}{2} p + \frac{(t+s)\gamma}{2}} \,ds \|u\|_{X(t)}^p
 \lesssim \|u\|_{X(t)}^p
\end{eqnarray*}
thanks to the assumption $ 1+  \frac{2\gamma}{n-1}<p \leq\frac{n}{n-2\gamma}$.
Following the steps to show the Lipschitz property from the proofs to Theorems \ref{main2} and \ref{main20}
implies
     \begin{equation*}
\|Pu-Pv\|_{X(t)}
     \lesssim\|u-v\|_{X(t)} \bigl(\|u\|_{X(t)}^{p-1}+\|v\|_{X(t)}^{p-1}\bigr)
\end{equation*}
for any $u,v\in X(t)$. As before we conclude a global (in time) result of Sobolev solutions for small data and a local (in time)
result for large data as well.
By using the relation $\phi(t,x)=e^{-\frac{n}{2}\,t}u(t,x)$ we derive the decay estimate
\begin{eqnarray*}
  \|\phi(t,\cdot)\|_{{H}^{\gamma}}   \lesssim e^{-\frac{n-1}{2}t}
  \big(\|f\|_{{H}^{\gamma}} + \|g\|_{L^2} \big).
  \end{eqnarray*}
This completes the proof.
\end{proof}
Finally, we are interested in energy solutions having a suitable higher regularity.
\begin{theorem}\label{main8} Consider the Cauchy problem  \eqref{general} with $m\in (\frac{n}{2},\infty)$ and data $(f,g) \in (H^\sigma(\R^n)\times H^{\sigma-1}(\R^n))$ for $n \geq 3$, where $\sigma \in \big(1,\frac{n}{2}\big)$.
Assume that $p$ satisfies the following condition:
\begin{eqnarray*}  \max\Big\{\frac{n+1}{n-1};\lceil \sigma
 \rceil\Big\} <p \leq 1+\frac{2}{n-2\sigma}.
 \end{eqnarray*}
  Then, there exists a constant $\varepsilon_0>0$ such that, for every small data satisfying
  \[\|f\|_{H^\sigma}+ \|g\|_{H^{\sigma-1}}\leq \varepsilon\,\,\,\mbox{for}\,\,\, \varepsilon\leq \varepsilon_0,\]
  there exists a uniquely determined global (in time) energy solution \[ \phi \in C\big([0,\infty),H^\sigma(\R^n)) \cap C^1([0,\infty),H^{\sigma-1}(\R^n)\big).\]
  The energy solution satisfies the decay estimate
  \begin{eqnarray*}
  \|\phi(t,\cdot)\|_{H^\sigma} + \|\phi_t(t,\cdot)\|_{H^{\sigma-1}}  \lesssim e^{-\frac{n-1}{2}t}
  \big(\|f\|_{H^\sigma} + \|g\|_{H^{\sigma-1}} \big).
  \end{eqnarray*}
  \end{theorem}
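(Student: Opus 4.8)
The plan is to mimic the proof of Theorem~\ref{main5}, replacing the linear estimates of Propositions~\ref{proposition1} and \ref{proposition11} by the Klein--Gordon estimates of Propositions~\ref{propositionKleinGordon1} and \ref{propositionKleinGordon2parameter}, and the factor $e^{(p-1)rs}$ by $e^{-\frac n2(p-1)s}$. As usual it suffices to treat the transformed problem \eqref{semiliearwavemasstreatment}. Guided by the linear decay rates I would define, for $t>0$,
\begin{eqnarray*}
&& X(t):=\bigl\{u\in C\big([0,t],H^\sigma(\R^n)\big)\cap C^1\big([0,t],H^{\sigma-1}(\R^n)\big):\ \|u\|_{X(t)}\\
&& \quad:=\sup_{\tau\in[0,t]}\bigl\{e^{-\frac\tau2}\|u(\tau,\cdot)\|_{L^2}+e^{-\frac\tau2}\||D|^\sigma u(\tau,\cdot)\|_{L^2}+\|u_\tau(\tau,\cdot)\|_{L^2}+\||D|^{\sigma-1}u_\tau(\tau,\cdot)\|_{L^2}\bigr\}<\infty\bigr\},
\end{eqnarray*}
and $Pu:=K_0(t,0,x)\ast_{(x)}u_0(x)+K_1(t,0,x)\ast_{(x)}u_1(x)+Gu$ with $Gu(t,x):=\int_0^t e^{-\frac n2(p-1)s}K_1(t,s,x)\ast_{(x)}|u(s,x)|^p\,ds$. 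The data terms are controlled by $\|u_0\|_{H^\sigma}+\|u_1\|_{H^{\sigma-1}}$ directly from Propositions~\ref{propositionKleinGordon1} and \ref{propositionKleinGordon2parameter} with $s=0$.

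For $Pu\in X(t)$ I would estimate the norms of $Gu$ one by one. The $L^2$- and $\partial_t$-$L^2$-norms are handled exactly as in the proof of Theorem~\ref{main3}: Minkowski's integral inequality, the first and third estimates of Proposition~\ref{propositionKleinGordon2parameter}, and the fractional Gagliardo--Nirenberg inequality (Proposition~\ref{fractionalGagliardoNirenberg}) giving $\||u(s,\cdot)|^p\|_{L^2}\lesssim e^{\frac{ps}2}\|u\|_{X(s)}^p$ with $\theta=\frac n\sigma(\frac12-\frac1{2p})$ and $p\le\frac n{n-2\sigma}$, so that the time integral $\int_0^t e^{-\frac n2(p-1)s+\frac{ps}2}\,ds$ converges for $p>\frac n{n-1}$. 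For $\||D|^\sigma Gu\|_{L^2}$ I would argue as in Theorem~\ref{main5}: Lemma~\ref{Sickelauxiliary lemma} replaces $|D|^\sigma$ by $\nabla|D|^{\sigma-1}$, the second estimate of Proposition~\ref{propositionKleinGordon2parameter} produces the factor $e^{\frac{s+t}2}\||D|^{\sigma-1}|u(s,\cdot)|^p\|_{L^2}$, and Propositions~\ref{Propfractionalchainrulegeneral} (for $p>\lceil\sigma-1\rceil$) and \ref{fractionalGagliardoNirenberg} bound the last factor by $e^{\frac{ps}2}\|u\|_{X(s)}^p$ with the choice $r_2=\frac{2n}{n-2}$, $r_1=n(p-1)$, which requires $p\le1+\frac2{n-2\sigma}$. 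Hence $e^{-\frac t2}\||D|^\sigma Gu(t,\cdot)\|_{L^2}\lesssim\int_0^t e^{-\frac n2(p-1)s+\frac s2+\frac{ps}2}\,ds\,\|u\|_{X(t)}^p\lesssim\|u\|_{X(t)}^p$, the integral converging precisely because $p>\frac{n+1}{n-1}$; the norm $\||D|^{\sigma-1}\partial_t Gu\|_{L^2}$ is treated analogously with the third estimate of Proposition~\ref{propositionKleinGordon2parameter} and brings no further restriction. Altogether $\|Pu\|_{X(t)}\lesssim\|u_0\|_{H^\sigma}+\|u_1\|_{H^{\sigma-1}}+\|u\|_{X(t)}^p$.

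For the Lipschitz estimate I would start from the integral representation \eqref{Lipschitzdissipation} (with $e^{(p-1)rs}$ replaced by $e^{-\frac n2(p-1)s}$): the $L^2$- and $\partial_t$-$L^2$-parts follow as in Theorem~\ref{main3}, while for $\||D|^\sigma(Pu-Pv)\|_{L^2}$ and $\||D|^{\sigma-1}\partial_t(Pu-Pv)\|_{L^2}$ one applies the fractional Leibniz rule (Proposition~\ref{fractionalLeibniz}) followed by Propositions~\ref{Propfractionalchainrulegeneral} and \ref{fractionalGagliardoNirenberg} to the factors $|v+\tau(u-v)|^{p-2}(v+\tau(u-v))$ and $u-v$, with the very same admissible exponents $r_1,\dots,r_6$ as in the proof of Theorem~\ref{main5}; the condition $r_5\le\frac{2n}{n-2\sigma}$ again reproduces $p\le1+\frac2{n-2\sigma}$ and the fractional chain rule forces $p>\lceil\sigma\rceil$. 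This yields $\|Pu-Pv\|_{X(t)}\lesssim\|u-v\|_{X(t)}\bigl(\|u\|_{X(t)}^{p-1}+\|v\|_{X(t)}^{p-1}\bigr)$. For small data $P$ is then a contraction on $X(t)$ uniformly in $t$, so Banach's fixed point theorem together with a continuation argument gives a uniquely determined global (in time) energy solution $u$ (and a local one for large data); passing back via $\phi(t,x)=e^{-\frac n2 t}u(t,x)$ and inserting the weights in the $X(t)$-norm yields $\|\phi(t,\cdot)\|_{H^\sigma}+\|\phi_t(t,\cdot)\|_{H^{\sigma-1}}\lesssim e^{-\frac{n-1}2 t}\big(\|f\|_{H^\sigma}+\|g\|_{H^{\sigma-1}}\big)$.

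The main obstacle is the estimate of $\||D|^\sigma Gu\|_{L^2}$: the loss-producing factor $e^{\frac{s+t}2}$ coming from the $\nabla K_1$-estimate in Proposition~\ref{propositionKleinGordon2parameter} cannot be avoided, and the argument only closes because the weight $e^{-\frac t2}$ in the definition of $X(t)$ absorbs $e^{\frac t2}$ while the decay $e^{-\frac n2(p-1)s}$ of the nonlinearity beats the growth $e^{\frac{(p+1)s}2}$ in the remaining $s$-integral --- which is exactly the threshold $p>\frac{n+1}{n-1}$. Simultaneously one must keep the Gagliardo--Nirenberg exponents admissible, and the only viable choice ($r_1=n(p-1)$) pins down the upper bound $p\le1+\frac2{n-2\sigma}$. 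Thus the delicate point is the simultaneous bookkeeping of the three competing exponential rates --- nonlinear decay, linear loss, and the $X(t)$-weight --- against the fractional Gagliardo--Nirenberg and chain-rule constraints.
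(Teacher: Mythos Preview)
Your proposal is correct and follows essentially the same approach as the paper's own proof: you define the identical space $X(t)$, combine the Klein--Gordon kernel estimates of Propositions~\ref{propositionKleinGordon1} and~\ref{propositionKleinGordon2parameter} with the fractional-power machinery from the proof of Theorem~\ref{main5} (Lemma~\ref{Sickelauxiliary lemma}, Propositions~\ref{Propfractionalchainrulegeneral} and~\ref{fractionalGagliardoNirenberg} with the same parameters $r_1,\dots,r_6$), and recover the thresholds $p>\frac{n+1}{n-1}$, $p>\lceil\sigma\rceil$, $p\le 1+\frac{2}{n-2\sigma}$ exactly as the paper does. The paper's proof is only a sketch referring back to Theorems~\ref{main3} and~\ref{main5}, and you have simply spelled out those referenced details explicitly.
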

\begin{proof}
We only sketch the proof.  It is enough to prove the global existence of small data solutions to \eqref{semiliearwavemasstreatment}.
Motivated by the estimates of Proposition \ref{propositionKleinGordon1} and Proposition \ref{propositionKleinGordon2parameter} for $s=0$ we define for $t>0$ the scale of spaces of energy solutions with suitable regularity
\begin{eqnarray*} && X(t) := \bigl\{ u\in C\big([0,t], H^\sigma(\R^n) ) \cap C^1([0,t],H^{\sigma-1}(\R^n)\big)\\ && \quad : \, \|u\|_{X(t)} :=\sup_{\tau\in[0,t]}\bigl\{e^{-\frac{\tau}{2}}\|u(\tau,\cdot)\|_{L^2} + e^{-\frac{\tau}{2}}\||D|^\sigma u(\tau,\cdot)\|_{L^2} +\|u_\tau(\tau,\cdot)\|_{L^2} +\||D|^{\sigma-1}u_\tau(\tau,\cdot)\|_{L^2}\bigr\}
<\infty \bigr\}. \end{eqnarray*}
For any~$u\in X(t)$ we define
\[ Pu :=  K_0(t,0, x) \ast_{(x)} u_0(x) + K_1(t,0, x) \ast_{(x)} u_1(x)  + Gu, \]
where
\[Gu(t,x):=\int_0^t e^{-\frac{n}{2}(p-1)s}K_1(t,s,x) \ast_{(x)}|u(s,x)|^{p}\,ds.\]
 Following the proof of Theorem \ref{main3} and the universal treatment of
 non-linear terms in scales of Sobolev spaces done in  the proof
of Theorem \ref{main5}, one may derive that $Pu\in X(t)$ and the Lipschitz property
     \begin{eqnarray*}
\|Pu-Pv\|_{X(t)}
     \lesssim \|u-v\|_{X(t)} \bigl(\|u\|_{X(t)}^{p-1}+\|v\|_{X(t)}^{p-1}\bigr)
\end{eqnarray*}
for any~$u,v\in X(t)$.
This completes the proof.
\end{proof}
If $n \leq 2\sigma$, then we can follow the steps of the previous proof and get the following statement.
\begin{corollary} \label{Corlargeregularity}
Consider the Cauchy problem  \eqref{general} with $m\in (\frac{n}{2},\infty)$ and data $f \in H^{\sigma}(\R^n)$ and $g \in H^{\sigma-1}(\R^n)$ for $n \geq 3$ and
$n \leq 2\sigma$.
Assume that $p$ satisfies the following condition:
\begin{eqnarray*}  \max\Big\{\frac{n+1}{n-1};\lceil \sigma
 \rceil\Big\} <p<\infty.
 \end{eqnarray*}
  Then, there exists a constant $\varepsilon_0>0$ such that, for every small data satisfying
  \[\|f\|_{H^\sigma}+ \|g\|_{H^{\sigma-1}}\leq \varepsilon\,\,\,\mbox{for}\,\,\, \varepsilon\leq \varepsilon_0,\]
  there exists a uniquely determined global (in time) energy solution \[ \phi \in C\big([0,\infty),H^\sigma(\R^n)) \cap C^1([0,\infty),H^{\sigma-1}(\R^n)\big).\]
  The energy solution satisfies the decay estimate
  \begin{eqnarray*}
  \|\phi(t,\cdot)\|_{H^\sigma} + \|\phi_t(t,\cdot)\|_{H^{\sigma-1}}  \lesssim e^{-\frac{n-1}{2}t}
  \big(\|f\|_{H^\sigma} + \|g\|_{H^{\sigma-1}} \big).
  \end{eqnarray*}
\end{corollary}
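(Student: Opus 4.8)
The plan is to follow, almost line by line, the proof of Theorem~\ref{main8}, the only new point being that the upper bound $p\le 1+\frac{2}{n-2\sigma}$ appearing there came solely from the Gagliardo--Nirenberg exponents, and these remain admissible for \emph{every} $p>1$ once $n\le 2\sigma$. As always it is enough to prove global existence of small data solutions to the transformed problem \eqref{semiliearwavemasstreatment}, since $\phi(t,x)=e^{-\frac n2 t}u(t,x)$; the asserted $e^{-\frac{n-1}2t}$ decay for $\phi$ then drops out of the weights built into the solution space. I would take
\[ X(t):=\Bigl\{u\in C([0,t],H^\sigma(\R^n))\cap C^1([0,t],H^{\sigma-1}(\R^n))\ :\ \|u\|_{X(t)}<\infty\Bigr\}, \]
\[ \|u\|_{X(t)}:=\sup_{\tau\in[0,t]}\Bigl\{e^{-\frac\tau2}\|u(\tau,\cdot)\|_{L^2}+e^{-\frac\tau2}\||D|^\sigma u(\tau,\cdot)\|_{L^2}+\|u_\tau(\tau,\cdot)\|_{L^2}+\||D|^{\sigma-1}u_\tau(\tau,\cdot)\|_{L^2}\Bigr\}, \]
and define $Pu:=K_0(t,0,x)\ast_{(x)}u_0+K_1(t,0,x)\ast_{(x)}u_1+Gu$ with $Gu(t,x):=\int_0^t e^{-\frac n2(p-1)s}K_1(t,s,x)\ast_{(x)}|u(s,x)|^p\,ds$. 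Proposition~\ref{propositionKleinGordon1} and Proposition~\ref{propositionKleinGordon2parameter} with $s=0$ give at once $\|K_0(t,0,x)\ast_{(x)}u_0+K_1(t,0,x)\ast_{(x)}u_1\|_{X(t)}\lesssim\|u_0\|_{H^\sigma}+\|u_1\|_{H^{\sigma-1}}$ (interpolating for the $\dot H^{\sigma-1}$-part of the $\nabla$-estimate).

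Next I would bound the four components of $\|Gu\|_{X(t)}$ exactly as in the proofs of Theorems~\ref{main3} and \ref{main8}. For $\|Gu\|_{L^2}$ and $\|\partial_tGu\|_{L^2}$ one uses Minkowski's integral inequality, Proposition~\ref{propositionKleinGordon2parameter}, and the Gagliardo--Nirenberg estimate $\||u(s,\cdot)|^p\|_{L^2}=\|u(s,\cdot)\|_{L^{2p}}^p\lesssim\|u(s,\cdot)\|_{L^2}^{p(1-\theta)}\||D|^\sigma u(s,\cdot)\|_{L^2}^{p\theta}\lesssim e^{\frac{ps}{2}}\|u\|_{X(s)}^p$ with $\theta=\frac n\sigma\bigl(\frac12-\frac1{2p}\bigr)$; here $n\le2\sigma$ forces $\theta\in[0,1)$ for all $p>1$, which is precisely why the upper bound on $p$ of Theorem~\ref{main8} disappears. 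The resulting $s$-integral $\int_0^t e^{-\frac n2(p-1)s+\frac{ps}{2}}\,ds=\int_0^t e^{-\frac{n-1}2(p-1)s+\frac s2}\,ds$ is controlled by the admissible weight $e^{t/2}$ when $p>1$, resp.\ without that weight when $p>\frac n{n-1}$. For $\||D|^\sigma Gu\|_{L^2}$ I would write $|D|^\sigma=\nabla|D|^{\sigma-1}$ and invoke the $\nabla K_1$-estimate of Proposition~\ref{propositionKleinGordon2parameter}, which costs an $e^{\frac{s+t}2}$ factor, then estimate $\||D|^{\sigma-1}|u(s,\cdot)|^p\|_{L^2}$ by the fractional chain rule of Proposition~\ref{Propfractionalchainrulegeneral} (needing $p>\lceil\sigma-1\rceil$, i.e.\ $p>\lceil\sigma\rceil$) together with fractional Gagliardo--Nirenberg (Proposition~\ref{fractionalGagliardoNirenberg}), whose exponents are again all admissible for every $p$ because $n\le2\sigma$. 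Balancing exponentials, the $s$-integral is absorbed into the $e^{t/2}$ weight of $X(t)$ exactly when $p>\frac{n+1}{n-1}$; the term $\||D|^{\sigma-1}\partial_tGu\|_{L^2}$ is handled identically with no further restriction. Summing up, $\|Pu\|_{X(t)}\lesssim\|u_0\|_{H^\sigma}+\|u_1\|_{H^{\sigma-1}}+\|u\|_{X(t)}^p$, so $P$ maps a small ball of $X(t)$ into itself, uniformly in $t$.

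For the Lipschitz property I would start from the representation used in \eqref{Lipschitzdissipation},
\[ Pu-Pv=p\int_0^t e^{-\frac n2(p-1)s}K_1(t,s,x)\ast_{(x)}\Bigl(\int_0^1|v+\tau(u-v)|^{p-2}(v+\tau(u-v))\,d\tau\Bigr)(s,x)\,(u-v)(s,x)\,ds, \]
and then copy the argument from the proof of Theorem~\ref{main5}: Hölder's inequality to split the product, the fractional Leibniz rule (Proposition~\ref{fractionalLeibniz}) for the $\dot H^\sigma$- and $\dot H^{\sigma-1}$-norms, the fractional chain rule (Proposition~\ref{Propfractionalchainrulegeneral}, again requiring $p>\lceil\sigma\rceil$) applied to $|w|^{p-2}w$ with $w=v+\tau(u-v)$, and fractional Gagliardo--Nirenberg (Proposition~\ref{fractionalGagliardoNirenberg}) for each factor. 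With the parameters $r_1,\dots,r_6$ chosen as there, the surviving conditions read $2\le r_j(\cdot)\le\frac{2n}{n-2}$ or $\le\frac{2n}{n-2\sigma}$; since $n\le2\sigma$ the second upper bound is vacuous, so the choice is legitimate for every $p>\max\{\frac{n+1}{n-1};\lceil\sigma\rceil\}$, and one obtains $\|Pu-Pv\|_{X(t)}\lesssim\|u-v\|_{X(t)}\bigl(\|u\|_{X(t)}^{p-1}+\|v\|_{X(t)}^{p-1}\bigr)$ for all $u,v\in X(t)$.

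The two displayed estimates then yield, via Banach's fixed point theorem for small data together with a continuation argument, a uniquely determined global solution $u\in X(t)$ for every $t>0$ (and a local solution for large data), whence $\phi=e^{-\frac n2 t}u\in C([0,\infty),H^\sigma(\R^n))\cap C^1([0,\infty),H^{\sigma-1}(\R^n))$ with the stated decay. I expect the main obstacle to be bookkeeping rather than a new idea: one must verify that, under $n\le2\sigma$, a single admissible choice of the Gagliardo--Nirenberg/fractional-chain-rule exponents covers the \emph{entire} unbounded range of $p$, and at the same time that the exponential weights of Propositions~\ref{propositionKleinGordon1} and \ref{propositionKleinGordon2parameter} — in particular the $e^{\frac{s+t}2}$ loss in the $\nabla K_1$ estimate — still leave the $s$-integrals defining $Gu$ convergent up to the $e^{t/2}$ growth permitted by $X(t)$; this is exactly what forces the lower bound $p>\frac{n+1}{n-1}$, just as in Theorem~\ref{main3}.
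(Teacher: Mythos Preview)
Your proposal is correct and follows essentially the same approach as the paper, which itself only states that one ``can follow the steps of the previous proof'' (i.e., of Theorem~\ref{main8}, which in turn reuses the arguments of Theorems~\ref{main3} and~\ref{main5}). You have in fact spelled out more detail than the paper does, correctly identifying that the only role of the upper bound $p\le 1+\frac{2}{n-2\sigma}$ was to keep the Gagliardo--Nirenberg parameters $\theta_j$ in $[0,1]$ (equivalently $r_1(p-1),\,r_5\le\frac{2n}{n-2\sigma}$), and that this constraint becomes vacuous once $n\le 2\sigma$. One small slip: the parenthetical ``needing $p>\lceil\sigma-1\rceil$, i.e.\ $p>\lceil\sigma\rceil$'' conflates two distinct thresholds---the mapping estimate for $\||D|^{\sigma-1}|u|^p\|_{L^2}$ requires only $p>\lceil\sigma-1\rceil$, while the stronger condition $p>\lceil\sigma\rceil$ enters through the Lipschitz estimate (chain rule applied to $|w|^{p-2}w$)---but since you impose the latter anyway, this does not affect the argument.
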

Similarly to Theorem \ref{improve1} we may improve the lower bound for $p$ in Corollary \ref{Corlargeregularity}.
   \begin{theorem}\label{improve7} Consider the Cauchy problem  \eqref{general} for $m\in (\frac{n}{2},\infty)$ and data $(f,g) \in (H^\sigma(\R^n)\times H^{\sigma-1}(\R^n))$ for
 $\sigma > \frac{n}{2} $ and $n \geq 2$.  Assume that  $p$ satisfies the following condition:
\begin{eqnarray*}
\max\Big\{\frac{n+1}{n-1}; \sigma; 2
 \Big\} <p <\infty.
 \end{eqnarray*}
  Then, there exists a constant $\varepsilon_0>0$ such that, for every given small data satisfying
  \[\|f\|_{H^\sigma}+ \|g\|_{ H^{\sigma-1}} \leq \varepsilon\,\,\,\mbox{for}\,\,\, \varepsilon\leq \varepsilon_0,\]
 there exists a uniquely determined global (in time) energy solution \[ \phi \in C\big([0,\infty),H^\sigma(\R^n)\big)\cap C^1\big([0,\infty), H^{\sigma-1}(\R^n)\big).\]  The energy solution satisfies the decay estimate
  \begin{eqnarray*}
  \|\phi(t,\cdot)\|_{H^\sigma} + \|\phi_t(t,\cdot)\|_{H^{\sigma-1}}  \lesssim e^{-\frac{n-1}{2}t}
  \big(\|f\|_{H^\sigma} + \|g\|_{H^{\sigma-1}} \big).
  \end{eqnarray*}
 \end{theorem}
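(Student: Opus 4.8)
The plan is to adapt the proof of Theorem~\ref{improve1} to the dominant--mass setting, replacing the linear estimates of Propositions~\ref{proposition1} and~\ref{proposition11} by those of Propositions~\ref{propositionKleinGordon1} and~\ref{propositionKleinGordon2parameter}. As in all previous proofs it suffices to establish global existence of small data solutions to the transformed problem~\eqref{semiliearwavemasstreatment} with $\mu^2=m^2-\frac{n^2}{4}>0$, and then to return to $\phi$ via $\phi(t,x)=e^{-\frac{n}{2}t}u(t,x)$, which converts the $e^{t/2}$ growth allowed in the Klein--Gordon estimates into the announced decay $e^{-\frac{n-1}{2}t}$. I would work in the solution space $X(t)$ already used in Theorem~\ref{main8}, namely
\[ \|u\|_{X(t)} := \sup_{\tau\in[0,t]}\bigl\{ e^{-\frac{\tau}{2}}\|u(\tau,\cdot)\|_{L^2} + e^{-\frac{\tau}{2}}\||D|^\sigma u(\tau,\cdot)\|_{L^2} + \|u_\tau(\tau,\cdot)\|_{L^2} + \||D|^{\sigma-1}u_\tau(\tau,\cdot)\|_{L^2}\bigr\}, \]
and consider $Pu := K_0(t,0,\cdot)\ast_{(x)}u_0 + K_1(t,0,\cdot)\ast_{(x)}u_1 + Gu$ with $Gu(t,x):=\int_0^t e^{-\frac{n}{2}(p-1)s}K_1(t,s,x)\ast_{(x)}|u(s,x)|^p\,ds$. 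The data contribution is handled exactly as in~\eqref{eq:Kdatamass}, now at regularity $(\sigma,\sigma-1)$, by Propositions~\ref{propositionKleinGordon1} and~\ref{propositionKleinGordon2parameter} for $s=0$.

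For the Duhamel term the key simplification -- available because $\sigma>\frac{n}{2}$, so that $H^\sigma(\R^n)$ is an algebra and embeds in $L^\infty(\R^n)$ -- is that no Gagliardo--Nirenberg inequality is needed. I would estimate $\||u(s,\cdot)|^p\|_{L^2}=\|u(s,\cdot)\|_{L^{2p}}^p\lesssim\|u(s,\cdot)\|_{H^\sigma}^p\lesssim e^{\frac{sp}{2}}\|u\|_{X(s)}^p$ by Sobolev's embedding; via Proposition~\ref{PropSickelfractional} together with $\|u(s,\cdot)\|_{L^\infty}\lesssim\|u(s,\cdot)\|_{H^\sigma}$,
\[ \||D|^\sigma|u(s,\cdot)|^p\|_{L^2}\lesssim\|u(s,\cdot)\|_{H^\sigma}\|u(s,\cdot)\|_{L^\infty}^{p-1}\lesssim e^{\frac{sp}{2}}\|u\|_{X(s)}^p\qquad(p>\sigma); \]
and, via Corollary~\ref{Corfractionalhomogeneous}, $\||D|^{\sigma-1}|u(s,\cdot)|^p\|_{L^2}\lesssim\||D|^{\sigma-1}u(s,\cdot)\|_{L^2}\|u(s,\cdot)\|_{L^\infty}^{p-1}\lesssim e^{\frac{sp}{2}}\|u\|_{X(s)}^p$ for $p>\sigma-1$. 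Combining these with Proposition~\ref{propositionKleinGordon2parameter} -- for the $\||D|^\sigma Gu\|_{L^2}$ component writing $|D|^\sigma=\nabla|D|^{\sigma-1}$ and using $\|\nabla K_1(t,s,\cdot)\ast h\|_{\dot H^{\sigma-1}}\lesssim e^{\frac{s+t}{2}}\|h\|_{\dot H^{\sigma-1}}$, and for the $\|Gu\|_{L^2}$, $\|\partial_t Gu\|_{L^2}$, $\||D|^{\sigma-1}\partial_t Gu\|_{L^2}$ components the $O(1)$ bounds -- everything reduces to convergence of $s$-integrals of the form $\int_0^t e^{(-\frac{n}{2}(p-1)+\frac{p}{2}+c)s}\,ds$ with $c\in\{0,\tfrac12\}$. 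The worst case $c=\tfrac12$ (the $e^{\frac{s+t}{2}}$ factor from the $\nabla K_1$ estimate) produces the exponent $-\frac{n-1}{2}p+\frac{n+1}{2}$, negative precisely when $p>\frac{n+1}{n-1}$; the remaining cases require only $p>\frac{n}{n-1}$. Hence $\|Gu\|_{X(t)}\lesssim\|u\|_{X(t)}^p$ and $Pu\in X(t)$ under $p>\max\{\frac{n+1}{n-1};\sigma\}$.

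For the Lipschitz estimate I would reproduce the argument of Theorem~\ref{improve1} verbatim: starting from~\eqref{Lipschitzdissipation}, apply the fractional Leibniz rule (Proposition~\ref{fractionalLeibniz}, together with Proposition~\ref{Propfractionalchainrulegeneral}) to $|D|^\sigma(Pu-Pv)$, bounding one factor by $\||D|^{\sigma-1}(u-v)\|_{L^2}$ or by $\|u-v\|_{L^\infty}\lesssim\|u-v\|_{H^\sigma}$, and the other by $\||D|^{\sigma-1}(|w|^{p-2}w)\|_{L^2}\lesssim\|w\|_{\dot H^{\sigma-1}}\|w\|_{L^\infty}^{p-2}$ (Corollary~\ref{Corfractionalhomogeneous}), which needs $p>\max\{\sigma,2\}$, with $w=v+\tau(u-v)$; using $\|w\|_{L^\infty}\lesssim\|w\|_{H^\sigma}$ and the same exponential bookkeeping gives $\|Pu-Pv\|_{\dot H^\sigma}\lesssim\|u-v\|_{X(t)}(\|u\|_{X(t)}^{p-1}+\|v\|_{X(t)}^{p-1})$, and analogously for $\|\partial_t(Pu-Pv)\|_{\dot H^{\sigma-1}}$ and the $L^2$-norms, with no further constraint on $p$. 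This yields $\|Pu-Pv\|_{X(t)}\lesssim\|u-v\|_{X(t)}(\|u\|_{X(t)}^{p-1}+\|v\|_{X(t)}^{p-1})$, so $P$ is a contraction on a small ball of $X(t)$ for small data, giving the unique global solution (and a local one for large data); the stated decay follows from $\phi=e^{-\frac{n}{2}t}u$ and $\|u(t,\cdot)\|_{H^\sigma}+\|u_t(t,\cdot)\|_{H^{\sigma-1}}\lesssim e^{\frac{t}{2}}\|u\|_{X(t)}$. I expect the main obstacle to be precisely this exponential bookkeeping: unlike the effectively damped model, the Klein--Gordon linear estimates grow like $e^{t/2}$ and the gradient estimate carries the extra factor $e^{\frac{s+t}{2}}$, so the only source of convergence for the $s$-integrals is the factor $e^{-\frac{n}{2}(p-1)s}$ from the transformation balanced against the loss $e^{\frac{sp}{2}}$ incurred in estimating $|u|^p$; one has to check that the single threshold $p>\frac{n+1}{n-1}$ -- rather than a stronger one -- makes every one of the mapping and Lipschitz terms converge.
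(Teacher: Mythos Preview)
Your proposal is correct and follows essentially the same approach as the paper, which proves Theorem~\ref{improve7} by direct analogy with Theorem~\ref{improve1}: you correctly identify the solution space from Theorem~\ref{main8}, replace the linear estimates by Propositions~\ref{propositionKleinGordon1} and~\ref{propositionKleinGordon2parameter}, and use Proposition~\ref{PropSickelfractional} and Corollary~\ref{Corfractionalhomogeneous} in place of Gagliardo--Nirenberg, with the fractional Leibniz rule for the Lipschitz part. Your exponential bookkeeping is accurate; the threshold $p>\frac{n+1}{n-1}$ indeed arises from the $e^{\frac{s+t}{2}}$ factor in the $\nabla K_1$ estimate when handling $|D|^\sigma Gu$ via $|D|^\sigma\approx\nabla|D|^{\sigma-1}$ (Lemma~\ref{Sickelauxiliary lemma}), exactly as in the paper's route through Theorem~\ref{improve1}.
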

For space dimensions $n=1,2$ we have a similar result as in Theorem \ref{onedimension}.
 \begin{theorem}\label{onedimension2} Consider the Cauchy problem  \eqref{general} for $m\in (\frac{n}{2},\infty)$ and data $(f,g) \in (H^\sigma(\R^n)\times H^{\sigma-1}(\R^n))$, with $\sigma=1$ for $n=1$ and  $1<\sigma <  2 $ for $n=2$. Let $p>\sigma +1$.   Then, there exists a constant $\varepsilon_0>0$ such that, for every given small data satisfying
  \[\|f\|_{H^\sigma}+ \|g\|_{ H^{\sigma-1}} \leq \varepsilon\,\,\,\mbox{for}\,\,\, \varepsilon\leq \varepsilon_0,\]
 there exists a uniquely determined global (in time) energy solution \[ \phi \in C\big([0,\infty),H^\sigma(\R^n)\big)\cap C^1\big([0,\infty), H^{\sigma-1}(\R^n)\big).\]  The energy solution satisfies the decay estimate
  \begin{eqnarray*}
  \|\phi(t,\cdot)\|_{H^\sigma} + \|\phi_t(t,\cdot)\|_{H^{\sigma-1}}  \lesssim e^{-\frac{n-1}{2}t}
  \big(\|f\|_{H^\sigma} + \|g\|_{H^{\sigma-1}} \big).
  \end{eqnarray*}
 \end{theorem}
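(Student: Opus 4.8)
The plan is to run the contraction argument of the proof of Theorem~\ref{main1} (the case $\sigma>\frac n2$ of the dominant-dissipation model), with the damped-wave estimates of Propositions~\ref{proposition1} and \ref{proposition11} replaced by the Klein--Gordon estimates of Propositions~\ref{propositionKleinGordon1} and \ref{propositionKleinGordon2parameter}. As always it suffices to solve the transformed problem \eqref{semiliearwavemasstreatment}, since $\phi(t,x)=e^{-\frac n2 t}u(t,x)$. In the dominant-mass case only the datum $u_0$ produces the exponential loss $e^{t/2}$, so I would work in
\[ X(t):=\bigl\{u\in C\bigl([0,t],H^\sigma(\R^n)\bigr)\cap C^1\bigl([0,t],H^{\sigma-1}(\R^n)\bigr):\ \|u\|_{X(t)}:=\sup_{\tau\in[0,t]}e^{-\tau/2}\bigl(\|u(\tau,\cdot)\|_{H^\sigma}+\|u_\tau(\tau,\cdot)\|_{H^{\sigma-1}}\bigr)<\infty\bigr\}, \]
and set $Pu:=K_0(t,0,x)\ast_{(x)}u_0+K_1(t,0,x)\ast_{(x)}u_1+Gu$, where $Gu(t,x):=\int_0^t e^{-\frac n2(p-1)s}K_1(t,s,x)\ast_{(x)}|u(s,x)|^p\,ds$. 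Propositions~\ref{propositionKleinGordon1} and \ref{propositionKleinGordon2parameter} with $s=0$ bound the first two terms of $Pu$ in $X(t)$ by $\|u_0\|_{H^\sigma}+\|u_1\|_{H^{\sigma-1}}\lesssim\|f\|_{H^\sigma}+\|g\|_{H^{\sigma-1}}$.

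For the Duhamel term the key is to use only the loss-free bounds $\|K_1(t,s,x)\ast_{(x)}\psi\|_{\dot H^\gamma}\lesssim\|\psi\|_{\dot H^\gamma}$ and $\|\partial_t K_1(t,s,x)\ast_{(x)}\psi\|_{\dot H^\gamma}\lesssim\|\psi\|_{\dot H^\gamma}$ of Proposition~\ref{propositionKleinGordon2parameter} (applied with $\gamma\in\{0,\sigma-1,\sigma\}$ to control the $H^\sigma$ and $H^{\sigma-1}$ norms; the boundary term in $\partial_t Gu$ vanishes because $K_1(t,t,\cdot)=0$), and never the lossy gradient estimate. Since $\sigma>\frac n2$ in all admissible cases, $H^\sigma(\R^n)$ is a Banach algebra and embeds continuously into $L^\infty(\R^n)$; hence Proposition~\ref{PropSickelfractional} (with $p>\sigma$), Corollary~\ref{Corfractionalhomogeneous} (with $p>\sigma-1$) and Sobolev's embedding give
\[ \bigl\||u(s,\cdot)|^p\bigr\|_{H^\sigma}+\bigl\||u(s,\cdot)|^p\bigr\|_{H^{\sigma-1}}\lesssim\|u(s,\cdot)\|_{H^\sigma}\|u(s,\cdot)\|_{L^\infty}^{p-1}\lesssim\|u(s,\cdot)\|_{H^\sigma}^p\lesssim e^{ps/2}\|u\|_{X(s)}^p. \]
Substituting this into $Gu$ and $\partial_t Gu$ reduces everything to the time integral $\int_0^t e^{-\frac n2(p-1)s+\frac p2 s}\,ds=\int_0^t e^{\frac s2(n-(n-1)p)}\,ds$. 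For $n\ge2$ we have $p>\sigma+1>\frac n2+1\ge\frac n{n-1}$, so $n-(n-1)p<0$ and the integral is bounded uniformly in $t$; for $n=1$ the integrand is $e^{s/2}$, so the integral is $\lesssim e^{t/2}$. Either way $e^{-t/2}\bigl(\|Gu(t,\cdot)\|_{H^\sigma}+\|\partial_t Gu(t,\cdot)\|_{H^{\sigma-1}}\bigr)\lesssim\|u\|_{X(t)}^p$, hence $\|Pu\|_{X(t)}\lesssim\|f\|_{H^\sigma}+\|g\|_{H^{\sigma-1}}+\|u\|_{X(t)}^p$ and $P$ maps $X(t)$ into itself for small data.

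For the contraction I would use the identity \eqref{Lipschitzdissipation} with $r=-\frac n2$, which writes $Pu-Pv$ as a Duhamel integral of $\bigl(\int_0^1|v+\tau(u-v)|^{p-2}(v+\tau(u-v))\,d\tau\bigr)(u-v)$. Estimating this product in $H^\sigma$ by the algebra property, using Proposition~\ref{PropSickelfractional} (valid because $p-1>\sigma$) and Sobolev's embedding to obtain $\bigl\||w|^{p-2}w\bigr\|_{H^\sigma}\lesssim\|w\|_{H^\sigma}^{p-1}$ for $w=v+\tau(u-v)$, and bounding the $H^{\sigma-1}$ norm of $\partial_t(Pu-Pv)$ simply by its $H^\sigma$ norm (which sidesteps the fact that $H^{\sigma-1}$ is not an algebra), one is led to the same time integral as above and gets
\[ \|Pu-Pv\|_{X(t)}\lesssim\|u-v\|_{X(t)}\bigl(\|u\|_{X(t)}^{p-1}+\|v\|_{X(t)}^{p-1}\bigr). \]
The usual contraction plus continuation argument then produces the unique global small-data solution $u$ (and a local large-data solution), and the stated decay of $\phi=e^{-\frac n2 t}u$ follows from $\|u(t,\cdot)\|_{H^\sigma}+\|u_t(t,\cdot)\|_{H^{\sigma-1}}\lesssim e^{t/2}\|u\|_{X(t)}$ together with $\phi_t=e^{-\frac n2 t}\bigl(u_t-\frac n2 u\bigr)$. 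The point — and the reason why $n=1$ becomes admissible, in contrast with Theorems~\ref{main3}--\ref{improve7} — is that for $\sigma>\frac n2$ one discards the Gagliardo--Nirenberg step, which produced the restriction $p>\frac{n+1}{n-1}$ via the lossy gradient estimate for $K_1$, in favour of the loss-free $K_1$ estimate combined with algebra and Moser-type estimates. The only delicate verification is that the exponent $n-(n-1)p$ controlling the Duhamel time integral stays $\le1$ for each admissible $n$ — which is exactly ensured by $p>\sigma+1$ together with $\sigma>\frac n2$ — while the embedding $H^1(\R)\hookrightarrow L^\infty(\R)$ is what makes the one-dimensional case work at all.
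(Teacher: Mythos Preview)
Your proposal is correct and follows essentially the paper's approach: the paper gives no separate proof for this theorem, pointing instead to the analogue Theorem~\ref{onedimension}, which in turn is modeled on the proof of Theorem~\ref{main1}; you carry out precisely that argument with the Klein--Gordon estimates of Propositions~\ref{propositionKleinGordon1} and \ref{propositionKleinGordon2parameter} in place of the damped-wave ones, and with the weighted norm of Theorems~\ref{main3}/\ref{main8} to absorb the $e^{t/2}$ growth from $K_0$. Two small remarks: (i) your choice to weight $\|u_\tau\|_{H^{\sigma-1}}$ by $e^{-\tau/2}$ as well is harmless but unnecessary, since Proposition~\ref{propositionKleinGordon2parameter} gives $\|\partial_t K_1\ast\psi\|_{\dot H^\gamma}\lesssim\|\psi\|_{\dot H^\gamma}$ with no loss; (ii) your citation of Corollary~\ref{Corfractionalhomogeneous} ``with $p>\sigma-1$'' is formally off, because that corollary inherits the hypothesis $s>\frac n r$, i.e.\ $\sigma-1>\frac n2$, which fails for $n=2$, $\sigma\in(1,2)$ and for $n=1$, $\sigma=1$ --- but this is immaterial since, as you yourself observe a few lines later, one simply bounds $\||u|^p\|_{H^{\sigma-1}}\le\||u|^p\|_{H^\sigma}$ and applies Proposition~\ref{PropSickelfractional} with $s=\sigma$.
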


\section{De Sitter model with balanced dissipation and mass: Case $m =\frac{n}{2}$.} \label{Sec5}
In this section we consider the Cauchy problem
\begin{equation}\label{semilinearwavetreatment}
\begin{cases}
u_{tt} - e^{-2t}\Delta u =e^{-\frac{n}{2} (p-1)t}|u|^p,
  \quad (t,x)\in (0,\infty)\times \R^n,
\\
u(0,x)= u_0(x), \quad u_t(0,x)=u_1(x),
  \quad x\in \R^n,
\end{cases}
\end{equation}
that is, $m =\frac{n}{2}$ in (\ref{general}).\\
According to Duhamel's principle, a solution of \eqref{semilinearwavetreatment} satisfies the non-linear integral equation
\[  u(t,x)=K_0(t,0, x) \ast_{(x)} u_0(x) + K_1(t,0, x) \ast_{(x)} u_1(x) + \int_0^t e^{-\frac{n}{2}(p-1)s}K_1(t,s,x) \ast_{(x)}|u(s,x)|^{p}\,ds, \]
where $K_j(t,0, x) \ast_{(x)} u_j(x)$, $j=0,1$, are the solutions to the corresponding linear Cauchy
problem
\begin{equation}\label{linearwave}
\begin{cases}
u_{tt} - e^{-2t}\Delta u =0,
  \quad (t,x)\in (0,\infty)\times \R^n,
\\
u(0,x)=\delta_{0j}u_0(x), \quad u_t(0,x)=\delta_{1j}u_1(x),
  \quad x\in \R^n,
\end{cases}
\end{equation}
with $\delta_{kj}=1$  for $k=j$, and zero otherwise. The term $K_1(t,s,x) \ast_{(x)}\psi(s,x)$ is the  solution  of the parameter-dependent Cauchy problem
\begin{equation}  \label{parameterdependentwave} \begin{cases}
u_{tt} - e^{-2t}\Delta u =0,
  \quad (t,x)\in (s,\infty)\times \R^n,
\\
u(s,x)=0, \quad u_t(s,x)=\psi(s,x),
  \quad x\in \R^n.
\end{cases}
\end{equation}
 So, Duhamel's principle explains that we have to take account of solutions
to a family of parameter-dependent Cauchy problems.
\subsection{Estimates of solutions to the corresponding linear model} \label{Sec5.1}
Let us consider the parameter-dependent Cauchy problem
\begin{equation}  \label{parameterdependentwave2} \begin{cases}
u_{tt} - e^{-2t}\Delta u =0,
  \quad (t,x)\in (s,\infty)\times \R^n,
\\
u(s,x)=\varphi(s,x), \quad u_t(s,x)=\psi(s,x),
  \quad x\in \R^n.
\end{cases}
\end{equation}
We perform the partial Fourier transformation with respect to the spatial variables to \eqref{parameterdependentwave2} and the change of variables
\[ v(\tau)=\widehat{u}(t,\xi), \quad \tau=|\xi|A(t)=|\xi|e^{-t}, \]
 leads to the Bessel equation of order zero
\[v_{\tau\tau} +\frac{1}{\tau} v_{\tau}+ v=0,
  \quad
v(\xii e^{-s})=\widehat\varphi(s,\xi),  \quad v_{\tau}(\xii e^{-s})=-\frac{\widehat\psi(s,\xi)}{|\xi|e^{-s}}.\]
The general solution of the Bessel equation of order zero is given for $\tau>0$ by
\[v(\tau)=c_1(\xi)J_0(\tau)+ c_2(\xi)Y_0(\tau),\]
where $J_{\nu}$ and $Y_{\nu}$
are Bessel functions of order $\nu$, of first and second kind, respectively. The  Wronskian  satisfies (\cite{BE}, vol.2, p.79)
\[W(J_0(\tau), Y_0(\tau))=\frac2{\pi\tau}.\]
Therefore, we obtain the following
representation:
\begin{eqnarray}\label{repre}
\begin{cases} \widehat{u}(t,\xi)=\frac{\pi}{2}|\xi|e^{-s}\Big(Y_0'(e^{-s}\xii)J_0(|\xi|e^{-t})-J_0'(e^{-s}\xii)Y_0(|\xi|e^{-t})\Big)
\widehat\varphi(s,\xi) \\
\quad +\frac{\pi}{2}\Big(Y_0(e^{-s}\xii)J_0(|\xi|e^{-t})-J_0(e^{-s}\xii)Y_0(|\xi|e^{-t})\Big)\widehat\psi(s,\xi).
\end{cases}
\end{eqnarray}
To describe the asymptotic behavior of $\widehat{u}$ we may use the following well-known properties of
the functions $J_0$ and $Y_0$ (see \cite{BE}, Vol.2):
\begin{itemize}
\item (B1): $J_0$ is an entire analytic function, whereas $Y_0$ has a logarithmic singularity at $\tau=0$;
\item (B2): $J_0'(\tau)=-J_1(\tau)$;
\item (B3): the behavior for small $\tau$ is given by
\[ |J_0(\tau)| \lesssim 1, \quad |J_0'(\tau)| \lesssim \tau, \quad |Y_0(\tau)| \lesssim |\log(\tau)|, \quad |Y_0'(\tau)| \lesssim \tau^{-1};\]
\item (B4): the behavior for large $\tau$ is given by
\[ |J_0^{(k)}(\tau)| \lesssim \tau^{-1/2}, \quad |Y_0^{(k)}(\tau)| \lesssim \tau^{-1/2}, \  k=0,1.\]
\end{itemize}
In order to have a pointwise estimate for $\widehat{u}$ we split again the extended phase space into three zones
\[ Z_1(s)
     := \{\xi: \ \xii e^{-s}\leq N \},\ \ Z_2(t,s)
     := \{N e^{s}\leq \xii \leq N e^{t} \},  \ \ Z_3(t)
     := \{\xii e^{-t}\geq N \}.\]
Let us denote by $t_{\xi}$ the separate line between $Z_2$ and $Z_3$, i.e., $\xii e^{-t_{\xi}}= N$.
The following result can be concluded by the results of \cite{ERnew}.
\begin{proposition}\label{propositionpurewave1}
Assume that $\varphi(0,\cdot) \in \dot{H}^{\gamma}(\R^n)$, $\gamma \geq 0$ and $\psi(0,\cdot) \equiv 0$. Then the following estimates hold for the solutions to (\ref{parameterdependentwave2}) for $t \in [0,\infty)$ and $\gamma \geq 0$:
\begin{eqnarray*}
&& \| K_0(t,0, x) \ast_{(x)} \varphi(0,x)\|_{\dot{H}^{\gamma}} \lesssim
  (1+t)e^{\frac{t}{2}} \|\varphi(0,\cdot)\|_{\dot{H}^{\gamma}},  \\
&&  \|\partial_t K_0(t,0, x) \ast_{(x)} \varphi(0,x)\|_{\dot{H}^{\gamma-1}} \lesssim
  \| \varphi(0,\cdot)\|_{\dot{H}^{\gamma}}.
\end{eqnarray*}
\end{proposition}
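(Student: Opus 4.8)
The plan is to follow the same scheme as in the proof of Proposition~\ref{propositionKleinGordon2parameter}, but now starting from the Bessel-function representation~\eqref{repre} specialized to $s=0$ and $\psi(0,\cdot)\equiv 0$:
\[
\widehat{u}(t,\xi)=\frac{\pi}{2}\,\xii\,\Big(Y_0'(\xii)J_0(\xii e^{-t})-J_0'(\xii)Y_0(\xii e^{-t})\Big)\widehat\varphi(0,\xi).
\]
First I would differentiate this relation in $t$, using $(B2)$ together with $\partial_t J_0(\xii e^{-t})=-\xii e^{-t}J_0'(\xii e^{-t})$ and the analogous identity for $Y_0$, to obtain
\[
\widehat{u}_t(t,\xi)=-\frac{\pi}{2}\,\xii^2 e^{-t}\,\Big(Y_0'(\xii)J_0'(\xii e^{-t})-J_0'(\xii)Y_0'(\xii e^{-t})\Big)\widehat\varphi(0,\xi).
\]
The whole statement then reduces to pointwise bounds for the two Fourier multipliers acting on $\widehat\varphi(0,\xi)$: it suffices to show that the first one is $\lesssim(1+t)e^{t/2}$ and the second one is $\lesssim\xii$, uniformly in $t\ge 0$, after which Plancherel's theorem together with multiplication by $\xii^{\gamma}$, resp.\ $\xii^{\gamma-1}$, yields the two claimed inequalities for every $\gamma\ge 0$.

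Next I would split the extended phase space into the three zones $Z_1(0)=\{\xii\le N\}$, $Z_2(t,0)=\{N\le\xii\le Ne^{t}\}$ and $Z_3(t)=\{\xii e^{-t}\ge N\}$, and estimate both multipliers zone by zone, using $(B3)$ for small arguments and $(B4)$ for large arguments, noting that on $Z_1$ both $\xii$ and $\xii e^{-t}$ stay $\le N$, on $Z_3$ both exceed $N$, and on $Z_2$ the argument $\xii$ is large while $\xii e^{-t}$ is small. Routine computations then give, for the $\widehat u$-multiplier, a bound $\lesssim 1+\xii^2\big(1+|\log(\xii e^{-t})|\big)\lesssim 1+t$ on $Z_1$ (using $\xii\le N$), a bound $\lesssim e^{t/2}$ on $Z_3$, and, on $Z_2$, a bound of the form $\xii^{1/2}\big(1+|\log(\xii e^{-t})|\big)$; since $N\le\xii\le Ne^{t}$ there, we have $\xii^{1/2}\lesssim e^{t/2}$ and $|\log(\xii e^{-t})|=|\log\xii-t|\lesssim 1+t$, so this is $\lesssim(1+t)e^{t/2}$. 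For the $\widehat u_t$-multiplier the same bookkeeping yields $\lesssim\xii^2\lesssim\xii$ on $Z_1$, $\lesssim\xii e^{-t/2}$ on $Z_3$, and $\lesssim\xii^{5/2}e^{-2t}+\xii^{1/2}\lesssim\xii$ on $Z_2$ (now using $\xii e^{-t}\le N$ together with $\xii\ge N\ge 1$); in particular no polynomial factor in $t$ is needed for the second estimate.

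The points I expect to require the most care are the following two. First, the logarithmic singularity of $Y_0$ at the origin enters the estimates only through the products $\xii^2|\log(\xii e^{-t})|$ (on $Z_1$) or $\xii^{1/2}|\log(\xii e^{-t})|$ (on $Z_2$), and in both cases the algebraic prefactor absorbs the logarithm uniformly on the zone, so it produces no genuine loss. Second, the interplay on $Z_2$ between the largeness of $\xii$ and the smallness of $\xii e^{-t}$ is exactly what forces the extra polynomial factor $(1+t)$ in the first estimate while leaving the estimate for $\widehat u_t$ loss-free; matching the powers of $\xii$ against those of $e^{\pm t}$ there is the delicate step. As the sentence preceding the statement already indicates, an alternative is simply to invoke the sharp $L^2$--$L^2$ estimates for the de Sitter wave equation proved in~\cite{ERnew}, specialized to $s=0$ and to a vanishing second Cauchy datum.
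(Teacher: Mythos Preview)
Your proposal is correct and follows exactly the zone-by-zone Bessel-function strategy that the paper itself carries out in detail for the companion result, Proposition~\ref{propositionwave2parameter}. For Proposition~\ref{propositionpurewave1} the paper does not spell this out but simply refers to \cite{ERnew}; your direct computation is precisely what that reference contains, so there is no genuine difference in approach.
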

Moreover, we need the following result.
\begin{proposition}\label{propositionwave2parameter}
Assume that $\psi(s,\cdot) \in \dot{H}^\gamma(\R^n),\,\gamma \geq 0,$ for $s \in [0,\infty)$ and $\varphi(s,\cdot) \equiv 0$. Then the following estimates hold for the solutions to (\ref{parameterdependentwave2}) for $t \in [s,\infty)$:
\begin{eqnarray*}
&& \|K_1(t,s, x) \ast_{(x)} \psi(s,x)\|_{\dot{H}^\gamma} \lesssim
(1+t) \|\psi(s,\cdot)\|_{\dot{H}^\gamma},\\
&& \|\nabla K_1(t,s, x) \ast_{(x)} \psi(s,x)\|_{\dot{H}^\gamma} \lesssim
  (1+t) e^{\frac{s+t}{2}}  \|\psi(s,\cdot)\|_{\dot{H}^\gamma}, \\
&& \|\partial_t K_1(t,s, x) \ast_{(x)} \psi(s,x)\|_{\dot{H}^\gamma} \lesssim
  \|\psi(s,\cdot)\|_{\dot{H}^\gamma}.
\end{eqnarray*}
\end{proposition}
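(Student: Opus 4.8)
The plan is to follow exactly the template used for Proposition \ref{propositionKleinGordon2parameter}: pass to the Fourier side, reduce all three estimates to uniform pointwise bounds for a single scalar multiplier, and bound that multiplier zone by zone in $Z_1(s)\cup Z_2(t,s)\cup Z_3(t)$ by means of the Bessel asymptotics (B1)--(B4). Taking $\varphi(s,\cdot)\equiv 0$ in the representation \eqref{repre} we have $\widehat u(t,\xi)=m(t,s,\xi)\,\widehat\psi(s,\xi)$ with
\[
m(t,s,\xi)=\frac{\pi}{2}\Big(Y_0(\xii e^{-s})J_0(\xii e^{-t})-J_0(\xii e^{-s})Y_0(\xii e^{-t})\Big),
\]
so by Plancherel's theorem the three claims are equivalent to the pointwise estimates $|m(t,s,\xi)|\lesssim 1+t$, $\xii\,|m(t,s,\xi)|\lesssim (1+t)e^{(s+t)/2}$ and $|\partial_t m(t,s,\xi)|\lesssim 1$, each uniform in $\xi\in\R^n$.

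For the first estimate I would treat the three zones in turn. In $Z_3(t)$ both arguments exceed $N$, so (B4) gives $|m(t,s,\xi)|\lesssim (\xii e^{-s})^{-1/2}(\xii e^{-t})^{-1/2}=\xii^{-1}e^{(s+t)/2}$, and the regularity estimate $N\xii^{-1}\le e^{-t}$ available in $Z_3$, together with $s\le t$, turns this into $|m|\lesssim e^{(s-t)/2}\lesssim 1$. In $Z_2(t,s)$ the argument $\xii e^{-s}$ is large while $\xii e^{-t}\in[Ne^{s-t},N]$ is small, so (B4) bounds the large-argument factors by $1$ and (B3) gives $|Y_0(\xii e^{-t})|\lesssim|\log(\xii e^{-t})|\lesssim 1+(t-s)\lesssim 1+t$, whence $|m|\lesssim 1+t$. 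The delicate zone is $Z_1(s)$, where both arguments lie in $(0,N]$ and each of $Y_0(\xii e^{-s})$, $Y_0(\xii e^{-t})$ is individually singular; here one writes $Y_0(\rho)=\frac{2}{\pi}(\log(\rho/2)+\gamma_E)J_0(\rho)+\widetilde Y_0(\rho)$ with $\widetilde Y_0$ analytic near $0$, so that the singular parts cancel in the combination defining $m$ and leave the factor $\frac{2}{\pi}\log\big(e^{t-s}\big)J_0(\xii e^{-s})J_0(\xii e^{-t})=\frac{2}{\pi}(t-s)J_0(\xii e^{-s})J_0(\xii e^{-t})$, which by (B3) is $\lesssim 1+t$. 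This is the step I expect to be the main obstacle, since it is the only place where the logarithmic degeneracy of $Y_0$ at the origin has to be handled by hand, and it is also where the loss $1+t$ in the statement originates.

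The second estimate follows by multiplying the three zonal bounds for $|m|$ by $\xii$ and using $\xii\le Ne^{s}\le Ne^{(s+t)/2}$ on $Z_1$, $\xii\le Ne^{t}$ on $Z_2$ (so that $\xii\cdot\xii^{-1/2}e^{s/2}\lesssim e^{(s+t)/2}$), and $\xii\cdot\xii^{-1}e^{(s+t)/2}=e^{(s+t)/2}$ on $Z_3$. For the third estimate I would differentiate $m$ in $t$: by (B2) one has $\partial_t J_0(\xii e^{-t})=\xii e^{-t}J_1(\xii e^{-t})$ and $\partial_t Y_0(\xii e^{-t})=-\xii e^{-t}Y_0'(\xii e^{-t})$, so differentiation produces a prefactor $\xii e^{-t}$ which exactly tames the $\rho^{-1}$ singularity of $Y_0'(\rho)$. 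Then in $Z_1$ one gets $|\partial_t m|\lesssim|\log(\xii e^{-s})|(\xii e^{-t})^{2}+1\lesssim 1$, using that $\rho\mapsto|\log\rho|\rho^{2}$ is bounded on $(0,N]$ and that $\xii e^{-t}=(\xii e^{-s})e^{s-t}$ with $s\le t$; in $Z_2$ one gets $|\partial_t m|\lesssim(\xii e^{-s})^{-1/2}\lesssim 1$; and in $Z_3$ one gets $|\partial_t m|\lesssim(\xii e^{-t})^{1/2}(\xii e^{-s})^{-1/2}=e^{(s-t)/2}\lesssim 1$. Apart from the handling of $Y_0$ near the origin the whole computation is routine and parallels Section \ref{Sec5.1}; alternatively one may simply observe that the statement is contained in the linear estimates of \cite{ERnew} for wave equations with time-dependent speed of propagation.
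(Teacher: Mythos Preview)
Your proposal is correct and follows essentially the same route as the paper: Fourier multiplier on the $\psi$-part of \eqref{repre}, zone decomposition $Z_1\cup Z_2\cup Z_3$, and Bessel asymptotics (B3)--(B4). The only noteworthy difference is that in $Z_1$ you make the cancellation of the two logarithmic singularities of $Y_0$ explicit via the decomposition $Y_0(\rho)=\tfrac{2}{\pi}\big(\log(\rho/2)+\gamma_E\big)J_0(\rho)+\widetilde Y_0(\rho)$, whereas the paper simply writes ``by using the property (B3)''; your version is the more honest one, since a naive triangle inequality with (B3) alone would leave an unbounded $|\log(\xii e^{-s})|$, and it is precisely this cancellation that produces the factor $t-s\le 1+t$.
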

\begin{proof}
By using the property $(B3)$ in $Z_1$ we have
\[ \xii^{\gamma + j} |\hat{u}(t,\xi)| \lesssim
 (1+t)e^{js} \xii^{\gamma }\widehat\psi(s,\xi), \qquad j=0,1,\]
 and
 \[ \xii^{\gamma } |\hat{u}_t(t,\xi)| \lesssim
  \xii^{\gamma }\widehat\psi(s,\xi).\]
In $Z_3$ we have $\xii e^{-s}\geq \xii e^{-t}\geq N$ and by using the property $(B4)$ we can estimate
\[ \xii^{\gamma + j} |\hat{u}(t,\xi)| \lesssim
 e^{\frac{s-t}{2}+jt} \xii^{\gamma }\widehat\psi(s,\xi), \qquad j=0,1,\]
 and
 \[ \xii^{\gamma } |\hat{u}_t(t,\xi)| \lesssim
 e^{\frac{s-t}{2}} \xii^{\gamma }\widehat\psi(s,\xi).\]
In $Z_2$ we still have $\xii e^{-s}\geq N$ and by using the properties $(B3)$ and $(B4)$ we have that
\[ \xii^{\gamma + j} |\hat{u}(t,\xi)| \lesssim
 (1+t)e^{\frac{(s+t)j}{2}} \xii^{\gamma }\widehat\psi(s,\xi), \qquad j=0,1,\]
 and
 \[ \xii^{\gamma } |\hat{u}_t(t,\xi)| \lesssim
  \xii^{\gamma }\widehat\psi(s,\xi),\]
 thanks to $\xii e^{-t}\leq N$.
Summarizing all these estimates the proof of Proposition \ref{propositionwave2parameter} is concluded.
\end{proof}
\subsection{Global existence of small data solutions} \label{Sec5.2}
If we compare the estimates of Propositions \ref{propositionKleinGordon1}
and \ref{propositionKleinGordon2parameter} with those ones of Propositions \ref{propositionpurewave1}
and \ref{propositionwave2parameter}, then they differ only by the factor $1+t$ appearing in some of the estimates in
Propositions \ref{propositionpurewave1} and \ref{propositionwave2parameter}. So we can follow all the considerations of Section \ref{Sec4}. We study power non-linearities. The factor $1+t$ will not have any influence on the admissible set of powers $p$. Consequently, we do not expect any changes in the admissible set of powers $p$. For this reason
we mainly restrict ourselves to formulate the results only. We sketch modifications in the proof for one result.
Firstly we are interested in the global existence (in time) of energy solutions.
\begin{theorem}\label{main61} Consider for $n \geq 2$ the Cauchy problem  \eqref{general} with data $(f,g) \in (H^1(\R^n)\times L^2(\R^n))$. Let $p>\frac{n+1}{n-1}$ and $p\leq \frac{n}{n-2}$ for $n\geq 3$. Assume that the parameters $m$ and $n$ satisfy  $m=\frac{n}{2}$.
  Then, there exists a constant $\varepsilon_0>0$ such that, for every small data satisfying
  \[\|f\|_{H^1}+ \|g\|_{L^2}\leq \varepsilon\,\,\,\mbox{for}\,\,\, \varepsilon\leq \varepsilon_0,\]
  there exists a uniquely determined global (in time) energy solution \[ \phi \in C\big([0,\infty),H^1(\R^n)) \cap C^1([0,\infty),L^2(\R^n)\big).\]
  The energy solution satisfies the decay estimate
  \begin{eqnarray*}
  \|\phi(t,\cdot)\|_{L^2} + \|\nabla \phi(t,\cdot)\|_{L^2}+ \|\phi_t(t,\cdot)\|_{L^2}  \lesssim (1+t) e^{-\frac{n-1}{2}t}
  \big(\|f\|_{H^1} + \|g\|_{L^2} \big).
  \end{eqnarray*}
  \end{theorem}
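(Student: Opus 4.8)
The plan is to reproduce, \emph{mutatis mutandis}, the proof of Theorem~\ref{main3}; the only structural difference is the presence of the polynomial factors $1+t$ in the linear estimates of Propositions~\ref{propositionpurewave1} and~\ref{propositionwave2parameter}, and I will argue that these do not affect the admissible range of $p$. As there, it suffices to prove global existence of small data solutions to the transformed problem~\eqref{semilinearwavetreatment}, the statement for $\phi$ then following from $\phi(t,x)=e^{-\frac n2 t}u(t,x)$. I would work in
\[ X(t) := \Bigl\{ u\in C\big([0,t], H^1(\R^n)\big)\cap C^1\big([0,t],L^2(\R^n)\big)\ :\ \|u\|_{X(t)}<\infty\Bigr\}, \]
\[ \|u\|_{X(t)} := \sup_{\tau\in[0,t]}\Bigl\{ (1+\tau)^{-1}e^{-\tau/2}\big(\|u(\tau,\cdot)\|_{L^2}+\|\nabla u(\tau,\cdot)\|_{L^2}\big) + \|u_\tau(\tau,\cdot)\|_{L^2}\Bigr\}, \]
the weights being dictated by Propositions~\ref{propositionpurewave1}, \ref{propositionwave2parameter} with $s=0$; in particular these immediately give $\|K_0(t,0,x)\ast_{(x)}u_0 + K_1(t,0,x)\ast_{(x)}u_1\|_{X(t)}\lesssim \|u_0\|_{H^1}+\|u_1\|_{L^2}$. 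The operator $P$ and the Duhamel term $Gu(t,x)=\int_0^t e^{-\frac n2(p-1)s}K_1(t,s,x)\ast_{(x)}|u(s,x)|^p\,ds$ are defined exactly as in Theorem~\ref{main3}.

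To show $Pu\in X(t)$ I would estimate the three ingredients of $\|Gu\|_{X(t)}$ by Minkowski's integral inequality together with the three estimates of Proposition~\ref{propositionwave2parameter}, applied to the source term bounded by the Gagliardo--Nirenberg inequality
\[ \||u(s,\cdot)|^p\|_{L^2}=\|u(s,\cdot)\|_{L^{2p}}^p \lesssim \|u(s,\cdot)\|_{L^2}^{p(1-\theta)}\|\nabla u(s,\cdot)\|_{L^2}^{p\theta}\lesssim \big((1+s)e^{s/2}\big)^p\|u\|_{X(s)}^p, \]
where $\theta=n\big(\tfrac12-\tfrac1{2p}\big)$ and $2p\le\tfrac{2n}{n-2}$ when $n\ge3$, which is precisely the hypothesis $p\le\tfrac n{n-2}$. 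After dividing by the weights appearing in $\|\cdot\|_{X(t)}$, the estimates for $\|Gu(t,\cdot)\|_{L^2}$ and $\|\partial_t Gu(t,\cdot)\|_{L^2}$ reduce to the convergence, uniformly in $t$, of $\int_0^t (1+s)^p\exp\!\big(\tfrac s2(n-(n-1)p)\big)\,ds$, while that for $\|\nabla Gu(t,\cdot)\|_{L^2}$ (which carries the extra factor $(1+t)e^{t/2}$ from the $\nabla K_1$ bound) reduces to $\int_0^t (1+s)^p\exp\!\big(\tfrac s2(n+1-(n-1)p)\big)\,ds$; both exponential rates are strictly negative under the assumptions $p>\tfrac n{n-1}$ and $p>\tfrac{n+1}{n-1}$ respectively, so the polynomial factors $(1+s)^p$ are absorbed and both integrals are bounded in $t$. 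This yields $\|Gu\|_{X(t)}\lesssim\|u\|_{X(t)}^p$ and hence $\|Pu\|_{X(t)}\lesssim\|u_0\|_{H^1}+\|u_1\|_{L^2}+\|u\|_{X(t)}^p$, so $P$ maps a small closed ball of $X(t)$ into itself.

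For the Lipschitz property I would start from the identity of the type~\eqref{Lipschitzdissipation},
\[ Pu-Pv=p\int_0^t e^{-\frac n2(p-1)s}K_1(t,s,x)\ast_{(x)}\Bigl(\int_0^1|v+\tau(u-v)|^{p-2}(v+\tau(u-v))\,d\tau\Bigr)(s,x)\,(u-v)(s,x)\,ds, \]
apply H\"older's inequality with $\tfrac1{2p'}+\tfrac1{2p}=\tfrac12$, $p'=\tfrac p{p-1}$, and then the same Gagliardo--Nirenberg estimates as above, exactly as in the proof of Theorem~\ref{main2}, to get
\[ \|Pu-Pv\|_{X(t)}\lesssim \|u-v\|_{X(t)}\big(\|u\|_{X(t)}^{p-1}+\|v\|_{X(t)}^{p-1}\big), \]
with no further restriction on $p$. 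Banach's fixed point theorem and a continuation argument for small data then give the unique global solution $u$, local existence for large data being obtained along the way; multiplying the $X(t)$-bound for $u$ by $e^{-\frac n2 t}$ and using $\phi_t=e^{-\frac n2 t}(u_t-\tfrac n2 u)$ yields the stated decay for $\phi$. I do not anticipate any genuine obstacle: the only delicate point is keeping track of the factors $1+t$, which is harmless because each of the three time integrals has a strictly negative exponential rate for $p$ in the admissible range, so the set of admissible exponents $p$ coincides with the one of Theorem~\ref{main3}.
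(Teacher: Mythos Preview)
Your proposal is correct and follows essentially the same route as the paper's own proof: the same solution space $X(t)$ with the $(1+\tau)^{-1}e^{-\tau/2}$ weights, the same use of Propositions~\ref{propositionpurewave1} and~\ref{propositionwave2parameter}, the same Gagliardo--Nirenberg estimate producing the factor $(1+s)^p e^{sp/2}$, and the same observation that the polynomial factors are absorbed by the strictly negative exponential rates when $p>\tfrac{n+1}{n-1}$. One very minor point: for the $L^2$ norm of $Gu$ the weight $(1+t)^{-1}e^{-t/2}$ leaves an extra $e^{-t/2}$ in front of your integral, so the paper gets away with only $p>1$ there (the binding constraint $p>\tfrac{n+1}{n-1}$ comes, as you note, from the gradient term); this does not affect your argument since your stated condition $p>\tfrac{n}{n-1}$ is implied by the hypothesis anyway.
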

  \begin{proof}
We only sketch the proof, in particular, we explain modifications to the proof of Theorem \ref{main3}. It is enough to prove the global existence of small data solutions to \eqref{semilinearwavetreatment}.
Motivated by the estimates of Proposition \ref{propositionpurewave1} and Proposition \ref{propositionwave2parameter} for $s=0$ we define for $t>0$ the scale of spaces of energy solutions
\begin{eqnarray*} && X(t) := \bigl\{ u\in C\big([0,t], H^1(\R^n) ) \cap C^1([0,t],L^2(\R^n)\big)\\ && \quad : \, \|u\|_{X(t)} :=\sup_{\tau\in[0,t]}\bigl\{(1+\tau)^{-1}e^{-\frac{\tau}{2}}\|u(\tau,\cdot)\|_{L^2} + (1+\tau)^{-1}e^{-\frac{\tau}{2}}\|\nabla u(\tau,\cdot)\|_{L^2} +\|u_\tau(\tau,\cdot)\|_{L^2} \bigr\}
<\infty \bigr\}. \end{eqnarray*}
In the following we use the same notations as in the proofs before.
Using Proposition \ref{propositionpurewave1} and Proposition \ref{propositionwave2parameter} for $s=0$ we have
\begin{equation}\label{eq:Kdatapurewave}
\| K_0(t,0, x) \ast_{(x)} u_0(x) + K_1(t,0, x) \ast_{(x)} u_1(x) \|_{X(t)} \lesssim\,\|u_0\|_{H^1} + \|u_1\|_{L^2}.
\end{equation}
 Using Minkowski's integral inequality gives with Proposition \ref{propositionwave2parameter}
 \[(1+t)^{-1}\|Gu(t,\cdot)\|_{L^2} \lesssim (1+t)^{-1}\int_0^t e^{-\frac{n}{2}(p-1)s}\|K_1(t,s,x) \ast_{(x)}|u(s,x)|^{p}\|_{L^2}\,ds\lesssim \int_0^t e^{-\frac{n}{2}(p-1)s}
 \| |u(s,x)|^{p}\|_{L^2}\,ds.\]
We may estimate
\[\| |u(s,\cdot)|^{p}\|_{L^2}=\|u(s,\cdot)\|_{L^{2p}}^p\lesssim
\| u(s,\cdot)\|_{L^2}^{p(1-\theta)} \| \nabla u(s,\cdot)\|_{L^2}^{p\theta} \lesssim (1+s)^p e^{\frac{s}{2}p}\|u\|_{X(s)}^p.\]
Hence,
\begin{eqnarray*}  && (1+t)^{-1}\|Gu(t,\cdot)\|_{L^2} \lesssim \|u\|_{X(t)}^p \int_0^t e^{-\frac{n}{2}(p-1)s+\frac{s}{2}p} (1+s)^p ds \\
&& \qquad =\|u\|_{X(t)}^p \int_0^t e^{-\frac{n-1}{2}(p-1)s+\frac{s}{2}} (1+s)^p ds
 \lesssim  e^{\frac{t}{2}}\|u\|_{X(t)}^p,\end{eqnarray*}
thanks to $p>1$. In the same way we get after applying Propositions \ref{propositionpurewave1} and \ref{propositionwave2parameter} to
\[\partial_t Gu(t,x)=\int_0^t e^{-\frac{n}{2}(p-1)s} \partial_t (K_1(t,s,x) \ast_{(x)}|u(s,x)|^{p})\,ds\]
the estimate
\[\| \partial_t   Gu(t,\cdot)\|_{L^2} \lesssim \|u\|_{X(t)}^p \int_0^t e^{-\frac{n}{2}(p-1)s+\frac{s}{2}p} (1+s)^p ds \lesssim \|u\|_{X(t)}^p \]
for all $p> \frac{n}{n-1}$. Finally, it remains to estimate
\[(1+t)^{-1} e^{-\frac{t}{2}}\nabla Gu(t,x)=(1+t)^{-1} e^{-\frac{t}{2}}\int_0^t e^{-\frac{n}{2}(p-1)s} \nabla (K_1(t,s,x) \ast_{(x)}|u(s,x)|^{p})\,ds.\]
Again the application of Propositions \ref{propositionpurewave1} and \ref{propositionwave2parameter} yields
\begin{eqnarray*}
&& \| (1+t)^{-1} e^{-\frac{t}{2}}\nabla Gu(t,\cdot)\|_{L^2} \lesssim (1+t)^{-1} e^{-\frac{t}{2}}\int_0^t e^{-\frac{n}{2}(p-1)s} \|\nabla (K_1(t,s,x) \ast_{(x)}|u(s,x)|^{p})\|_{L^2}\,ds \\
&& \qquad \lesssim  e^{-\frac{t}{2}}\int_0^t e^{-\frac{n}{2}(p-1)s +  \frac{s}{2} p + \frac{t+s}{2}} (1+s)^p \,ds \|u\|_{X(t)}^p
\\
&& \qquad \lesssim \int_0^t e^{-\frac{n}{2}(p-1)s +  \frac{s}{2} p + \frac{s}{2}} (1+s)^{p}\,ds \|u\|_{X(t)}^p \lesssim \|u\|_{X(t)}^p
\end{eqnarray*}
thanks to the assumption $p > \frac{n+1}{n-1}$.
Summarizing all the derived estimates it follows for all $t>0$ and $p>\frac{n+1}{n-1}$ the estimate
\[ \|Pu\|_{X(t)}
     \lesssim\,\|u_0\|_{H^1} + \|u_1\|_{L^2}+ \|u\|_{X(t)}^{p}.
     \]
This leads to $Pu\in X(t)$. Following the steps to show the Lipschitz property from the proof to Theorem \ref{main2} implies
     \begin{equation}
\label{eq:contractionwave}
\|Pu-Pv\|_{X(t)}
     \lesssim\|u-v\|_{X(t)} \bigl(\|u\|_{X(t)}^{p-1}+\|v\|_{X(t)}^{p-1}\bigr)
\end{equation}
for any $u,v\in X(t)$. Using these estimates for $Pu$ one can prove the existence of a
uniquely determined global (in time) energy solution $u$ by contraction argument for small data. Moreover, we get a local (in time) result for large data. The decay estimates of Theorem \ref{main61} follow by using the relation $\phi(t,x)=e^{-\frac{n}{2}t}u(t,x)$.
\end{proof}
Moreover, we can prove the following results.
\begin{theorem}\label{main62} Consider for $n \geq 2$ the Cauchy problem \eqref{general} with data $f \in H^\gamma(\R^n), \gamma \in (\frac{1}{2},1)$, and $g \in L^2(\R^n)$. Let $p>p_{n,\gamma}:=1+\frac{2\gamma}{n-1}$ and $p \leq \frac{n}{n-2\gamma}$.
 Assume that the parameters $m$ and $n$ satisfy $m=\frac{n}{2}$.
  Then, there exists a constant $\varepsilon_0>0$ such that, for every small data satisfying
  \[\|f\|_{H^\gamma}+ \|g\|_{L^2}\leq \varepsilon\,\,\,\mbox{for}\,\,\, \varepsilon\leq \varepsilon_0,\]
  there exists a uniquely determined global (in time) Sobolev solution \[ \phi \in C\big([0,\infty),H^\gamma(\R^n)\big).\] The solution satisfies the decay estimate  \begin{equation*}
  \| \phi(t,\cdot)\|_{H^\gamma} \lesssim
  (1+t)e^{-\frac{n-1}{2}t}\big(\|f\|_{H^\gamma}+ \|g\|_{L^2} \big).
 \end{equation*}
  \end{theorem}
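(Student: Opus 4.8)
The plan is to reduce, as in the previous cases, to the transformed Cauchy problem \eqref{semilinearwavetreatment} via $\phi(t,x)=e^{-\frac n2 t}u(t,x)$, and to solve the associated integral equation by Banach's fixed point theorem in the space
\[
X(t):=\Bigl\{u\in C\bigl([0,t],H^\gamma(\R^n)\bigr):\ \|u\|_{X(t)}:=\sup_{\tau\in[0,t]}(1+\tau)^{-1}e^{-\frac\tau2}\|u(\tau,\cdot)\|_{H^\gamma}<\infty\Bigr\},
\]
with $Pu:=K_0(t,0,x)\ast_{(x)}u_0(x)+K_1(t,0,x)\ast_{(x)}u_1(x)+Gu$ and $Gu(t,x):=\int_0^t e^{-\frac n2(p-1)s}K_1(t,s,x)\ast_{(x)}|u(s,x)|^p\,ds$. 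This is the space of Theorem \ref{main25} corrected by the factor $1+\tau$ forced by Propositions \ref{propositionpurewave1} and \ref{propositionwave2parameter}; I would follow the proof of Theorem \ref{main25}, inserting the factors $1+t$ exactly as was done when passing from Theorem \ref{main3} to Theorem \ref{main61}.

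For the data part I would use Proposition \ref{propositionpurewave1} with exponents $\gamma$ and $0$, giving $(1+t)^{-1}e^{-\frac t2}\|K_0(t,0,x)\ast_{(x)}u_0\|_{H^\gamma}\lesssim\|u_0\|_{H^\gamma}$, and for $K_1(t,0,x)\ast_{(x)}u_1$ with $u_1\in L^2$ only I would interpolate (Proposition \ref{fractionalGagliardoNirenberg}) the $L^2$ and $\dot H^1$ bounds of Proposition \ref{propositionwave2parameter} to obtain $\|K_1(t,0,x)\ast_{(x)}u_1\|_{\dot H^\gamma}\lesssim(1+t)e^{\frac{\gamma t}2}\|u_1\|_{L^2}$, hence $(1+t)^{-1}e^{-\frac t2}\|K_1(t,0,x)\ast_{(x)}u_1\|_{H^\gamma}\lesssim e^{\frac{(\gamma-1)t}2}\|u_1\|_{L^2}\lesssim\|u_1\|_{L^2}$ since $\gamma<1$. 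For $Gu$ I would combine Minkowski's integral inequality, the $L^2$ bound of Proposition \ref{propositionwave2parameter} and the interpolated bound $\|K_1(t,s,x)\ast_{(x)}h\|_{\dot H^\gamma}\lesssim(1+t)e^{\frac{\gamma(s+t)}2}\|h\|_{L^2}$, with the fractional Gagliardo--Nirenberg inequality of Proposition \ref{fractionalGagliardoNirenberg},
\[
\||u(s,\cdot)|^p\|_{L^2}=\|u(s,\cdot)\|_{L^{2p}}^p\lesssim\|u(s,\cdot)\|_{L^2}^{p(1-\theta)}\|u(s,\cdot)\|_{\dot H^\gamma}^{p\theta}\lesssim(1+s)^pe^{\frac{ps}2}\|u\|_{X(s)}^p,\qquad\theta=\tfrac n\gamma\Bigl(\tfrac12-\tfrac1{2p}\Bigr),
\]
which is admissible exactly under $p\le\frac n{n-2\gamma}$. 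This reduces the estimate of $(1+t)^{-1}e^{-\frac t2}\|Gu(t,\cdot)\|_{H^\gamma}$ to bounding, up to the harmless polynomial weights, terms of the form $e^{\frac{(\gamma-1)t}2}\int_0^t e^{(-\frac n2(p-1)+\frac p2+\frac\gamma2)s}(1+s)^p\,ds$. Since $-\frac n2(p-1)+\frac p2=-\frac{(n-1)(p-1)}2+\frac12$, in the worst case this is controlled by $(1+t)^p e^{(\gamma-\frac{(n-1)(p-1)}2)t}$, which is bounded precisely when $(n-1)(p-1)>2\gamma$, i.e. $p>p_{n,\gamma}$; when the $s$-exponent is non-positive the integral is only sub-exponential and $e^{\frac{(\gamma-1)t}2}$ absorbs it because $\gamma<1$. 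Thus $\|Pu\|_{X(t)}\lesssim\|u_0\|_{H^\gamma}+\|u_1\|_{L^2}+\|u\|_{X(t)}^p$.

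For the contraction estimate I would start from the algebraic identity used in \eqref{Lipschitzdissipation}, apply Hölder's inequality with the exponents $r_1=\frac{2p}{p-1}$ and $r_2=2p$ and the fractional Gagliardo--Nirenberg inequality, just as in the proofs of Theorems \ref{main2}, \ref{main20}, \ref{main25} and \ref{main61}, to get
\[
\|Pu-Pv\|_{X(t)}\lesssim\|u-v\|_{X(t)}\bigl(\|u\|_{X(t)}^{p-1}+\|v\|_{X(t)}^{p-1}\bigr),\qquad u,v\in X(t).
\]
Then $P$ is a contraction on a small ball of $X(t)$ for sufficiently small data, uniformly in $t$, which together with a continuation argument yields the unique global solution $u\in C([0,\infty),H^\gamma(\R^n))$; the stated decay of $\phi$ follows from the definition of $\|\cdot\|_{X(t)}$ and the relation $\phi(t,x)=e^{-\frac n2 t}u(t,x)$. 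The main obstacle is the exponent bookkeeping in the $Gu$ estimate: one has to interpolate the $\dot H^\gamma$ bound for $K_1$ between its $L^2$ and $\dot H^1$ bounds (producing the weight $e^{\frac{\gamma(s+t)}2}$ and a factor $1+t$), feed in the Gagliardo--Nirenberg growth $(1+s)^pe^{\frac{ps}2}$, and then check that the gain $e^{-\frac t2}$ built into $X(t)$ exactly compensates the possibly exponentially growing time integral under the single threshold $p>p_{n,\gamma}$, with all the polynomial factors $1+t$ and $(1+s)^p$ absorbed by the strict exponential gap. Apart from this, the argument is the same as for Theorem \ref{main25}.
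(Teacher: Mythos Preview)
Your proposal is correct and follows essentially the same approach as the paper: the paper does not give a separate proof for this theorem but explicitly says the argument of Theorem \ref{main25} carries over verbatim once the extra factor $1+t$ from Propositions \ref{propositionpurewave1} and \ref{propositionwave2parameter} is absorbed into the norm of $X(t)$, which is exactly what you do. Your exponent bookkeeping for the $\dot H^\gamma$-estimate of $Gu$ (interpolated $K_1$-bound, Gagliardo--Nirenberg growth, and the resulting threshold $(n-1)(p-1)>2\gamma$) matches the computation in the proof of Theorem \ref{main25} line by line, with the polynomial weights handled by the strict exponential gap as in the proof of Theorem \ref{main61}.
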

\begin{theorem}\label{main63} Consider the Cauchy problem  \eqref{general} with data $(f,g) \in (H^\sigma(\R^n)\times H^{\sigma-1}(\R^n))$ for $n \geq 3$, where $\sigma \in \big(1,\frac{n}{2}\big)$.  Assume that the parameters $m$ and $n$ satisfy $m =\frac{n}{2}$. Finally, let $p$ satisfy the following condition:
\begin{eqnarray*}  \max\Big\{\frac{n+1}{n-1};\lceil \sigma
 \rceil\Big\} <p \leq 1+\frac{2}{n-2\sigma}.
 \end{eqnarray*}
  Then, there exists a constant $\varepsilon_0>0$ such that, for every small data satisfying
  \[\|f\|_{H^\sigma}+ \|g\|_{H^{\sigma-1}}\leq \varepsilon\,\,\,\mbox{for}\,\,\, \varepsilon\leq \varepsilon_0,\]
  there exists a uniquely determined global (in time) energy solution \[ \phi \in C\big([0,\infty),H^\sigma(\R^n)) \cap C^1([0,\infty),H^{\sigma-1}(\R^n)\big).\]
  The energy solution satisfies the decay estimate
  \begin{eqnarray*}
  \|\phi(t,\cdot)\|_{H^\sigma} + \|\phi_t(t,\cdot)\|_{H^{\sigma-1}}  \lesssim (1+t)e^{-\frac{n-1}{2}t}
  \big(\|f\|_{H^\sigma} + \|g\|_{H^{\sigma-1}} \big).
  \end{eqnarray*}
  \end{theorem}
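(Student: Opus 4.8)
The plan is to proceed exactly as in the proof of Theorem~\ref{main8} (the dominant mass case of Section~\ref{Sec4}), since by the discussion preceding Theorem~\ref{main61} the estimates of Propositions~\ref{propositionpurewave1} and \ref{propositionwave2parameter} differ from those of Propositions~\ref{propositionKleinGordon1} and \ref{propositionKleinGordon2parameter} only by the extra polynomial factor $1+t$, which will not affect the admissible range of $p$. First I would reduce to proving global existence of small data solutions to the transformed problem \eqref{semilinearwavetreatment}, and introduce, motivated by the linear estimates, the weighted space
\begin{eqnarray*}
&& X(t) := \bigl\{ u\in C\big([0,t], H^\sigma(\R^n) ) \cap C^1([0,t],H^{\sigma-1}(\R^n)\big) \, : \, \|u\|_{X(t)} \\
&& \quad :=\sup_{\tau\in[0,t]}\bigl\{(1+\tau)^{-1}e^{-\frac{\tau}{2}}\bigl(\|u(\tau,\cdot)\|_{L^2} + \||D|^\sigma u(\tau,\cdot)\|_{L^2}\bigr) + \|u_\tau(\tau,\cdot)\|_{L^2} + \||D|^{\sigma-1}u_\tau(\tau,\cdot)\|_{L^2}\bigr\}<\infty \bigr\},
\end{eqnarray*}
together with the operator $Pu := K_0(t,0,x)\ast_{(x)}u_0 + K_1(t,0,x)\ast_{(x)}u_1 + Gu$, where $Gu(t,x) := \int_0^t e^{-\frac n2(p-1)s} K_1(t,s,x)\ast_{(x)}|u(s,x)|^p\,ds$. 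The estimate of the data terms in $X(t)$ is immediate from Propositions~\ref{propositionpurewave1} and \ref{propositionwave2parameter} applied at $s=0$.

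Next I would estimate $\|Gu\|_{X(t)}$. The $L^2$-norm and the $\partial_t$-in-$L^2$ part are treated exactly as in the proof of Theorem~\ref{main61}, using Minkowski's integral inequality, the $L^2$-boundedness of $K_1$, and the fractional Gagliardo--Nirenberg inequality (Proposition~\ref{fractionalGagliardoNirenberg}); the polynomial factors $(1+s)^p$ appearing under the time integral are harmless since the exponential exponents are strictly negative once $p>\frac{n+1}{n-1}$. For $\||D|^\sigma Gu(t,\cdot)\|_{L^2}$ and $\||D|^{\sigma-1}\partial_t Gu(t,\cdot)\|_{L^2}$ I would copy the universal treatment of the non-linear term from the proof of Theorem~\ref{main5}: apply Lemma~\ref{Sickelauxiliary lemma} to pass from $|D|^\sigma$ to $\nabla|D|^{\sigma-1}$, then the $\nabla K_1$ estimate of Proposition~\ref{propositionwave2parameter} (which costs $(1+t)e^{\frac{s+t}{2}}$), and finally estimate $\||D|^{\sigma-1}|u(s,\cdot)|^p\|_{L^2}$ by the fractional chain rule (Proposition~\ref{Propfractionalchainrulegeneral}, requiring $p>\lceil\sigma-1\rceil$, so $p>\lceil\sigma\rceil$ suffices) followed by Proposition~\ref{fractionalGagliardoNirenberg}. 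With the same choice of H\"older exponents $r_2=\frac{2n}{n-2}$, $r_1=n(p-1)$ as in Theorem~\ref{main5}, the Gagliardo--Nirenberg admissibility conditions reduce precisely to $2\le r_1\le\frac{2n}{n-2\sigma}$, i.e. $p\le 1+\frac{2}{n-2\sigma}$. Collecting all factors, the resulting time integral is of the form $\int_0^t e^{-\frac{n-1}{2}(p-1)s+\frac s2}(1+s)^{p}\,ds$, which is bounded uniformly in $t$ exactly for $p>\frac{n+1}{n-1}$; hence $\|Gu\|_{X(t)}\lesssim\|u\|_{X(t)}^p$ and $Pu\in X(t)$.

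Finally, for the Lipschitz estimate I would use the representation \eqref{Lipschitzdissipation} of $Pu-Pv$ and run the same scheme once more: the $L^2$ and $\partial_t$-in-$L^2$ parts follow from H\"older's inequality and Proposition~\ref{fractionalGagliardoNirenberg} as in Theorem~\ref{main61}, while $\||D|^\sigma(Pu-Pv)\|_{L^2}$ and $\||D|^{\sigma-1}\partial_t(Pu-Pv)\|_{L^2}$ are handled by the fractional Leibniz rule (Proposition~\ref{fractionalLeibniz}) and the fractional chain rule (Proposition~\ref{Propfractionalchainrulegeneral}) with the same H\"older exponents as in the proof of Theorem~\ref{main5}; no new restriction on $p$ appears, and all $(1+s)$-powers are again absorbed by the decaying exponentials. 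This yields $\|Pu-Pv\|_{X(t)}\lesssim\|u-v\|_{X(t)}\bigl(\|u\|_{X(t)}^{p-1}+\|v\|_{X(t)}^{p-1}\bigr)$, whence $P$ is a contraction on a small ball of $X(t)$; by the usual continuation argument one obtains a uniquely determined global small-data solution $u$, and the assertions of the theorem, including the decay estimate with the extra factor $1+t$, follow from $\phi(t,x)=e^{-\frac n2 t}u(t,x)$. The only genuine difficulty is bookkeeping --- keeping the $(1+t)$-weights consistent in the norm and checking uniform boundedness of all the time integrals for the stated range of $p$ --- but, exactly as explained at the beginning of Section~\ref{Sec5.2}, the polynomial weights cause no loss relative to Theorem~\ref{main8}.
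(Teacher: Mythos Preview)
Your proposal is correct and follows precisely the approach that the paper has in mind: the paper does not give a separate proof of Theorem~\ref{main63} but explains at the beginning of Section~\ref{Sec5.2} that one should repeat the arguments of Section~\ref{Sec4} (here Theorem~\ref{main8}) with the only modification being the extra $(1+t)$ factor from Propositions~\ref{propositionpurewave1} and \ref{propositionwave2parameter}, exactly as you do. One small slip: the time integral governing $(1+t)^{-1}e^{-t/2}\||D|^\sigma Gu(t,\cdot)\|_{L^2}$ actually has exponent $-\frac{n-1}{2}(p-1)s+s$ (not $+\frac{s}{2}$), coming from the $e^{\frac{s+t}{2}}$ in the $\nabla K_1$ estimate together with the $e^{\frac{p}{2}s}$ from Gagliardo--Nirenberg; this is precisely what forces $p>\frac{n+1}{n-1}$, as you correctly conclude.
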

  The statement of the previous theorem implies for $n \leq 2\sigma$ the following result.
\begin{corollary} \label{Corlargeregularitywave}
Consider the Cauchy problem  \eqref{general} with data $(f,g) \in (H^\sigma(\R^n)\times H^{\sigma-1}(\R^n))$ for $n \geq 3$ and
$n \leq 2\sigma$. Assume that the parameters $m$ and $n$ satisfy $ m=\frac{n}{2}$.
Finally, let $p$ satisfy the following condition:
\begin{eqnarray*}  \max\Big\{\frac{n+1}{n-1};\lceil \sigma
 \rceil\Big\} <p<\infty.
 \end{eqnarray*}
  Then, there exists a constant $\varepsilon_0>0$ such that, for every small data satisfying
  \[\|f\|_{H^\sigma}+ \|g\|_{H^{\sigma-1}}\leq \varepsilon\,\,\,\mbox{for}\,\,\, \varepsilon\leq \varepsilon_0,\]
  there exists a uniquely determined global (in time) energy solution \[ \phi \in C\big([0,\infty),H^\sigma(\R^n)) \cap C^1([0,\infty),H^{\sigma-1}(\R^n)\big).\]
  The energy solution satisfies the decay estimate
  \begin{eqnarray*}
  \|\phi(t,\cdot)\|_{H^\sigma} + \|\phi_t(t,\cdot)\|_{H^{\sigma-1}}  \lesssim (1+t)e^{-\frac{n-1}{2}t}
  \big(\|f\|_{H^\sigma} + \|g\|_{H^{\sigma-1}} \big).
  \end{eqnarray*}
\end{corollary}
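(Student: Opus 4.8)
The plan is to reduce, exactly as in all the previous results, to proving global existence of small data solutions to the transformed problem \eqref{semilinearwavetreatment}, and then to recycle almost verbatim the fixed-point scheme behind Theorem \ref{main63}, the only change being that the linear estimates of the dominant-mass case (Propositions \ref{propositionKleinGordon1} and \ref{propositionKleinGordon2parameter}) are replaced by their balanced-case analogues (Propositions \ref{propositionpurewave1} and \ref{propositionwave2parameter}), which carry an extra factor $1+t$. Concretely I would work in the scale
\[
X(t) := \bigl\{ u\in C\big([0,t], H^\sigma(\R^n)\big) \cap C^1\big([0,t],H^{\sigma-1}(\R^n)\big)\,:\,\|u\|_{X(t)}<\infty\bigr\},
\]
with the weight already used in Theorem \ref{main63},
\[
\|u\|_{X(t)} := \sup_{\tau\in[0,t]}\Bigl\{(1+\tau)^{-1}e^{-\frac{\tau}{2}}\bigl(\|u(\tau,\cdot)\|_{L^2}+\||D|^\sigma u(\tau,\cdot)\|_{L^2}\bigr)+\|u_\tau(\tau,\cdot)\|_{L^2}+\||D|^{\sigma-1}u_\tau(\tau,\cdot)\|_{L^2}\Bigr\},
\]
and define $Pu$ and $Gu$ by Duhamel's formula as before. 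Propositions \ref{propositionpurewave1} and \ref{propositionwave2parameter} for $s=0$ give the data estimate $\|K_0(t,0,\cdot)\ast_{(x)}u_0+K_1(t,0,\cdot)\ast_{(x)}u_1\|_{X(t)}\lesssim\|u_0\|_{H^\sigma}+\|u_1\|_{H^{\sigma-1}}$.

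For the nonlinear operator I would follow the universal treatment of nonlinear terms in scales of Sobolev spaces from the proof of Theorem \ref{main5}: the $L^2$ and $\dot H^{\sigma-1}$-$\partial_t$ components of $Gu$ are bounded as in Theorem \ref{main61}, while for $\||D|^\sigma Gu(t,\cdot)\|_{L^2}$ one applies Lemma \ref{Sickelauxiliary lemma}, the estimate $\|\nabla K_1(t,s,\cdot)\ast_{(x)}h\|_{\dot H^{\sigma-1}}\lesssim(1+t)e^{\frac{s+t}{2}}\|h\|_{\dot H^{\sigma-1}}$ of Proposition \ref{propositionwave2parameter}, and then Proposition \ref{Propfractionalchainrulegeneral} together with the fractional Gagliardo--Nirenberg inequality of Proposition \ref{fractionalGagliardoNirenberg} to bound $\||D|^{\sigma-1}|u(s,\cdot)|^p\|_{L^2}$ by $(1+s)^p e^{\frac{ps}{2}}\|u\|_{X(s)}^p$. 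Because $n\le 2\sigma$, every Gagliardo--Nirenberg exponent of the form $\frac{n}{2\sigma}\big(\frac12-\frac1{2p}\big)$ lies in $[0,1]$ for all $p>1$, so, unlike in Theorem \ref{main63}, no upper bound on $p$ survives; the only restrictions remaining are $p>\lceil\sigma\rceil$, forced by the fractional chain rule, and $p>\frac{n+1}{n-1}$. Collecting exponents, one is reduced to time integrals of the type $\int_0^t (1+s)^p e^{-\frac{n-1}{2}(p-1)s+\frac{s}{2}}\,ds$, which are bounded uniformly in $t$ precisely when $p>\frac{n+1}{n-1}$; the polynomial weight $(1+s)^p$ is harmless since it is absorbed by a strictly negative exponential rate. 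This yields $\|Pu\|_{X(t)}\lesssim\|u_0\|_{H^\sigma}+\|u_1\|_{H^{\sigma-1}}+\|u\|_{X(t)}^p$.

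The Lipschitz property $\|Pu-Pv\|_{X(t)}\lesssim\|u-v\|_{X(t)}\bigl(\|u\|_{X(t)}^{p-1}+\|v\|_{X(t)}^{p-1}\bigr)$ is obtained the same way, starting from the integral representation \eqref{Lipschitzdissipation} and using the fractional Leibniz rule of Proposition \ref{fractionalLeibniz} together with Proposition \ref{Propfractionalchainrulegeneral} to distribute $|D|^{\sigma-1}$ across the product, exactly as in the proof of Theorem \ref{main5}; no new conditions on $p$ appear. Banach's fixed point theorem then furnishes a unique global solution $u\in X(t)$ for small data (and a local one for large data), and the decay estimate for $\phi$ follows from $\phi(t,x)=e^{-\frac{n}{2}t}u(t,x)$, which turns the $(1+\tau)e^{\tau/2}$ growth allowed in $X(t)$ into the stated $(1+t)e^{-\frac{n-1}{2}t}$ decay. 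The genuine point to watch—the main obstacle—is bookkeeping: verifying that all intermediate Gagliardo--Nirenberg and chain-rule exponents remain admissible once $n\le 2\sigma$, and confirming that every polynomial factor $(1+t)^k$ produced by Propositions \ref{propositionpurewave1}--\ref{propositionwave2parameter} is always paired with an exponential of strictly negative exponent, so that all time integrals converge; beyond this, the argument is a routine transcription of the $m>\frac{n}{2}$ case.
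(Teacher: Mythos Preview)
Your proposal is correct and follows exactly the approach the paper intends: the paper itself does not give a detailed proof of this corollary, but simply notes that one follows the steps of the proof of Theorem \ref{main63} (which in turn mirrors Theorem \ref{main8}/Theorem \ref{main5}) with the observation that for $n\le 2\sigma$ the Gagliardo--Nirenberg constraints producing the upper bound $p\le 1+\frac{2}{n-2\sigma}$ become vacuous. Your sketch in fact supplies more detail than the paper does; the only minor imprecision is that the decisive time integral coming from the $\dot H^\sigma$ component carries the exponent $-\frac{n-1}{2}(p-1)s+s$ (rather than $+\frac{s}{2}$), which is precisely where the condition $p>\frac{n+1}{n-1}$ is used.
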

Similarly to Theorem \ref{improve1} we may improve the lower bound for $p$ in Corollary \ref{Corlargeregularitywave}.
   \begin{theorem}\label{improve71} Consider the Cauchy problem  \eqref{general} with $m=\frac{n}{2}$ and data $(f,g) \in (H^\sigma(\R^n)\times H^{\sigma-1}(\R^n))$ for
 $\sigma > \frac{n}{2} $ and $n \geq 2$.  Assume that  $p$ satisfies the following condition:
\begin{eqnarray*}
\max\Big\{\frac{n+1}{n-1}; \sigma; 2
 \Big\} <p <\infty.
 \end{eqnarray*}
  Then, there exists a constant $\varepsilon_0>0$ such that, for every given small data satisfying
  \[\|f\|_{H^\sigma}+ \|g\|_{ H^{\sigma-1}} \leq \varepsilon\,\,\,\mbox{for}\,\,\, \varepsilon\leq \varepsilon_0,\]
 there exists a uniquely determined global (in time) energy solution \[ \phi \in C\big([0,\infty),H^\sigma(\R^n)\big)\cap C^1\big([0,\infty), H^{\sigma-1}(\R^n)\big).\]  The energy solution satisfies the decay estimate
  \begin{eqnarray*}
  \|\phi(t,\cdot)\|_{H^\sigma} + \|\phi_t(t,\cdot)\|_{H^{\sigma-1}}  \lesssim (1+t) e^{-\frac{n-1}{2}t}
  \big(\|f\|_{H^\sigma} + \|g\|_{H^{\sigma-1}} \big).
  \end{eqnarray*}
 \end{theorem}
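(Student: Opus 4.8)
The plan is to reduce, as in every earlier result of Sections~\ref{Sec3}--\ref{Sec5}, to the auxiliary problem \eqref{semilinearwavetreatment} by the substitution $\phi(t,x)=e^{-\frac{n}{2}t}u(t,x)$, so that it is enough to construct a global (in time) small data solution $u$ of \eqref{semilinearwavetreatment}; the claimed decay for $\phi$ will then follow by unwinding this substitution together with $u_0=f$, $u_1=g+\frac{n}{2}f$. The scheme is exactly the one used for Theorem~\ref{improve7} (the $m>\frac n2$ analogue, which is itself the $m=\frac n2$-type counterpart of Theorem~\ref{improve1}), adapted to the linear estimates of Propositions~\ref{propositionpurewave1} and \ref{propositionwave2parameter}, which differ from those of Propositions~\ref{propositionKleinGordon1} and \ref{propositionKleinGordon2parameter} only through an extra polynomial factor $1+t$. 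Following the bookkeeping of Theorem~\ref{main61} I would therefore work in
\begin{eqnarray*}
&& X(t):=\bigl\{u\in C\big([0,t],H^\sigma(\R^n)\big)\cap C^1\big([0,t],H^{\sigma-1}(\R^n)\big)\,:\,\|u\|_{X(t)}:=\sup_{\tau\in[0,t]}\bigl\{(1+\tau)^{-1}e^{-\frac{\tau}{2}}\|u(\tau,\cdot)\|_{L^2}\\
&& \qquad +(1+\tau)^{-1}e^{-\frac{\tau}{2}}\||D|^\sigma u(\tau,\cdot)\|_{L^2}+\|u_\tau(\tau,\cdot)\|_{L^2}+\||D|^{\sigma-1}u_\tau(\tau,\cdot)\|_{L^2}\bigr\}<\infty\bigr\},
\end{eqnarray*}
and set $Pu:=K_0(t,0,x)\ast_{(x)}u_0+K_1(t,0,x)\ast_{(x)}u_1+Gu$ with $Gu(t,x):=\int_0^t e^{-\frac n2(p-1)s}K_1(t,s,x)\ast_{(x)}|u(s,x)|^p\,ds$. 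Propositions~\ref{propositionpurewave1} and \ref{propositionwave2parameter} with $s=0$ give at once $\|K_0(t,0,x)\ast_{(x)}u_0+K_1(t,0,x)\ast_{(x)}u_1\|_{X(t)}\lesssim\|u_0\|_{H^\sigma}+\|u_1\|_{H^{\sigma-1}}$, so the whole matter reduces to estimating $Gu$ and $Gu-Gv$ in $X(t)$.

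For the nonlinear term I would use that $\sigma>\frac n2$, so $H^\sigma(\R^n)$ is a Banach algebra embedded into $L^\infty(\R^n)$, together with $\|u(s,\cdot)\|_{H^\sigma}\lesssim(1+s)e^{\frac s2}\|u\|_{X(s)}$, hence also $\|u(s,\cdot)\|_{L^{2p}}+\|u(s,\cdot)\|_{L^\infty}\lesssim(1+s)e^{\frac s2}\|u\|_{X(s)}$. Proposition~\ref{PropSickelfractional} (for $p>\sigma$) and Corollary~\ref{Corfractionalhomogeneous} (for $p>\sigma-1$) then yield
\[ \||u(s,\cdot)|^p\|_{L^2}+\||D|^{\sigma-1}|u(s,\cdot)|^p\|_{L^2}+\||D|^\sigma|u(s,\cdot)|^p\|_{L^2}\lesssim(1+s)^p e^{\frac{ps}{2}}\|u\|_{X(s)}^p. \]
By Minkowski's integral inequality and Propositions~\ref{propositionpurewave1}, \ref{propositionwave2parameter}, the $L^2$-norm and the two $u_t$-norms of $Gu$ are then controlled by $\|u\|_{X(t)}^p$ times a time integral of exponential rate $\tfrac12\big(n-p(n-1)\big)<0$ (true since $p>\frac n{n-1}$), whereas for $\||D|^\sigma Gu(t,\cdot)\|_{L^2}$ one writes $|D|^\sigma=\nabla|D|^{\sigma-1}$ and invokes $\|\nabla K_1(t,s,x)\ast_{(x)}h\|_{\dot H^{\sigma-1}}\lesssim(1+t)e^{\frac{s+t}{2}}\||D|^{\sigma-1}h\|_{L^2}$, which produces the integral $\int_0^t e^{\frac12((n+1)-p(n-1))s}(1+s)^p\,ds$. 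In every case the outer factors $1+t$ and $e^{\pm t/2}$ match precisely the corresponding weight in $\|\cdot\|_{X(t)}$, and the integral stays bounded exactly because $p>\frac{n+1}{n-1}$. Hence $\|Gu\|_{X(t)}\lesssim\|u\|_{X(t)}^p$ and $\|Pu\|_{X(t)}\lesssim\|u_0\|_{H^\sigma}+\|u_1\|_{H^{\sigma-1}}+\|u\|_{X(t)}^p$.

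For the contraction I would start from the standard representation $Pu-Pv=p\int_0^t e^{-\frac n2(p-1)s}K_1(t,s,x)\ast_{(x)}\bigl(\int_0^1|v+\tau(u-v)|^{p-2}(v+\tau(u-v))\,d\tau\bigr)(s,x)(u-v)(s,x)\,ds$, estimate $\|Pu-Pv\|_{L^2}$ and $\|\partial_t(Pu-Pv)\|_{L^2}$ exactly as for Theorem~\ref{main61}, and for $\||D|^\sigma(Pu-Pv)\|_{L^2}$ apply the fractional Leibniz rule of Proposition~\ref{fractionalLeibniz} together with Corollary~\ref{Corfractionalhomogeneous} applied to $w\mapsto|w|^{p-2}w$ (which requires $p>\max\{\sigma,2\}$) and the embedding $H^\sigma\hookrightarrow L^\infty$; the term $\||D|^{\sigma-1}\partial_t(Pu-Pv)\|_{L^2}$ is handled identically with no new restriction on $p$. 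This gives $\|Pu-Pv\|_{X(t)}\lesssim\|u-v\|_{X(t)}\bigl(\|u\|_{X(t)}^{p-1}+\|v\|_{X(t)}^{p-1}\bigr)$, so for small $\|u_0\|_{H^\sigma}+\|u_1\|_{H^{\sigma-1}}$ the map $P$ is a contraction on a small closed ball of $X(t)$, uniformly in $t$; Banach's fixed point theorem plus a continuation argument yield the unique global energy solution (and a local one for arbitrary data), and the decay estimate for $\phi$ follows from $\phi(t,x)=e^{-\frac n2 t}u(t,x)$ and the definition of $X(t)$. The only genuinely delicate point is quantitative rather than conceptual: one must check that the polynomial factors $(1+s)^p$ arising from the logarithmic loss of Propositions~\ref{propositionpurewave1}--\ref{propositionwave2parameter} in the intermediate zone $Z_2$ are absorbed by the strictly negative rate $\tfrac12\big((n+1)-p(n-1)\big)<0$ guaranteed by $p>\frac{n+1}{n-1}$; the remaining hypotheses $p>\sigma$ and $p>2$ serve only to make Proposition~\ref{PropSickelfractional} and Corollary~\ref{Corfractionalhomogeneous} applicable, and everything else is line-by-line identical to the proof of Theorem~\ref{improve7}.
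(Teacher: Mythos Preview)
Your proposal is correct and follows essentially the same approach the paper indicates: since the paper proves Theorem~\ref{improve71} only by the remark ``Similarly to Theorem~\ref{improve1}'', your sketch is precisely the expected implementation, namely the proof of Theorem~\ref{improve1}/\ref{improve7} rerun with the linear estimates of Propositions~\ref{propositionpurewave1}--\ref{propositionwave2parameter} and the $(1+t)$--weighted evolution space of Theorem~\ref{main61}. Your handling of the $\dot H^\sigma$--norm via $|D|^\sigma=\nabla|D|^{\sigma-1}$ together with the $\nabla K_1$--estimate mirrors exactly what the paper does in the Lipschitz part of Theorem~\ref{improve1} (the factor $e^{(s+t)/2}$ there corresponds to the $e^s$ in \eqref{optidamp3}), and this is what produces the critical rate $\tfrac12\big((n+1)-(n-1)p\big)$ and hence the threshold $p>\tfrac{n+1}{n-1}$; the conditions $p>\sigma$ and $p>2$ enter, as you say, only through Proposition~\ref{PropSickelfractional} and Corollary~\ref{Corfractionalhomogeneous} applied to $|w|^{p-2}w$ at order $\sigma-1$.
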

For space dimensions $n=1,2$ we have a similar result as in Theorem \ref{onedimension}.
 \begin{theorem}\label{onedimension21} Consider the Cauchy problem  \eqref{general} for $m=\frac{n}{2}$ and data $(f,g) \in (H^\sigma(\R^n)\times H^{\sigma-1}(\R^n))$, with $\sigma=1$ for $n=1$ and  $1<\sigma <  2 $ for $n=2$. Let $p>\sigma +1$.   Then, there exists a constant $\varepsilon_0>0$ such that, for every given small data satisfying
  \[\|f\|_{H^\sigma}+ \|g\|_{ H^{\sigma-1}} \leq \varepsilon\,\,\,\mbox{for}\,\,\, \varepsilon\leq \varepsilon_0,\]
 there exists a uniquely determined global (in time) energy solution \[ \phi \in C\big([0,\infty),H^\sigma(\R^n)\big)\cap C^1\big([0,\infty), H^{\sigma-1}(\R^n)\big).\]  The energy solution satisfies the decay estimate
  \begin{eqnarray*}
  \|\phi(t,\cdot)\|_{H^\sigma} + \|\phi_t(t,\cdot)\|_{H^{\sigma-1}}  \lesssim (1+t) e^{-\frac{n-1}{2}t}
  \big(\|f\|_{H^\sigma} + \|g\|_{H^{\sigma-1}} \big).
  \end{eqnarray*}
 \end{theorem}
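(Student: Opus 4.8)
The plan is to follow the scheme used for the balanced model in Theorem~\ref{main61}, combined with the algebra and embedding techniques of Theorems~\ref{main1} and \ref{onedimension} which are precisely what lowers the admissible power down to $p>\sigma+1$. As always it is enough to prove global existence of small data solutions to \eqref{semilinearwavetreatment}, the statement for \eqref{general} then following from the relation $\phi(t,x)=e^{-\frac{n}{2}t}u(t,x)$. First I would set up Banach's fixed point argument for
\[ Pu:=K_0(t,0,x)\ast_{(x)}u_0(x)+K_1(t,0,x)\ast_{(x)}u_1(x)+\int_0^t e^{-\frac{n}{2}(p-1)s}K_1(t,s,x)\ast_{(x)}|u(s,x)|^{p}\,ds \]
in the space
\[ X(t):=\Bigl\{u\in C\big([0,t],H^\sigma(\R^n)\big)\cap C^1\big([0,t],H^{\sigma-1}(\R^n)\big):\ \|u\|_{X(t)}<\infty\Bigr\}, \]
\[ \|u\|_{X(t)}:=\sup_{\tau\in[0,t]}\Bigl\{(1+\tau)^{-1}\|u(\tau,\cdot)\|_{L^2}+(1+\tau)^{-1}e^{-\tau/2}\bigl\||D|^\sigma u(\tau,\cdot)\bigr\|_{L^2}+\|u_\tau(\tau,\cdot)\|_{H^{\sigma-1}}\Bigr\}, \]
where the weights reflect Propositions~\ref{propositionpurewave1} and \ref{propositionwave2parameter}: for the linear model $K_1$ produces only a polynomial factor $(1+t)$ in $\dot H^\gamma$, the top-order norm $\||D|^\sigma u\|_{L^2}$ carries in addition the factor $e^{t/2}$, and $u_t$ stays bounded in $H^{\sigma-1}$. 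A preliminary point I would record is that the zone analysis behind Proposition~\ref{propositionpurewave1}, together with the usual device of absorbing exponentials by regularity (in the zone $Z_3$ one uses $N|\xi|^{-1}\le e^{-t}$, cf.~Section~\ref{Sec3.1}), yields the sharpened estimate $\|K_0(t,0,x)\ast_{(x)}u_0\|_{\dot H^\gamma}\lesssim(1+t)\|u_0\|_{H^\sigma}$ for $\gamma\le\sigma-\frac{1}{2}$, which is available here since $\sigma>\frac{1}{2}$; combined with Propositions~\ref{propositionpurewave1} and \ref{propositionwave2parameter} for $s=0$ this gives
\[ \bigl\|K_0(t,0,x)\ast_{(x)}u_0+K_1(t,0,x)\ast_{(x)}u_1\bigr\|_{X(t)}\lesssim\|u_0\|_{H^\sigma}+\|u_1\|_{H^{\sigma-1}}. \]

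For the nonlinear term $Gu$ I would use that $H^\sigma(\R^n)$ is a Banach algebra and $H^\sigma(\R^n)\hookrightarrow L^\infty(\R^n)$ for $\sigma>\frac{n}{2}$ (for $n=1$ this is the embedding $H^1(\R)\hookrightarrow L^\infty(\R)$), so that by Proposition~\ref{PropSickelfractional}, Corollary~\ref{Corfractionalhomogeneous} and, for non-integer $\sigma$, the fractional Gagliardo-Nirenberg, Leibniz and chain rules (Propositions~\ref{fractionalGagliardoNirenberg}, \ref{fractionalLeibniz}, \ref{Propfractionalchainrulegeneral}) the norms $\||u(s,\cdot)|^{p}\|_{L^2}$, $\||D|^{\sigma-1}|u(s,\cdot)|^{p}\|_{L^2}$ and $\||D|^{\sigma}|u(s,\cdot)|^{p}\|_{L^2}$ are controlled, on the one hand, by $\|u(s,\cdot)\|_{H^\sigma}^{p}$ and, on the other hand, using the Gagliardo-Nirenberg inequality together with the polynomial $L^2$-bound above, by $(1+s)^{p}e^{cs}\|u\|_{X(t)}^{p}$ with an exponent $c<\frac{n}{2}(p-1)$ (here it is decisive that the low-order norms are kept merely polynomially bounded). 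I would always move the derivatives onto the non-linearity so that they are hit only by $K_1$, which contributes the factor $(1+t)$ and not the factor $e^{(s+t)/2}$ of $\nabla K_1$; the relevant time integrals are then of the type $(1+t)\int_0^t e^{(c-\frac{n}{2}(p-1))s}(1+s)^{p}\,ds$, bounded since $c<\frac{n}{2}(p-1)$, for the $L^2$- and $H^{\sigma-1}$-components, and of the type $(1+t)\int_0^t e^{(\frac{1}{2}+c-\frac{n}{2}(p-1))s}(1+s)^{p}\,ds$, $\lesssim e^{t/2}$ because the exponent is strictly below $\frac{1}{2}$, for the $\dot H^\sigma$-component. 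This gives $\|Gu\|_{X(t)}\lesssim\|u\|_{X(t)}^{p}$ and hence $\|Pu\|_{X(t)}\lesssim\|u_0\|_{H^\sigma}+\|u_1\|_{H^{\sigma-1}}+\|u\|_{X(t)}^{p}$. For the Lipschitz estimate one writes $Pu-Pv$ as in \eqref{Lipschitzdissipation}; the hypothesis $p>\sigma+1$, i.e.\ $p-1>\sigma$, is used exactly when estimating the factor $|v+\tau(u-v)|^{p-2}(v+\tau(u-v))$ in $H^\sigma$ by Proposition~\ref{PropSickelfractional} and Corollary~\ref{Corfractionalhomogeneous}, after which the same integrals yield $\|Pu-Pv\|_{X(t)}\lesssim\|u-v\|_{X(t)}\bigl(\|u\|_{X(t)}^{p-1}+\|v\|_{X(t)}^{p-1}\bigr)$. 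Consequently $P$ maps $X(t)$ into itself, a uniquely determined global small-data solution follows by contraction and continuation, a local large-data solution follows likewise, and the decay estimate is read off from $\phi(t,x)=e^{-\frac{n}{2}t}u(t,x)$.

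The step I expect to be the main obstacle is the sharpness of the bookkeeping in low space dimension. For $n\ge2$ with $p>\frac{n+1}{n-1}$, the range treated in Theorems~\ref{main61} and \ref{main63}, the crude estimates already produce a strictly negative exponent in the Duhamel integral; but here $p$ only has to exceed $\sigma+1$, which for $n=1$ is the only threshold available and for $n=2$ is weaker than $\frac{n+1}{n-1}=3$, so the naive computation accumulates a spurious factor $(1+t)^{p}$ that would destroy the fixed point budget. Eliminating it is precisely what forces the two refinements above, namely the sharpened polynomial $L^2$-bound for $K_0(t,0,x)\ast_{(x)}u_0$ (so that the Gagliardo-Nirenberg exponent in the estimate of $\||u(s,\cdot)|^{p}\|_{L^2}$ stays below $\frac{n}{2}(p-1)$) and the systematic placement of derivatives on $|u|^{p}$ instead of on the kernel. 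Once this is arranged the remaining estimates are routine variants of those in the proofs of Theorems~\ref{main1} and \ref{onedimension}.
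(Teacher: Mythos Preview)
Your approach is correct in substance and, for $n=1$, more careful than the paper, which gives no proof at all and simply asserts that the balanced case is ``the Klein--Gordon case with an extra harmless factor $1+t$''. You are right that this gloss is not innocent in dimension one: with the natural weight $(1+\tau)^{-1}e^{-\tau/2}$ on the full $H^\sigma$ norm, the Duhamel exponent for $n=1$ is exactly $\tfrac12$, so the accumulated $(1+s)^p$ from $\|u(s,\cdot)\|_{H^\sigma}^p$ is not absorbed and the contraction does not close. Your fix --- decoupling the weights by keeping $\|u(\tau,\cdot)\|_{L^2}$ only polynomially bounded via the sharpened estimate $\|K_0(t,0,\cdot)\ast u_0\|_{L^2}\lesssim(1+t)\|u_0\|_{H^{1/2}}$ (which indeed follows from the zone analysis exactly as you indicate), and then feeding this into the Gagliardo--Nirenberg bound for $\|u\|_{L^\infty}$ --- is a valid way to break the borderline; the resulting exponent $\tfrac{3-p}{4}$ in the $\dot H^\sigma$ integral is strictly below $\tfrac12$ for every $p>1$, which is all that is needed.

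Two minor points. First, your diagnosis for $n=2$ is overcautious: once one puts \emph{all} $\sigma$ derivatives on $|u|^p$ and uses $\|K_1(t,s,\cdot)\ast h\|_{\dot H^\sigma}\lesssim(1+t)\|h\|_{\dot H^\sigma}$ (this is the route of Theorem~\ref{main1}, not the $\nabla K_1$ route of Theorems~\ref{main61}/\ref{main63}), the Duhamel exponent becomes $-\tfrac{n-1}{2}(p-1)+\tfrac12=1-\tfrac{p}{2}$, which is already strictly negative for $p>\sigma+1>2$ when $n=2$; no refinement is needed there. Second, your verbal description of the $\dot H^\sigma$ integral mixes the two options (you say you avoid the $e^{(s+t)/2}$ of $\nabla K_1$, yet write the extra $\tfrac12$ in the exponent); to make the computation match your words, use option~1 throughout and estimate $\||D|^\sigma|u|^p\|_{L^2}\lesssim\||D|^\sigma u\|_{L^2}\|u\|_{L^\infty}^{p-1}$ together with $\|u\|_{L^\infty}\lesssim\|u\|_{L^2}^{1-n/(2\sigma)}\||D|^\sigma u\|_{L^2}^{n/(2\sigma)}$. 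With these cosmetic fixes the argument is complete.
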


{\bf Acknowledgements} $\quad$
The discussions on this paper began during the time the first author spent his sabbatical year (July 2014 - July 2015) at the Institute of Applied Analysis at TU Bergakademie Freiberg. The stay of the first author was  supported by Funda\c{c}\~{a}o de Amparo \`{a} Pesquisa do Estado de S\~{a}o Paulo (FAPESP), grant 2013/20297-8. This paper was completed within the DFG project RE 961/21-1. The authors thank Karen Yagdjian (Edinburg) for fruitful discussions on the content of this paper.

\section{Appendix} \label{SecAppendix}
In the Appendix we list some results of Harmonic Analysis which are important tools for proving results on the global existence of small data solutions for semi-linear de Sitter models with power non-linearities. In particular, these are tools which allow to estimate power non-linearities in homogeneous Sobolev spaces (see \cite{Palmierithesis}). First of all we introduce the Bessel and Riesz
potential spaces.

\subsection{Bessel and Riesz potential spaces} \label{SecBesselRiesz}
Let $s\in \mathbb{R}$ and $1<p<\infty$. Then
\begin{align*}
H^s_p(\mathbb{R}^n)&=\{u\in\mathcal{S}'(\mathbb{R}^n): \|\langle D \rangle^s u\|_{L^p(\mathbb{R}^n)}=\| u\|_{H^s_p(\mathbb{R}^n)}<\infty \},\\
\dot{H}^s_p(\mathbb{R}^n)&=\{u\in\mathcal{Z}'(\mathbb{R}^n): \||D|^s u\|_{L^p(\mathbb{R}^n)}=\| u\|_{\dot{H}^s_p(\mathbb{R}^n)}<\infty \}
\end{align*}
are called Bessel and Riesz potential spaces, respectively. If $p=2$, then we use the notations $H^s(\mathbb{R}^n)$ and
 $\dot{H}^s(\mathbb{R}^n)$, respectively. In the definition of the Riesz potential spaces we use the space of distributions $\mathcal{Z}'(\mathbb{R}^n)$.
This space of distributions can be identified with the factor space $\mathcal{S}'/\mathcal{P}$, where $\mathcal{P}$ denotes the set of all polynomials.

\subsection{Fractional Gagliardo-Nirenberg inequality} \label{SecGagliardoNirenbergfractional}

The first inequality that we present is a generalization of the classical Gagliardo-Nirenberg inequality to the case of Sobolev spaces of fractional order. Therefore, we will refer to the upcoming result as {\it fractional Gagliardo-Nirenberg inequality}.

\begin{proposition}\label{fractionalGagliardoNirenberg}
Let $1<p,p_0,p_1<\infty$, $\sigma >0$ and $s\in [0,\sigma)$. Then it holds the following fractional Gagliardo-Nirenberg inequality for all $u\in L^{p_0}(\mathbb{R}^n)\cap \dot{H}^\sigma_{p_1}(\mathbb{R}^n)$:
\begin{align} \label{PropositionFGNI}
\|u\|_{\dot{H}^{s}_p}\lesssim \|u\|_{L^{p_0}}^{1-\theta}\|u\|_{\dot{H}^{\sigma}_{p_1}}^\theta,
\end{align} where $\theta=\theta_{s,\sigma}(p,p_0,p_1)=\frac{\frac{1}{p_0}-\frac{1}{p}+\frac{s}{n}}{\frac{1}{p_0}-\frac{1}{p_1}+\frac{\sigma}{n}}$ and $\frac{s}{\sigma}\leq \theta\leq 1$ .
\end{proposition}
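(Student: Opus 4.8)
The plan is to reach the target space $\dot{H}^s_p(\R^n)$ from the two data spaces $L^{p_0}=\dot{H}^0_{p_0}$ and $\dot{H}^\sigma_{p_1}$ in two moves: first along the \emph{interpolation scale}, then along a \emph{Sobolev embedding line}. The two hypotheses $\theta=\theta_{s,\sigma}(p,p_0,p_1)$ and $\frac{s}{\sigma}\le\theta\le1$ are exactly what guarantee that these moves are admissible and that their composition lands precisely at the pair $\bigl(s,\tfrac1p\bigr)$.

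Concretely, I would introduce the intermediate integrability exponent $\tilde p\in(1,\infty)$ defined by $\frac1{\tilde p}:=\frac{1-\theta}{p_0}+\frac{\theta}{p_1}$; since $\theta\in[0,1]$ and $\frac1{p_0},\frac1{p_1}\in(0,1)$ this is a genuine exponent in $(1,\infty)$. The first move is complex interpolation of homogeneous Bessel potential spaces, which for $1<p_0,p_1<\infty$ and $\theta\in(0,1)$ gives $[\dot H^0_{p_0},\dot H^\sigma_{p_1}]_\theta=\dot H^{\theta\sigma}_{\tilde p}$ together with the interpolation inequality $\|u\|_{\dot H^{\theta\sigma}_{\tilde p}}\lesssim\|u\|_{L^{p_0}}^{1-\theta}\|u\|_{\dot H^\sigma_{p_1}}^{\theta}$. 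The second move is the homogeneous Sobolev embedding $\dot H^{\theta\sigma}_{\tilde p}(\R^n)\hookrightarrow\dot H^s_p(\R^n)$: it applies because $\theta\sigma\ge s$ (this is precisely $\theta\ge s/\sigma$) and $\tilde p\le p$, the latter being equivalent to $\frac1{\tilde p}-\frac1p=\frac{\theta\sigma-s}{n}\ge0$. Composing the two bounds yields $\|u\|_{\dot H^s_p}\lesssim\|u\|_{\dot H^{\theta\sigma}_{\tilde p}}\lesssim\|u\|_{L^{p_0}}^{1-\theta}\|u\|_{\dot H^\sigma_{p_1}}^{\theta}$, which is \eqref{PropositionFGNI}. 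The only arithmetic to verify is that the second embedding has the correct dimensional balance $\theta\sigma-\frac n{\tilde p}=s-\frac np$: writing $N:=\frac1{p_0}-\frac1p+\frac sn$ and $D:=\frac1{p_0}-\frac1{p_1}+\frac\sigma n$, one has $\theta=N/D$, whence $\frac1{\tilde p}-\frac{\theta\sigma-s}{n}-\frac1p=N-\theta D=0$, so the balance holds automatically. The degenerate endpoints are immediate: $\theta=0$ forces $s=0$ and $p=p_0$, so the inequality is trivial; $\theta=1$ makes the interpolation step trivial and reduces the claim to the single Sobolev embedding $\dot H^\sigma_{p_1}\hookrightarrow\dot H^s_p$.

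I expect the main obstacle to be not the scheme above but the care needed to run it rigorously in the \emph{homogeneous} setting — the realization of $\dot H^s_p$ inside $\mathcal Z'=\mathcal S'/\mathcal P$, completeness of these spaces, and density of smooth functions with frequency support away from the origin — which is what makes the complex interpolation identity and the homogeneous Sobolev embedding legitimate; all of this is classical and can be quoted (e.g.\ from Triebel's monographs or from \cite{Palmierithesis}). If a more self-contained argument is preferred, the alternative is a Littlewood--Paley proof: estimate $\|u\|_{\dot H^s_p}\le\sum_{j\in\Z}\||D|^s\Delta_j u\|_{L^p}\lesssim\sum_{j\in\Z}2^{js}\|\Delta_j u\|_{L^p}$, bound each dyadic block by Bernstein's inequality both through $L^{p_0}$ and through $\dot H^\sigma_{p_1}$ so that $2^{js}\|\Delta_j u\|_{L^p}\lesssim\min\{2^{j\alpha}\|u\|_{L^{p_0}},\,2^{-j\beta}\|u\|_{\dot H^\sigma_{p_1}}\}$ with $\alpha=s+n\bigl(\frac1{p_0}-\frac1p\bigr)$, $\beta=(\sigma-s)+n\bigl(\frac1p-\frac1{p_1}\bigr)$ and $\alpha+\beta=\sigma+n\bigl(\frac1{p_0}-\frac1{p_1}\bigr)$, and then sum the resulting geometric series balanced at $2^{j_\ast}=\bigl(\|u\|_{\dot H^\sigma_{p_1}}/\|u\|_{L^{p_0}}\bigr)^{1/(\alpha+\beta)}$; this produces the weights $\frac{\alpha}{\alpha+\beta}=\theta$ and $\frac{\beta}{\alpha+\beta}=1-\theta$, at the price of a short case distinction ensuring $\alpha,\beta>0$ and that the Bernstein inequalities point in the permitted directions.
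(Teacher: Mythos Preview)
The paper does not give a proof of this proposition at all: immediately after the statement it simply writes ``For the proof one can see \cite{Ozawa}.'' So there is nothing to compare your argument against on the paper's side; your proposal supplies strictly more than the paper does.

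Your interpolation-plus-embedding scheme is correct and is one of the standard routes to this inequality. The arithmetic check that $\frac{1}{\tilde p}-\frac{1}{p}=\frac{\theta\sigma-s}{n}$ is right, so the homogeneous Sobolev embedding $\dot H^{\theta\sigma}_{\tilde p}\hookrightarrow \dot H^{s}_{p}$ has exactly the required scaling and the correct monotonicity $\tilde p\le p$ (equivalently $\theta\ge s/\sigma$); the endpoint cases $\theta=0$ and $\theta=1$ are handled as you say. Your caveat about working rigorously in $\mathcal Z'=\mathcal S'/\mathcal P$ for the complex interpolation identity $[\dot H^{0}_{p_0},\dot H^{\sigma}_{p_1}]_\theta=\dot H^{\theta\sigma}_{\tilde p}$ is well placed; citing Triebel or Bergh--L\"ofstr\"om for this is appropriate. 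The Littlewood--Paley alternative is also fine, but note that the sign conditions you flag are in fact forced by the hypotheses: $\theta\le 1$ gives $\beta\ge 0$, and $\theta\ge s/\sigma\ge 0$ together with $D>0$ (which is implicit in having $\theta$ lie in $[s/\sigma,1]$) gives $\alpha\ge 0$, so the Bernstein inequalities point the right way once one assumes without loss that $D>0$.
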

For the proof one can see \cite{Ozawa}.

\begin{corollary} Let $1<p,m<\infty$, $\sigma >0$ and $s\in[0,\sigma)$. Then we have the following inequality for all $u\in H^\sigma(\mathbb{R}^n)$:
\begin{equation}
\|\,|D|^{s} u\|_{L^p}\lesssim \| u\|_{L^m}^{1-\theta}\|\,|D|^\sigma u\|_{L^m}^{\theta},
\end{equation} where $\theta=\theta_{s,\sigma}(p,m)=\frac{n}{\sigma}\big(\frac{1}{m}-\frac{1}{p}+\frac{s}{n}\big)$ and $ \frac{s}{\sigma}\leq \theta_{s,\sigma}(p,m)\leq 1$.
\end{corollary}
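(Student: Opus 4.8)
The plan is to obtain this corollary as an immediate specialization of Proposition~\ref{fractionalGagliardoNirenberg}. First I would apply the fractional Gagliardo--Nirenberg inequality \eqref{PropositionFGNI} with the choice $p_0=p_1=m$, which is legitimate since $1<m<\infty$. With this choice the exponent $\theta=\theta_{s,\sigma}(p,p_0,p_1)$ simplifies: its denominator $\frac{1}{p_0}-\frac{1}{p_1}+\frac{\sigma}{n}$ collapses to $\frac{\sigma}{n}$, so that
\[
\theta=\frac{\frac1m-\frac1p+\frac sn}{\frac{\sigma}{n}}=\frac{n}{\sigma}\Big(\frac1m-\frac1p+\frac sn\Big),
\]
which is exactly the claimed value $\theta_{s,\sigma}(p,m)$. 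The admissibility range $\frac{s}{\sigma}\le\theta\le1$ is then inherited verbatim from the corresponding restriction in Proposition~\ref{fractionalGagliardoNirenberg}. Substituting $p_0=p_1=m$ into \eqref{PropositionFGNI} and rewriting the Riesz-potential norms as $\|u\|_{\dot H^s_p}=\|\,|D|^s u\|_{L^p}$ and $\|u\|_{\dot H^\sigma_m}=\|\,|D|^\sigma u\|_{L^m}$ yields the asserted estimate.

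The only point requiring a remark is the function space in which the statement is phrased. Proposition~\ref{fractionalGagliardoNirenberg} presupposes $u\in L^{p_0}(\mathbb{R}^n)\cap\dot H^\sigma_{p_1}(\mathbb{R}^n)=L^m(\mathbb{R}^n)\cap\dot H^\sigma_m(\mathbb{R}^n)$, whereas the corollary is stated for $u\in H^\sigma(\mathbb{R}^n)$. I would handle this by noting that the inequality is meaningful, and the argument above applies unchanged, precisely when the right-hand side is finite, i.e.\ for $u\in L^m(\mathbb{R}^n)\cap\dot H^\sigma_m(\mathbb{R}^n)$; for $m=2$ this is exactly $H^\sigma(\mathbb{R}^n)=H^\sigma_2(\mathbb{R}^n)$, and for general $m$ the hypothesis $u\in H^\sigma$ should be read as shorthand for membership in $L^m\cap\dot H^\sigma_m$ (equivalently, one first proves the bound for $u\in\mathcal{S}(\mathbb{R}^n)$ and then extends by density).

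I expect no genuine obstacle here: the corollary is a pure bookkeeping specialization, and the substantive content — the fractional Gagliardo--Nirenberg inequality itself — is already furnished by Proposition~\ref{fractionalGagliardoNirenberg}, whose proof is referenced to \cite{Ozawa}. The mildly delicate steps are the algebraic collapse of the quotient defining $\theta$ and the verification that the constraint $\frac{s}{\sigma}\le\theta\le1$ transfers; both are one-line checks. Accordingly, the write-up will be short: state the parameter choice, record the simplification of $\theta$, and invoke Proposition~\ref{fractionalGagliardoNirenberg}.
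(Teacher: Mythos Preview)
Your proposal is correct and matches the paper's approach: the corollary is stated immediately after Proposition~\ref{fractionalGagliardoNirenberg} with no separate proof, so it is meant to be the obvious specialization $p_0=p_1=m$, exactly as you describe. Your observation about the function-space hypothesis (the statement writes $H^\sigma$ but the right-hand side involves $L^m$ and $\dot H^\sigma_m$ norms) is a fair remark on a minor imprecision in the paper, and your suggested reading is the intended one.
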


\subsection{Fractional Leibniz rule} \label{SecLeibnizfractional}


\begin{proposition} \label{fractionalLeibniz}
Let us assume $s>0$ and $1\leq r \leq \infty, 1<p_1,p_2,q_1,q_2 < \infty$ satisfying the relation \[ \frac{1}{r}=\frac{1}{p_1}+\frac{1}{p_2}=\frac{1}{q_1}+\frac{1}{q_2}.\]
Then the following fractional Leibniz rules hold:
\begin{align}\label{PropFLR} 
\|\,|D|^s(u \,v)\|_{L^r}\lesssim \|\,|D|^s u\|_{L^{p_1}}\|v\|_{L^{p_2}}+\|u\|_{L^{q_1}}\|\,|D|^s v\|_{L^{q_2}}
\end{align}  for any $u\in \dot{H}^s_{p_1}(\mathbb{R}^n)\cap L^{q_1}(\mathbb{R}^n)$ and $v\in \dot{H}^s_{q_2}(\mathbb{R}^n)\cap L^{p_2}(\mathbb{R}^n)$,
\begin{align}\label{PropFLR100} 
\|\langle D \rangle^s(u \,v)\|_{L^r}\lesssim \|\langle D \rangle^s u\|_{L^{p_1}}\|v\|_{L^{p_2}}+\|u\|_{L^{q_1}}\|\langle D\rangle^s v\|_{L^{q_2}}
\end{align}  for any $u\in H^s_{p_1}(\mathbb{R}^n)\cap L^{q_1}(\mathbb{R}^n)$ and $v\in H^s_{q_2}(\mathbb{R}^n)\cap L^{p_2}(\mathbb{R}^n)$.
\end{proposition}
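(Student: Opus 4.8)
The plan is to prove the homogeneous estimate \eqref{PropFLR} by a Littlewood--Paley decomposition together with Bony's paraproduct, and then to deduce \eqref{PropFLR100} from it by an elementary treatment of low frequencies. First I would fix a dyadic partition of unity, write $\Delta_j$ for the Littlewood--Paley blocks, $S_j=\sum_{k\le j}\Delta_k$, and split the product by Bony's decomposition
\[ uv = \sum_j S_{j-N}u\,\Delta_j v + \sum_j \Delta_j u\, S_{j-N}v + \sum_{|j-k|\le N} \Delta_j u\, \Delta_k v =: \Pi_1 + \Pi_2 + \Pi_3 \]
for a fixed integer $N$. Each summand of $\Pi_1$ has Fourier support in a fixed annulus $|\xi|\sim 2^j$, so $|D|^s$ acts on it essentially as multiplication by $2^{js}$; a standard almost-orthogonality argument (finite overlap of the Fourier supports) then gives $\|\,|D|^s\Pi_1\|_{L^r}\lesssim \big\|\big(\sum_j 2^{2js}|S_{j-N}u\,\Delta_j v|^2\big)^{1/2}\big\|_{L^r}$, and similarly for $\Pi_2$.

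For $\Pi_2=\sum_j\Delta_j u\,S_{j-N}v$ the high-frequency factor is $\Delta_j u$, so I would move $2^{js}$ onto $u$ (so that $2^{js}\Delta_j u$ equals $\Delta_j|D|^s u$ up to a harmless smooth frequency multiplier), use the pointwise domination $|S_{j-N}v(x)|\lesssim (Mv)(x)$ by the Hardy--Littlewood maximal function, apply H\"older's inequality with exponents $p_1,p_2$, and invoke the Fefferman--Stein vector-valued maximal inequality on $L^{p_2}$ together with the Littlewood--Paley square-function characterization of $\dot H^s_{p_1}$; this uses precisely $1<p_1,p_2<\infty$ and yields the bound $\|u\|_{\dot H^s_{p_1}}\|v\|_{L^{p_2}}$. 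The term $\Pi_1$ is symmetric and produces $\|u\|_{L^{q_1}}\|v\|_{\dot H^s_{q_2}}$. The delicate piece is the resonant term $\Pi_3$: here the Fourier support of $\Delta_j u\,\Delta_k v$ with $|j-k|\le N$ is only contained in a ball $|\xi|\lesssim 2^j$, so the frequency localisation is lost. The fix is to write $|D|^s\Pi_3=\sum_\ell |D|^s\Delta_\ell\big(\sum_{j\ge \ell-N'}\Delta_j u\,\widetilde\Delta_j v\big)$, use $|D|^s\Delta_\ell\sim 2^{\ell s}$, and exploit $s>0$ to sum the geometric factor $\sum_{\ell\le j}2^{(\ell-j)s}<\infty$; after that one is back to a square-function expression estimated as before by H\"older and the maximal inequality, with the $s$-derivative placed on either factor. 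I expect $\Pi_3$ to be the main obstacle, both because one must genuinely use the positivity of $s$ there and because the geometric series has to be carried through a Cauchy--Schwarz/maximal-function argument without losing summability.

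Finally, for the inhomogeneous version I would split $\langle D\rangle^s(uv)=S_0\big(\langle D\rangle^s(uv)\big)+(1-S_0)\big(\langle D\rangle^s(uv)\big)$; on the high-frequency complement $\langle D\rangle^s$ is comparable to $|D|^s$ and \eqref{PropFLR} applies after noting $\|u\|_{L^{p_1}}\lesssim\|u\|_{H^s_{p_1}}$ and likewise for the other norms, while $\langle D\rangle^s S_0$ is a bounded Fourier multiplier on every $L^r$, $1<r<\infty$, so the low-frequency piece is controlled by $\|uv\|_{L^r}\le\|u\|_{L^{p_1}}\|v\|_{L^{p_2}}$ via H\"older, which is again dominated by the right-hand side of \eqref{PropFLR100}. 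Collecting the estimates for $\Pi_1,\Pi_2,\Pi_3$ and the low-frequency term completes the proof. (Alternatively one may simply invoke the Kato--Ponce/Christ--Weinstein fractional Leibniz rule and its refinements, e.g.\ by Grafakos--Oh, in which form the statement is classical.)
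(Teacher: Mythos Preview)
Your proof outline is correct and follows the standard Littlewood--Paley/paraproduct route to the Kato--Ponce inequality. The paper, however, does not give a proof of this proposition at all; it simply records the statement and cites \cite{Grafakos} for the result. Your closing parenthetical remark already anticipates this: what you have written is essentially a sketch of the argument one finds in the cited reference, so your approach is not so much different from the paper's as it is a fleshed-out version of what the paper leaves to the literature.
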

These results can be found in \cite{Grafakos}.

\subsection{Fractional chain rule} \label{Secchainrulefractional}

\begin{proposition}\label{fractionalchainruleChrist}
Let us choose $s\in (0,1)$, $1<r,r_1,r_2<\infty$ and a $C^1$ function $F$ 
 satisfying for any $\tau \in [0,1]$ and $u,v\in \mathbb{R}$ the inequality
\begin{align}\label{F' assumption}
|F'(\tau u+(1-\tau) v)|\leq \mu(\tau) (G(u)+G(v)),
\end{align}  for some continuous and non-negative function $G$
and some non-negative function $\mu\in L^1([0,1])$.\\
Under these assumptions the following estimate is true:
\begin{align}\label{FCR Christ version}
\|F(u)\|_{\dot{H}^s_r}\lesssim \| G(u)\|_{L^{r_1}}\|u\|_{\dot{H}^s_{r_2}}
\end{align} for any $u\in  \dot{H}^s_{r_2}(\mathbb{R}^n)$ such that $G(u)\in L^{r_1}(\mathbb{R}^n)$, provided that
\begin{align*}
\frac{1}{r}=\frac{1}{r_1}+\frac{1}{r_2}.
\end{align*}
\end{proposition}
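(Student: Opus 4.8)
The plan is to reduce the estimate \eqref{FCR Christ version} to a pointwise bound on the first differences of $F(u)$ and then to invoke the classical square-function (Gagliardo-type) description of the homogeneous Riesz potential space $\dot H^s_r(\mathbb{R}^n)$ which is available precisely because $s\in(0,1)$. Concretely, I would first record the equivalence, valid for $0<s<1$ and $1<q<\infty$,
\begin{equation*}
\big\||D|^s g\big\|_{L^q(\mathbb{R}^n)}\ \approx\ \Big\|\,\Big(\int_{\mathbb{R}^n}\frac{|g(x)-g(x-y)|^2}{|y|^{n+2s}}\,dy\Big)^{1/2}\,\Big\|_{L^q(\mathbb{R}^n)},
\end{equation*}
which is classical (see \cite{Grafakos} or \cite{Palmierithesis}); I denote the square root appearing on the right by $\mathcal{D}_s g(x)$.

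Next I would exploit the structural hypothesis. Since $F\in C^1$, the fundamental theorem of calculus gives $F(b)-F(a)=(b-a)\int_0^1 F'\big(\tau b+(1-\tau)a\big)\,d\tau$ for all $a,b\in\mathbb{R}$; applying \eqref{F' assumption} with the numbers $a=u(x-y)$ and $b=u(x)$ and integrating in $\tau$ then yields the pointwise inequality
\begin{equation*}
\big|F(u)(x)-F(u)(x-y)\big|\ \le\ \|\mu\|_{L^1(0,1)}\,\big(G(u)(x)+G(u)(x-y)\big)\,\big|u(x)-u(x-y)\big|.
\end{equation*}
Inserting this bound into $\mathcal{D}_s\big(F(u)\big)(x)$ and using $(\alpha+\beta)^2\le 2(\alpha^2+\beta^2)$, I would split $\mathcal{D}_s\big(F(u)\big)\lesssim A+B$, where
\begin{equation*}
A(x):=G(u)(x)\,\mathcal{D}_s u(x),\qquad B(x):=\Big(\int_{\mathbb{R}^n}\frac{G(u)(x-y)^2\,|u(x)-u(x-y)|^2}{|y|^{n+2s}}\,dy\Big)^{1/2}.
\end{equation*}

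The diagonal term $A$ is harmless: by H\"older's inequality with $\tfrac1r=\tfrac1{r_1}+\tfrac1{r_2}$ together with the characterization above, $\|A\|_{L^r}\le\|G(u)\|_{L^{r_1}}\,\|\mathcal{D}_s u\|_{L^{r_2}}\lesssim\|G(u)\|_{L^{r_1}}\,\|u\|_{\dot H^s_{r_2}}$, which is exactly the claimed bound. I expect the whole difficulty to sit in the off-diagonal term $B$, in which $G(u)$ is evaluated at the translated point and therefore cannot be factored out; in particular this is why the estimate does not reduce to a mere application of the fractional Leibniz rule of Proposition \ref{fractionalLeibniz}.

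To treat $B$ I would follow the device of Christ and Weinstein: decompose the $y$-integral into dyadic annuli $|y|\sim 2^{-k}$, transfer the factor coming from $G(u)$ onto a Hardy--Littlewood maximal function (controlled in $L^{r_1}$ since $r_1>1$, after choosing a slightly subcritical averaging exponent, which is possible because $\tfrac1{r_1}+\tfrac1{r_2}=\tfrac1r<1$ leaves room between the conjugate exponents of $r_1$ and $r_2$), and transfer the factor coming from the differences of $u$ onto the $\ell^2$-valued difference expression, whose $L^{r_2}$ norm is controlled by $\|u\|_{\dot H^s_{r_2}}$ via the Fefferman--Stein vector-valued maximal inequality and the square-function characterization above. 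A final application of H\"older with $\tfrac1r=\tfrac1{r_1}+\tfrac1{r_2}$ then gives $\|B\|_{L^r}\lesssim\|G(u)\|_{L^{r_1}}\,\|u\|_{\dot H^s_{r_2}}$. Combining the estimates for $A$ and $B$ completes the proof of \eqref{FCR Christ version}.
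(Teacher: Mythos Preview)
The paper does not prove this proposition; it simply refers the reader to \cite{Christ} and \cite{Palmierithesis}. Your proposal is exactly the Christ--Weinstein argument indicated by the first reference: the Strichartz/Gagliardo square-function characterization of $\dot H^s_r$ for $s\in(0,1)$, the pointwise first-difference bound from the hypothesis on $F'$, the split into the diagonal piece $A$ (handled by H\"older) and the off-diagonal piece $B$ (handled by dyadic decomposition together with the scalar and Fefferman--Stein vector-valued maximal inequalities). So your approach is correct and coincides with the route the paper points to.
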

For the proof of this result one can see \cite{Christ} or the proof in a slightly modified version in \cite{Palmierithesis}.\\
In particular we can apply Proposition \ref{fractionalchainruleChrist} for  $F(u)=|u|^p$ or $F(u)=\pm u|u|^{p-1}$. After choosing $G(u)=|F'(u)|$ and $\mu$ as a positive constant the next result
follows immediately.
\begin{corollary}\label{CorfractionalchainruleChrist} Let $F(u)=|u|^p$ or $F(u)=\pm u|u|^{p-1}$ for $p>1$, $s\in (0,1)$ and $r,r_1,r_2\in (1,\infty)$.
Then,
\begin{align*}
\|F( u)\|_{\dot{H}^s_r}\lesssim \|  u\|^{p-1}_{L^{r_1}}\|u\|_{\dot{H}^s_{r_2}}
\end{align*} for any $u\in L^{r_1}(\mathbb{R}^n)\cap H^s_{r_2}(\mathbb{R}^n)$, provided that
\begin{align*}
\frac{1}{r}=\frac{p-1}{r_1}+\frac{1}{r_2}.
\end{align*}
\end{corollary}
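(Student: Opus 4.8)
The assertion follows directly from Proposition~\ref{fractionalchainruleChrist}, so the plan is only to verify the hypotheses of that proposition for the two admissible choices of $F$ and then to perform the bookkeeping with the H\"older exponents. First I would note that for $p>1$ both $F(u)=|u|^p$ and $F(u)=\pm u|u|^{p-1}$ belong to $C^1(\mathbb{R})$, with $|F'(v)|=p\,|v|^{p-1}$ in either case, so in particular $F'$ is continuous.

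Next I would check the structural bound \eqref{F' assumption}. Choosing $G(u):=|u|^{p-1}$, which is continuous and non-negative, and using $|a+b|\leq 2\max\{|a|,|b|\}$ together with the monotonicity of $t\mapsto t^{p-1}$ on $[0,\infty)$, I obtain for all $\tau\in[0,1]$ and $u,v\in\mathbb{R}$
\[
|F'(\tau u+(1-\tau)v)|=p\,|\tau u+(1-\tau)v|^{p-1}\leq p\,(|u|+|v|)^{p-1}\leq p\,2^{p-1}\big(|u|^{p-1}+|v|^{p-1}\big),
\]
so \eqref{F' assumption} holds with the constant function $\mu(\tau)\equiv p\,2^{p-1}\in L^1([0,1])$ and the above $G$. (If one prefers, the ranges $p\geq 2$ and $1<p<2$ may be handled separately — convexity of $t\mapsto t^{p-1}$ in the first case, subadditivity in the second — but a single constant $\mu$ works throughout.)

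It then remains to match the indices. Given $s\in(0,1)$ and $r,r_1,r_2\in(1,\infty)$ with $\tfrac1r=\tfrac{p-1}{r_1}+\tfrac1{r_2}$, I set $q_1:=\tfrac{r_1}{p-1}$; since $r>1$ and $\tfrac1{r_2}>0$ force $\tfrac{p-1}{r_1}<1$, i.e.\ $r_1>p-1$, the exponent $q_1$ lies in $(1,\infty)$ and satisfies $\tfrac1r=\tfrac1{q_1}+\tfrac1{r_2}$. Applying Proposition~\ref{fractionalchainruleChrist} with the exponents $r,q_1,r_2$ and with $G(u)=|u|^{p-1}$ then gives
\[
\|F(u)\|_{\dot{H}^s_r}\lesssim \big\|\,|u|^{p-1}\big\|_{L^{q_1}}\,\|u\|_{\dot{H}^s_{r_2}}=\|u\|_{L^{r_1}}^{p-1}\,\|u\|_{\dot{H}^s_{r_2}}
\]
for every $u\in\dot{H}^s_{r_2}(\mathbb{R}^n)$ with $|u|^{p-1}\in L^{q_1}(\mathbb{R}^n)$, that is, for every $u\in L^{r_1}(\mathbb{R}^n)\cap \dot{H}^s_{r_2}(\mathbb{R}^n)$; since $H^s_{r_2}(\mathbb{R}^n)\hookrightarrow \dot{H}^s_{r_2}(\mathbb{R}^n)$ the same estimate holds on $L^{r_1}(\mathbb{R}^n)\cap H^s_{r_2}(\mathbb{R}^n)$, which is exactly the claim. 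I do not expect any genuine obstacle here: the whole substance is contained in Proposition~\ref{fractionalchainruleChrist}, and the only point requiring a moment's care is confirming that the intermediate exponent $q_1=r_1/(p-1)$ belongs to $(1,\infty)$, which, as just noted, is automatic from the exponent identity and $r,r_2>1$.
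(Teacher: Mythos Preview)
Your proposal is correct and follows exactly the approach the paper takes: apply Proposition~\ref{fractionalchainruleChrist} with $G(u)=|u|^{p-1}$ (the paper writes $G(u)=|F'(u)|$, which is the same up to the harmless factor $p$) and $\mu$ a constant. Your additional verification that the intermediate exponent $q_1=r_1/(p-1)$ lands in $(1,\infty)$ is a welcome detail that the paper leaves implicit.
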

The following result shows that there is no necessity to assume $s \in (0,1)$ in the last corollary. In the formulation we use for $s \in \mathbb{R}$ the symbol $\lceil s \rceil$ which denotes the smallest integer greater than or equal to $s$.
\begin{proposition}\label{Propfractionalchainrulegeneral} Let us choose $s>0$, $p>\lceil s \rceil$
 and $1<r,r_1,r_2<\infty$ satisfying \[ \frac{1}{r}=\frac{p-1}{r_1}+\frac{1}{r_2}.\]
Let us denote by $F(u)$ one of the functions $|u|^p, \pm |u|^{p-1}u$.\\
 Then it holds the following fractional chain rule:
\begin{align}\label{fractionalchainrulegeneral}
\|\,|D|^{s} F(u)\|_{L^r}\lesssim \|u\|_{L^{r_1}}^{p-1}\|\,|D|^{s} u\|_{L^{r_2}}
\end{align}
 for any $u\in  L^{r_1}(\mathbb{R}^n)\cap \dot{H}^{s}_{r_2}(\mathbb{R}^n)$.
 \end{proposition}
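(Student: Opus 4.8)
The plan is to argue by induction on $k:=\lceil s\rceil$. In the base case $k=1$ one distinguishes $s\in(0,1)$, for which the statement is exactly Corollary \ref{CorfractionalchainruleChrist}, from $s=1$, for which it follows from the chain rule $\nabla F(u)=F'(u)\nabla u$, H\"older's inequality with $\frac1r=\frac{p-1}{r_1}+\frac1{r_2}$ (note $r>1$ forces $r_1>p-1$, so the H\"older exponent $r_1/(p-1)$ is admissible) and the equivalence $\|\,|D|g\|_{L^r}\approx\|\nabla g\|_{L^r}$ for $1<r<\infty$, i.e. the $L^r$-boundedness of the Riesz transforms.

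For the inductive step let $s>1$ and assume the assertion for every exponent with ceiling strictly less than $k$. Writing $|D|^sg=\sum_j R_j\big(|D|^{s-1}\partial_j g\big)$ with $R_j$ a constant multiple of a Riesz transform, together with the analogous identity $|D|^{s-1}\partial_j u=\widetilde R_j\big(|D|^s u\big)$, Mikhlin's multiplier theorem gives $\|\,|D|^sF(u)\|_{L^r}\lesssim\sum_j\|\,|D|^{s-1}\partial_jF(u)\|_{L^r}$ and $\|\,|D|^{s-1}\partial_j u\|_{L^r}\lesssim\|\,|D|^su\|_{L^r}$. Since $\partial_jF(u)=\widetilde F(u)\,\partial_j u$, where $\widetilde F(u)$ is one of $\pm p|u|^{p-1}$, $p\,|u|^{p-2}u$ — an admissible non-linearity of ``power'' $p-1$ — the fractional Leibniz rule (Proposition \ref{fractionalLeibniz}) with exponent $s-1>0$ yields, for suitable $p_1,p_2,q_1,q_2\in(1,\infty)$ with $\frac1r=\frac1{p_1}+\frac1{p_2}=\frac1{q_1}+\frac1{q_2}$,
\[ \|\,|D|^{s-1}(\widetilde F(u)\partial_j u)\|_{L^r}\lesssim\|\,|D|^{s-1}\widetilde F(u)\|_{L^{p_1}}\|\partial_j u\|_{L^{p_2}}+\|\widetilde F(u)\|_{L^{q_1}}\|\,|D|^{s-1}\partial_j u\|_{L^{q_2}}. \]

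In the second term we take $q_1=r_1/(p-1)$, $q_2=r_2$, so that $\|\widetilde F(u)\|_{L^{q_1}}=\|u\|_{L^{r_1}}^{p-1}$ and $\|\,|D|^{s-1}\partial_j u\|_{L^{q_2}}\lesssim\|\,|D|^su\|_{L^{r_2}}$, which is already the desired bound. In the first term we invoke the induction hypothesis for $\widetilde F$ with exponent $s-1$ — legitimate since $\lceil s-1\rceil=k-1<k$ and $p-1>\lceil s-1\rceil$ because $p>\lceil s\rceil$ — obtaining $\|\,|D|^{s-1}\widetilde F(u)\|_{L^{p_1}}\lesssim\|u\|_{L^{a_1}}^{p-2}\|\,|D|^{s-1}u\|_{L^{a_2}}$ whenever $\frac1{p_1}=\frac{p-2}{a_1}+\frac1{a_2}$. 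It then remains to re-interpolate the three factors $\|u\|_{L^{a_1}}$, $\|\,|D|^{s-1}u\|_{L^{a_2}}$ and $\|\nabla u\|_{L^{p_2}}=\|\,|D|u\|_{L^{p_2}}$ against $L^{r_1}(\R^n)$ and $\dot{H}^s_{r_2}(\R^n)$ by the fractional Gagliardo--Nirenberg inequality (Proposition \ref{fractionalGagliardoNirenberg}); this is permissible since the orders $0$, $s-1$, $1$ are all strictly below $s$ for $s>1$. A short computation using only $\frac1r=\frac{p-1}{r_1}+\frac1{r_2}$ shows that the resulting interpolation exponents $\kappa,\theta,\vartheta$ satisfy $(p-2)\kappa+\theta+\vartheta=1$ identically, so the powers of $\|u\|_{L^{r_1}}$ add up to exactly $p-1$ and those of $\|\,|D|^su\|_{L^{r_2}}$ to exactly $1$; hence the first term is likewise $\lesssim\|u\|_{L^{r_1}}^{p-1}\|\,|D|^su\|_{L^{r_2}}$, and the induction closes.

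I expect the only genuine difficulty to be the exponent bookkeeping: one must exhibit admissible $p_1,p_2,q_1,q_2,a_1,a_2\in(1,\infty)$ for which the fractional Leibniz rule and the induction hypothesis both apply and, simultaneously, the Gagliardo--Nirenberg parameters lie in their prescribed ranges $\kappa\in[0,1]$, $\theta\in[(s-1)/s,1]$, $\vartheta\in[1/s,1]$. This is precisely where the hypotheses $1<r_1,r_2<\infty$ and $p>\lceil s\rceil$ are really used. Since there are two genuine degrees of freedom (for instance $p_2$ and $a_1$) and all constraints are linear in the reciprocals of the exponents, such a choice always exists, but it must be checked with care, especially near the endpoint $p=\lceil s\rceil$.
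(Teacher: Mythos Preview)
Your induction on $\lceil s\rceil$, using the equivalence $\||D|^sF(u)\|_{L^r}\approx\||D|^{s-1}\nabla F(u)\|_{L^r}$ (this is exactly Lemma \ref{Sickelauxiliary lemma}), then the chain rule, the fractional Leibniz rule, and closing via the induction hypothesis together with Gagliardo--Nirenberg, is precisely the scheme the paper indicates; the paper does not spell out the argument but refers to \cite{Palmierithesis} and records only that the proof proceeds by induction and relies on Lemma \ref{Sickelauxiliary lemma}. The exponent bookkeeping you flag is indeed where the labour lies --- your scaling identity $(p-2)\kappa+\theta+\vartheta=1$ checks out, but the range conditions $\kappa\in[0,1]$, $\theta\in[(s-1)/s,1]$, $\vartheta\in[1/s,1]$ and $p_1,p_2,a_1,a_2\in(1,\infty)$ should be verified by exhibiting an explicit admissible choice rather than by a degrees-of-freedom count.
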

The proof can be found in \cite{Palmierithesis}. It uses an induction argument and among other things the following auxiliary result whose proof was provided by Winfried Sickel (University of Jena) within a private communication.
\begin{lemma}\label{Sickelauxiliary lemma}
Let $\omega \geq 0$ and $q \in (1,\infty)$. Then we have the following equivalence of norms:
\[ \||D|^\omega \nabla u \|_{L^q} \approx \||D|^{\omega+1} u \|_{L^q} \]
under the assumption, that $u$ is given so that both norms exist.
\end{lemma}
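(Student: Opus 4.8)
The plan is to transfer the problem to the Fourier side and to recognise $|D|^\omega\nabla u$ and $|D|^{\omega+1}u$ as Riesz transforms of one another. First I would record, for $j=1,\dots,n$, the elementary symbol identities
\[ \widehat{|D|^\omega\partial_{x_j}u}(\xi)=|\xi|^\omega\,(i\xi_j)\,\widehat u(\xi)=\frac{i\xi_j}{|\xi|}\,|\xi|^{\omega+1}\widehat u(\xi),\qquad
|\xi|^{\omega+1}\widehat u(\xi)=\sum_{j=1}^n\Big(-\frac{i\xi_j}{|\xi|}\Big)\,|\xi|^\omega(i\xi_j)\,\widehat u(\xi), \]
the second one being a consequence of $\sum_{j=1}^n\xi_j^2=|\xi|^2$. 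In operator form these read $|D|^\omega\partial_{x_j}u=R_j\big(|D|^{\omega+1}u\big)$ and $|D|^{\omega+1}u=-\sum_{j=1}^n R_j\big(|D|^\omega\partial_{x_j}u\big)$, where $R_j$ denotes the $j$-th Riesz transform, i.e.\ the Fourier multiplier with symbol $i\xi_j/|\xi|$.

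Next I would invoke the $L^q$-boundedness of the Riesz transforms for $q\in(1,\infty)$: the symbols $i\xi_j/|\xi|$ are smooth on $\R^n\setminus\{0\}$ and homogeneous of degree zero, hence satisfy the Mikhlin--H\"ormander multiplier condition, so each $R_j$ extends to a bounded operator on $L^q(\R^n)$. Feeding the two operator identities into this bound and using the triangle inequality over $j$ then gives, on the one hand,
\[ \||D|^\omega\nabla u\|_{L^q}\lesssim\sum_{j=1}^n\||D|^\omega\partial_{x_j}u\|_{L^q}=\sum_{j=1}^n\big\|R_j\big(|D|^{\omega+1}u\big)\big\|_{L^q}\lesssim\||D|^{\omega+1}u\|_{L^q}, \]
and, on the other hand,
\[ \||D|^{\omega+1}u\|_{L^q}=\Big\|\sum_{j=1}^n R_j\big(|D|^\omega\partial_{x_j}u\big)\Big\|_{L^q}\lesssim\sum_{j=1}^n\||D|^\omega\partial_{x_j}u\|_{L^q}\lesssim\||D|^\omega\nabla u\|_{L^q}. \]
Together these two inequalities are exactly the asserted equivalence of norms.

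The step I expect to be the only delicate one is not the multiplier estimate but the functional-analytic bookkeeping. A priori $|D|^\omega u$ and $|D|^{\omega+1}u$ live only in the quotient $\mathcal{Z}'(\R^n)=\mathcal{S}'/\mathcal{P}$, and both $|\xi|^\omega$ and $i\xi_j/|\xi|$ are singular at the origin, so the composition of these multipliers has to be justified with some care. I would handle this by first establishing the identities and the estimates for functions whose Fourier transform vanishes in a neighbourhood of the origin --- where every multiplier appearing acts unambiguously and boundedly on $L^q$ --- and then passing to the general case by a density argument, using precisely the standing hypothesis that $u$ is given so that both norms exist in order to control the limiting procedure. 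Once this reduction is carried out, the remainder is routine Calder\'on--Zygmund theory.
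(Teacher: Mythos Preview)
Your proof is correct and is the standard route to this equivalence via the $L^q$-boundedness of the Riesz transforms. Note, however, that the paper does not actually supply a proof of this lemma: it merely records the statement and attributes the proof to a private communication from W.~Sickel, so there is no ``paper's own proof'' to compare against. Your argument, including the care you take with the $\mathcal{Z}'$ framework and the density reduction, is exactly what one would expect such a proof to contain.
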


\subsection{Fractional powers} \label{Secfractionalpowers}
We apply a result from
\cite{RunSic} for fractional powers.
\begin{proposition} \label{PropSickelfractional} Let $p>1$, $1< r <\infty$ and $u \in H^{s}_r$, where $s \in \big(\frac{n}{r},p\big)$.
Let us denote by $F(u)$ one of the functions $|u|^p,\, \pm |u|^{p-1}u$ with $p>1$.\\
Then the following estimate holds$:$
$$\Vert F(u)\Vert_{H^{s}_r}\le C \|u\|_{H^{s}_r}\|u\|_{L^\infty}^{p-1}.$$
In particular, if $s\in \mathbb{N}$, one may weaken the condition on $p$ to $p> s-\frac1{r}$.
\end{proposition}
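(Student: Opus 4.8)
The plan is to follow the Nemytskij (superposition) operator technique of \cite{RunSic}. I would work with the Littlewood--Paley / Triebel--Lizorkin description $H^s_r(\R^n)=F^s_{r,2}(\R^n)$ for $1<r<\infty$, together with the two structural facts made available by the hypothesis $s>n/r$: the embedding $H^s_r(\R^n)\hookrightarrow L^\infty(\R^n)$ and the fact that $H^s_r(\R^n)$ is a multiplication algebra. These are what allow $F$ to act continuously on $H^s_r$ at all.

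First I would split the inhomogeneous norm as $\|F(u)\|_{H^s_r}\approx\|F(u)\|_{L^r}+\||D|^sF(u)\|_{L^r}$. The low order part is elementary: since $|F(u)|\le C|u|^p$, H\"older's inequality gives
\[ \|F(u)\|_{L^r}\le C\|u\|_{L^{pr}}^p\le C\|u\|_{L^\infty}^{p-1}\|u\|_{L^r}\le C\|u\|_{L^\infty}^{p-1}\|u\|_{H^s_r}. \]
For the main term $\||D|^sF(u)\|_{L^r}$ I would decompose $F(u)$ telescopically over the partial sums $S_Nu=\sum_{j<N}\Delta_ju$ of the Littlewood--Paley pieces of $u$, writing $F(u)=F(S_0u)+\sum_{N\ge0}\big(F(S_{N+1}u)-F(S_Nu)\big)$, and then Taylor-expand each difference $F(S_{N+1}u)-F(S_Nu)$ to order $M:=\lceil s\rceil$ around $S_Nu$. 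This produces paraproduct-type terms $F^{(k)}(S_Nu)(\Delta_Nu)^k$ with $1\le k\le M-1$, plus a remainder governed by the modulus of continuity of the top derivative. Since $F^{(k)}(y)$ is comparable to $|y|^{p-k}$ and $k\le M-1<p$, the factors $F^{(k)}(S_Nu)$ are bounded in $L^\infty$ by $\|u\|_{L^\infty}^{p-k}\le\|u\|_{L^\infty}^{p-1}$; the remaining dyadic blocks are then recollected into the $F^s_{r,2}$-norm of $u$ by maximal function (Fefferman--Stein) estimates, which delivers the factor $\|u\|_{H^s_r}$.

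The delicate point, and the one I expect to be the main obstacle, is the remainder term, since it is there that the sharp thresholds are forced. When $s\notin\N$ the argument needs $F^{(M-1)}$ to be H\"older continuous of exponent $s-\lfloor s\rfloor$, which holds precisely because $p>s$; when $s=m\in\N$ the top derivative $F^{(m)}(y)$ is only comparable to $|y|^{p-m}$, hence possibly unbounded, and to still close the estimate one must run a refined interpolation that exploits that this top derivative need only be locally integrable to a suitable power rather than bounded, together with the equivalence $\||D|^m\,\cdot\,\|_{L^r}\approx\|\nabla|D|^{m-1}\,\cdot\,\|_{L^r}$ from Lemma \ref{Sickelauxiliary lemma}; this is what yields the weakened condition $p>m-\tfrac1r$. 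The careful bookkeeping of these remainder contributions and the summation over $N$ without loss is carried out in full in \cite{RunSic}, which I would simply quote rather than reproduce; for the applications in this paper only the clean inequality $\|F(u)\|_{H^s_r}\le C\|u\|_{H^s_r}\|u\|_{L^\infty}^{p-1}$ is needed.
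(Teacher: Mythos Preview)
The paper does not prove this proposition at all; it simply states it as a known result from \cite{RunSic}. Your sketch of the Runst--Sickel superposition-operator argument (Littlewood--Paley telescoping plus Taylor expansion in $F^s_{r,2}$) is therefore strictly more than what the paper offers, and it is the correct outline of the proof in that reference.

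One small slip in your write-up: the chain of inequalities $\|u\|_{L^\infty}^{p-k}\le\|u\|_{L^\infty}^{p-1}$ is false in general (take $\|u\|_{L^\infty}>1$ and $k\ge2$). The correct bookkeeping is that $|F^{(k)}(S_Nu)|\lesssim\|u\|_{L^\infty}^{p-k}$ while the accompanying factor $(\Delta_Nu)^k$ contributes $\|u\|_{L^\infty}^{k-1}$ after peeling off all but one block, and the product $\|u\|_{L^\infty}^{p-k}\cdot\|u\|_{L^\infty}^{k-1}=\|u\|_{L^\infty}^{p-1}$ gives the right total power. This does not affect the validity of the approach, only the presentation.
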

\noindent We shall use the following corollary from Proposition \ref{PropSickelfractional}.
\begin{corollary} \label{Corfractionalhomogeneous}
Under the assumptions of Proposition \ref{PropSickelfractional} it holds
$$\| F(u)\|_{\dot{H}^{s}_r}\le C \| u\|_{\dot{H}^{s}_r}\|u\|_{L^\infty}^{p-1}.$$
\end{corollary}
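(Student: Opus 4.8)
The plan is to obtain the homogeneous estimate directly from the inhomogeneous one in Proposition \ref{PropSickelfractional} by a dilation argument, exploiting that the right-hand side factor $\|u\|_{L^\infty}^{p-1}$ is invariant under dilations while $\|\,\cdot\,\|_{\dot{H}^s_r}$ is homogeneous under them. Observe first that in all cases covered by Proposition \ref{PropSickelfractional} one has $s>0$. For $\lambda>0$ I would set $u_\lambda(x):=u(\lambda x)$; since $F$ is a pointwise nonlinearity, $F(u_\lambda)(x)=F(u)(\lambda x)$, and a direct computation on the Fourier side gives the scaling identities
\[ \|u_\lambda\|_{\dot{H}^s_r}=\lambda^{s-\frac{n}{r}}\|u\|_{\dot{H}^s_r},\quad \|u_\lambda\|_{L^r}=\lambda^{-\frac{n}{r}}\|u\|_{L^r},\quad \|u_\lambda\|_{L^\infty}=\|u\|_{L^\infty}, \]
together with $\|F(u_\lambda)\|_{\dot{H}^s_r}=\lambda^{s-\frac{n}{r}}\|F(u)\|_{\dot{H}^s_r}$.

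The key step is then to apply Proposition \ref{PropSickelfractional} to $u_\lambda$ (which satisfies the same hypotheses as $u$, with the same $s$, $r$ and $p$, since $\|u_\lambda\|_{H^s_r}\lesssim\|u_\lambda\|_{L^r}+\|u_\lambda\|_{\dot{H}^s_r}<\infty$), and to combine it with the standard equivalence $\|v\|_{H^s_r}\approx\|v\|_{L^r}+\|v\|_{\dot{H}^s_r}$, valid for $s>0$ and $1<r<\infty$ with constants independent of $v$. This gives
\begin{align*}
\|F(u)\|_{\dot{H}^s_r}&=\lambda^{\frac{n}{r}-s}\|F(u_\lambda)\|_{\dot{H}^s_r}\leq\lambda^{\frac{n}{r}-s}\|F(u_\lambda)\|_{H^s_r}\lesssim\lambda^{\frac{n}{r}-s}\|u_\lambda\|_{H^s_r}\|u_\lambda\|_{L^\infty}^{p-1}\\
&\lesssim\lambda^{\frac{n}{r}-s}\big(\|u_\lambda\|_{L^r}+\|u_\lambda\|_{\dot{H}^s_r}\big)\|u\|_{L^\infty}^{p-1}=\big(\lambda^{-s}\|u\|_{L^r}+\|u\|_{\dot{H}^s_r}\big)\|u\|_{L^\infty}^{p-1}.
\end{align*}
Letting $\lambda\to+\infty$ and using $s>0$, the term $\lambda^{-s}\|u\|_{L^r}$ tends to $0$, which yields the claimed inequality $\|F(u)\|_{\dot{H}^s_r}\lesssim\|u\|_{\dot{H}^s_r}\|u\|_{L^\infty}^{p-1}$. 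The same reasoning works verbatim when $s\in\mathbb{N}$, where one has $s\geq 1$.

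The only point that needs a little care is formal rather than substantial: one must read $\dot{H}^s_r$ in the sense of distributions modulo polynomials as in Section \ref{SecBesselRiesz}, so that $|D|^s$ acts on the relevant elements. All the norms occurring above are finite, because $u\in H^s_r$ (hence $u\in\dot{H}^s_r\cap L^r$, and $F(u)\in H^s_r\subset\dot{H}^s_r$ by Proposition \ref{PropSickelfractional}), so the chain of inequalities is legitimate and no density argument is needed. I do not expect any genuine obstacle here: the whole content of the corollary is already in Proposition \ref{PropSickelfractional}, and the scaling argument only serves to shed the low-frequency part of the inhomogeneous norm.
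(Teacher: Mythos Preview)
Your proof is correct and follows essentially the same approach as the paper's own proof: apply the inhomogeneous estimate from Proposition \ref{PropSickelfractional} to the dilated function $u_\lambda(x)=u(\lambda x)$, use the norm equivalence $\|v\|_{H^s_r}\approx\|v\|_{L^r}+\|v\|_{\dot{H}^s_r}$ together with the scaling identities, and then let $\lambda\to\infty$ to kill the $L^r$ contribution since $s>0$. Your write-up is in fact slightly more detailed than the paper's, which simply records the scaling relations and remarks that letting $\lambda\to\infty$ yields the desired inequality.
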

\begin{proof}
Let us prove it for $F(u)=|u|^p$. We write the estimate from Proposition \ref{PropSickelfractional} in the form $$   \| |u|^p\|_{\dot{H}^{s}_r}+ \| |u|^p\|_{L^{r}}\le C \big(\|
u\|_{\dot{H}^{s}_r}+ \|u\|_{L^{r}}\big)\|u\|_{L^\infty}^{p-1}.  $$ Using instead of $u$ the dilation $u_\lambda(\cdot):=u(\lambda \cdot)$ in the
last inequality we obtain the desired inequality after taking into consideration
\[  \|u_\lambda\|_{\dot{H}^{s}_r}=\lambda^{s-\frac{n}{r}}\|u\|_{\dot{H}^{s}_r}\,\,\,\mbox{and}\,\,\,
\|u_\lambda\|_{L^{r}}=\lambda^{-\frac{n}{r}} \|u\|_{L^{r}}      \] and letting $\lambda$ tend to infinity.
\end{proof}

\subsection{Fractional homogeneous Sobolev embeddings} \label{SecfractionalSobolevembedding}

 Sometimes one can apply in proofs for global existence results for semi-linear Cauchy problems, instead of the fractional Gagliardo-Nirenberg inequality  the embedding of a homogeneous fractional Sobolev space with suitable order $\dot{H}^\kappa$ in $L^q$, that is, the following result.
\begin{proposition} \label{PropfractionalSobolevembedding}
Let $q\geq 2$ and $\kappa=n\big(\frac{1}{2}-\frac{1}{q}\big)$. Then the following fractional Sobolev embedding is valid:
\begin{align*}
\dot{H}^\kappa(\mathbb{R}^n)\hookrightarrow L^q(\mathbb{R}^n).
\end{align*}
Therefore, there exists a constant $C=C(n,q)>0$ such that
\begin{align*}
\|u\|_{L^q}\leq  C\|u\|_{\dot{H}^\kappa}
\end{align*} for any $u\in \dot{H}^\kappa(\mathbb{R}^n)$.
\end{proposition}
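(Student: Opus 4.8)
The plan is to realize $\dot H^\kappa(\R^n)$ as the range of $L^2(\R^n)$ under a Riesz potential and to reduce the claim to the Hardy--Littlewood--Sobolev inequality. First note that if $q=2$ then $\kappa=0$ and the assertion is just Plancherel's identity, while the case $q=\infty$ is excluded (there $\kappa=\tfrac n2$ and the embedding into $L^\infty$ fails); so it suffices to treat $q\in(2,\infty)$, for which $0<\kappa<\tfrac n2<n$ and, in particular, $\dot H^\kappa(\R^n)$ may be identified with a genuine space of tempered distributions, the ambiguity modulo polynomials in its definition being immaterial in this range. Given $u\in\dot H^\kappa(\R^n)$, set $f:=|D|^\kappa u$, so that $f\in L^2(\R^n)$ with $\|f\|_{L^2}=\|u\|_{\dot H^\kappa}$, and write $u=|D|^{-\kappa}f=I_\kappa f$, where $I_\kappa$ denotes convolution with the Riesz kernel $c_{n,\kappa}\,|x|^{\kappa-n}$; the identification of the Fourier multiplier $|\xi|^{-\kappa}$ with this convolution kernel is valid precisely because $0<\kappa<n$.

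The key step is then to invoke the Hardy--Littlewood--Sobolev inequality: for $0<\kappa<n$ and $1<p<r<\infty$ with $\tfrac1p=\tfrac1r+\tfrac\kappa n$ one has $\|I_\kappa f\|_{L^r}\lesssim\|f\|_{L^p}$. Taking $p=2$ forces $\tfrac1r=\tfrac12-\tfrac\kappa n=\tfrac1q$, i.e.\ $r=q$, and the admissibility constraints $p<r<\infty$ reduce to $2<q<\infty$, which holds by assumption. Combining these facts gives
\[
\|u\|_{L^q}=\|I_\kappa f\|_{L^q}\lesssim\|f\|_{L^2}=\||D|^\kappa u\|_{L^2}=\|u\|_{\dot H^\kappa},
\]
with the implied constant depending only on $n$ and $q$, which is exactly the asserted embedding.

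The only non-elementary ingredient here is the Hardy--Littlewood--Sobolev inequality itself, so that is where a fully self-contained argument would demand the most effort (a standard route proves it via the layer-cake decomposition, a weak-type bound, and real interpolation). An alternative that avoids citing it and stays within the Littlewood--Paley framework already used in this Appendix runs as follows: decompose $u=\sum_{j\in\Z}\Delta_j u$ into dyadic frequency pieces, apply Bernstein's inequality $\|\Delta_j u\|_{L^q}\lesssim 2^{jn(\frac12-\frac1q)}\|\Delta_j u\|_{L^2}=2^{j\kappa}\|\Delta_j u\|_{L^2}$ to each piece, and combine them using the square-function characterization of $L^q$ for $1<q<\infty$ together with Minkowski's inequality in $L^{q/2}$ (legitimate since $q\ge2$), which yields
\[
\|u\|_{L^q}\lesssim\Big\|\big(\ts\sum_j|\Delta_j u|^2\big)^{1/2}\Big\|_{L^q}\lesssim\Big(\sum_j\|\Delta_j u\|_{L^q}^2\Big)^{1/2}\lesssim\Big(\sum_j 2^{2j\kappa}\|\Delta_j u\|_{L^2}^2\Big)^{1/2}\approx\|u\|_{\dot H^\kappa}.
\]
I would present the Riesz-potential/Hardy--Littlewood--Sobolev version as the main proof and merely remark on the Littlewood--Paley variant.
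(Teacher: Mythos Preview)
Your proof is correct and standard. The paper, however, does not supply a proof of this proposition at all: it is stated in the Appendix purely as a known tool from harmonic analysis, alongside the fractional Gagliardo--Nirenberg inequality, the fractional Leibniz rule, and the fractional chain rule, each of which is either cited to the literature or given only a one-line reduction. So there is no ``paper's own proof'' to compare against.

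That said, your argument via the Riesz potential and the Hardy--Littlewood--Sobolev inequality is exactly the textbook route one would expect for this embedding, and the Littlewood--Paley variant you sketch is a legitimate alternative that fits the spirit of the other Appendix results. One small remark: in the paper's framework $\dot H^\kappa$ is defined as a subspace of $\mathcal Z'=\mathcal S'/\mathcal P$, and you correctly note that for $0<\kappa<\tfrac n2$ this quotient is immaterial; it may be worth saying explicitly that the embedding produces a genuine $L^q$ function (not an equivalence class modulo polynomials), which is precisely what the HLS bound on $I_\kappa f$ delivers.
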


\end{document}